\theoremstyle{plain}
\newtheorem{theorem}{Theorem}
\newtheorem{thm}[theorem]{Theorem}
\newtheorem{lemma}[theorem]{Lemma}
\newtheorem{lem}[theorem]{Lemma}
\newtheorem{prop}[theorem]{Proposition}
\newtheorem{corollary}[theorem]{Corollary}
\theoremstyle{definition}
\newtheorem{definition}[theorem]{Definition}
\newtheorem{defi}[theorem]{Definition}
\newtheorem{remark}[theorem]{Remark}
\newtheorem{exa}[theorem]{Example}
\newcommand\itemspacing{\vspace{2mm}\noindent}   
\numberwithin{equation}{section}
\numberwithin{theorem}{section}
\newcommand{\NCtensorT}{\mathrm{\shuffle}}    \newcommand{\monoM}{\mathrm{M}}       \newcommand{\freeF}{\mathrm{F}}     \newcommand{\boolB}{\mathrm{B}}    \newcommand{\freeLie}{\mathrm{FL}}    \newcommand{\exch}{\mathcal{E}}   \def\cS{\mathcal{S}}
\let\C\undefined
\newcommand{\C}{\mathbb{C}}  \newcommand{\R}{\mathbb{R}}   \newcommand{\Z}{\mathbb{Z}}    \newcommand{\N}{\mathbb{N}}   
\DeclareMathOperator{\IE}{\mathbf{E}}   
\newcommand{\abs}[1]{\left\lvert #1 \right\rvert}  \newcommand{\restr}{\rfloor}  
\newcommand{\alg}[1]{\mathcal{#1}}   
\DeclareMathOperator{\Hom}{\mathrm{Hom}}   
\DeclareMathOperator{\conc}{\mathrm{conc}}   \DeclareMathOperator{\Id}{\mathrm{Id}}   
\renewcommand\sharp{\#}   \renewcommand{\phi}{\varphi}    \newcommand{\Nfin}{\N_{\mathrm{fin}}}   
\newcommand{\SP}{\mathcal{P}} \newcommand{\NC}{\mathcal{NC}}  \newcommand{\IP}{\mathcal{I}}  
\newcommand{\OSP}{\mathcal{OP}}   \newcommand{\EOSP}{\mathcal{OPP}}    \newcommand{\ONP}{\mathcal{ONC}}   \newcommand{\MP}{\mathcal{M}}  \newcommand{\OIP}{\mathcal{OI}}   
\DeclareMathOperator{\outintmax}{\mathrm{\iota_{\max}}}
\DeclareMathOperator{\intmax}{\mathrm{\tilde{\iota}_{\max}}}
\DeclareMathOperator{\outncmax}{\mathrm{\nu_{\max}}}
\DeclareMathOperator{\ncmax}{\mathrm{\tilde{\nu}_{\max}}}
\newcommand{\Outer}{\mathrm{Outer}}   \newcommand{\Inner}{\mathrm{Inner}}    
\DeclareMathOperator{\IncAlg}{\mathfrak{I}}   \newcommand{\Bool}{\mathcal{B}}   \DeclareMathOperator{\downset}{\downarrow\!}   
\newcommand{\SG}{\mathfrak{S}}   
\DeclareMathOperator{\Des}{Des}  \DeclareMathOperator{\Asc}{Asc}   \DeclareMathOperator{\Plat}{Pla}   
\DeclareMathOperator{\des}{\mathrm{des}}   \DeclareMathOperator{\asc}{\mathrm{asc}}     
\DeclareMathOperator{\plat}{\mathrm{pla}}
\newcommand{\tensorT}{\mathrm{T}}
\def\uk{\underline{k}}
\def\us{\underline{s}}
\def\ut{\underline{t}}
\def\uu{\underline{u}}
\def\uz{\underline{z}}
\def\uN{\underline{N}}
\def\uM{\underline{M}}
\def\uX{\underline{X}}
\def\uZ{\underline{Z}}
\newcommand{\Lie}{\mathcal{L}}   \DeclareMathOperator*{\odott}{\odot}   \newcommand{\egf}{\mathcal{F}}    \newcommand{\lap}{\mathcal{L}}     \newcommand{\NCP}{\operatorname{AlgP}}   
\def\fA{{\mathbf{A}}} \def\bfX{{\mathbf X}} \def\fC{\mathbf{C}} \def\gold{g}
\title{Cumulants, Spreadability and the Campbell-Baker-Hausdorff Series}
\author{Takahiro Hasebe}
\address{
Department of Mathematics, Hokkaido University, Kita 10, Nishi 8, Kita-ku, Sapporo 060-0810, Japan
}
\email{thasebe@math.sci.hokudai.ac.jp}
\author{Franz Lehner}
\address{Institut f\"ur Diskrete Mathematik, Technische Universit\"at Graz,
Steyrergasse 30, 8010 Graz, Austria}
\email{lehner@math.tugraz.at}
\date{}
\begin{document}

\keywords{spreadability, Campbell-Baker-Hausdorff series, exchangeability, cumulants, quasisymmetry, free
  probability, Goldberg coefficients}
\subjclass[2000]{Primary 46L53, Secondary 60C05, 17B01, 05A18}

\begin{abstract}
We define spreadability systems as a generalization of exchangeability systems
in order to unify various notions of independence and cumulants known
in noncommutative probability. In particular, our theory covers monotone
independence and monotone cumulants which do not satisfy exchangeability.
To this end we study generalized zeta and M\"obius functions in the context of
the incidence algebra of the semilattice of ordered set partitions and prove an
appropriate variant of Fa\`a di Bruno's theorem.
With the aid of this machinery we show that our cumulants cover most of the previously
known cumulants.  Due to noncommutativity of independence the behaviour
of these cumulants with respect to independent random variables
is more complicated than in the exchangeable case and the appearance
of Goldberg coefficients exhibits the role of the Campbell-Baker-Hausdorff
series in this context.
Moreover, we exhibit an interpretation of the Campbell-Baker-Hausdorff
series as a sum of cumulants in a particular spreadability system, thus
providing a new derivation of the Goldberg coefficients.
\end{abstract}

\maketitle

\tableofcontents{}

\section{Introduction}

\subsection{Background: independence and cumulants}

Cumulants were introduced by Thiele in the late 19th century as a combinatorial
means to describe independence of classical random variables.
In free probability existence of cumulants was indicated by Voiculescu
\cite{V85} and described explicitly by Speicher
\cite{Speicher:1994:multiplicative}.
Free cumulants are one of the cornerstones
in free probability, complementing the analytic machinery of Cauchy transforms, 
see \cite{NicaSpeicher:2006:lectures} for many applications.
Later on other kinds of cumulants were introduced in noncommutative
probability,
e.g.,
Boolean cumulants were defined in \cite{SW97} in the context of Boolean independence
(see also \cite{vonWaldenfels:1973:approach,vonWaldenfels:1975:interval});
various kinds of $q$-deformed cumulants were considered in
\cite{Nica:1995:oneparameter,Anshelevich:2001:partition}
in order to interpolate between classical and free cumulants (however no $q$-convolution has been found so far);
conditionally free cumulants were defined in \cite{BLS96} which generalize both
free and Boolean cumulants.
The second-named author gave a unified theory of the cumulants mentioned above
in the framework of so-called \emph{exchangeability systems} 
as a general notion of independence \cite{L04}.  
In the present paper we develop a yet more general framework which
comprises also Muraki's monotone independence~\cite{M01}, 
which is not covered by the approach of \cite{L04} because
it does not satisfy exchangeability, the obstruction being that
monotone independence is sensitive to the order on random variables:
\begin{equation}\label{eq:difference-monotone}
\text{Independence of $X$ and $Y$ is not equivalent to independence of $Y$ and $X$.} \tag{O}
\end{equation}
Hence in order to avoid misinterpretations we say  ``the (ordered) pair $(X,Y)$ is
monotone independent'' rather than  ``$X$ and $Y$ are monotone independent''.
This property sharply distinguishes monotone independence from classical, free,
and Boolean independences. 
Despite lack of exchangeability, the first-named author 
together with H.~Saigo managed to define monotone cumulants \cite{HS11a,HS11b}
relying only on the property called \emph{extensivity} defined below.
If we denote the monotone cumulants by $K^{\monoM}_n(X_1,X_2,\dots, X_n)$ with
respect to a noncommutative probability space $(\mathcal{A},\varphi)$,
then we have the moment-cumulant formula
\begin{equation}\label{monotone moment cumulant}
 \varphi(X_1 X_2 \cdots X_n) = \sum_{\pi\in\MP_n} \frac{1}{\abs{\pi}!}K_{(\pi)}^{\monoM}(X_1,X_2,\dots,X_n)  \tag{MC}
\end{equation}
where the set
$\MP_n$ of monotone partitions is a 
subclass of ordered set partitions rather  than set partitions.
The factor $\frac{1}{\abs{\pi}!}$ accounts for the number of possible
orderings of the blocks of the underlying set partition of $\pi$
and cancels out in the case of exchangeability,
like classical, free or Boolean independence.

Cumulants carry essential information on independence, in particular the
\emph{vanishing} of mixed cumulants, that is, of cumulants with independent
entries, characterize independence \cite[Prop.~3.5]{L04},
which is the major reason for their usefulness in free probability
\cite{NicaSpeicher:2006:lectures}.
More precisely, if a finite family of random variables
$X_1,X_2, \dots, X_n$ can be partitioned into two mutually independent
subfamilies 
(in the general sense of Definition~\ref{def:exchangeableindependence} below)
then
\begin{equation}\label{V}
K_n(X_1,X_2,\dots,X_n)=0.\tag{V} 
\end{equation} 
As a consequence, cumulants are additive, that is, the cumulant of the sum of two independent tuples
 $(X_1,X_2,\dots,X_n)$ and
$(Y_1,Y_2,\dots,Y_n)$ decomposes as
\begin{equation}
\label{eq:additivity}
K_n(X_1+Y_1,X_2+Y_2,\dots,X_n+Y_n) = 
K_n(X_1,X_2,\dots,X_n) + K_n(Y_1,Y_2,\dots,Y_n). 
\end{equation} 
By contrast, because of property \eqref{eq:difference-monotone},  monotone
cumulants do not satisfy additivity and thus mixed cumulants do not necessarily
vanish. Instead, they satisfy the weaker notion of \emph{extensivity}: 
if $\{(X_1^{(j)}, X_2^{(j)},\dots, X_n^{(j)})\}_{j=1}^\infty$
is a sequence of monotone independent random vectors such that
$(X_1,X_2, \dots,X_n)\overset{d}{=}(X_1^{(j)},X_2^{(j)},\dots, X_n^{(j)})$
for any $j \geq1$, then  
\begin{equation}\label{E}
K_n^{\monoM}(N.X_1, N.X_2,\dots, N.X_n) = N K_n^{\monoM} (X_1,X_2,\dots,X_n), \tag{E}
\end{equation}
where $N.X_i=X_i^{(1)}+X_i^{(2)}+\cdots+X_i^{(N)}$ is the sum of i.i.d.~copies.

Extensivity is strictly weaker than the property of vanishing of mixed cumulants, 
but extensivity (together with some other properties) still suffices
to prove uniqueness of cumulants even in the case of exchangeability. 
Therefore extensivity is a natural generalization of the property of 
vanishing of mixed cumulants.

\subsection{Main objectives of the present paper}
The first goal of this paper is the unification of second-named author's 
approach to cumulants based on exchangeability systems and first-named author's 
monotone cumulants based on universal products of states. 
The second-named author's definition of cumulants includes $q$-deformed
cumulants as well as tensor (or classical), free and Boolean cumulants. On the
other hand, the approach of the first-named author and Saigo comprises monotone
cumulants as well as tensor, free and Boolean cumulants, but not $q$-deformed
cumulants. In the present paper 
we establish a unified theory based on the  concept of spreadability
which has been considered recently by K\"ostler~\cite{Koe10} in the noncommutative context. Similar to the transition from symmetric to quasisymmetric functions,
the concept of \emph{spreadability systems} naturally arises as a generalization of
exchangeability systems and allows to unify various
kinds of independence and cumulants, including conditionally monotone
independence \cite{Hasebe:2011:conditionally} and two other generalized
notions of independence \cite{H10}, with a generalization of the moment-cumulant
formula \eqref{monotone moment cumulant} and the property of extensivity
\eqref{E}. 

Our approach is combinatorial on the basis of \emph{ordered set partitions}. 
An ordered set partition is defined as an ordered \emph{sequence} of disjoint subsets
whose union is the entire set (say $\{1,2,\dots,n\}$).  
We first investigate the structure of the semilattices $\OSP_n$
of ordered set partitions (with respect to dominance order) 
and show that they locally look like the lattices $\IP_n$ of interval partitions,
in the sense that any interval in $\OSP_n$ is isomorphic to an interval
in $\IP_k$ for an appropriate $k$.
This property is crucial for the study of multiplicative functions 
in the incidence algebra of ordered set partitions and we establish
an isomorphism with the composition algebra of generating
functions analogous to the well known formula of Fa\`a di Bruno.
In particular, we obtain the fundamental convolution identity
for the generalized zeta and M\"obius functions.

Our interest in ordered set partitions was stipulated by the appearance of
monotone partitions in the classification of independence \cite{M03} 
and their role for monotone cumulants \cite{HS11a}, 
but presently it turns out that the structure of ordered set partitions is
actually easier to describe than that of monotone partitions. 
For example we do not have a good understanding of the structure of intervals
$[\sigma,\pi]\subseteq \MP_n$  for $\sigma,\pi\in\MP_n$ and in particular
the values of the M\"obius function remain mysterious.

The second goal is the application of our results to \emph{free Lie algebras}.
Connections of free probability to formal groups have been pointed out early by
Voiculescu~\cite{V85} and more recently by Friedrich and
McKay~\cite{FriedrichMcKay:2015:homogeneous} and 
in the context of monotone probability by
Manzel and Sch\"urmann~\cite{ManzelSchurmann:2016:noncommutative};
see also an approach via shuffle algebras
\cite{EbrahimiFardPatras:2017:monotone}.

Here we obtain new formulations of some well known identities in terms of
ordered set partitions and cumulants.
In particular our cumulants turn out to coincide with the homogenous components
of the \emph{Campbell-Baker-Hausdorff formula}, if an appropriate spreadability
system is chosen. Thus a new derivation of the coefficients of the
Campbell-Baker-Hausdorff formula (``Goldberg coefficients'')
drops out as a by-product of our results on spreadability systems.

We hope that our results will stimulate more connections to combinatorics, 
in particular Hopf algebras and noncommutative quasi-symmetric functions
\cite{BergeronZabrocki:2009:hopf}.
Our proofs only use elementary and at times tedious calculations, yet
we suspect that many of our results have been obtained in different contexts
before and that those with the right knowledge will find easier and more
conceptual proofs.
After a first draft of the present paper was published,
this direction was taken up in \cite{LehnerNovelliThibon:2020}.

\subsection{Main results and structure of the paper}
Most definitions and results of the present paper are expressed in the language
of ordered set partitions.
In order to avoid a large overhead and streamline the presentation all relevant
material concerning these and related objects has been collected in
Appendix~\ref{app:partition composition}.
This includes in particular the description of intervals in the poset of
ordered set partitions, incidence algebras, multiplicative functions, generalized zeta functions and the corresponding M\"obius functions.

In Section~\ref{sec:spreadabilitydefinition}, we define spreadability systems
(Definition \ref{def:spreaded}) and give examples coming from the four (or
five) universal products of linear maps (Section \ref{exaspread}).
Roughly speaking, a
\emph{spreadability system} for a noncommutative probability space $(\alg{A},\varphi)$
consists of a larger space $(\alg{U}, \tilde \varphi)$ containing
copies $X^{(i)}, i\in \N,$ of each element $X \in \alg{A}$ whose joint
distribution in invariant under spreading, i.e., such that
\begin{equation}\label{eq:main1}
\tilde \varphi(X_1^{(i_1)} X_2^{(i_2)} \cdots X_n^{(i_n)})  =  \tilde \varphi(X_1^{(j_1)} X_2^{(j_2)} \cdots X_n^{(j_n)})
\end{equation}
whenever the entries of the multiindices
$(i_1,i_2,\dots, i_n)$ and $(j_1,j_2,\dots, j_n)$
are in one-to-one correspondence via an order preserving map;
in other words, whenever the kernel ordered set partitions
$\kappa(i_1,i_2,\dots, i_n)$ and $\kappa(j_1,j_2,\dots, j_n)$ 
(Definition~\ref{app:def:kernelpartition}) coincide.
In this case the common value of \eqref{eq:main1} only depends
on said ordered set partition  $\pi = \kappa(i_1,i_2,\dots, i_n)$ and is
denoted by
\begin{equation}
  \label{eq:phipi=tildephii}
\varphi_\pi(X_1,X_2,\dots, X_n)=
\tilde \varphi(X_1^{(i_1)} X_2^{(i_2)} \cdots X_n^{(i_n)})
.
\end{equation}
In typical examples (like the ones presented in Section~\ref{exaspread}),
$\alg{U}$ is the free product or tensor product of copies of $\alg{A}$,
$\tilde \varphi$ is a universal product (e.g., the free product)
of the copies of $\varphi$, and $X^{(i)}$ is the
natural embedding of $X$ into $\alg{U}$ as the  $i$th component; then
\eqref{eq:main1}  holds trivially.
The main idea is to regard $\{X^{(i)}, i\in \N\}$ as i.i.d.\
copies of the random variable $X$ and condition \eqref{eq:main1} gives rise to a generalized notion of i.i.d.\
sequences $\alg A^{(i)}:=\{X^{(i)}: X\in \alg A\}, i \in \N$ of subalgebras of $\alg{U}$.
Using this, we can induce a notion of independence on the original algebra
$\alg{A}$ by generalizing the following elementary observation from classical probability:
two random variables $X$ and $Y$ are independent if and only if the joint distribution of the
random vector $(X,Y)$ is the same as that of $(X^{(1)},Y^{(2)})$,
where $(X^{(1)},Y^{(1)})$ and  $(X^{(2)},Y^{(2)})$ are i.i.d.\ copies of
$(X,Y)$.
More precisely, our algebraic notion of independence
can be formulated in lattice theoretic terms as follows:
a sequence of subalgebras $(\alg{A}_i)_{i\in I}$ of $\alg{A}$, where  $I\subseteq \N$, is said to be
  \emph{$\cS$-independent} if for any tuple of indices
  $(i_1,i_2,\dots,i_n)\in I^n$, 
  any tuple of random variables $(X_1,X_2,\dots,X_n)$
  with $X_j\in  \alg{A}_{i_j}$
  and any ordered set partition $\pi\in\OSP_n$, we have
  \begin{equation} %\label{S-indep} 
  \phi_\pi(X_1,X_2,\dots,X_n) = \phi_{\pi\curlywedge \kappa(i_1,i_2,\dots,i_n)}(X_1,X_2,\dots,X_n),   
  \end{equation}
  where $\curlywedge$ is roughly the usual operation $\wedge$ for the underlying set partitions, equipped with the lexicographic order on blocks, see Definition  \ref{app:def:ordered_set_partitions}. 
For several specific spreadability systems this definition reproduces previous
notions of independence (Proposition \ref{prop:equivalence_independences}).

In Section~\ref{sec:cumulants}, we define cumulants associated to a
spreadability system and express cumulants in terms of moments and vice
versa (Theorem \ref{thm:mc}). More precisely, we adapt Rota's
\emph{dot operation} from umbral calculus \cite{RotaTaylor:1994:classical}
for our purpose and write
\[
N. X := X^{(1)} + X^{(2)} + \cdots + X^{(N)}, 
\]
for ``the sum of i.i.d.\ random variables''.
Using the notation
\[
\varphi_{\pi}(X_1^{(i_1)}, X_2^{(i_2)}, \dots, X_n^{(i_n)}) :=
\varphi_{\pi\curlywedge \kappa(i_1,i_2,\dots, i_n)}(X_1,X_2,\dots, X_n),
\]
we show that for each $\pi \in \OSP_n$
the partitioned expectation  $\varphi_\pi (N.X_1, N. X_2, \dots, N.X_n)$ is a
polynomial in $N$ and the  coefficient of
the lowest order term $N^{|\pi|}$ is the $\pi$-cumulant with respect to the
spreadability system $\alg{S}$,  denoted by $K_\pi(X_1,X_2,\dots, X_n)$.
 These coefficients satisfy the fundamental properties of
 cumulants listed in Definition~\ref{def:spreadableK2}
 and are related to the partitionend moments
 \eqref{eq:phipi=tildephii} 
 by \emph{generalized M\"{o}bius inversion},
 \begin{align}
 K_{\pi}(X_1,X_2,\ldots,X_n)  &= \sum_{\substack{\sigma \in \OSP_n \\ \sigma \leq \pi}}  \varphi_{\sigma}(X_1,X_2,\ldots,X_n)\,  \widetilde{\mu}(\sigma,\pi), \label{eq:mci}  \\
  \varphi_{\pi}(X_1, X_2,\ldots, X_n) &= \sum_{\substack{\sigma \in \OSP_n \\ \sigma \leq \pi}} K_{\sigma}(X_1,X_2,\ldots,X_n)\,\widetilde{\zeta}(\sigma,\pi), \label{eq:mcii}
  \end{align}
where $\widetilde{\mu}$ and $\widetilde{\zeta}$ are generalized M\"obius and
zeta functions, respectively (see Definition \ref{app:def:factorials}).
In the
special case of the maximal element $\pi= \hat{1}_n$ these coefficients
evaluate to $\widetilde{\mu}(\sigma, \hat{1}_n) =
\frac{(-1)^{n-|\sigma|}}{|\sigma|}$ and $\zeta(\sigma, \hat{1}_n) =
\frac{1}{|\sigma|!}$, cf.\ formula \eqref{monotone moment cumulant}.

These concepts have a certain affinity to formal group laws
and exhibit a Lie algebraic flavour, which is
confirmed by the considerations in Section~\ref{sec:freeLiealg}.

We also prove extensivity (Proposition~\ref{ad}) and uniqueness of
cumulants (Theorem \ref{thm:unique}). The previous moment-cumulant formulas for the free, tensor, Boolean, monotone and c-monotone cases in the
literature are listed as special cases (Propositions \ref{prop19} and \ref{prop20}); for instance,
the partitioned cumulants $K_\pi^{\rm M}$ associated with the monotone spreadability system are shown to satisfy multiplicativity 
\begin{equation}\label{eq:monotone_multiplicative}
  K^{\monoM}_\pi=
  \begin{cases}
    0, &\pi \in \OSP_n \setminus \MP_n, \\
    K^{\monoM}_{(\pi)},& \pi \in \MP_n,  
  \end{cases}
\end{equation} 
which, together with  \eqref{eq:mcii}, implies the known formula
\eqref{monotone moment cumulant}. 
While associativity of monotone independence is crucial for the proof of
\eqref{eq:monotone_multiplicative},
this property is not required
for the construction of cumulants for general spreadability systems.       
     
In Section~\ref{sec:partialcumulant} we establish recursive differential
equations for the time evolution of moments. This generalizes for example the
complex Burgers equation in free probability \cite{V86}. Motivations for this section come especially from the effective use of differential equations in monotone probability theory.

 In Sections~\ref{sec:vanishing} and \ref{sec:freeLiealg} we encounter the ``Goldberg coefficients'' in two different ways whose connections are still unclear. 
In Section~\ref{sec:vanishing}, 
we compute \emph{mixed cumulants}, i.e., 
we express cumulants of random variables which split into ``independent subsets'' in terms of lower order cumulants (Theorem \ref{cum vanishing}):  
 a sequence of subalgebras $(\alg{A}_i)_{i\in I}$ of $\alg{A}$, where 
  $I\subseteq \N$, is $\cS$-independent if and only if for any tuple $(i_1,i_2,\dots,i_n)\in I^n$, any random variables $(X_1,X_2,\dots,X_n)\in \alg{A}_{i_1} \times \alg{A}_{i_2} \times \cdots \times \alg{A}_{i_n}$ and any ordered set partition $\pi\in\OSP_n$, we have 
\begin{equation} \label{eq:partial_vanishing}
  K_\pi(X_1,X_2, \ldots, X_n)
  - \sum_{\tau \in \OSP_n} K_\tau(X_1,X_2,\ldots,X_n)
  \, \gold(\tau,\kappa(i_1,\dots,i_n),\pi) = 0,     
\end{equation}
where $\gold(\tau,\eta,\pi)$ is what we call the Goldberg coefficient. In the special case $\pi =\hat 1_n$ it is given by  
\begin{equation}\label{eq:goldberg}
\gold(\tau,\eta, \hat 1_n)
= 
\begin{cases}\displaystyle
\frac{1}{q_1! q_2! \cdots q_r!}\int_{-1}^0 x^{\des_\eta(\tau)}(1+x)^{\asc_\eta(\tau)}\prod_{j=1}^r P_{q_j}(x)\,dx, &\bar{\tau} \leq \bar{\eta},\\
0, & \bar{\tau} \not\leq \bar{\eta}, 
\end{cases}
\end{equation}
where $r$ and $q_j$ are certain integers determined by $\tau$ and $\eta$,
$\des_\eta(\tau), \asc_\eta(\tau)$
are the numbers of  descents and ascents of the multiset permutations
naturally associated with $(\eta,\tau)$.
$
P_q(x)=\sum_{k=1}^q k! \, S(q,k)\, x^{k-1}
$
are the homogeneous Euler polynomials and $S(q,k)$ are the Stirling numbers of
the second kind.
For general $\pi$, $\gold(\tau,\eta,\pi)$ is either zero or the product of $\gold(\tau\restr_P,\eta\restr_P, \hat 1_{|P|})$ over $P \in \pi$. 
Formula \eqref{eq:partial_vanishing} sheds light on the gap between extensivity \eqref{E} 
and the vanishing property \eqref{V} and reveals the role of
the Camp\-bell-Baker-Hausdorff formula.
 
In Section~\ref{sec:freeLiealg} we present an
operator-valued ``unshuffle'' spreadability system $\cS_{\NCtensorT}$
which is interesting from a combinatorial point of view:
The corresponding
cumulants reproduce the homogeneous components of the Camp\-bell-Baker-Hausdorff
formula; in particular, the specialization of our cumulants  
to free algebras corresponds to Lie projectors.
The definition of the spreadability system $\cS_{\NCtensorT}$ is rather simple:
$\alg{A}$ is any unital algebra, $\varphi \colon \alg{A} \to \alg{A}$ is the
identity map, $\alg{U}$ is the infinite tensor product of the copies of
$\alg{A}$, and $X^{(i)} = 1^{\otimes(i-1)} \otimes X \otimes 1^{\otimes \infty}
\in \alg{U}$ is the natural embedding of $X \in \alg{A}$ into $\alg{U}$ as the
$i$-th tensor factor.  The linear map $\tilde \varphi \colon \alg{U}\to
\alg{A}$ is given by concatenation of words:
$\tilde \varphi(X_1 \otimes X_2 \otimes \cdots \otimes X_n \otimes 1^\infty):=
X_1 X_2 \cdots X_n$.
Consequently, the mixed expectation yields a rearrangement (``unshuffle'')
$$
\tilde{\varphi}(X_1^{(i_1)} X_2^{(i_2)}\cdots X_n^{(i_n)}):= X_{P_1} X_{P_2}\cdots X_{P_k},
$$
where $\kappa(i_1,i_2,\ldots,i_n)=(P_1,P_2,\ldots,P_k)$ and $X_P$ is the
ordered product of $X_p$'s over the elements $p \in P$, see \eqref{product}.

For this specific spreadability system, $\alg{\cS}_{\NCtensorT}$-independence
of subalgebras $(\alg{A}_i)_{i\in I}$ turns out to be equivalent to the
commutativity $[\alg{A}_i, \alg{A}_j]=0$ for $i\ne j$ (Remark
\ref{rem:commutativity=vanishing}). The second cumulant $K_{\hat 1_2}(X,Y)$ is
the commutator $\frac1{2}[X,Y]$.

Finally we give a new derivation of the coefficients of the Campbell-Baker-Hausdorff formula 
(also known as ``Goldberg coefficients'') using the moment-cumulant formulas
\eqref{eq:mci} and \eqref{eq:mcii} for the particular spreadability system $\cS_{\NCtensorT}$.   
To this end specialize $\alg{A}$ to the free associative algebra generated by a
set of noncommuting variables $a_1,a_2,\ldots$,
fix $r\in \N$ and $q_j,i_j \in \N$ for $j\in [r]$ such that $i_j \neq
i_{j+1}$ for $j\in[r-1]$,
then the coefficient of the monomial
$a_{i_1}^{q_1}a_{i_2}^{q_2}\cdots a_{i_r}^{q_r}$
appearing in $\log(e^{a_1}\cdots e^{a_n})$ is given by 
\begin{equation} \label{eq:CBH_coefficient}
\frac{1}{q_1!q_2!\cdots q_r!}\int_{-1}^0 x^{\des(\underline{i})} (1+x)^{\asc(\underline{i})} \prod_{j=1}^r P_{q_j}(x)\,dx, 
\end{equation}
where $\underline{i}$ stands for the sequence $(i_1,i_2,\dots, i_n)$ and $P_q(x)$ are the homogeneous Euler polynomials already encountered in \eqref{eq:goldberg}.

In Section~\ref{CLT} we briefly discuss the central limit theorem associated to 
spreadability systems satisfying a certain singleton condition.

We conclude the paper with a few open problems.
One of them is to find an explanation or unified proof of the surprisingly coinciding formulas \eqref{eq:goldberg} and \eqref{eq:CBH_coefficient}.

\section{Spreadability systems and independence}\label{sec:spreadabilitydefinition}

\subsection{Notation and terminology for noncommutative probability}
\label{Notation} 

From now on we denote by $\mathcal{A}$ and $\mathcal{B}$ associative algebras over the field 
$\C$.  Elements of $\mathcal{A}$ are called \emph{random variables}, and elements of $\mathcal{A}^n, n\in\N$ are called
\emph{random vectors}. An (algebraic) $\mathcal{B}$-valued \emph{expectation} is a linear map 
$$
\varphi:\mathcal{A}\to \mathcal{B} 
$$
and  we call the pair $(\mathcal{A},\varphi)$ an \emph{(algebraic)
  $\mathcal{B}$-valued noncommutative probability space} ($\alg{B}$-ncps).
In the case where $\mathcal{B}$ is a subalgebra of $\mathcal{A}$ and $\varphi$
is a $\mathcal{B}$-module map in the sense that
$\varphi(bab')=b\,\varphi(a)\,b'$ for all $a\in\mathcal{A}$ and
$b,b'\in\mathcal{B}$, the map $\varphi$ is called \emph{conditional
  expectation}. This property will however not be crucial in the context of the
present paper.

\begin{remark} \ 
\begin{enumerate}[label=\rmfamily(\roman*),leftmargin=1cm]
\item In order to be able to include Boolean and monotone
    products and some other examples (Section~\ref{exaspread})
    we do not a priori assume the unitality. In some examples $\alg A$ and $\alg B$ are naturally unital; then we also assume that $\varphi$ is also unital, i.e., $\varphi(1_{\alg A})=1_{\alg B}$ and also that involved subalgebras of $\alg A$ contain the unit of $\alg A$. 
\item 
Usually the involved algebras are $\ast$-algebras and the linear maps satisfy positivity.  
However positivity is not essential here and we stick to the algebraic
$\alg{B}$-valued setting, which allows to include the interesting
example of Lie polynomials in Section~\ref{sec:freeLiealg}.

\end{enumerate}
\end{remark}

We say that two sequences $(X_i)_{i=1}^\infty, (Y_i)_{i=1}^\infty\subseteq \mathcal{A}$
have \emph{the same distribution} if 
$$
\varphi(X_{i_1} X_{i_2} \dotsm X_{i_n})
  = \varphi(Y_{i_1} Y_{i_2} \dotsm Y_{i_n})  
$$ 
for any tuple $(i_1,i_2,\dots,i_n) \in \N^n, n \in\N$, and in this case we
write $(X_i)_{i=1}^\infty \overset{d}{=} (Y_i)_{i=1}^\infty$.

Alternatively, in the categorical approach (see, e.g.,  \cite{ManzelSchurmann:2016:noncommutative}),
a random variable is a unital homomorphism $\iota: \mathcal{D} \to \mathcal{A}$ 
from some unital algebra $\mathcal{D}$ into $\mathcal{A}$.
This definition extends also to
random vectors. 
Indeed, given a random vector $(X_1,X_2,\dots,X_n)$, we get a homomorphism $\iota: \mathcal{D}\to\mathcal{A}$ defined by $\iota(x_i)=X_i$, where $\mathcal{D}$ is the nonunital algebra freely generated by noncommuting indeterminates $x_1,x_2,\dots,x_n$. 
Let $(\mathcal{A}_1,\varphi_1)$,
$(\mathcal{A}_2,\varphi_2)$ be two $\alg{B}$-ncps such that $\varphi_1,\varphi_2$ take
values in a common algebra $\mathcal{B}$. Sequences of random variables
$(\iota_1^{(i)})_{i=1}^\infty \subseteq \Hom(\mathcal{D}, \mathcal{A}_1),
(\iota_2^{(j)})_{j=1}^\infty \subseteq \Hom(\mathcal{D}, \mathcal{A}_2)$ have
\emph{the same distribution} if  
$$
\varphi_1(\iota_1^{(i_1)}(x_1)
          \iota_1^{(i_1)}(x_2)
          \dotsm
          \iota_1^{(i_n)}(x_n))
 = \varphi_2(\iota_2^{(i_1)}(x_1)
             \iota_2^{(i_2)}(x_2)
             \cdots 
             \iota_2^{(i_n)}(x_n))
$$
for any $(i_1,i_2,\dots,i_n)\in\N^n$ and any $x_1,x_2,\dots,x_n \in \mathcal{D}$. In this case we write 
$$
(\iota_1^{(i)})_{i=1}^\infty \overset{d}{=} (\iota_2^{(j)})_{j=1}^\infty.  
$$

Given $X_1, X_2, \ldots, X_n \in \mathcal{A}$ and
$P=\{p_1, p_2, \dots, p_k\} \subseteq [n]$ with $p_1 < p_2 < \cdots < p_k$, 
it will be convenient to introduce the notation 
\begin{equation}\label{product}
  X_P := X_{p_1} X_{p_2} \cdots X_{p_k}
\end{equation}
for the ordered product.   For a $k$-linear functional $M\colon \alg{A}^k \to \alg{B}$, we denote 
\begin{equation}\label{notation:linear}
M(X_P):=M(X_{p_1}, X_{p_2}, \ldots, X_{p_k}).  
\end{equation}

Recall that the tensor product has the universal property that any multilinear map
$$
T:\alg{A}^n\to \alg{B}
$$
has a unique lifting to a linear map
$$
\tilde{T}:\alg{A}^{\otimes n}\to\alg{B}
$$
such that on rank 1 tensors we have $\tilde{T}(a_1\otimes
a_2\otimes\dots\otimes a_n)=T(a_1,a_2,\dots,a_n)$. We will
tacitly identify $T$ with $\tilde{T}$ in
order to simplify notation.

\subsection{Spreadability systems}
In this subsection we introduce the notation necessary to generalize
the notions of partitioned moment and cumulant functionals of \cite{L04}
from the exchangeable setting to the spreadable setting.

\begin{defi} \label{def:spreaded}Let $(\mathcal{A}, \varphi)$ be a $\mathcal{B}$-ncps. 
\begin{enumerate}[label=\arabic*., leftmargin=1cm]
 \item A \emph{spreadability system} for $(\mathcal{A}, \varphi)$ is a triplet
  $\cS=(\alg{U}, \tilde{\varphi}, (\iota^{(i)})_{i=1}^\infty)$ satisfying the
  following properties:

\begin{enumerate}[label=\rm(\roman*)]
 \item $(\alg{U}, \tilde{\varphi})$ is a $\alg{B}$-valued ncps.

 \item $\iota^{(i)}: \mathcal{A} \to \alg{U}$ is a homomorphism
 such that $\varphi = \tilde{\varphi} \circ \iota^{(i)}$ for
  each $i \geq 1$. For simplicity, $\iota^{(i)}(X)$ is denoted by $X^{(i)}$, $X \in
  \mathcal{A}$, and we denote by $\mathcal{A}^{(i)}$ the image
  of $\mathcal{A}$ under $\iota^{(i)}$.
 \item The identity
  \begin{equation}\label{eq:spreadability}
    \tilde{\varphi}(X_1^{(i_1)}X_2^{(i_2)}\cdots X_n^{(i_n)}) = \tilde{\varphi}(X_1^{(h(i_1))}X_2^{(h(i_2))}\cdots X_n^{(h(i_n))}) 
  \end{equation}
  holds for any $X_1, X_2,\ldots, X_n \in \mathcal{A}$, any $i_1, i_2, \ldots, i_n \in
  \N$ and any \emph{order preserving} map $h:\{i_1, i_2, \ldots, i_n\}
  \to \N$, that is, $i_p < i_q$ implies $h(i_p)<h(i_q)$.
\end{enumerate}
\item A triplet $\exch=(\alg{U}, \tilde{\varphi}, (\iota^{(i)})_{i=1}^\infty)$ is called an
 \emph{exchangeability system} if, in addition to (i), (ii) above, eq.\
 (\ref{eq:spreadability}) holds for any $X_1, X_2, \ldots, X_n \in \mathcal{A}$, any $i_1,
 i_2, \ldots, i_n \in \N$ and any \emph{permutation}
 $h \in \SG_\infty:=\bigcup_{n\geq 1}\SG_n$. 
\end{enumerate}
\end{defi}
\goodbreak{}
\begin{remark}   \  
\begin{enumerate}[label=\arabic*.,leftmargin=1cm]
\item In some examples where $\alg A, \alg B, \phi$ are unital, the extended algebra $\alg{U}$ is also unital and $\iota^{(i)}$ are unit-preserving. To be specific, this applies to the tensor and free spreadability systems in Section \ref{exaspread} and the unshuffle spreadability system in Section \ref{sec:freeLiealg}.  
    \item It is easy to see that condition (\ref{eq:spreadability}) can be rephrased as follows:
    \begin{equation*}
(\iota^{(1)}, \iota^{(2)},\dots)
      \overset{d}{=} (\iota^{(n_1)}, \iota^{(n_2)},\dots)
    \end{equation*}
    for any strictly increasing sequence $(n_i)_{i=1}^\infty \subseteq \N$. This
    is the definition given in \cite{Koe10}.

   \item It is straightforward to extend the definition of exchangeability
    systems (resp., spreadability systems) from  $\N$
    to an arbitrary set (resp., arbitrary totally ordered set).
\end{enumerate}
\end{remark}

\begin{defi}  \ 
  \begin{enumerate}[label=\rm(\roman*),leftmargin=1cm]
\item 
    Using the concept of kernel partition from Definition~\ref{app:def:kernelpartition} the condition of spreadability
    \eqref{eq:spreadability} 
    is equivalent to the requirement that
    \begin{equation}
      \label{eq:phiXi1i2=phiXj1j2}
      \tilde\varphi(X_1^{(i_1)} X_2^{(i_2)} \cdots X_n^{(i_n)}) = \tilde\varphi(X_1^{(j_1)} X_2^{(j_2)} \cdots X_n^{(j_n)})
    \end{equation}
    holds whenever the kernels
    coincide, i.e.,  $\kappa(i_1,i_2, \ldots, i_n) =
    \kappa(j_1,j_2,\ldots,j_n)$,
    see Definition~\ref{app:def:kernelpartition}.
    That is, the expectation (\ref{eq:spreadability}) 
    only depends on the ordered kernel set partition  $\kappa(i_1,i_2,\ldots,i_n)$.
    Thus for every ordered set partition $\pi \in \OSP_n$ 
    we can define a multilinear functional
    $\varphi_\pi\colon \mathcal{A}^n \to \C$ by choosing any representative sequence
    $(i_1, i_2, \ldots, i_n)$ with $\kappa(i_1, i_2, \ldots, i_n)=\pi$ and setting
    \begin{equation}
      \label{eq:phipi}
      \phi_{\pi}(X_1, X_2, \ldots,X_n)=\tilde{\phi}(X_1^{(i_1)} X_2^{(i_2)} \cdots X_n^{(i_n)}). 
    \end{equation}
    The invariance \eqref{eq:phiXi1i2=phiXj1j2} ensures that
    this definition is consistent and does not depend on the choice of the representative. 

\item 
This generalizes the corresponding notions from exchangeability systems \cite{L04}:
given an exchangeability system $\exch=(\alg{U}, \tilde{\varphi}, (\iota^{(i)})_{i=1}^\infty)$, we can define a multilinear functional $\phi_\pi$, 
this time for any set partition $\pi \in \SP_n$,   
\begin{equation*}
\phi_{\pi}(X_1, X_2, \ldots,X_n)=\tilde{\phi}(X_1^{(i_1)} X_2^{(i_2)} \cdots X_n^{(i_n)}), 
\end{equation*}
where $(i_1,i_2,\dots,i_n)$ is any representative such that $\overline{\kappa}(i_1,i_2,\dots,i_n)=\pi$,  see Definition \ref{app:def:kernelpartition}. 
  \end{enumerate}
  
\end{defi}

\begin{remark} 
The algebra $\alg{U}$ and the homomorphisms $(\iota^{(i)})_{i\in\N}$ of a
spreadability system can always be chosen to be the tensor algebra of the
copies of $\alg A$ (see \eqref{eq:coproduct} below) with the natural
embeddings, respectively, in the following sense.
Given a spreadability system $\cS=(\alg{U}, \tilde{\varphi}, (\iota^{(i)})_{i=1}^\infty)$ for a $\alg{B}$-ncps $(\alg A,\varphi)$, we set $\hat{\alg{U}} := \bigsqcup_{i=1}^\infty\mathcal{A}_i$ where $\alg A_{i}$ are copies of $\alg A$ and set $\hat\iota^{(i)} \colon \alg A =\alg A_i \to \hat{\alg{U}}$ to be the natural embeddings. Then we can define $\hat \varphi \colon \hat{\alg{U}}  \to \alg B$ by setting
\[
\hat \varphi (\hat\iota^{(i_1)}(X_1)  \hat\iota^{(i_2)}(X_2)  \cdots \hat\iota^{(i_n)}(X_n)) := \tilde{\varphi} (X_1^{(i_1)} X_2^{(i_2)} \cdots X_n^{(i_n)}) 
\]
for every $i_1, i_2,\dots, i_n$ with $i_k \ne i_{k+1}~(k\in[n-1])$ and
$X_1,X_2,\dots, X_n \in \alg A$, giving rise to the spreadability system $\hat
\cS=(\hat{\alg{U}}, \hat \phi, (\hat \iota^{(i)})_{i\ge1})$ for $(\alg A,
\phi)$. Now $(\hat\iota^{(i)})_{i\ge1}\overset{d}{=} (\iota^{(i)})_{i\ge1}$ and
thus replacing $\cS$ with $\hat\cS$ does not change basic results; in
particular, the cumulants for $\cS$ and for $\hat \cS$ are identical, see
Theorem \ref{thm:mc}.

In the majority of the examples below we choose $\alg{U}$ to be the tensor algebra,
in a few cases however, it is more natural to choose another $\alg{U}$.
A notable exception is the case of unital algebras, where 
the unital free product $\ast_{i \in \N} \alg A$ and the infinite tensor product
$\bigotimes_{i\in \N} \alg A$ are more convenient.
\end{remark}

\subsection{Examples from universal products of linear functionals}
\label{exaspread}
Spreadability systems typically arise as universal products of linear
functionals defined on free products of $\ast$-algebras. 
Universal products can be formulated for various categories of noncommutative
probability spaces, e.g., the category of
 $\ast$-algebras with restricted states (i.e., such that the unital extension to the
unitization is positive) or the category of unital $\ast$-algebras with states.
In the present paper we concentrate
on combinatorial aspects and skip questions about positivity. On the other hand,
recently multiple linear functionals on single algebras turned into focus which
give rise to nontrivial spreadability systems.
We therefore consider the
category $\NCP_d$ consisting of objects $(\mathcal{A},\varphi)$, where
$\alg{A}$ is an  algebra and $\varphi=(\varphi^1,\dots, \varphi^d)$ is a
$d$-tuple of $\C$-valued linear functionals on $\alg{A}$.
An arrow 
$f\colon (\alg{D}, \psi^1, \dots, \psi^d) \to (\alg{A}, \varphi^1,\dots,
\varphi^d)$
in this category is an algebra homomorphism $f\colon \alg{D}\to\alg{A}$  such that $\varphi^i \circ f = \psi^i$ for all $i \in [d]$. 

A \emph{universal product} is a bifunctor $\odott$ on $\NCP_d$ of the form    
\[
\left((\alg{A}_1, \varphi_1), (\alg{A}_2, \varphi_2) \right) \mapsto (\alg{A}_1, \varphi_1)\odott (\alg{A}_2, \varphi_2)= (\alg{A}_1\sqcup \alg{A}_2,  \varphi_1\odott\varphi_2),   
\]
where $\alg{A}_1 \sqcup \alg{A}_2$ is the coproduct (also called the nonunital
free product) in the category of associative algebras. This means that $\odot$ is a binary operation on $\NCP_d$ such that 
\begin{equation} \label{eq:bifunctor}
(\varphi_1 \odott \varphi_2) \circ (f_1 \sqcup f_2) = (\varphi_1 \circ f_1) \odott (\varphi_2 \circ f_2)  \tag{U0}
\end{equation}
for any arrows $f_k\colon \alg{D}_k \to \alg{A}_k, k=1,2$, where $f_1 \sqcup f_2$ is the canonical arrow $\alg{D}_1 \sqcup \alg{D}_2 \to \alg{A}_1 \sqcup \alg{A}_2 $. 
For a universal product $\odott$ some of the following conditions are often imposed \cite{ManzelSchurmann:2016:noncommutative}.

\begin{enumerate}[label=\rm(U\arabic*)]

\item\label{U1} \emph{Restriction property}: $(\varphi_1 \odott \varphi_2) \circ \iota_k = \varphi_k$ for each $k\in \{1,2\}$, where $\iota_k$ is the canonical embedding $\alg{A}_k \to \alg{A}_1\sqcup \alg{A}_2$. 

\item\label{U2}  \emph{Associativity}: $(\varphi_1 \odott \varphi_2)\odott \varphi_3 = \varphi_1 \odott (\varphi_2\odott \varphi_3)$  
under the natural isomorphism of $(\alg{A}_1 \sqcup \alg{A}_2) \sqcup \alg{A}_3$ and $\alg{A}_1 \sqcup (\alg{A}_2 \sqcup \alg{A}_3)$.   

\item\label{U3} \emph{Factorization} on length two (``stochastic
 independence''):

 $ (\varphi_1 \odott \varphi_2)(a b) = \varphi_1(a) \varphi_2(b)= (\varphi_1 \odott \varphi_2)(b a)$  for all $a \in \alg{A}_1$ and $b \in \alg{A}_2$. 

\item\label{U4} \emph{Symmetry}: $\varphi_1 \odott \varphi_2 = \varphi_2 \odott \varphi_1$ under the natural isomorphism of $\alg{A}_1 \sqcup \alg{A}_2$ and $\alg{A}_2 \sqcup \alg{A}_1$.

\end{enumerate}

Under condition \ref{U2}, the product $\odott_{i\in [p]}\varphi_i$ can be naturally defined on $\sqcup_{i\in [p]} \alg A_i$ for any $p\in\N$. 
The following property is satisfied by many examples. 
\begin{enumerate}[label=\rm(U\arabic*)]
\setcounter{enumi}{4}
\item\label{U6} \emph{Universal coefficients}: there exists a family of complex numbers 
\[
\{u^j(\sigma, f; \pi): j \in [d], n \in \N, \pi \in \OSP_n, \sigma \in \SP_n, \sigma \le \bar \pi,  f\colon \sigma \to [d]\}
\] such that for every $p\in \N$, $(\alg A_i, \varphi_i) \in \NCP_d
~(i\in[p])$, $n \in \N, i_1, i_2, \dots, i_n\in[p]$ and $X_k \in
\alg{A}_{i_k}~(k \in[n])$, we have for $j \in [d]$ 
\[
\left( \odott_{i\in [p]}\varphi_i\right)^j(X_1 X_2 \cdots X_n) =  \sum_{\substack{\sigma \in \SP_n\\ \sigma \le \bar\pi}} \sum_{f\colon\, \sigma \to [d]} u^j (\sigma, f; \pi) \prod_{S \in \sigma} \varphi_{i(S)}^{f(S)}(X_S),    
\]
where $\pi := \kappa(i_1,i_2,\dots, i_n)$ and $i(S)$ is the common value $i_k$ for $k \in S$ (this number is independent of a choice of $k \in S$ because $\sigma \le \bar\pi$ and $k\mapsto i_k$ is constant on each block of $\pi$). 
\end{enumerate}
In fact,  condition \ref{U6} for $p=2$ only is sufficient: then associativity implies condition \ref{U6} for general $p$. 

\begin{remark} For $d=1$ a binary operation $\odott$ satisfying conditions \eqref{eq:bifunctor}--\ref{U3} and \ref{U6} is called a \emph{quasi-universal product}  in \cite{Muraki:2002:five}. (Note that \eqref{eq:bifunctor} easily follows from \ref{U6}).
  On the other hand, a binary operation $\odott$ satisfying conditions \eqref{eq:bifunctor}--\ref{U3} is called a \emph{natural product} \cite{M03}.
 The main result of \cite{M03} is that \ref{U6} follows from \eqref{eq:bifunctor}--\ref{U3}, i.e., a natural product is a quasi-universal product. 
\end{remark}

\begin{remark}\label{rem:multiface} A more general setup was discussed in
  \cite{ManzelSchurmann:2016:noncommutative}, where the algebras are allowed to
  have an additional structure of \emph{faces}. The theory of cumulants is also
  developed in \cite{ManzelSchurmann:2016:noncommutative} for universal
  products with multistates and multifaces.  It seems that some modifications
  are needed in order to adapt the notion of
  spreadability system to the structure of faces but this issue is not discussed in the present paper.  
\end{remark}

The universal products with \ref{U1}--\ref{U4} for $d=1$ were classified by Ben Ghorbal and Sch\"urmann into three types: tensor, free, Boolean  \cite{BGS2002}.  The universal products with \ref{U1}--\ref{U3} for $d=1$ were then classified by Muraki \cite{M03} into five types: monotone and anti-monotone products in addition to the above three.  
The anti-monotone product is essentially the reversion of the monotone product and therefore omitted from the discussion below. 
With \ref{U1}--\ref{U2} for $d=1$, more examples arise and according to
\cite[p.~7]{Lachs15} and \cite[p.~3]{GL15} the classification is not yet complete.
On the other hand, no classification results are known for $d \ge2$.  

Given a universal product with \ref{U1} and \ref{U2} for $\NCP_d$
one can construct a $d$-tuple of ($\mathbb{C}$-valued) spreadability systems in the following
way: for a single object $(\alg{A}, \varphi)$ of $\NCP_d$, take
countably many copies $(\alg{A}_i, \varphi_i)= (\alg{A}, \varphi), i\in \N$ and
set   
\begin{equation}\label{eq:spreadability_universal_product}
(\alg{U}, \tilde{\varphi}^j)
:=\left(\bigsqcup_{i=1}^\infty\mathcal{A}_i, \left(\odott_{i\in \N} \varphi_i\right)^j\right)
\end{equation}
 for each
$j \in [d]$ with the natural embeddings $\iota^{(i)}\colon \alg{A} = \alg{A}_i
\to \alg{U}, i\in \N$. Note here that the coproduct over $\N$ can be represented
as the tensor algebra
\begin{equation} \label{eq:coproduct}
\bigsqcup_{i=1}^\infty\mathcal{A}_i = \bigoplus_{n \in \N} \bigoplus_{\substack{i_1,i_2,\dots, i_n \in \N\\ i_j \ne i_{j+1} \text{~for all~} j \in [n-1]}} \alg{A}_{i_1} \otimes \alg{A}_{i_2} \otimes \cdots \otimes \alg{A}_{i_n}
\end{equation}
and $\odott_{i\in \N} \varphi_i$ is naturally defined due to
associativity. Condition \ref{U1} readily implies that $\varphi_i^j =
\tilde{\varphi}^j \circ \iota^{(i)}$ for all $i\in \N$ and functoriality of $\odott$ yields
\begin{equation}\label{eq:universal}
\left(\odott_{i\in \N} \varphi_i \right) \circ \left(\bigsqcup_{i\in \N} f_i \right) = \odott_{i\in \N} (\varphi_i \circ f_i)
\end{equation}
for all arrows $f_i\colon \alg{D}_i \to \alg{A}_i, i \in \N$, where $\sqcup_{i\in \N} f_i\colon \sqcup_{i\in\N} \alg{D}_i \to \sqcup_{i\in\N} \alg{A}_i$ is the canonical arrow.  Equation \eqref{eq:universal} guarantees the invariance \eqref{eq:spreadability}, and therefore $\mathcal{S}_{\odott}^j := (\alg{U}, \tilde{\varphi}^j, (\iota^{(i)})_{i=1}^\infty)$ is a spreadability system for the ncps $(\alg A, \varphi^j)$ for every $j\in[d]$. Moreover, it becomes an exchangeability system if $\odott$ is symmetric.  
The preceding construction works for both unital and non-unital algebras.

Each universal product $\odott$ gives rise to a notion of independence. 

\begin{defi} \label{defi:universal_independence}  Let $d \in\N$ and $\odott$ be
  a universal product for $\NCP_d$ satisfying conditions \ref{U1} and
  \ref{U2}. Let $\alg A$ be an algebra and $\varphi = (\varphi^1, \dots,
  \varphi^d)$ be a tuple of $\C$-linear functionals on $\alg A$.

\begin{enumerate}[label=\rm(\roman*),leftmargin=1cm]
\item A sequence $(\alg A_i)_{i\in [p]}$ of subalgebras of $\alg A$ is said to be \emph{$\odott$-independent} if the identity
 \[
\phi^j \circ \tilde{\bigsqcup_{i\in[p]}}\gamma_i = \left[\odott_{i\in [p]} (\phi\circ \gamma_i) \right]^j   \quad \text{on}\quad \bigsqcup_{i\in [p]} \alg A_i,  
 \]
 holds for all $j\in[d]$, where  $\gamma_i\colon \alg A_i \hookrightarrow \alg A$ is the embedding for $i\in [p]$ and $\tilde\sqcup_{i\in[p]}\gamma_i$ is the canonical arrow $\sqcup_{i\in[p]} \alg A_i \to \alg A$.   
 
If we further assume \ref{U6} the above definition is equivalent to the
requirement that for any  $n \in \N, i_1, i_2, \dots, i_n \in [p], X_k \in
\alg{A}_{i_k}~(k \in[n])$ we have  
\begin{equation}\label{eq:univ_calculation}
\varphi^j(X_1 X_2 \cdots X_n) =  \sum_{\substack{\sigma \in \SP_n\\ \sigma \le \bar\pi}} \sum_{f\colon\, \sigma \to [d]} u^j (\sigma, f; \pi) \prod_{S \in \sigma} \varphi^{f(S)}(X_S),    \qquad j \in [d]. 
\end{equation}
 
\item  For an arbitrary totally ordered set $I$,
 a family of subalgebras $(\alg A_i)_{i\in I}$ is called $\odott$-independent
 if any finite subfamily is  $\odott$-independent in the sense above.
 If the universal product $\odott$  in addition satisfies \ref{U4}
 then one can define  $\odott$-independence for a family of subalgebras $(\alg A_i)_{i\in I}$ with \emph{any index set $I$} because one can define $\odott_{i\in I} (\phi\circ \iota^{(i)})$ for any index set $I$. 
 
\item A family $(S_i)_{i \in I}$ of subsets of $\alg A$ with totally ordered index set $I$ is said to be $\odot$-independent if the family of subalgebras $\alg A_i$ generated by $S_i$ is $\odot$-independent. 
\end{enumerate}
\end{defi}

After these general considerations we briefly discuss some explicit examples.

\subsubsection{Tensor exchangeability system}
\label{ssec:tensor}
Our first example  of a universal product is the tensor product $\otimes$ of unital algebras and unital linear
functionals.
Let $(\alg A,\phi)$ be a unital ncps,
 $\alg{U}$ the algebraic
  infinite tensor product $\alg{U}:=\otimes_{i=1}^\infty\mathcal{A}$ and
  $\tilde{\varphi}:=\otimes_{i=1}^\infty \varphi$ be the infinite tensor product
  of copies of $\phi$. 
  Let $\iota^{(j)}$ be the embedding of $\mathcal{A}$ into the $j$th tensor
  component: 
  $$
 \iota^{(j)}(X):= 1^{\otimes(j-1)} \otimes X  \otimes 1 \otimes 1\otimes \cdots.   
  $$  Then $\exch_{\tensorT}=(\alg{U},
  \tilde{\varphi}, (\iota^{(i)})_{i=1}^\infty)$ is an exchangeability system
  for $(\mathcal{A}, \varphi)$, which we call the \emph{tensor exchangeability
    system}. In order to emphasize that $\exch_{\tensorT}$ is a spreadability system, we may write $\cS_{\tensorT}$ instead of $\exch_{\tensorT}$ and call $\cS_{\tensorT}$ the tensor spreadability system. 

A family of subalgebras $(\alg A_i)_{i \in I}$ of $\alg A$ is  $\otimes$-independent if for every $i_1, i_2,\dots, i_n \in I$ and $X_k \in \alg A_{i_k}, k\in [n]$ we have 
\begin{equation}
  \label{eq:otimesindep:factor}
\varphi(X_1 X_2 \cdots X_n) = \prod_{P \in \bar\kappa(i_1,i_2,\dots, i_n)}\varphi(X_P). 
\end{equation}
In other words, the universal coefficients for the tensor spreadability system are given by $u^1 (\bar{\pi}; \pi) = 1$ for all $\pi \in \OSP_n$ and $u^1(\sigma; \pi)=0$ for all $\sigma \in \SP_n \setminus \{\bar \pi\}$. Note that we here omit $f$ because it is unique. 

\subsubsection{Free exchangeability system} The reduced free product of unital linear
functionals is another example of a universal
product.
Let $\alg{U}:=\ast_{i=1}^\infty \mathcal{A}$ be the unital free
  product of infinitely many copies of a unital algebra $\mathcal{A}$
  and let $\tilde{\varphi}:=\ast_{i=1}^\infty \varphi$ be the free product
  of copies of a unital linear functional $\phi$
  \cite{Avitzour:1982:free,V85}.
  Let $\iota^{(i)}$ be the embedding of $\mathcal{A}$ into the $i$th
  component $\mathcal{A}$ of $\alg{U}$. Then $\exch_{\freeF}=(\alg{U},
  \tilde{\varphi}, (\iota^{(i)})_{i=1}^\infty)$ (or we may write $\cS_{\freeF}$ when emphasizing the spreadability) 
  is an exchangeability system for $(\mathcal{A}, \varphi)$, called the \emph{free exchangeability (or spreadability) system}.

\subsubsection{Boolean exchangeability system}
\label{sssec:boolean} 
  The Boolean product
  $\tilde{\varphi}=\diamond_{i=1}^\infty \varphi$ is defined on the
  nonunital free product (i.e.\ the tensor algebra) 
  $\alg{U}:=\bigsqcup_{i=1}^\infty \mathcal{A}$ by the following rule \cite{Bozejko:1986:positive}: if $X_1, X_2,\dots, X_n \in \mathcal{A}$ and
  $i_{k}\neq i_{k+1}$ for any $1 \leq k \leq n-1$, then
  $$
  \tilde{\varphi} (X_1^{(i_1)} X_2^{(i_2)} \cdots X_n^{(i_n)})
  = \varphi (X_1)\, \varphi(X_2) \cdots \varphi (X_n).  
  $$
  As before, $\iota^{(j)}$ is the
  embedding of $\mathcal{A}$ into the $j$th component $\mathcal{A}$ of
  $\alg{U}$.
  The triplet $\exch_{\boolB}=(\alg{U}, \tilde{\varphi}, (\iota^{(i)})_{i=1}^\infty)$ (or we may write $\cS_{\boolB}$) is  called the \emph{Boolean exchangeability (or spreadability) system}.

\subsubsection{Monotone spreadability system}
\label{sssec:monotone} 
 Let $(\alg{U}, (\iota^{(i)})_{i=1}^\infty)$ be as in Section~\ref{sssec:boolean}. 
 The monotone product
  $\tilde{\varphi}=\triangleright_{i=1}^\infty \varphi$ is defined on
  $\alg{U}$ by the following recursive rules \cite{M00}: for every $n \in \N$, $X_1,X_2,\dots, X_n \in \alg A$ and $i_1,i_2,\dots, i_n \in \N$, 
\begin{enumerate}[label=\rm(\roman*)]

 \item\label{item:monotone1} $\tilde{\varphi}(X_1^{(i_1)})=\varphi (X_1)$;

 \item $\tilde{\varphi} (X_1^{(i_1)} X_2^{(i_2)} \cdots X_n^{(i_n)})
  = \varphi (X_1)\,\tilde{\varphi}(X_2^{(i_2)} \cdots X_n^{(i_n)})$ \text{~if~}  $n\ge2$ and $i_1 > i_2$; 
 
 \item\label{item:monotone3} $\tilde{\varphi} (X_1^{(i_1)} X_2^{(i_2)} \cdots X_n^{(i_n)})
   = \tilde{\varphi}(X_1^{(i_1)} X_2^{(i_2)} \cdots X_{n-1}^{(i_{n-1})})\,\varphi (X_n)$
    \text{~if~}  $n\ge2$ and $i_n > i_{n-1}$; 
    
 \item $\tilde{\varphi} (X_1^{(i_1)} X_2^{(i_2)} \cdots X_n^{(i_n)})
  =\tilde{\varphi} (X_1^{(i_1)} X_2^{(i_2)} \cdots  X_{j-1}^{(i_{j-1})} X_{j+1}^{(i_{j+1})} \cdots X_n^{(i_n)})\, \varphi
  (X_j)$
  if $n\ge3$, $2 \leq j \leq n-1$ and $i_{j-1} < i_j > i_{j+1}$.
\end{enumerate} 
Then $\cS_{\monoM}=(\alg{U}, \tilde{\varphi}, (\iota^{(i)})_{i=1}^\infty)$ is called the \emph{monotone spreadability system} for $(\mathcal{A}, \varphi)$.
It is a proper spreadability system, i.e., it does not satisfy exchangeability.

\subsubsection{Conditionally monotone spreadability system}\label{sssec:cmonotone}
 Let $(\alg{U}, (\iota^{(i)})_{i=1}^\infty)$ be as in Section~\ref{sssec:boolean}. 
The conditionally monotone product is an associative universal product for $d=2$.   
The infinite conditionally monotone product $(\alg{U},\tilde{\varphi},
 \tilde{\psi})=\triangleright_{i=1}^\infty (\alg{A},\varphi, \psi)$ 
is defined as follows  \cite{Hasebe:2011:conditionally}. 

\begin{itemize}
\item $\tilde{\psi}= \triangleright_{i=1}^\infty \psi$ is the monotone product of $\psi$ according to Section~\ref{sssec:monotone}; 
\item $\tilde\varphi$ is determined by rules
 \ref{item:monotone1}--\ref{item:monotone3} as in the monotone case from
 Section~\ref{sssec:monotone}
 but with the last rule modified into
\end{itemize}

\begin{enumerate}[label=\rm(\roman*')]
  \setcounter{enumi}{3}
 \item 
$
\begin{multlined}[t]
    \tilde{\varphi}  (X_1^{(i_1)} X_2^{(i_2)} \cdots X_n^{(i_n)})
\\
    = \tilde{\varphi}  (X_1^{(i_1)} X_2^{(i_2)} \cdots X_{j-1}^{(i_{j-1})})\, 
      (\varphi (X_j) - \psi (X_j)) \,
      \tilde{\varphi} (X_{j+1}^{(i_{j+1})} \cdots X_n^{(i_n)}) 
\\
    + \psi (X_j)\, \tilde{\varphi}(X_1^{(i_1)} X_2^{(i_1)}\cdots X_{j-1}^{(i_{j-1})} X_{j+1}^{(i_{j+1})} \cdots X_n^{(i_n)})
  \end{multlined}
  $

\noindent
if $n\ge3$, 
$2 \leq j \leq n-1$  and $i_{j-1} < i_j > i_{j+1}$.
\end{enumerate}

Then $\cS_{\mathrm{CM}}=(\alg{U}, \tilde{\varphi}, (\iota^{(i)})_{i=1}^\infty)$ is a
spreadability system for $(\mathcal{A}, \varphi)$ 
which does not satisfy exchangeability.  
It is called the \emph{c-monotone spreadability system}. 
  
\begin{remark}
More examples may be extracted from associative universal products in \cite{BLS96,H10} for $d=2$ or $d=3$, but we omit them here.   
\end{remark}

\subsubsection{$V$-monotone spreadability system}
\label{sssec:vmonotone}
Recently Dacko introduced
the concept of $V$-monotone
independence and constructed a corresponding $V$-monotone product of
probability spaces \cite{Dacko:2019}.
These notions are based on the notion of $V$-shaped sequences and partitions.
A sequence of numbers $i_1,i_2,\dots,i_n$ is called \emph{$V$-shaped}
if there exists an index $1\leq r\leq n$ such that
$i_1>i_2>\dots>i_r<i_{r+1}<\dots<i_n$. 
Given a \emph{unital} $\C$-ncps $(\alg A,\varphi)$,  the $V$-monotone product $\tilde \phi=\ovee_{i\in\N}\phi$ is
defined on the \emph{nonunital} free product $\alg{U} := \bigsqcup_{i=1}^\infty \alg A$  
and characterized by the following factorization properties. Let $\alg{A}_i \subseteq \alg{U}$ denote the embedded image $\iota^{(i)}(\alg A)$ of $\alg A$ with unit denoted by 
$I_i$ and $\varphi_i$ be the induced linear functional $\varphi\circ (\iota^{(i)})^{-1}$ on $\alg A_i$. Let $n\in\N$ and $Y_j\in\alg{A}_{i_j}$, $j=1,2,\dots,n$ be arbitrary elements.
\begin{enumerate}[label=\rm(\roman*),leftmargin=0.5cm]
 \item $\tilde\phi(Y_1Y_2\dotsm Y_n)=0$ whenever $i_j\ne i_{j+1}$ for all $j$
  and $\phi_{i_j}(Y_j)=0$.
 \item In addition, for every $j \in[n]$, 
  $$
 \tilde \phi(Y_1Y_2\dotsm Y_{j-1}I_{i_j}Y_{j+1}\dotsm Y_n)=
  \begin{cases}
    \tilde\phi(Y_1Y_2\dotsm Y_{j-1}Y_{j+1}\dotsm Y_n) &\text{if $(i_1,i_2,\dots,i_j)$
      is $V$-shaped,}\\
      0 &\text{otherwise,}
  \end{cases}
  $$
  whenever $\phi_{i_1}(Y_1)=\phi_{i_2}(Y_2)=\dots=\phi_{i_{j-1}}(Y_{j-1})=0$.
\end{enumerate}
It is shown in  \cite{Dacko:2019} that
associativity does not hold; yet  identity \eqref{eq:universal} holds
and the $V$-monotone product gives  rise to a spreadability system.

\subsection{Spreadability systems with calculation rules}

Motivated by Axiom \ref{U6} in the previous subsection, we are lead to the following class of spreadability systems which provides various examples.

\begin{defi}\label{defi:universal} Let $\cS=(\alg{U}, \tilde{\varphi},(\iota^{(i)})_{i \geq 1})$ be
  a spreadability system for a $\C$-ncps $(\mathcal{A}, \varphi)$.   For a set partition $\pi\in\SP_n$ we define a multiplicative extension
  $\varphi_{(\pi)}$ of $\varphi$ by setting
  \begin{align} \label{eq:multiplicative}
    &\varphi_{(\pi)}(X_1,X_2,\ldots, X_n) :=\prod_{P\in \pi} \varphi(X_P) \quad \text{for}\quad  X_1, X_2,\dots, X_n \in \alg A,
  \end{align}
  where we used notation \eqref{product}  for the ordered product $X_P$.

 Then $\cS$ is said to have a \emph{calculation rule} if there exists a family of complex numbers
  $\fC= \{s(\sigma; \pi)\in\C: n \in \N, \pi\in \OSP_n, \sigma\in\mathcal{P}_n, \sigma \leq
  \bar{\pi}\}$ such that the equality
\begin{equation}\label{eq:calculation_rule1}
\varphi_\pi=\sum_{\substack{\sigma\in\mathcal{P}_n,\\ \sigma \leq \bar{\pi}}} s(\sigma; \pi) \, \varphi_{(\sigma)}
\end{equation}
holds as functionals on $\mathcal{A}^n$ for all $\pi \in \OSP_n$ and $n\in \N$. Note that we can always take $s(\{1\}; (1))=1 $ because of $\tilde \varphi \circ \iota^{(i)} = \phi$. 
 \end{defi}
 
 \begin{remark}
   \begin{enumerate}[label=\arabic*.,leftmargin=1cm]
    \item []
    \item 
 It is straightforward to see that a spreadability system $\cS_\odot^1$
 constructed from a universal product $\odot$ on $\NCP_1$ subject to axioms
 \ref{U1}, \ref{U2} and \ref{U6} has a calculation rule with $s(\sigma;
 \pi):=u^1(\sigma; \pi)$ (where $f$ is omitted because there is a unique
 $f\colon \sigma \to [1]$).  

\item  
Actually we can construct such a spreadability system for any given family of constants  $\fC =\{s(\sigma; \pi)\in\C: n \in \N, \pi\in \OSP_n, \sigma\in\mathcal{P}_n, \sigma \leq
  \bar{\pi}\}$ with  $s(\{1\}; (1))=1 $ and any given $\C$-ncps $(\alg A,
  \varphi)$. Let $\alg{U}$ be the coproduct (i.e., the tensor algebra) of the
  countable copies of $ \alg{A}$ in the category of associative algebras and 
  $\iota^{(i)}\colon \alg A\to \alg{U}$ the natural embedding as the $i$th component. Then we can define a linear functional $\tilde{\varphi}$ on $\alg{U}$ as follows: for each tuple $X_j \in \alg{A}~(1 \le j \le n)$ and indices $i_1,i_2,\dots, i_n \in \N$ with $i_k \ne i_{k+1}$ ($k\in[n-1]$) we set 
  \begin{equation}\label{eq:calculation_rule}
  \tilde \varphi (X_1^{(i_1)} X_2^{(i_2)} \cdots X_n^{(i_n)}) := \sum_{\substack{\sigma\in\mathcal{P}_n,\\ \sigma \leq \bar{\kappa}(i_1,\dots, i_n) }} 
             s(\sigma; \kappa(i_1,\dots, i_n)) \, \varphi_{(\sigma)} (X_1,X_2,\dots, X_n). 
  \end{equation}
The fact that the value of \eqref{eq:calculation_rule} depends only on the partition $\kappa(i_1,i_2,\dots,i_n)$ induced by the sequence $(i_1,i_2,\dots, i_n)$ yields that $\cS_{\fC}:= (\alg{U}, \tilde\varphi, (\iota^{(i)})_{i\ge1})$ is indeed a spreadability system.  
\item 
Muraki's example \cite{Muraki:private} yields a spreadability system with
calculation rule coming from a nonassociative universal product.
The V-monotone spreadability system is also such an example. We will investigate other instructive examples later in Examples \ref{exa:nonassociative0} and  \ref{exa:nonassociative}. 

\item 
 It is worth mentioning that there are spreadability
 systems without calculation rules. For instance, the algebra generated by left and right creation operators acting
 on $q$-Fock space gives rise to an exchangeability system
 \cite{Lehner:2005:cumulants3}, but it was shown in \cite{LeeuwenMaassen:1996:obstruction}
 that in this case the individual distributions of independent elements with respect to the vacuum expectation 
 is not sufficient to determine the joint distribution
 with respect to $(\varphi_\pi)_{\pi}$,
 and in particular there is no ``$q$-convolution''.  The c-monotone spreadability system $\cS_{\rm CM}$ also does not have a calculation rule since the evaluation of $\varphi_\pi$ depends on both $\varphi$ and $\psi$ in general.
\end{enumerate}
\end{remark}

\subsection{$\cS$-independence}   
Let us first recall the notion of independence associated to exchangeability systems \cite[Definition~1.8]{L04}. 
Roughly speaking  independence of a pair $(X,Y)$ means that 
the joint distribution of $(X,Y)$ coincides with the joint distribution of $(X^{(1)},Y^{(2)})$, where the couples
$(X^{(1)},Y^{(1)})$ and $(X^{(2)},Y^{(2)})$ are exchangeable copies of the couple $(X,Y)$.
This property can be reformulated in a lattice theoretical way as follows. 
\begin{defi}[$\exch$-independence]
  \label{def:exchangeableindependence}   
   Let $\exch=(\alg{U},\tilde{\phi},(\iota^{(i)})_{i\ge1})$ be an exchangeability system for a $\alg{B}$-ncps $(\alg{A},\phi)$.  Let $I\subseteq \N$. 
  
\begin{enumerate}[label=\rm(\roman*),leftmargin=1cm]
\item Subalgebras $(\alg{A}_i)_{i\in I}$ of $\alg{A}$ are said to be
  \emph{$\exch$-independent} if for any tuple of indices $(i_1,i_2,\dots,i_n)\in I^n$, any tuple of random variables $(X_1,X_2,\dots,X_n)$ with $X_j\in \alg{A}_{i_j}$ and any set partition $\pi\in\SP_n$, we have  
  \begin{equation}\label{eq:E-indep}
  \phi_\pi(X_1,X_2,\dots,X_n) = \phi_{\pi\wedge \bar{\kappa}(i_1,i_2,\dots,i_n)}(X_1,X_2,\dots,X_n). 
  \end{equation}

\item Subsets  $(S_i)_{i \in I}$ of $\alg A$  are said to be
 \emph{$\exch$-independent} if the algebras $\alg A_i$ generated by 
 $S_i$ are 
 $\exch$-independent. In particular, a family
 $
 \bigl(
 (X_1(i),X_2(i),\dots,X_n(i))
 \bigr)_{i\in I}$ of random vectors
 is said to be
 \emph{$\exch$-independent} if the subalgebras $\alg A_i$
 generated by its respective entries  $\{X_1(i),X_2(i), \dots, X_n(i)\}$
 are $\exch$-independent.  
\end{enumerate}
\end{defi}

\begin{remark} In the case where $\alg A, \alg B, \alg{U}$ and $\phi,
  \iota^{(i)}$ are unital,
  it is more natural to assume that the subalgebras $\alg A_i$ contain the unit
  of $\alg A$.
  The same applies to Definition \ref{def:S-independence}. 
\end{remark}

In order to generalize independence from exchangeability systems to spreadability systems
we replace set partitions by ordered set partitions.
This time independence of an ordered pair $(X,Y)$ means that 
the joint distribution of $(X,Y)$ coincides with the joint distribution of
$(X^{(1)},Y^{(2)})$
(but not necessarily $(X^{(2)},Y^{(1)})$), 
where the couples
$(X^{(1)},Y^{(1)})$ and $(X^{(2)},Y^{(2)})$ are spreaded copies of the couple $(X,Y)$.

\begin{defi}[$\cS$-independence]  \label{def:S-independence}
  Let $\cS=(\alg{U},\tilde{\varphi},(\iota^{(i)})_{i\ge1})$ be a spreadability system for a given
  $\alg{B}$-ncps $(\alg{A},\phi)$. Let $I \subseteq \N$. 
  
  \begin{enumerate}[label=(\roman*), leftmargin=1cm]
\item  A sequence of subalgebras $(\alg{A}_i)_{i\in I}$ of $\alg{A}$ is said to be
  \emph{$\cS$-independent} if for any tuple of indices
  $(i_1,i_2,\dots,i_n)\in I^n$, 
  any tuple of random variables $(X_1,X_2,\dots,X_n)$
  with $X_j\in  \alg{A}_{i_j}$
  and any ordered set partition $\pi\in\OSP_n$, we have
  \begin{equation}\label{S-indep} 
  \phi_\pi(X_1,X_2,\dots,X_n) = \phi_{\pi\curlywedge \kappa(i_1,i_2,\dots,i_n)}(X_1,X_2,\dots,X_n).  
  \end{equation}
  
\item A sequence of subsets $(S_i)_{i \in I}$ is said to be
\emph{$\cS$-independent} if the sequence  of subalgebras $\alg A_i$ generated by $S_i$ is $\cS$-independent. 
In particular, a sequence of random vectors $((X_1(i),X_2(i),\dots,X_n(i)))_{i\in I}$ is said to be \emph{$\cS$-independent} if the sequence of subalgebras $\alg A_i$ generated by $\{X_1(i),X_2(i),\dots, X_n(i)\}$ is $\cS$-independent. 
\end{enumerate}
\end{defi}

\begin{exa}
For two subalgebras $\cS$-independence reads as follows.
A pair of subalgebras
$(\mathcal{A}_1, \mathcal{A}_2)$ of $\mathcal{A}$ is
$\cS$-independent if the following condition holds.
Given elements $X_1,X_2,\dots,X_n\in \alg{A}_1\cup \alg{A}_2$,
let $\rho=(B_1,B_2)$ be an ordered set partition of the index set $[n]$ such that
$X_i\in \alg{A}_1$ for $i\in B_1$ and $X_i\in \alg{A}_2$ for $i\in B_2$.
Then for any ordered set partition $\pi\in \OSP_n$ we have
$$
\phi_\pi(X_1,X_2,\dots, X_n) = \phi_{\pi\restr_{B_1}\pi\restr_{B_2}}(X_1,X_2,\dots, X_n).  
$$
\end{exa}

\begin{remark}
Let $\exch=(\alg{U},\tilde{\varphi},(\iota^{(i)})_{i\ge1})$ be an
exchangeability system for a given $\alg{B}$-ncps $(\alg{A},\phi)$. It can be
regarded as a spreadability system and we denote it by  $\cS$.  Then, obviously,  a sequence of subalgebras $(\alg{A}_i)_{i\in I}$ of $\alg{A}$ is $\exch$-independent in the sense of Definition  \ref{def:exchangeableindependence} if and only if it is $\cS$-independent in the sense of Definition \ref{def:S-independence}. 
\end{remark} 

\begin{remark}
  \begin{enumerate}[label=\arabic*., leftmargin=5ex]
   \item []
   \item 
With the notation introduced in Definition~\ref{def:dot} below, 
equation (\ref{S-indep}) may be rewritten as   
  $$
  \phi_\pi(X_1,X_2,\dots,X_n) = \phi_{\pi}(X_1^{(i_1)},X_2^{(i_2)},\dots,X_n^{(i_n)}).  
  $$
This condition means that the random vectors $(X_k)_{k\in [n]}$ and
$(X_k^{(i_k)})_{k\in [n]}$ have the same ``distribution'' with respect to
$(\varphi_\pi)_\pi$.  This is compatible with the concept of spreadability,
which is to regard the sequence $(\iota^{(j)}(\mathcal{A}))_{j \in I}$ as
independent copies $\alg A$ constructed in $(\alg{U}, \tilde{\varphi})$. 
\item 
An alternative natural definition of $\cS$-independence would be to require the simplified condition that for any $(i_1,i_2,\dots,i_n)\in I^n$ and any 
 $X_j\in  \alg{A}_{i_j}, j \in [n]$, 
\begin{equation}\label{eq:weaker}
  \phi (X_1 X_2  \cdots X_n) = \tilde\phi(X_1^{(i_1)} X_2^{(i_2)} \cdots X_n^{(i_n)}), 
\end{equation}
i.e., requiring \eqref{S-indep} only for $\pi =\hat1_n$.
In fact, for the spreadability systems associated with  a large class of
universal products,
\eqref{eq:weaker} implies the $\cS$-independence, see Proposition
\ref{prop:equivalence_independences}.
In general however, condition \eqref{eq:weaker} does not imply $\cS$-in\-de\-pen\-den\-ce, see Example \ref{exa:nonassociative0}. 
\item  In the case of exchangeability, $\exch$-independence is equivalent to the
vanishing of mixed cumulants (see Proposition \ref{prop:mixedvanish}).

  \end{enumerate}
 \end{remark}

The following result shows that the definitions of $\exch$- and $\cS$-independence coincide with the usual definitions for typical examples, e.g.\ tensor, free, Boolean, monotone, c-free and c-monotone independences.  Associativity is crucial. 

\begin{prop} \label{prop:equivalence_independences}  Let $d \in \N$ and $\odott$ be a universal product in the category $\NCP_d$ satisfying \ref{U1}, \ref{U2} and \ref{U6}. Let $(\alg A, \varphi^1,\varphi^2, \dots, \varphi^d)$ be an object in $\NCP_d$ and $(\alg A_i)_{i \in I}$ be a family of subalgebras of $\alg A$ with $I\subseteq \N$. Then $(\alg A_i)_{i \in I}$ is $\odot$-independent if and only if it is $\cS_{\odott}^j$-independent for all $j\in[d]$, where 
 $\cS_{\odott}^j$ is the spreadability system for the $\C$-ncps $(\alg A, \varphi^j)$ constructed in Section \ref{exaspread}. 
\end{prop} 

\begin{proof} 
Suppose that $(\alg A_i)_{i \in I}$ is $\odot$-independent.  This means that  the identity  
\begin{equation}\label{eq:S-indep}
  \phi_\pi^j(X_1,X_2,\dots,X_n) = \phi_{\pi\curlywedge \kappa(i_1,i_2,\dots,i_n)}^j(X_1,X_2,\dots,X_n)  
\end{equation}
holds for $\pi = \hat 1_n$, any $j \in[d]$,  any tuple $(i_1,i_2,\dots,i_n)\in I^n$ and  any tuple of random variables $(X_1,X_2,\dots,X_n)$
  with $X_k\in  \alg{A}_{i_k}$, cf.\ \eqref{eq:univ_calculation}.   The goal is to verify \eqref{eq:S-indep} for any ordered set partition $\pi =(P_1,P_2,\dots, P_p)\in\OSP_n$. 
  
  Let us denote $\kappa(i_1,i_2,\dots,i_n) = (Q_1,Q_2,\dots, Q_q)$. Then $\pi\curlywedge \kappa(i_1,i_2,\dots,i_n) = (P_1 \cap Q_1, P_1 \cap Q_2, \dots, P_1 \cap  Q_q, P_2\cap Q_1, \dots, P_p \cap Q_q)$.  The RHS of \eqref{eq:S-indep} is exactly the value $\varphi^j(X_1X_2 \cdots X_n)$ when 
  \[
  (\{X_i\}_{i\in P_1\cap Q_1}, \{X_i\}_{i\in P_1 \cap Q_2}, \dots, \{X_i\}_{i\in P_1 \cap Q_q}, \{X_i\}_{i\in P_2 \cap Q_1}, \{X_i\}_{i\in P_2 \cap Q_2}, \dots)
  \]
  is assumed to be $\odot$-independent. The associativity of $\odot$ allows us to compute the RHS of \eqref{eq:S-indep} in the following two steps:    
  \begin{enumerate}[label=\rm(\alph*)]
\item\label{stepa} First compute $\varphi^j(X_1X_2 \cdots X_n)$ assuming that $(\{X_i\}_{i\in P_1}, \{X_i\}_{i\in P_2}, \dots, \{X_i\}_{i\in P_p})$ is $\odot$-independent. The result is exactly the RHS of \eqref{eq:univ_calculation}.   
\item\label{stepb} Then compute each factor $\varphi^{f(S)}(X_{S})$ by additionally assuming that 
\[
(\{X_i\}_{i\in S\cap Q_1}, \{X_i\}_{i\in S\cap Q_2}, \dots, \{X_i\}_{i\in S\cap Q_q})\] 
is $\odot$-independent.  
  \end{enumerate}
  This is exactly how the LHS of \eqref{eq:S-indep} is computed, so that \eqref{eq:S-indep} holds as desired.  
  
Conversely,  suppose that  $(\alg A_i)_{i \in I}$ is  $\cS_{\odott}^j$-independent for all $j\in[d]$. Formula \eqref{eq:S-indep} for  $\pi = \hat1_n$ is exactly the desired independence relation \eqref{eq:univ_calculation}. 
\end{proof}

\begin{exa} \label{exa:nonassociative0}
Let $(\alg{A}, \varphi)$ be a $\C$-ncps and let $\alg{U}:= \sqcup_{i=1}^\infty \alg{A}$ be the coproduct in the category of algebras (see \eqref{eq:coproduct}) and $\iota^{(i)} \colon \alg{A} \to \alg{U}$ be the natural embedding as $i^{\rm th}$ component. We define a linear functional $\tilde{\varphi}$ on $\alg{U}$ by, for each $X_1, X_2 , \dots, X_n \in \alg{A} $ and $i_1,i_2,\dots, i_n \in \N$ with $i_j \ne i_{j+1}$ for all $j =1,2,\dots, n-1$, setting 
\[
\tilde{\varphi}(X_1^{(i_1)} X_2^{(i_2)} \cdots  X_n^{(i_n)}) 
= 
\begin{cases} 
\phi(X_1) & \text{~if $n=1$}, \\ 
\phi(X_1) \phi(X_2) &  \text{~if $n =2$}, \\
0& \text{~if $n\ge3$}. 
\end{cases}
\]
Then we get an exchangeability system $(\alg{U}, \tilde{\varphi}, (\iota^{(i)})_{i \in \N})$ for $(\alg{A}, \varphi)$ with calculation rule.

For  $\pi \in \OSP_n$ and $X_1,X_2,\dots, X_n \in \alg{A}$ it is easy to see that  
 \begin{equation} \label{eq:phi_pi}
      \phi_\pi(X_1,X_2,\dots,X_n) =   
      \begin{cases} 
      \phi_{(\pi)}(X_1,X_2,\dots,X_n) & \text{~if~$\pi \in \IP_n$ with $|\pi|\le2$,} \\ 
0 & \text{~otherwise}. 
      \end{cases}
    \end{equation} 
Suppose that subalgebras $\alg B_1, \alg B_2 \subseteq \alg A$ are $\exch$-independent.  Then for $X \in \alg B_1$ and $Y \in \alg B_2$ we have 
$$
\varphi_{\pi}(X, Y)  = \varphi_{\pi \wedge \bar\kappa(1,2)}(X,Y)  
$$  
and the case $\pi =\hat1_2$ yields $\varphi(X Y) = \varphi(X) \varphi(Y)$. 
Moreover, independence implies 
$$
\varphi_{\pi}(X, Y, X)  = \varphi_{\pi \wedge \bar\kappa(1,2,1)}(X,Y,X). 
$$
On the other hand,
we infer from \eqref{eq:phi_pi} that
for $\pi = \{\{1,2\}, \{3\}\}$ the left hand side equals $\varphi(X Y) \varphi(X)
=\varphi(X)^2\varphi(Y)$, while the RHS equals zero.
This is only possible if either $\varphi \restr_{\alg B_1}=0$ or
$\varphi\restr_{\alg B_2}=0$ holds,
so that only the trivial examples satisfy $\exch$-independence. However, nontrivial subalgebras $\{\alg C_1, \alg C_2\}$ satisfying condition \eqref{eq:weaker} exist because \eqref{eq:weaker} simply reads: if $X_j \in \alg C_{i_j}$ then for $\pi := \bar \kappa (i_1,i_2,\dots, i_n)$
 \begin{equation} \label{eq:phi_pi2}
      \phi(X_1X_2\cdots X_n) =   
      \begin{cases} 
      \phi_{(\pi)}(X_1,X_2,\dots,X_n) & \text{~if~$\pi \in \IP_n$ with $|\pi|\le2$,} \\ 
0 & \text{~otherwise}. 
      \end{cases}
    \end{equation} 
 For example we can start from any $\C$-ncps $(\alg C_i,\phi_i), i=1,2$ and then construct $\phi$ on $\alg A := \alg C_1 \sqcup \alg C_2$ so that  \eqref{eq:phi_pi2} holds.    
 \end{exa}

\section{Cumulants for spreadability systems}
\label{sec:cumulants}

\subsection{Definition and uniqueness of cumulants}
Cumulants provide a powerful tool to describe independence of random
variables.
Let us first recall the case of exchangeability systems.
\begin{definition}
  \label{def:exch:cumulants}
Let $\exch=(\alg{U},\tilde{\varphi},(\iota^{(i)})_{i\ge1})$ be an
exchangeability system for a $\alg{B}$-ncps $(\alg{A},\phi)$.  Cumulants are functionals satisfying the
the following requirements.
\begin{enumerate}[label=\rm(E\arabic*)] 
 \item\label{it:Ecumulants:multi}
  \emph{Multilinearity}: Cumulants are multilinear functionals $K_\pi\colon \alg{A}^n\to\alg{B}$, indexed
  by set partitions $\pi\in\SP_n$,   $n\in\N$.
   \item \emph{Universality}:
  There are universal coefficients $c(\sigma,\pi) \in \C$ such that
  \begin{equation}
    \label{eq:phipi=Kpi+sum}
  \phi_\pi = K_\pi + \sum_{\substack{\sigma\in\SP_n\\ \sigma<\pi}} c(\sigma,\pi) K_\sigma.
  \end{equation}

 \item\label{it:Ecumulants:vanish}  
  \emph{Vanishing of mixed cumulants}:
  given  a family $(X_1,X_2,\dots,X_n)\in\alg{A}^n$ and
  a partition $\pi\in\SP_n$,
  such that some block of $P\in\pi$ can be partitioned
  $P=P' \dot\cup P''$ nontrivially in such a way that
  $\{X_i\mid i\in P'\}$ and   $\{X_i\mid i\in P''\}$ are $\exch$-independent,
  then
  $$
  K_\pi(X_1,X_2,\dots,X_n)=0
  .
  $$
  Equivalently, if $\rho\in\SP_n$  partitions the family $(X_1,X_2,\dots,X_n)$
  into
  mutually $\exch$-independent subfamilies,
  then $K_\pi(X_1,X_2,\dots,X_n)=0$ unless $\pi\leq\rho$.
\end{enumerate}
\end{definition}

\begin{remark}  \ 
  \begin{enumerate}[label=\rm(\roman*),leftmargin=1cm]
\item \emph{Additivity}.
    As a consequence of
    \ref{it:Ecumulants:multi}
    and
    \ref{it:Ecumulants:vanish}
    cumulants of sums of independent random variables are additive:
    Denote by $K_n$ the cumulant functional $K_{\hat{1}_n}$, then
    \begin{equation}
      \label{eq:exch:additivity}
    K_n(X_1+Y_1,X_2+Y_2,\dots,X_n+Y_n) =
    K_n(X_1,X_2,\dots,X_n) +K_n(Y_1,Y_2,\dots,Y_n)
    .
    \end{equation}

   \item
    The system of equations \eqref{eq:phipi=Kpi+sum} is in triangular form and
    can be solved recursively and transformed into the equivalent system
    \begin{equation*}
K_\pi = \phi_\pi  + \sum_{\substack{\sigma\in\SP_n\\ \sigma<\pi}} \tilde{c}(\sigma,\pi) \phi_\sigma
    \end{equation*}
    where the matrix $\tilde{c}(\sigma,\pi)$ is inverse to $c(\sigma,\pi)$.
  \end{enumerate}
\end{remark}
In \cite{L04} the second named author established 
a unified theory of cumulants for ex\-change\-abi\-li\-ty systems
based on a kind of finite Fourier transform,
known as Good's formula in the mathematics literature  \cite{Good:1975:new}
and Cartier's formula for the so-called \emph{Ursell functions} in the physics literature
\cite{Percus:1975:correlation,Simon:1993:statistical}.
This approach apparently fails in the present, non-exchangeable setting; 
however in their study of monotone cumulants~\cite{HS11b,HS11a}
the first named author and Saigo found a good replacement in Rota's dot operation
from umbral calculus \cite{RotaTaylor:1994:classical},
i.e., a weak version of       \eqref{eq:exch:additivity},
which we take as a starting point for the definition of cumulants in full
generality in Definition \ref{def:spreadableK2} below.
\begin{defi}\label{def:dot}
 Let $(\alg{U}, \tilde{\varphi},(\iota^{(i)})_{i \geq 1})$ be a spreadability system for a $\alg{B}$-ncps $(\mathcal{A}, \varphi)$. 
\begin{enumerate}[label=\rm(\roman*),leftmargin=1cm]
\item  \label{eq:deltaAX} 
 Given a noncommutative random variable $X \in \mathcal{A}$ 
 and a finite subset $A \subseteq \N$ we define
 \begin{equation*} 
   \delta_A(X)=\sum_{i\in A} X^{(i)}   
 \end{equation*}
i.e., the sum of i.i.d.\ copies of $X$.
In the case $A=[N]$ we will also write $\delta_N(X)$ and frequently
abbreviate it using Rota's \emph{dot operation} 
$$
N.X:=X^{(1)} + X^{(2)} + \cdots + X^{(N)}
$$ 
whenever it is convenient.

\item We extend the partitioned functionals $\phi_\pi$ to $\bigcup \iota^{(i)}(\alg{A})$ by setting
 \begin{equation}
   \label{eq:phipixjij}
 \phi_{\pi}(X_1^{(i_1)}, X_2^{(i_2)}, \ldots,X_n^{(i_n)}):=\phi_{\pi \curlywedge \kappa(i_1,i_2,\ldots,i_n)}(X_1,X_2,\ldots,X_n),    
 \end{equation}
and 
\begin{equation}
     \label{eq:phipi(NX)}
 \phi_\pi(N_1.X_1,N_2.X_2,\ldots,N_n.X_n):=
\sum_{i_1=1}^{N_1} 
\sum_{i_2=1}^{N_2} 
\dots
\sum_{i_n=1}^{N_n} 
 \phi_{\pi}(X_1^{(i_1)}, X_2^{(i_2)}, \ldots,X_n^{(i_n)}). 
\end{equation}

\end{enumerate}

\end{defi}

\begin{remark}
  Note that \eqref{eq:phipixjij} is actually an abuse of notation,
  because $\phi_\pi$ is defined for elements of $\mathcal{A}$ only 
  (see~\eqref{eq:phipi});
  here we pretend that 
  $\cS=(\alg{U}, (\iota^{(i)})_{i=1}^\infty, \tilde{\varphi})$
  can be interpreted as a spreadability system for the algebra
  $\mathcal{A}^{(1,2,\dots,N)}$  generated by the images
  $\mathcal{A}^{(1)},\mathcal{A}^{(2)},\dots,\mathcal{A}^{(N)}$.
  This is true in the case of universal product construction in \eqref{eq:spreadability_universal_product} 
  but needs justification otherwise;
  yet \eqref{eq:phipixjij} is well-defined
  and convenient to keep notation manageable.
\end{remark}

\begin{defi}
  \label{def:spreadableK2}
  Let $(\alg{A},\phi)$ be a $\alg{B}$-ncps and $\cS=(\alg{U},\tilde{\varphi},(\iota^{(i)})_{i\ge1})$ a
  spreadability system for $(\alg{A},\varphi)$.
  \emph{Cumulants} are multilinear functionals $K_\pi$ indexed
  by ordered set partitions $\pi\in\OSP_n$, $n\in\N$,
  which satisfy the following axioms.

\begin{enumerate}[label=\rm(S\arabic*)] 
 \item\label{S1}
\emph{Multilinearity}: Cumulants are multilinear functionals $K_\pi\colon \alg{A}^n\to\alg{B}$, indexed
  by ordered set partitions $\pi\in\OSP_n$,   $n\in\N$.
 \item\label{S2}
\emph{Universality}:
  There are universal coefficients $c(\sigma,\pi) \in \C$ such that
  for every $\pi\in\OSP_n$
  \begin{equation*}
\phi_\pi = K_\pi + \sum_{\substack{\sigma\in\OSP_n\\ \sigma<\pi}} c(\sigma,\pi) K_\sigma
  \end{equation*}
  or, equivalently,
  there are universal coefficients $\tilde{c}(\sigma,\pi) \in \C$ such that
    \begin{equation}
      \label{eq:SKpi=phipi+sum}
      K_\pi = \phi_\pi  + \sum_{\substack{\sigma\in\OSP_n\\ \sigma<\pi}} \tilde{c}(\sigma,\pi) \phi_\sigma. 
    \end{equation}

   \item\label{S3}
\emph{Extensivity}:
  given  a family $(X_1,X_2,\dots,X_n)\in\alg{A}^n$ and
  an ordered partition $\pi\in\OSP_n$,
  \begin{equation}
    \label{eq:extensivity}
    K_{\pi}(N.X_1,N.X_2,\ldots,N.X_n) =N^{\abs{\pi}}K_{\pi}(X_1,X_2,\ldots,X_n), 
  \end{equation}
  where
  the  extension of $K_\pi$ to arguments from $\bigcup \iota^{(i)}(\alg{A})$ is defined via formula
  \eqref{eq:phipi(NX)}
  and
  \eqref{eq:SKpi=phipi+sum}
  as
\begin{align*}
K_\pi(X_1^{(i_1)},X_2^{(i_2)},\ldots,X_n^{(i_n)})
&:=  \sum_{\substack{\sigma\in\OSP_n\\\sigma \leq \pi}}
    \tilde{c}(\sigma,\pi)\,\varphi_{\sigma}(X_1^{(i_1)},X_2^{(i_2)},\ldots,X_n^{(i_n)}) \\
&= \sum_{\substack{\sigma\in\OSP_n\\\sigma \leq \pi}}
    \tilde{c}(\sigma,\pi)\,\varphi_{\sigma\curlywedge \kappa(i_1,i_2,\ldots,i_n)}(X_1,X_2,\ldots,X_n)
\intertext{and thus  }
K_\pi(N_1.X_1,N_2.X_2,\ldots,N_n.X_n)&:=  \sum_{\substack{\sigma\in\OSP_n\\\sigma \leq \pi}}
     \tilde{c}(\sigma,\pi)\,\varphi_{\sigma}(N_1.X_1,N_2.X_2,\ldots,N_n.X_n),
\end{align*}
where the diagonal coefficients are $\tilde{c}(\pi,\pi)=1$.
\end{enumerate}
\end{defi}

\begin{theorem}\label{thm:unique}
  Cumulants are uniquely determined by axioms \ref{S1}--\ref{S3} from
  Definition~\ref{def:spreadableK2}.
\end{theorem}
\begin{proof}
  Assume that there are two sets of cumulants $K_\pi$ and $K'_\pi$ satisfying
  axioms \ref{S1}--\ref{S3} with universal coefficients $c(\sigma,\pi)$ and
  $c'(\sigma,\pi)$, respectively. Then the axioms imply that
      \begin{multline*}
        \varphi_{\pi}(N.X_1,N.X_2,\ldots,N.X_n)
        \\
      \begin{aligned}[t]
      &= N^{\abs{\pi}}K_{\pi}(X_1,X_2,\ldots,X_n) + \sum_{\sigma < \pi} N^{\abs{\sigma}}K_{\sigma}(X_1,X_2,\ldots,X_n)\,c(\sigma,\pi)  \\
      &= N^{\abs{\pi}}K'_{\pi}(X_1,X_2,\ldots,X_n) + \sum_{\sigma < \pi} N^{\abs{\sigma}}K'_{\sigma}(X_1,X_2,\ldots,X_n)\,c'(\sigma,\pi)
      \end{aligned}
      \end{multline*}
  for any $N\in\N$
  and the coefficients of the leading term $N^{\abs{\pi}}$ must coincide.
\end{proof}

\subsection{Construction of Cumulants via factorial M\"obius and zeta functions}

In this subsection, we assume that $(\alg{A},\phi)$ is a $\alg{B}$-ncps and $\cS=(\alg{U},\tilde{\phi},(\iota^{(i)})_{i\ge1})$ is a
  spreadability system for $(\alg{A},\varphi)$.

Spreadability implies that the value 
   \eqref{eq:phipixjij}
is invariant under order preserving changes of the indices;
however, for partitioned expectations invariance holds under the weaker
assumption that the relative order of indices is preserved on every individual block.
We will only be concerned with the following particular case and therefore
refrain from formulating this fact in full generality.
\begin{lemma}
  \label{lem:quasimeetshift}
  Let $\pi\in \OSP_n$ be an ordered set partition.
  Fix a block $P$ of $\pi$ and a number $m\in\N$
  and let $(i_1,i_2,\dots,i_n)$,  $(i'_1,i'_2,\dots,i'_n)$ be $n$-tuples
  such that
  $$
  i'_k=
  \begin{cases}
    i_k + m & \text{if $k\in P$}\\
    i_k     & \text{if $k\not\in P$}
  \end{cases}
  $$
  Then
  $\pi\curlywedge \kappa(i_1,i_2,\dots,i_n)
  = \pi\curlywedge \kappa(i'_1,i'_2,\dots,i'_n)$.
\end{lemma}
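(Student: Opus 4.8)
The plan is to reduce everything to the blockwise behaviour of $\kappa$ under restriction. Writing $\pi=(P_1,P_2,\dots,P_p)$, recall that the quasi-meet is the concatenation $\pi\curlywedge\sigma = \sigma\restr_{P_1}\sigma\restr_{P_2}\dotsm\sigma\restr_{P_p}$. Since the blocks of $\pi$ are fixed and each block of $\pi\curlywedge\sigma$ is a nonempty subset of exactly one $P_j$, one recovers $\sigma\restr_{P_j}$ from $\pi\curlywedge\sigma$ as the subsequence of its blocks that lie inside $P_j$; hence $\pi\curlywedge\kappa(i_1,\dots,i_n)=\pi\curlywedge\kappa(i'_1,\dots,i'_n)$ holds if and only if $\kappa(i_1,\dots,i_n)\restr_{P_j}=\kappa(i'_1,\dots,i'_n)\restr_{P_j}$ in $\OSP_{P_j}$ for every $j$. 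So the lemma is reduced to this blockwise identity.

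The second step is the elementary observation that restricting an ordered kernel partition to a subset $Q\subseteq[n]$ produces the ordered kernel partition of the corresponding subtuple: both $\kappa(i_1,\dots,i_n)\restr_Q$ and $\kappa\bigl((i_k)_{k\in Q}\bigr)$ list the sets $\{k\in Q: i_k=v\}$ in increasing order of the value $v$, discarding the values $v$ that do not occur among the $i_k$ with $k\in Q$. Hence the required identity is equivalent to the statement that the subtuples $(i_k)_{k\in P_j}$ and $(i'_k)_{k\in P_j}$ have the same ordered kernel partition of $P_j$, for each $j$.

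Finally I would check this by distinguishing whether $P_j$ is the distinguished block $P$ or not. If $P_j\ne P$, then $P_j$ and $P$ are disjoint blocks of $\pi$, so $i'_k=i_k$ for all $k\in P_j$ and the two subtuples coincide outright. If $P_j=P$, then $i'_k=i_k+m$ for all $k\in P$; the translation $v\mapsto v+m$ is a strictly increasing bijection from the value set of $(i_k)_{k\in P}$ onto that of $(i'_k)_{k\in P}$, and $\{k\in P: i_k=v\}=\{k\in P: i'_k=v+m\}$, so the two subtuples determine the same blocks and, because the translation preserves order, the same ordering of them. This establishes the blockwise identity in both cases and closes the argument.

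I do not expect a genuine obstacle here; the point that needs care is merely that the quasi-meet is sensitive both to the order in which the restrictions are concatenated and to the order of blocks inside each $\kappa(\cdot)\restr_{P_j}$, so one must make sure the shift by $m$ changes neither — which is exactly why it is crucial that $m$ is added uniformly to all indices of the single block $P$ and that addition by $m$ is order-preserving on $\N$. An alternative, slightly more laborious route would unwind the definitions of $\curlywedge$ and of $\kappa$ simultaneously without isolating the restriction step, but factoring through the two reductions above keeps the bookkeeping transparent.
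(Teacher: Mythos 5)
Your argument is correct. Note that the paper states Lemma~\ref{lem:quasimeetshift} without proof, treating it as an immediate consequence of the definitions, so there is no "official" argument to compare against; your write-up simply supplies the expected details. The two reductions you use — that $\pi\curlywedge\sigma$ is determined by (and determines) the family of restrictions $\sigma\restr_{P_j}$, and that $\kappa(i_1,\dots,i_n)\restr_Q$ is the ordered kernel partition of the subtuple $(i_k)_{k\in Q}$ — are exactly the right way to make the claim precise, and the final case distinction (identity on blocks $P_j\neq P$, order-preserving translation by $m$ on $P$) correctly handles the only point where something could go wrong, namely the ordering of blocks within each restriction.
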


\begin{remark}
  Note that without performing the quasi-meet operation in the lemma above
  the kernel partitions
  $\kappa(i_1,i_2,\dots,i_n)$ 
  and $\kappa(i'_1,i'_2,\dots,i'_n)$ 
  may well be nontrivial permutations of each other or even 
  $\bar\kappa(i_1',i_2',\dots,i_n')\ne\bar\kappa(i_1,i_2,\dots,i_n)$.
\end{remark}

\begin{thm}  
  \label{thm:iidsum}  
  For any ordered set partitions $\rho \le \pi \in \OSP_n$ and numbers
  $N_1,N_2,\ldots\in\N$ we have
  \begin{equation}
    \label{eq:iidsum0}
  \varphi_{\rho}(N_{\pi(1)}.X_1,N_{\pi(2)}.X_2,\ldots,N_{\pi(n)}.X_n)
    = \sum_{\sigma \leq \rho}\varphi_{\sigma}(X_1,X_2,\ldots,X_n)\,\gamma_{\uN}(\sigma,\rho, \pi),
\end{equation}
  with $\gamma_{\uN}(\sigma, \rho, \pi)$ as in Definition~\ref{app:def:factorials} and $\pi(i)$ as in Definition \ref{defi:OP}\ref{item:OP2}. 
Specializing to the case $\rho=\pi$ yields
 \begin{align}
    \label{eq:iidsum}
  \varphi_{\pi}(N_{\pi(1)}.X_1,N_{\pi(2)}.X_2,\ldots,N_{\pi(n)}.X_n)
    = \sum_{\sigma \leq \pi}\varphi_{\sigma}(X_1,X_2,\ldots,X_n)\,\beta_{\uN}(\sigma,\pi)
\end{align}
and this is a polynomial in $N_1,\dots, N_{\abs{\pi}}$ without constant term.
Furthermore, specializing to the case $N:=N_1 = N_2 = \cdots$ entails 
  $$
  \varphi_{\pi}(N.X_1,N.X_2,\ldots,N.X_n)
    = \sum_{\sigma \leq \pi}\varphi_{\sigma}(X_1,X_2,\ldots,X_n)\,\beta_N(\sigma,\pi),
  $$
 which is a polynomial in $N$ consisting of monomials of degree at least $\abs{\pi}$. 
   \end{thm}

   \begin{proof}
     We begin with \eqref{eq:iidsum} and then verify the general case \eqref{eq:iidsum0}.  The proof boils down to the enumeration of the set
  \[
 S_{\uN}(\sigma,\pi):= \{(i_1,i_2,\ldots,i_n)\in [N_{\pi(1)}] \times [N_{\pi(2)}] \times \cdots \times [N_{\pi(n)}]: \pi \curlywedge \kappa(i_1,i_2,\ldots,i_n)=\sigma
  \}. 
  \]
  Pick an arbitrary block
  $P_i=\{p_1,p_2,\ldots,p_k\}$ of $\pi = (P_1,P_2,\dots)$.  There are $\binom{N_i}{\sharp(\sigma\restr_{P_i})}$ possible
  ways to choose a tuple $(i_{p_1},i_{p_2}, \ldots, i_{p_k})$ such that
  $\kappa(i_{p_1},i_{p_2}, \ldots, i_{p_k})$ 
  defines the partition $\sigma\restr_{P_i}$:
  for each block of $\sigma\restr_{P_i}$ we have to choose
  a distinct label from $[N_i]$ respecting the order prescribed by the
  labels of the blocks of $\sigma\restr_{P_i}$.
  That is, we have to choose a subset of $[N_i]$ of cardinality 
  $\#(\sigma\restr_{P_i})$.
  This can be done for every block of $\pi$ independently
  and thus
  \begin{equation}\label{eq cardinality} 
  \sharp S_{\uN}(\sigma,\pi)=\prod_{i =1}^{\abs{\pi}} \binom{N_i}{\sharp(\sigma\restr_{P_i})}=\beta_{\uN}(\sigma,\pi).
  \end{equation}
   If $N_i <\sharp(\sigma\restr_{P_i})$ for some $i$, then $\pi \curlywedge \kappa(i_1,i_2,\ldots,i_n)$ can never be equal to $\sigma$, and so 
    the cardinality of $ S_{\uN}(\sigma,\pi)$ is 0, in accordance
    with the generally adopted convention that the generalized
    binomial coefficient $\binom{N}{k}$ is zero when $N<k$. 
    
    The general formula \eqref{eq:iidsum0} requires the enumeration of the set
    \[
 S_{\uN}(\sigma,\rho,\pi) = \{(i_1,i_2,\ldots,i_n)\in [N_{\pi(1)}] \times [N_{\pi(2)}] \times \cdots \times [N_{\pi(n)}]: \rho \curlywedge \kappa(i_1,i_2,\ldots,i_n)=\sigma
  \}.   
  \]
This is similar to $S_{\uN}(\sigma,\rho)$; the difference is that the blocks of $\rho$ contained in a block $P_i \in \pi$ are endowed with the common number $N_i$. This results in the product formula
\[
\sharp S_{\uN}(\sigma,\rho,\pi) = \prod_{i=1}^p  S_{N_i}(\sigma \restr_{P_i},\rho \restr_{P_i}),  
\]
which equals $\gamma_{\uN}(\sigma,\rho,\pi)$ from Remark \ref{rem:beta_gamma}\ref{item:beta_gamma}.      
   
\end{proof}

\begin{defi}\label{def:cumulants} 
  Given an ordered set partition $\pi=(P_1,P_2,\dots,P_k) \in \OSP_n$
  with $k$ blocks, 
  we define the \emph{partitioned cumulant} $K_{\pi}(X_1,X_2,\ldots,X_n)$ to be the coefficient of
$N_1N_2\dotsm N_k$ in the polynomial
    \eqref{eq:iidsum}.
\end{defi}
The next theorem shows that this definition 
is a natural generalization of \cite[Definition~2.6]{L04}, cf.~Corollary \ref{cor:mc} below.

\begin{thm}[Moment-cumulant formulas]\label{thm:mc} For any $\pi \in \OSP_n$, we have 
  \begin{align}
  K_{\pi}(X_1,X_2,\ldots,X_n)  &= \sum_{\sigma \leq \pi}  \varphi_{\sigma}(X_1,X_2,\ldots,X_n)\,  \widetilde{\mu}(\sigma,\pi),    \label{eq:Kpi=sumphisi}  \\  
  \varphi_{\pi}(X_1, X_2,\ldots, X_n) &= \sum_{\sigma \leq \pi} K_{\sigma}(X_1,X_2,\ldots,X_n)\,\widetilde{\zeta}(\sigma,\pi)  \label{eq:phipi=sumKsi}    
  \end{align}
 with $ \widetilde{\mu}, \widetilde{\zeta}$ as in Definition~\ref{app:def:factorials}. 
\end{thm}
\begin{proof} For $0<k<N$, 
there is no constant term in $\binom{N}{k}= \frac{N(N-1)\cdots (N-k+1)}{k!}$ 
(regarded as a polynomial in $N$), and the coefficient of its linear term is 
$\frac{(-1)(-2)\cdots(-k+1)}{k!} = \frac{(-1)^{k-1}}{k}$. 
Therefore the coefficient of $N_1 \cdots N_{\abs{\pi}}$ in $\beta_{\uN}(\sigma,\pi)=\prod_{i=1}^{\abs{\pi}}\binom{N_i}{\sharp(\sigma\restr_{P_i})}$ is equal to $\frac{(-1)^{\abs{\pi}-\abs{\sigma}}}{[\sigma:\pi]}$. Note that $\sum_{P\in\pi}\sharp(\sigma\restr_P)=\abs{\sigma}$. 
Comparing with  \eqref{eq:mutilde} the claimed formula \eqref{eq:Kpi=sumphisi}  follows.
The same argument holds true if we look at the monomial $N^{\abs{\pi}}$ 
when $N_1 =N_2= \cdots =N$.

The inverse formula \eqref{eq:phipi=sumKsi} expressing moments in terms of
cumulants
is an immediate consequence of the fact that the modified M\"{o}bius function
$\widetilde{\mu}$
is the inverse of the modified zeta function $\widetilde{\zeta}$
(Corollary~\ref{app:cor:inversion}).

\end{proof}

\subsection{Verification of cumulant axioms}
In this subsection, we keep the assumption that $(\alg{A},\phi)$ is a
$\alg{B}$-ncps and $\cS=(\alg{U},\tilde{\phi},(\iota^{(i)})_{i\ge1})$ is a 
spreadability system for $(\alg{A},\varphi)$.

\begin{theorem}
  The cumulants defined in Definition~\ref{def:cumulants} verify the axioms \emph{\ref{S1}--\ref{S3}} from
  Definition~\ref{def:spreadableK2}.
\end{theorem}
Axiom \ref{S1} -- multilinearity and \ref{S2} -- universality are immediate
consequences of the relations     \eqref{eq:Kpi=sumphisi}
and \eqref{eq:phipi=sumKsi}. It remains to show axiom \ref{S3} -- extensivity, which is proven in Proposition~\ref{ad} in a slightly generalized form. 
For this, we iterate the construction \eqref{eq:phipi(NX)}.
\begin{defi}   For $\pi\in\OSP_n$ and $M_i, N_i \in \N, i=1,2,\dots,n$, we define
 \begin{multline*}
\phi_\pi(M_1.(N_1.X_1),M_2.(N_2.X_2), \ldots,M_n.(N_n.X_n)) \\
:= \sum_{(i_1,i_2,\ldots,i_n) \in[M_1] \times [M_2] \times \cdots \times [M_n]} \phi_{\pi\curlywedge \kappa(i_1,i_2,\ldots,i_n)}(N_1.X_1, N_2.X_2,\ldots,N_n.X_n). 
  \end{multline*}
\end{defi}

\begin{remark}
We would like to alert the reader that the formal definition of the expectation
 \[
 \phi_\pi(M_1.(N_1.X_1),M_2.(N_2.X_2),\ldots,M_n.(N_n.X_n))
 \] 
is not necessarily related
to ``$M.(N.X)$'' (which formally would represent the element
$
(N.X)^{(1)}+(N.X)^{(2)}+\cdots +(N.X)^{(M)}
,
$
i.e., 
``the sum of i.i.d.\ copies of $N.X$'')
as an element of an enlarged space,
obtained, e.g., by iterating a product construction.
Yet we will show below that the vector
$(M_{\pi(1)}N_{\pi(1)}.X_1,\ldots, M_{\pi(n)}N_{\pi(n)}.X_n)$ and
the virtual vector ``$(M_{\pi(1)}.(N_{\pi(1)}.X_1),
\ldots,M_{\pi(n)}.(N_{\pi(n)}.X_n))$'' have the same distribution with respect to $\varphi_\pi$.
This property implies extensivity of cumulants, see Proposition~\ref{ad} below. 
It is a consequence of the associativity of the corresponding universal product in the case of classical, free, monotone and Boolean independence \cite{HS11b}.
For general spreadability systems it holds on a formal level
(Lemma~\ref{prop5}), even when it comes from a nonassociative
universal product, like the example of V-monotone independence from 
Section~\ref{sssec:vmonotone}.
\end{remark}

\begin{lem}\label{prop5} Let $\pi \in \OSP_n$, $X_i \in \mathcal{A}$ and $M_i,N_i \in \N, i=1,2,\dots, \abs{\pi}$. 
\begin{enumerate}[label=\rm(\roman*),leftmargin=1cm]

\item\label{Polynomial} The value
 $\phi_\pi(M_{\pi(1)}.(N_{\pi(1)}.X_1),\ldots,M_{\pi(n)}.(N_{\pi(n)}.X_n))$ is
 a polynomial in the variables $M_i,N_i, i=1,2,\dots, \abs{\pi}$,  and 
$K_\pi(N_{\pi(1)}.X_1,\ldots,N_{\pi(n)}.X_n)$ is the coefficient of $M_1 \cdots M_{\abs{\pi}}$. 

\item \label{Associative-like} The dot operation gives rise to an action of the
 multiplicative semigroup $\N^\infty$
 \begin{multline*}
   \phi_\pi(M_{\pi(1)}N_{\pi(1)}.X_1,\ldots,M_{\pi(n)}N_{\pi(n)}.X_n)
   \\
   = \phi_\pi(M_{\pi(1)}.(N_{\pi(1)}.X_1),\ldots,M_{\pi(n)}.(N_{\pi(n)}.X_n)). 
 \end{multline*}

\end{enumerate}
\end{lem}
\begin{proof}
\ref{Polynomial}\,\, Let $\uM:=(M_1,\dots, M_{\abs{\pi}},0,0,\dots)\in \N^\infty$ and similarly $\uN \in \N^\infty$. 
  Then the following expansion holds:
  \begin{multline}\label{eq20}
     \phi_\pi(M_{\pi(1)}.(N_{\pi(1)}.X_1),\ldots,M_{\pi(n)}.(N_{\pi(n)}.X_n))\\
    \begin{aligned}
      &= \sum_{(i_1,\dots, i_n) \in [M_{\pi(1)}]\times \cdots \times [M_{\pi(n)}]} \phi_{\pi \curlywedge \kappa(i_1,i_2,\ldots,i_n)}(N_{\pi(1)}.X_1,\ldots,N_{\pi(n)}.X_n) \\ 
      &=\sum_{\sigma \leq \pi} \sum_{\substack{(i_1,\dots, i_n) \in [M_{\pi(1)}]\times \cdots \times [M_{\pi(n)}],\\ \pi \curlywedge \kappa(i_1,i_2,\ldots,i_n)=\sigma}} \phi_{\sigma}(N_{\pi(1)}.X_1,\ldots,N_{\pi(n)}.X_n) \\
      &=  \sum_{\sigma \leq \pi} \phi_{\sigma}(N_{\pi(1)}.X_1,\ldots,N_{\pi(n)}.X_n) \,\beta_{\uM}(\sigma,\pi), 
    \end{aligned}
    \end{multline}
 where we used the identity \eqref{eq cardinality}. Hence $\phi_\pi(M_{\pi(1)}.(N_{\pi(1)}.X_1),\ldots,M_{\pi(n)}.(N_{\pi(n)}.X_n))$ is a polynomial in $M_i,N_i, i=1,2,\dots, \abs{\pi}$, 
  and from the proof of Theorem~\ref{thm:mc} 
we infer that the coefficient of $M_1\cdots M_{\abs{\pi}}$ is equal to $K_\pi(N_{\pi(1)}.X_1,\ldots,N_{\pi(n)}.X_n)$. 

\itemspacing
\ref{Associative-like}\,\, We proceed with the computation of \eqref{eq20}:
\begin{multline*}
     \phi_\pi(M_{\pi(1)}.(N_{\pi(1)}.X_1),M_{\pi(2)}.(N_{\pi(2)}.X_2),\ldots,M_{\pi(n)}.(N_{\pi(n)}.X_n))\\
    \begin{aligned}
         &= \sum_{\rho \leq \pi} 
             \phi_{\rho}(N_{\pi(1)}.X_1,N_{\pi(2)}.X_2,\ldots,N_{\pi(n)}.X_n)
             \,\beta_{\uM}(\rho,\pi) \\
          &= \sum_{\rho \leq \pi} 
             \sum_{\sigma \le \rho }
              \phi_{\sigma}(X_1,X_2, \ldots,X_n) 
              \, \gamma_{\uN}(\sigma,\rho,\pi)
              \, \beta_{\uM}(\rho,\pi)\\ 
          &= \sum_{\sigma\leq \pi}
             \phi_{\sigma}(X_1, X_2,\ldots,X_n)
             \biggl(  \sum_{\substack{\rho\in\OSP_n  \\ \sigma \leq \rho \leq \pi}}
               \gamma_{\uN}(\sigma,\rho,\pi)
               \, \beta_{\uM}(\rho,\pi) 
             \biggr)\\ 
          &= \sum_{\sigma \leq \pi} \phi_\sigma(X_1, X_2,\ldots,X_n) (\gamma_{\uN} \otriangle \beta_{\uM}) (\sigma,\pi)\\
          &= \sum_{ \sigma \leq \pi} \phi_\sigma(X_1, X_2,\ldots,X_n) \,\beta_{\uM \circ \uN}(\sigma,\pi) \\
          &= \phi_{\pi}(M_{\pi(1)}N_{\pi(1)}.X_1,M_{\pi(2)}N_{\pi(2)}.X_2,\ldots,M_{\pi(n)}N_{\pi(n)}.X_n), 
    \end{aligned}
\end{multline*}
    where  \eqref{eq:iidsum0} was used in the second equality and Corollary~\ref{app:cor:inversion} was used in the next to last line.
\end{proof}

\begin{prop}\label{ad}
  The cumulants from 
 Definition~\ref{def:cumulants}  satisfy extensivity \eqref{eq:extensivity} and more generally, 
  \begin{equation*}
  K_{\pi}(N_{\pi(1)}.X_1,N_{\pi(2)}.X_2,\ldots,N_{\pi(n)}.X_n) =N_1N_2 \cdots N_{\abs{\pi}}K_{\pi}(X_1,X_2,\ldots,X_n). 
  \end{equation*}
\end{prop}
\begin{proof}
  By definition, $\phi_\pi$ applied to the same arguments has the expansion
  \begin{multline*}
  \phi_{\pi}(M_{\pi(1)}N_{\pi(1)}.X_1, M_{\pi(2)}N_{\pi(2)}.X_2,\ldots,M_{\pi(n)}N_{\pi(n)}.X_n)
\\
  =\Biggl(
      \prod_{i=1}^{\abs{\pi}}M_i N_i
      \Biggr)
      K_{\pi}(X_1,X_2,\ldots,X_n) + (\text{sum of monomials in $M_i N_i$ of higher degrees}). 
  \end{multline*}
    On the other hand, from Lemma~\ref{prop5}\ref{Polynomial} we infer
    \begin{multline*}
      \phi_{\pi}(M_{\pi(1)}.(N_{\pi(1)}.X_1),
      M_{\pi(2)}.(N_{\pi(2)}.X_2),\ldots,M_{\pi(n)}.(N_{\pi(n)}.X_n))
      \\
      = M_1 M_2 \cdots M_{\abs{\pi}} K_{\pi}(N.X_1,N.X_2,\ldots,N.X_n)
      \\
+ (\text{sum of monomials in $M_i$ of higher degree}). 
    \end{multline*}
We have thus computed the expectation in two ways and
it follows from Lemma~\ref{prop5}\ref{Associative-like} that the coefficients of $M_1 \cdots M_{\abs{\pi}}$  coincide. 
\end{proof}

\subsection{Examples}
Let us now briefly review instances of moment-cumulant formulas 
coming from noncommutative notions of independence in the light
of Theorem~\ref{thm:mc}. Let $(\alg{A},\phi)$ be a $\alg{B}$-ncps and $\cS=(\alg{U},\tilde{\phi},(\iota^{(i)})_{i\ge1})$ be a
  spreadability system for $(\alg{A},\varphi)$.

  \subsubsection{Cumulants for exchangeability systems}

Let start with the verification that the definition of cumulants for spreadability
systems is consistent with the previous definition for exchangeability systems
from \cite{L04}.  

\begin{prop}\label{inv:permutation}
  If a spreadability system $\cS$ satisfies exchangeability, 
  then both the partitioned expectations and cumulants are invariant under permutations of the
  blocks.
  More precisely,
  for any partition $\pi=(P_1,P_2, \dots,P_k) \in \OSP_n$ and permutation $h \in \SG_k$,
  $\varphi_\pi = \varphi_{h(\pi)}$ and
  $K_\pi = K_{h(\pi)}$,   where $h(\pi)=(P_{h(1)}, P_{h(2)}, \dots,P_{h(k)})$,
  Thus $\varphi_\pi$ and $K_\pi$ are completely determined by the underlying (unordered) set partition $\bar{\pi}$.
\end{prop}
\begin{proof}
   Invariance of $\varphi_\pi$ under permutations of the blocks of $\pi$ is an immediate consequence of
   exchangeability and this invariance extends to the extension  \eqref{eq:phipixjij}.
   Indeed let $\pi=(P_1,P_2,\dots,P_k)\in\OSP_n$ and  $i_1,i_2,\dots,i_n \in \N$,
   then for any $h \in \SG_k$ there exists a permutation $g \in \SG_\infty$
   such that
   $h(\pi)\curlywedge \kappa(i_1,i_2,\dots,i_n) = g(\pi\curlywedge \kappa(i_1,i_2,\dots,i_n) )$. Therefore, 
\begin{align*}
\varphi_{h(\pi)}(X_1^{(i_1)}, X_2^{(i_2)},\dots, X_n^{(i_n)}) &= \varphi_{h(\pi)\curlywedge \kappa(i_1,i_2,\dots,i_n)}(X_1,X_2,\dots, X_n) \\
&= \varphi_{g(\pi\curlywedge \kappa(i_1,i_2,\dots,i_n))}(X_1,X_2,\dots, X_n)  \\
&= \varphi_{\pi\curlywedge \kappa(i_1,i_2,\dots,i_n)}(X_1,X_2,\dots,X_n) \\
&=\varphi_{\pi}(X_1^{(i_1)}, X_2^{(i_2)},\dots, X_n^{(i_n)}). 
\end{align*}
Consequently also $\varphi_\pi(N.X_1,N.X_2,\dots,N.X_n) =
\varphi_{h(\pi)}(N.X_1,N.X_2,\dots,N.X_n)$ and,
by definition,  $K_\pi(X_1,X_2,\dots, X_n) = K_{h(\pi)}(X_1,X_2,\dots, X_n)$. 
\end{proof}

From the above observation and with the help of
Proposition~\ref{app:prop:f*tzeta=fbar*zeta}
we can recover the moment-cumulant formulas in the exchangeable case
\cite[Definition 2.6 and Proposition~2.7]{L04}.

\begin{corollary}\label{cor:mc}
  If a spreadability system $\cS$ satisfies exchangeability
  then  for $\pi \in \SP_n$  we simply write $K_\pi$ for the partitioned
  cumulant functional uniquely determined in Proposition~\ref{inv:permutation}.
  Then for any $\pi \in \SP_n$, we have 
  \begin{align}
  K_{\pi}(X_1,X_2,\ldots,X_n)  &= \sum_{\substack{\sigma \in \SP_n \\ \sigma \leq \pi}}  \varphi_{\sigma}(X_1,X_2,\ldots,X_n)\,  \mu_{\SP}(\sigma,\pi),    \label{eq:Kpi=sumphisi2}  \\  
  \varphi_{\pi}(X_1, X_2,\ldots, X_n) &= \sum_{\substack{\sigma \in \SP_n \\ \sigma \leq \pi}} K_{\sigma}(X_1,X_2,\ldots,X_n), \label{eq:phipi=sumKsi2}    
  \end{align}
where $\mu_{\SP}$ is the M\"obius function on the posets $\SP_n, n\in \N$, see \eqref{app:eq:SPmoebius}. 
\end{corollary}

\subsubsection{Some moment-cumulant formulas of low order}

\begin{exa}[Cumulants in terms of moments] 
  We will write the ordered kernel set partition $\kappa(i_1,\dots, i_n)$
  simply as the multiset permutation $i_1i_2\dots i_n$. For example,
  $211=(\{2,3\},\{1\})$.  
  Examples of Theorem~\ref{thm:mc} are given by 
\begin{align*}
K_1(X) &= \phi_1(X), \\
K_{11}(X, Y) &= \phi_{11}(X,Y) - \frac{1}{2}(\phi_{12}(X,Y) + \phi_{21}(X,Y)), \\
K_{12}(X,Y) &= \phi_{12}(X,Y), \\
K_{21}(X,Y) &= \phi_{21}(X,Y), \\
K_{111} &= \phi_{111} - \frac{1}{2}(\phi_{112}+\phi_{121}+\phi_{122}+\phi_{211}+\phi_{212}+\phi_{221}) \\ \notag
&\qquad+\frac{1}{3}(\phi_{123}+\phi_{132}+\phi_{213}+\phi_{231}+\phi_{312}+\phi_{321}), \\
K_{112} &= \phi_{112} - \frac{1}{2}(\phi_{123}+\phi_{213}), 
\end{align*}
where for the sake of compactness the arguments $(X,Y,Z)$ are omitted 
in the last two formulas.
\end{exa}

\begin{exa}[Moments in terms of cumulants] Examples of Theorem~\ref{thm:mc} are given by 
\begin{align*}
\phi_1(X) &= K_1(X), \\
\phi_{11}(X, Y) &= K_{11}(X,Y) + \frac{1}{2!}(K_{12}(X,Y) + K_{21}(X,Y)), \\
\phi_{12}(X,Y) &= K_{12}(X,Y), \\
\phi_{21}(X,Y) &= K_{21}(X,Y), \\
\phi_{111} &= K_{111} + \frac{1}{2!}(K_{112}+K_{121}+K_{122}+K_{211}+K_{212}+K_{221}) \\ \notag
&\qquad+\frac{1}{3!}(K_{123}+K_{132}+K_{213}+K_{231}+K_{312}+K_{321}), \\
\phi_{112} &= K_{112} + \frac{1}{2!}(K_{123}+K_{213}), 
\end{align*}
where $(X,Y,Z)$ are omitted in the last two formulas for simplicity. 
\end{exa}

\subsubsection{Cumulants for universal products}
We verify that the cumulants for associative universal products satisfy factorization properties. Here we further assume that $\mathcal{B} =\C$, but it will not be difficult for readers familiar with operator-valued
independence \cite{Speicher:1998:combinatorial,Popa2008,HS14,Ske04,Mlo02} to
generalize the results to general $\alg{B}$-valued conditional expectations in the cases of free, Boolean
and monotone products and spreadability systems.

\begin{defi}\label{def:multiplicative} 
  For $n\in\N$ the  $n$-linear maps on $\alg{A}$
$$
K_n:=K_{\hat{1}_n}, \qquad \varphi_n:=\varphi_{\hat{1}_n}
$$
are called the  $n^{\mathrm{th}}$ cumulant and expectation functional,
respectively.
The latter is simply
$$
\varphi_n(X_1, X_2, \dots, X_n)
=\varphi(X_1X_2  \cdots X_n)
$$
for $X_i \in \alg{A}$.
Following notation \eqref{eq:phipixjij},
we also write
$$
\varphi_n(X_1^{(i_1)}, X_2^{(i_2)}, \dots, X_n^{(i_n)})
=\tilde{\varphi}(X_1^{(i_1)} X_2^{(i_2)} \cdots X_n^{(i_n)})
.
$$
We extend these functionals
multiplicatively to set partitions and 
ordered set partitions as follows.

  For set partitions $\pi\in\SP_n$  we define multiplicative extensions
  $\varphi_{(\pi)}$ of $(\varphi_n)_{n\in\N}$ and $K_{(\pi)}$ of $(K_n)_{n\in\N}$ 
  by setting
  \begin{align*}
    &\varphi_{(\pi)}(Y_1,Y_2,\ldots, Y_n) :=\prod_{P\in \pi} \varphi_{\abs{P}}(Y_P)   \qquad \text{and} \qquad  K_{(\pi)}(Y_1,Y_2,\ldots, Y_n) :=\prod_{P\in \pi} K_{\abs{P}}(Y_P)\\
   &\qquad\qquad \text{for} \quad Y_1,Y_2,\dots, Y_n\in \bigcup_{i=1}^\infty \iota^{(i)}(\mathcal{A}) \quad \text{or} \quad Y_1, Y_2,\dots, Y_n \in \alg A,
  \end{align*}
  where we use notation \eqref{notation:linear}  for multilinear functionals. 
We also define $\varphi_{(\pi)}:=\varphi_{(\bar{\pi})}$ and $K_{(\pi)}:=K_{(\bar{\pi})}$ for ordered set partitions $\pi\in\OSP_n$. 
\end{defi}

These multiplicative extensions are important for the understanding of
cumulants arising from universal products.
Note that the above definition of $\varphi_{(\pi)}$ reduces to
\eqref{eq:multiplicative} when $Y_i \in \alg A$ and the use of the same
notation does not cause any conflict.  

It turns out that for specific spreadability systems certain cumulant
functionals vanish identically. This is a consequence of a certain factorization
property which is satisfied by these spreadability systems and which is
subsumed in the following lemma.

\begin{lemma}\label{lem:factorization}
 Let $\pi\in\OSP_n$ be an ordered set partition.  
  \begin{enumerate}[label=\rm(\roman*),leftmargin=1cm]
\item\label{item:factorization1}  Let $\cS=(\alg{U}, \tilde \phi, (\iota^{(i)})_{i\ge1})$ be a spreadability system for some $\alg{B}$-ncps $(\mathcal{A},\phi)$. Assume that there exists a family of complex numbers 
$
\{s(\sigma; \pi): \sigma \in \SP_n, \sigma \le \bar\pi\}
$ 
such that 
  \begin{equation}
    \label{eq:lem:factorization1} 
\varphi_\pi(Y_1, Y_2, \dots, Y_n) =   \sum_{\substack{\sigma \in \SP_n\\ \sigma \le \bar\pi }} s(\sigma; \pi) \varphi_{(\sigma)}(Y_1,Y_2,\dots, Y_n)
  \end{equation}
    holds for any tuple of random variables $Y_1, Y_2, \dots, Y_n \in  \bigcup_{i=1}^\infty \iota^{(i)}(\mathcal{A})$. 
    Then 
    \begin{equation*}
    K_\pi(X_1,X_2,\dots,X_n)=  s(\bar\pi;\pi) K_{(\pi)}(X_1,X_2,\dots,X_n)
    \end{equation*}
    for any tuple $X_1, X_2, \dots, X_n \in \alg{A}$.   \item\label{item:factorization2}
    Let $(\alg{U}, \tilde \phi, (\iota^{(i)})_{i\ge1})$ be a spreadability
    system for some $\alg{B}$-ncps $(\mathcal{A},\phi)$.
    Assume that there exist coefficients
    $
    \{t(\rho; \pi): \rho \in \OSP_n, \bar \sigma < \bar \pi\}
    $ 
    such that 
   \begin{equation*}
\varphi_\pi(Y_1, Y_2, \dots, Y_n) =  \sum_{\substack{\rho \in \OSP_n\\ \bar \rho < \bar \pi }} t(\rho; \pi) \varphi_{\rho}(Y_1,Y_2,\dots, Y_n)
  \end{equation*}
   holds for any tuple of random variables $Y_1, Y_2, \dots, Y_n \in
   \bigcup_{i=1}^\infty \iota^{(i)}(\mathcal{A})$.
   Then the cumulant $K_\pi(X_1,X_2,\dots, X_n)=0$  for any tuple $X_1, X_2, \dots, X_n \in \alg{A}$.  
\end{enumerate}
\end{lemma}
\begin{remark}
  Suppose the expansion \eqref{eq:lem:factorization1}
  holds for all ordered set partitions $\pi \in \OSP_n$, then $\cS$ is a
  spreadability system with calculation rule \eqref{eq:calculation_rule} (e.g.\
  take $Y_1,Y_2,\dots, Y_n \in \iota^{(1)}(\alg A)$).
  Now the expansion \eqref{eq:lem:factorization1} extends the calculation rule to the variables
  $Y_1,Y_2,\dots$ in $\bigcup_{i=1}^\infty \iota^{(i)}(\mathcal{A})$.
  Such an extension is not automatic; we give a counterexample in Example \ref{exa:nonassociative}. 

\end{remark}

\begin{proof}[Proof of Lemma \ref{lem:factorization}]
 \ref{item:factorization1}  Choosing $Y_k = \iota^{(i_k)}(X_k)$ and taking the sum of  \eqref{eq:lem:factorization1} over all indices $i_1, i_2,\dots, i_n \in [N]$ we get formula \eqref{eq:lem:factorization1} for $Y_k = N. X_k$ as well. The desired conclusion is  an immediate consequence by comparing the coefficients of $N^{|\pi|}$ together with the definition of cumulants (Definition~\ref{def:cumulants}). 
The point is that, since the number of blocks of any $\sigma<\bar \pi$ is strictly larger than the number of blocks of $\pi$, the coefficient of $N^{|\pi|}$ vanishes in $\phi_{(\sigma)}(N.X_1,N.X_2,\dots,N.X_n)$.    \ref{item:factorization2} is proven in a similar way. 
\end{proof}

\begin{prop}\label{prop19}   \
  \begin{enumerate}[label=\rm(\roman*),leftmargin=1cm]
\item\label{ten}
    Let $K^{\tensorT}_\pi$ be the cumulants associated to the tensor spreadability system $\cS_{\tensorT}$. Then 
    \begin{equation*}
    K^{\tensorT}_\pi=K^{\tensorT}_{(\pi)},\qquad \pi \in \OSP_n. 
    \end{equation*}

   \item\label{fre}
    Let $K^{\freeF}_\pi$ be the cumulants associated to the free spreadability system $\cS_{\freeF}$. Then  
    \begin{equation*}
    K^{\freeF}_\pi=
    \begin{cases}
      0, &\pi \notin \ONP_n, \\
      K^{\freeF}_{(\pi)},& \pi \in \ONP_n
    \end{cases}
    \end{equation*}
    where by $\ONP_n$ we denote the set of ordered noncrossing partitions,
    see Definition~\ref{app:def:OSPclass}     \ref{app:def:ONC}.

   \item\label{boole} 
    Let $K^{\tensorT}_\pi$ be the cumulants associated to the Boolean spreadability system $\cS_{\boolB}$. Then 
    \begin{equation*}
    K^{\boolB}_\pi=
    \begin{cases}
      0, &\pi \notin \OIP_n, \\
      K^{\boolB}_{(\pi)},& \pi \in \OIP_n
    \end{cases}
  \end{equation*}
    where by $\OIP_n$ we denote the set of ordered interval partitions,    
    see Definition~\ref{app:def:OSPclass}     \ref{app:def:OI}.
   \item\label{it:mon}
   Let $K^{\monoM}_\pi$ be the cumulants associated to the monotone spreadability system $\cS_{\monoM}$. Then 
    \begin{equation*}
    K^{\monoM}_\pi=
    \begin{cases}
      0, &\pi \notin \MP_n, \\
      K^{\monoM}_{(\pi)},& \pi \in \MP_n
    \end{cases}
    \end{equation*}
    where by $\MP_n$ we denote the set of monotone partitions,
    see Definition~\ref{app:def:OSPclass}     \ref{app:def:monopart}.
\end{enumerate}
These results combined with the general moment-cumulant formula (Theorem \ref{thm:mc}) reproduce the known formulas. 
\end{prop}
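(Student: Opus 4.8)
The four statements will all be deduced from Lemma~\ref{lem:factorization}, so the task reduces to identifying, for each of the four natural products, the precise form of the partitioned moment functional $\phi_\pi^{\cS}$ obtained from a representative word $X_1^{(i_1)}\cdots X_n^{(i_n)}$ with $\kappa(i_1,\dots,i_n)=\pi$. The plan is to show in each case that $\phi_\pi^{\cS}$ is either equal to $\phi_{(\pi)}$ (whence $K_\pi^{\cS}=K_{(\pi)}^{\cS}$ by Lemma~\ref{lem:factorization}(i)) or a linear combination $\sum_\sigma c_{\pi,\sigma}\,\phi_{(\sigma)}$ with $\abs{\sigma}>\abs{\pi}$ for every $\sigma$ occurring (whence $K_\pi^{\cS}=0$ by Lemma~\ref{lem:factorization}(ii)); note that a term $\phi_{(\sigma)}$ depends only on $\bar\sigma$ and may therefore always be rewritten with $\sigma<\pi$ in $\OSP_n$, by listing the sub-blocks of $P_1$, then those of $P_2$, and so on.

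For the tensor product $\widetilde\varphi=\otimes_i\varphi$ the blocks of $\pi$ are mapped into distinct, pairwise commuting tensor legs, so $\phi_\pi^{\tensorT}(X_1,\dots,X_n)=\prod_{P\in\pi}\varphi(X_P)=\phi_{(\pi)}(X_1,\dots,X_n)$ at once, which gives assertion~(\ref{ten}). For the Boolean product, two consecutive indices $i_k,i_{k+1}$ agree precisely when $k,k+1$ lie in the same block of $\pi$, so collapsing the maximal runs of equal indices and applying the Boolean factorisation rule yields $\phi_\pi^{\boolB}=\phi_{(\widetilde\pi)}$, where $\widetilde\pi$ is the interval partition whose blocks are the maximal subintervals of the blocks of $\bar\pi$; since $\widetilde\pi=\bar\pi$ exactly when $\bar\pi\in\IP_n$ and $\abs{\widetilde\pi}>\abs{\pi}$ otherwise, Lemma~\ref{lem:factorization} gives assertion~(\ref{boole}). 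For the free product I would use exchangeability (Proposition~\ref{inv:permutation}) to reduce to $\bar\pi$ and then argue by induction on $n$ with the recursive definition of the free product of states: when $\bar\pi\in\NC_n$ one peels off an interval block of $\bar\pi$ (which always exists) by centering it, obtaining $\phi_\pi^{\freeF}=\phi_{(\pi)}$; when $\bar\pi\notin\NC_n$, iterating the centering expansion produces only products of $\varphi$'s over partitions strictly finer than $\bar\pi$, i.e.\ $\phi_\pi^{\freeF}=\sum_{\bar\sigma<\bar\pi}c_{\pi,\sigma}\,\phi_{(\sigma)}$. (This is the familiar fact that free cumulants vanish on crossing partitions, and one could alternatively invoke that here.) Lemma~\ref{lem:factorization} then gives assertion~(\ref{fre}).

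The monotone case~(\ref{it:mon}) is the one I expect to require the most care. I would induct on $n$, peeling the block $B$ of $\pi$ carrying the largest index from a representative word, as follows. After merging consecutive factors with equal index, the maximal-index factor is always a local maximum and can be removed as a scalar $\varphi(\cdot)$ by rule (iv) of the monotone product (or by the corresponding end rules (ii), (iii)). The crucial structural observation is that if $\pi\in\MP_n$ then $B$ cannot be the outer block of any nesting --- otherwise its inner block, having smaller index, would precede it, violating the linearisation property of monotone partitions --- so $B$ is an interval block, the whole of $B$ is peeled off, the remaining ordered set partition $\pi\setminus B$ is again monotone, and induction gives $\phi_\pi^{\monoM}=\varphi(X_B)\,\phi_{(\pi\setminus B)}=\phi_{(\pi)}$, hence $K_\pi^{\monoM}=K_{(\pi)}^{\monoM}$. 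If instead $\pi\notin\MP_n$, then either $B$ is not an interval block, in which case peeling off the maximal-index factor splits $B$ and already produces a partition with more than $\abs{\pi}$ blocks; or $B$ is an interval block, in which case peeling $B$ leaves $\pi\setminus B$, which cannot be monotone (adjoining an interval block to a noncrossing partition keeps it noncrossing and preserves the linearisation property, so monotonicity of $\pi\setminus B$ would force monotonicity of $\pi$), and the induction hypothesis applies to $\pi\setminus B$. In both subcases $\phi_\pi^{\monoM}$ becomes a linear combination of $\phi_{(\sigma)}$ with $\abs{\sigma}>\abs{\pi}$, so $K_\pi^{\monoM}=0$ by Lemma~\ref{lem:factorization}(ii). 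The delicate points here --- verifying that the merging and peeling really do split a block in the non-monotone situation, that no term with $\bar\sigma=\bar\pi$ survives, and that the order of blocks is tracked correctly through the recursion --- are where the characterisation of monotone partitions as linearisations of the nesting poset (Definition~\ref{def:orderedpartitions}) does the essential work.
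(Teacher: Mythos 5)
Your proposal is correct and rests on the same reduction as the paper: all four cases are funneled through Lemma~\ref{lem:factorization}. The difference is one of self-containedness rather than of method: the paper handles (i)--(iii) by exchangeability and a citation of \cite{L04} (mentioning Lemma~\ref{lem:factorization} as an alternative), and the case $\pi\notin\MP_n$ of (iv) by citing \cite[Prop.~3.2]{Muraki:2002:five}, whereas you verify the hypotheses \eqref{eq:lem:factorization1} and \eqref{eq:lem:factorization2} directly --- tensor and Boolean by inspection of the product rules (with the correct remark that a factor $\phi_{(\sigma)}$ depends only on $\bar\sigma$ and so can always be re-ordered into some $\sigma<\pi$), free by the centering/interval-block induction (equivalently, vanishing of free cumulants on crossing partitions), and monotone by peeling off the last block $B$ of $\pi$. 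Your structural claims in the monotone case all check out: since a non-interval block of a noncrossing partition is the outer block of some nesting, the maximal-label block of a monotone partition must be an interval; removing it preserves monotonicity, and conversely adjoining an interval block as the last block of a monotone partition yields a monotone partition, so $\pi\setminus B$ is non-monotone whenever $\pi$ is non-monotone and $B$ is an interval, while a non-interval $B$ splits into at least two factors under the peeling --- in every case the block count strictly increases, so Lemma~\ref{lem:factorization} applies exactly as you say. What your route buys is a proof of (iv) independent of Muraki's universality result, at the cost of carrying out the induction that the paper outsources.
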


\begin{proof}
  Items \ref{ten}, \ref{fre} and \ref{boole} satisfy exchangeability
  and are covered  in~\cite{L04} (note that our cumulants are invariant under the permutation of the blocks of $\pi$, see Proposition~\ref{inv:permutation});
  alternatively these cases follow from Lemma~\ref{lem:factorization} and the arguments below.

  We are left with case \ref{it:mon}.
  Let $u_{\rm M}(\sigma; \pi)$ be the universal coefficients from Axiom
  \ref{U6} for monotone independence
  (where $f \colon \sigma \to [1]$ is unique and hence omitted).  
  We can prove \eqref{eq:lem:factorization1} for $s(\sigma; \pi) :=  u_{\mathrm{M}}(\sigma; \pi)$
  using the technique from the proof of
  Proposition~\ref{prop:equivalence_independences}.
  Indeed,
  associativity allows us to deduce the identity
  $$
  \varphi_{\pi}(X_1^{(i_1)},X_2^{(i_2)},\dots, X_n^{(i_n)})= \varphi_{\pi
    \curlywedge \kappa(i_1,\dots, i_n)}(X_1,X_2,\dots, X_n)
  $$
  following the two steps \ref{stepa} and \ref{stepb} there
  and the resulting value is exactly  the RHS of
  \eqref{eq:lem:factorization1} with $Y_k = X_k^{(i_k)}$. 
  It is known that $u_{\rm M}(\bar{\pi}; \pi)=1$ for $\pi \in \MP_n$ and
  $u_{\rm M}(\bar{\pi}; \pi)=0$ for $\pi \in \OSP_n \setminus \MP_n$, see
  \cite[Prop.~3.2]{Muraki:2002:five} and the claimed formula follows.     
\end{proof}

\begin{remark}
  Other examples of noncrossing cumulants come from the c-free
  exchangeability systems of \cite{BLS96}, see \cite[Section~4.7]{L04}.
  Lemma~\ref{lem:factorization} \ref{item:factorization2} provides a new
  proof that crossing cumulants vanish.
\end{remark}   
Let us consider next c-monotone spreadability system $\cS_{\mathrm{CM}}$.
Recall from Section~\ref{sssec:cmonotone} that the construction
of  $\cS_{\mathrm{CM}}$ is based on an algebra $\mathcal{A}$ with two linear
functionals $\phi,\psi$.
\begin{prop}\label{prop20} 
 Let $K^{\mathrm{CM}}_\pi$ be the $\pi$-cumulant associated to the c-monotone spreadability
 system $\cS_{\mathrm{CM}}$ defined in Section \ref{sssec:cmonotone} and
 $K^{{\monoM},\psi}_n$ the $n^{\mathrm{th}}$ monotone cumulant functional with
 respect to the linear map $\psi:\mathcal{A}\to \C$.
 Then
  \[
  \displaystyle 
  K^{\mathrm{CM}}_\pi(X_1,X_2,\ldots,X_n)=
  \begin{cases}
    \displaystyle \prod_{P \in \Outer(\pi)}K^{\mathrm{CM}}_{\abs{P}}(X_P) \prod_{P \in \Inner(\pi)}K^{{\monoM},\psi}_{\abs{P}}(X_P),& \pi \in \MP_n, 
    \\
    0, &\pi \notin \MP_n 
  \end{cases}
  \]
 with $\Outer(\pi)$ and $\Inner(\pi)$ the sets of outer blocks of $\pi$ and inner blocks of $\pi$, respectively; see Definition \ref{app:def:partitions} \ref{item:app:def:partitions2}. 
\end{prop}
\begin{proof}
We use two spreadability systems: the c-monotone spreadability system $\cS_{\mathrm{CM}}=(\alg{U}, \tilde{\varphi}, (\iota^{(i)})_{i=1}^\infty)$ for $(\mathcal{A},\varphi)$ and the monotone spreadability system $\cS_{\monoM}=(\alg{U}, \tilde{\psi}, (\iota^{(i)})_{i=1}^\infty)$ for $(\mathcal{A},\psi)$. 
  The multiplicative extension $\phi_{(\pi)}$ of $\phi$ is now modified to
  \begin{align*}
 & \varphi_{(\pi), \psi}(Y_1, Y_2,\ldots, Y_n) 
  := \prod_{P \in \Outer(\pi)}\varphi_{\abs{P}}(Y_P) \prod_{P \in \Inner(\pi)}\psi_{\abs{P}}(Y_P) \\
    & \qquad \text{for} \quad \pi\in\ONP_n,  \quad Y_1,Y_2,\dots, Y_n\in \bigcup_{i=1}^\infty \iota^{(i)}(\mathcal{A}) \quad \text{or} \quad Y_1, Y_2,\dots, Y_n \in \alg A,
  \end{align*}
  which apparently coincides with $\varphi_{(\pi)}$ if $\varphi=\psi$. 
  By the definition of c-monotone spreadability system, there exist universal constants
  $u_{\mathrm{CM}}(\sigma, f; \pi) \in \C$ depending only on  $\pi \in \OSP_n, \sigma \in\SP_n$
  with $\bar\pi > \sigma$ and a ``2-coloring'' $f$ of the blocks of $\sigma$,
  i.e.\ a function $f\colon \sigma\to \{1,2\}$, such that   for any $Y_j\in \alg A$
\begin{multline*}
    \varphi_\pi(Y_1,Y_2,\ldots,Y_n)
    =
      \varphi_{(\pi), \psi}(Y_1,Y_2,\ldots,Y_n)
    \\
      +
\sum_{\substack{\sigma\in\SP_n,  \sigma <\bar{\pi} \\  f\colon\,
          \sigma\to \{1,2\} }}u_{\mathrm{CM}}(\sigma, f; \pi)
      \Bigl(\prod_{f(S)=1}\varphi_{\abs{S}}(Y_S)\Bigr)
      \Bigl(\prod_{f(S)=2}\psi_{\abs{S}}(Y_S)\Bigr)
\end{multline*}
if $\pi \in \MP_n$ and
$$
\varphi_\pi(Y_1,Y_2,\ldots,Y_n)  =
\sum_{\substack{\sigma\in\SP_n,  \sigma <\bar{\pi} \\  f\colon\, \sigma\to     \{1,2\} }}
    u_{\mathrm{CM}}(\sigma, f; \pi)
    \Bigl(\prod_{f(S)=1}\varphi_{\abs{S}}(Y_S)\Bigr)
    \Bigl(\prod_{f(S)=2}\psi_{\abs{S}}(Y_S)\Bigr)
$$ if $\pi \notin \MP_n$.
 Similar to the monotone case, thanks to associativity and
the arguments from the proof of Proposition~\ref{prop:equivalence_independences}, 
this formula holds also for $Y_j \in  \bigcup_{i=1}^\infty
\iota^{(i)}(\mathcal{A}), j\in[n]$. Then the arguments from Lemma~\ref{lem:factorization} and Proposition \ref{prop19} applies.
\end{proof}

    Associativity played a key role in the proof of Propositions
  \ref{prop19} and \ref{prop20} above. The following example shows that indeed
  the assumptions of Lemma~\ref{lem:factorization} can fail for nonassociative universal products. 

\begin{exa} \label{exa:nonassociative}
Let $(\alg{A}, \varphi)$ be a $\C$-ncps and let $\alg{U}:= \sqcup_{i=1}^\infty \alg{A}$ be the coproduct in the category of algebras (see \eqref{eq:coproduct}) and $\iota^{(i)} \colon \alg{A} \to \alg{U}$ be the natural embedding as $i^{\mathrm{th}}$ component. We define a linear functional $\tilde{\varphi}$ on $\alg{U}$ by, for each $X_1,X_2,\dots, X_n \in \alg{A}$ and $i_1,i_2,\dots, i_n\in\N$ with $i_j \ne i_{j+1}$ for all $j =1,2,\dots, n-1$, setting 
\[
\tilde{\varphi}(X_1^{(i_1)} X_2^{(i_2)} \cdots X_n^{(i_n)}) 
= 
\begin{cases} 
0& \text{~if there are $k \ne \ell$ such that $i_k=i_\ell$}, \\
\prod_{i=1}^n \varphi(X_i) &  \text{~otherwise}.  
\end{cases}
\]
Then we get an exchangeability system $(\alg{U}, \tilde{\varphi}, (\iota^{(i)})_{i \in \N})$ for $(\alg{A}, \varphi)$.

For  $\pi \in \SP_n$ and $X_1,X_2,\dots, X_n \in \alg{A}$ it is easy to see that  
 \begin{equation}
      \phi_\pi(X_1,X_2,\dots,X_n) =   
      \begin{cases} \phi_{(\pi)}(X_1,X_2,\dots,X_n) & \text{~if~} \pi \in \IP_n, \\
      0 & \text{~otherwise}
      \end{cases}
    \end{equation}
    holds for any tuple $X_i \in \alg{A}$; however this formula does not extend
    to mixed tuples $Y_i \in \bigcup_{i=1}^\infty \iota^{(i)}(\alg{A})$. For example, if $\pi=\{\{1,3\},\{2\}\}$ then 
    \[
\varphi_\pi(X_1^{(1)}, X_2^{(2)}, X_3^{(3)}) = \varphi(X_1) \varphi(X_2) \varphi(X_3) = \varphi(X_1^{(1)}) \varphi(X_2^{(2)}) \varphi(X_3^{(3)}) 
\] 
but \[
\varphi_\pi(X_1^{(1)}, X_2^{(1)}, X_3^{(1)}) =0 \ne \varphi(X_1^{(1)}) \varphi(X_2^{(1)}) \varphi(X_3^{(1)}) 
\] 
in general.

Next we shall compute some cumulants. Straightforward calculations yield
\begin{align*}
\varphi_{\hat{1}_1} (N. X_1) &= N \varphi(X_1), \\
\varphi_{\hat{1}_2}(N.X_1,N.X_2) &= N \varphi(X_1X_2) + N(N-1) \varphi(X_1) \varphi(X_2), \\
  \varphi_{\hat{1}_3}(N.X_1, N.X_2, N.X_3)
    &= \begin{multlined}[t]
      N \varphi(X_1 X_2 X_3) + N(N-1) \varphi(X_1 X_2) \varphi(X_3)
      \\
      +  N(N-1) \varphi(X_1) \varphi(X_2 X_3)
      \\
      + N(N-1) (N-2) \varphi(X_1)\varphi( X_2) \varphi(X_3),  
    \end{multlined}
\end{align*}
so that the coefficients of $N$ yield the cumulants  
\begin{align*}
K_1 (X_1) &= \varphi(X_1), \\
K_2(X_1,X_2) &=  \varphi(X_1X_2) - \varphi(X_1) \varphi(X_2), \\
K_3(X_1, X_2, X_3) &= \varphi(X_1 X_2 X_3) - \varphi(X_1 X_2) \varphi(X_3) - \varphi(X_1) \varphi(X_2 X_3) + 2 \varphi(X_1)\varphi( X_2) \varphi(X_3).   
\end{align*}
On the other hand, for $\pi = \{\{1,3\},\{2\}\}$ we have 
\begin{align*}
\varphi_{\pi}(N.X_1, N.X_2, N.X_3) 
  &= \sum_{\substack{ i_1,i_2,i_3 \\ i_1 = i_3}}     \sum_{i_1 = i_3}
      \varphi_{\pi}(X_1^{(i_1)}, X_2^{(i_2)}, X_3^{(i_3)})
    + \sum_{\substack{i_1,i_2,i_3 \\ i_1 \ne i_3}}
       \varphi_{\pi}(X_1^{(i_1)}, X_2^{(i_2)}, X_3^{(i_3)})  \\
  &= \sum_{\substack{i_1,i_2,i_3 \\ i_1 = i_3}}
  \underbrace{\varphi_{\pi}(X_1, X_2, X_3)}_{\text{$=0$}}
  + \sum_{\substack{i_1,i_2,i_3  \\ i_1 \ne i_3}}
     \varphi_{\hat{0}_3}(X_1, X_2, X_3) \\
&= 
N^2(N-1) \varphi(X_1)\varphi(X_2) \varphi(X_3), 
\end{align*}
so that 
\[
K_{\pi}(X_1, X_2, X_3) = - \varphi(X_1)\varphi(X_2) \varphi(X_3), 
\]
which does not vanish in general although the underlying set partition $\pi$ is not interval. This does not contradict Lemma~\ref{lem:factorization} 
because condition \eqref{eq:lem:factorization1} is not satisfied.
Indeed, the only possible identity would be
$\phi_{\{13/2\}}(Y_1,Y_2,Y_3) = c \phi_{\{1/2/3\}}(Y_1,Y_2,Y_3)$
for every $Y_j\in\bigcup \iota^{(i)}(\alg{A})$.
However the combinations
$\phi_{\{13/2\}}(X_1^{(1)},X_2^{(1)},X_3^{(1)}) =
\phi_{\{13/2\}}(X_1,X_2,X_3) = 0$
and 
$\phi_{\{13/2\}}(X_1^{(1)},X_2^{(2)},X_3^{(3)}) =
\phi_{\{1/2/3\}}(X_1,X_2,X_3) =  \phi(X_1) \phi(X_2) \phi(X_3)$
show that there is no universal constant $c$ satisfying
the requirement.
\end{exa}

\begin{remark}
  We already verified in Section~\ref{sssec:vmonotone} that 
  Dacko's V-monotone independence  \cite{Dacko:2019} satisfies the axioms of a
  spreadability system, but  is non-associative.
  On the other hand, it also
  falls into the framework of the tree operad of Jekel and Liu
  \cite{JekelLiu:2020:operad} with the corresponding cumulants
  and it is an interesting question how these cumulants are related.
\end{remark}

\section{Partial cumulants and differential equations} \label{sec:partialcumulant} 
Neither the defining formula  (Definition~\ref{def:cumulants})
nor the M\"obius formula (Theorem~\ref{thm:mc}) are suitable
for the efficient calculation of cumulants of higher orders.
In the case of exchangeability systems recursive formulas are available
which are more adequate for this purpose; see \cite[Proposition~3.9]{L04}.
In the classical case, the recursion reads as follows:
$$
K_n^\tensorT(X_1,X_2,\dots,X_n) = 
\IE X_1X_2\dotsm X_n - \sum_{\substack{A\subsetneqq [n]\\ 1\in A}} K^\tensorT_{\abs{A}}(X_i:i\in A) \IE\prod_{j\in A^c}X_j
.
$$
In the univariate case this is the familiar formula
$$
\kappa_n = m_n - \sum_{k=1}^{n-1}\binom{n-1}{k-1}\kappa_km_{n-k}
$$
which for normal random variables specifies to
\emph{Stein's method}.

In the free case the recursive formula reads
\begin{equation}
  \label{eq:freerecursion}
  K_n^\freeF(X_1,X_2,\dots,X_n) 
  = \phi(X_1X_2\dotsm X_n) 
     - \sum_{\substack{A\subsetneqq [n]\\ 1\in A}} 
     K^\freeF_{\abs{A}}(X_i:i\in A)
     \,
     \phi_{\intmax(A^c)}(X_j \mid j\in A^c)
\end{equation}
(see Definition~\ref{app:def:ncmax})
which has been recently considered under the name of ``splitting process''
in the realm of combinatorial Hopf algebras \cite{EbrahimiFardPatras:2016:splitting}.

Turning to our general setting we note that already in the case of monotone
probability we lack a simple recursive formula; 
however the first author and Saigo~\cite{HS11b,HS11a} found
a good replacement in terms of differential equations.
Differential equations also play a major role in free probability, 
for example the complex Burgers' equation and its generalizations
appear in the context of free L\'evy processes \cite{V86}. 
In this section we will unify these differential equations from the point of
view of spreadability systems. 

\subsection{Recursive differential equations for evolution of moments}
 Throughout this subsection, let $(\alg{U}, \tilde \varphi, (\iota^{(i)})_{i\in\N})$ be a spreadability system for a $\alg B$-ncps $(\alg A, \varphi)$. 
For $\pi=(P_1,\dots, P_p)\in\OSP_n$ we have observed in Theorem~\ref{thm:iidsum} 
that $\phi_\pi(N_{\pi(1)}. X_1, \dots, N_{\pi(n)}.X_n)$ is a polynomial in
$N_1,\dots, N_p$ and we may formally replace $N_1, \cdots, N_p$ with real
numbers $t_1,\dots, t_p$.
Thus we obtain formal multivariate moment polynomials
\begin{equation*}
\phi^{\ut}_\pi(X_1,\dots, X_n):= \phi_\pi(t_{\pi(1)}. X_1, \dots, t_{\pi(n)}.X_n),\qquad \ut \in \R^p. 
\end{equation*}
We will now establish recursive differential equations for these moment polynomials. 
By Definition~\ref{def:cumulants}, our cumulants are given by  
 \begin{equation}
\label{eq:Kpidiff}
 K_\pi(X_1,\dots, X_n) = \left.\frac{\partial^p}{\partial t_1\partial t_2\dotsm\partial t_p}\right\rvert_{\ut=(0,\dots, 0)} \phi_\pi^{\underline{t}}(X_1,X_2,\dots,X_n)
. 
 \end{equation}
In order to get recursive differential equations we need a refinement of cumulants 
which we call \emph{partial cumulants}. 
They are obtained by taking the derivatives in \eqref{eq:Kpidiff} one at a time.

\begin{defi} Let $\pi=(P_1,P_2,\dots, P_p) \in \OSP_n, \ut=(t_1,\dots, t_p) \in \R^p, j \in[p]$.  We define the \emph{partial cumulant} to be the polynomial
$$
K_{\pi, P_j}^{(t_1,\dots, t_{j-1},1,t_{j+1}, \dots, t_p)}(X_1,\dots,X_n)
=
\left.\frac{\partial }{\partial t_j}\right\rvert_{t_j=0}\phi_{\pi}^{\ut}(X_1,\dots, X_n).
$$ 
\end{defi}
Applying the binomial formula to specific blocks
similar to the proofs of Theorems~\ref{thm:iidsum} 
and~\ref{thm:mc}
it is easy to derive the following explicit expression for the partial cumulants.

\begin{prop}
  Let $\pi=(P_1,P_2,\dots, P_p) \in \OSP_n$, $\ut=(t_1,\dots, t_p) \in \R^p$, $j \in [p]$. 
  Then 
  \begin{multline*}
    K_{\pi, P_j}^{(t_1,\dots, t_{j-1},1,t_{j+1}, \dots, t_p)}(X_1,\dots,X_n)
    \\
    = \sum_{\sigma \in \OSP_{P_j}}\phi_{(P_1,P_2,\dots,P_{j-1},\sigma,P_{j+1},\dots,P_p)}^{(t_1,\dots, t_{j-1},1, t_{j+1}, \dots, t_p)}(X_1,\dots, X_n)\, \widetilde{\mu}(\sigma, \hat{1}_{P_j}).  
  \end{multline*}
\end{prop}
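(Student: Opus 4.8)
The plan is to expand $\phi_\pi^{\ut}$ by the i.i.d.\ sum formula of Theorem~\ref{thm:iidsum}, differentiate once in $t_j$, and then reorganize the resulting sum. First I would record that, formally substituting indeterminates $t_i$ for the integers $N_i$ in Theorem~\ref{thm:iidsum},
\[
\phi_\pi^{\ut}(X_1,\dots,X_n) = \sum_{\sigma\leq\pi}\phi_\sigma(X_1,\dots,X_n)\prod_{i=1}^p\binom{t_i}{\sharp(\sigma\restr_{P_i})}.
\]
Only the $i=j$ factor depends on $t_j$, so differentiating in $t_j$ and setting $t_j=0$, together with the elementary identity $\left.\frac{d}{dt}\binom{t}{k}\right|_{t=0}=\frac{(-1)^{k-1}}{k}$ for $k\ge1$ (exactly as in the proof of Theorem~\ref{thm:mc}; here $k=\sharp(\sigma\restr_{P_j})\ge1$ because $P_j\ne\emptyset$), yields
\[
K_{\pi,P_j}^{(t_1,\dots,1,\dots,t_p)}(X_1,\dots,X_n)
= \sum_{\sigma\leq\pi}\phi_\sigma(X_1,\dots,X_n)\,\frac{(-1)^{\sharp(\sigma\restr_{P_j})-1}}{\sharp(\sigma\restr_{P_j})}\prod_{i\ne j}\binom{t_i}{\sharp(\sigma\restr_{P_i})}.
\]
By formula~\eqref{eq:mutilde} applied to the interval $[\sigma\restr_{P_j},\hat1_{P_j}]$ in $\OSP_{P_j}$, the scalar $\frac{(-1)^{\sharp(\sigma\restr_{P_j})-1}}{\sharp(\sigma\restr_{P_j})}$ equals $\widetilde\mu(\sigma\restr_{P_j},\hat1_{P_j})$.

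Next I would group the sum over $\{\sigma\leq\pi\}$ according to the value $\tau:=\sigma\restr_{P_j}\in\OSP_{P_j}$ and factor out $\widetilde\mu(\tau,\hat1_{P_j})$, reducing the claim to the identity
\[
\sum_{\substack{\sigma\leq\pi\\\sigma\restr_{P_j}=\tau}}\phi_\sigma(X_1,\dots,X_n)\prod_{i\ne j}\binom{t_i}{\sharp(\sigma\restr_{P_i})}
= \phi_{\pi_\tau}^{(t_1,\dots,1,\dots,t_p)}(X_1,\dots,X_n),
\]
where $\pi_\tau:=(P_1,\dots,P_{j-1},S_1,\dots,S_m,P_{j+1},\dots,P_p)\in\OSP_n$ is obtained from $\pi$ by replacing the block $P_j$ with the ordered blocks $(S_1,\dots,S_m)=\tau$, and the superscript assigns $t_i$ to $P_i$ for $i\ne j$ and $1$ to each $S_a$. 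To prove this I would apply Theorem~\ref{thm:iidsum} to $\pi_\tau$: each block $S_a$ contributes a factor $\binom{1}{\sharp(\rho\restr_{S_a})}$, which vanishes unless $\rho$ keeps $S_a$ as a single block, so the sum collapses to those $\rho\leq\pi_\tau$ with $\rho\restr_{P_j}=\tau$, on which the surviving product is $\prod_{i\ne j}\binom{t_i}{\sharp(\rho\restr_{P_i})}$. It then remains to observe that $\pi_\tau\leq\pi$ and that $\{\rho\leq\pi_\tau:\rho\restr_{P_j}=\tau\}=\{\rho\leq\pi:\rho\restr_{P_j}=\tau\}$; the nontrivial inclusion is ``$\supseteq$'', which holds because for $\rho\leq\pi$ the blocks of $\rho$ contained in $P_j$ occupy a contiguous run in the ordering of $\rho$ (after all blocks inside $P_1,\dots,P_{j-1}$ and before all blocks inside $P_{j+1},\dots,P_p$) and, when $\rho\restr_{P_j}=\tau$, equal $S_1,\dots,S_m$ in that order, so every block of $\pi_\tau$ is a union of consecutive blocks of $\rho$.

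The step I expect to be the main obstacle is this last combinatorial identification: pinning down the superscript convention for $\phi_{\pi_\tau}$ (the single ``$1$'' being inherited by all blocks of $\tau$), checking that $\pi_\tau$ is a genuine element of $\OSP_n$ with $\pi_\tau\leq\pi$, and verifying the equality of the two fibers $\{\rho\leq\pi_\tau:\rho\restr_{P_j}=\tau\}$ and $\{\rho\leq\pi:\rho\restr_{P_j}=\tau\}$. Everything else --- the binomial-coefficient derivative, the recognition of $\widetilde\mu$, and the regrouping --- is routine and closely parallels the proofs of Theorems~\ref{thm:iidsum} and~\ref{thm:mc}, as the statement of the proposition already suggests.
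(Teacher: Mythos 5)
Your proof is correct and follows essentially the route the paper intends: the paper omits a detailed argument, remarking only that the statement follows by applying the binomial expansion of Theorem~\ref{thm:iidsum} to specific blocks and extracting the linear coefficient as in Theorem~\ref{thm:mc}, which is exactly what you do. The only difference is organizational — you expand over all $\sigma\leq\pi$ and then recollapse the blocks outside $P_j$ using the vanishing of $\binom{1}{k}$ for $k\geq 2$, rather than expanding the dot operation only within the block $P_j$ — and your verification of the fiber identity $\{\rho\leq\pi_\tau:\rho\restr_{P_j}=\tau\}=\{\rho\leq\pi:\rho\restr_{P_j}=\tau\}$ is sound.
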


We are now ready to establish partial differential equations for the evolution
of partitioned moments.

\begin{thm}\label{thm diff-eq} For $\pi =(P_1,P_2,\dots, P_p) \in \OSP_n$,
  $\ut =(t_1,t_2,\dots, t_p)$ and $j \in [p]$ we have 
\begin{align}
\frac{\partial}{\partial t_j} \phi^{\ut}_\pi(X_1,\dots, X_n) 
&= \sum_{\emptyset \neq A \subseteq P_j}K_{(P_1,\dots, P_{j-1}, A, P_j\setminus A, P_{j+1}, \dots, P_p),A}^{(t_1,\dots, t_{j-1}, 1, t_j, t_{j+1}, \dots, t_p)}(X_1,\dots, X_n)  \label{eq diff1}\\
&= \sum_{\emptyset \neq A \subseteq P_j}K_{(P_1,\dots, P_{j-1}, P_j\setminus A, A, P_{j+1}, \dots, P_p),A}^{(t_1,\dots, t_{j-1}, t_j,1, t_{j+1}, \dots, t_p)}(X_1,\dots, X_n). \label{eq diff2}
\end{align}
\end{thm}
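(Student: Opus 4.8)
The plan is to compute $\frac{\partial}{\partial t_j}\phi_\pi^{\ut}$ directly from the definition
$\phi_\pi^{\ut}(X_1,\dots,X_n) = \phi_\pi(t_{\pi(1)}.X_1,\dots,t_{\pi(n)}.X_n)$, which by Theorem~\ref{thm:iidsum} is the polynomial $\sum_{\sigma\leq\pi}\phi_\sigma(X_1,\dots,X_n)\,\beta_{\ut}(\sigma,\pi)$, where $\beta_{\ut}(\sigma,\pi)=\prod_{i=1}^p\binom{t_i}{\sharp(\sigma\restr_{P_i})}$. Since only the $i=j$ factor of $\beta_{\ut}$ depends on $t_j$, differentiation in $t_j$ acts only on $\binom{t_j}{\sharp(\sigma\restr_{P_j})}$. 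The first step is therefore to isolate this $j$-th factor and recognize that $\frac{\partial}{\partial t_j}\binom{t_j}{k}$ is, combinatorially, the generating function identity behind splitting off one new copy out of the $t_j$ copies associated to block $P_j$. Concretely, I would mimic the binomial bookkeeping in the proofs of Theorems~\ref{thm:iidsum} and~\ref{thm:mc}: write $t_j.(\,\cdot\,)$ as $1.(\,\cdot\,) + (t_j-1).(\,\cdot\,)$ on the indices belonging to $P_j$ — i.e. distinguish one ``new'' replica from the remaining $t_j-1$ — and expand.

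The second step is to turn this expansion into partitioned cumulants. When we single out one fresh index on the positions in $P_j$, that index labels a nonempty subset $A\subseteq P_j$ (the positions that happen to take the new value), and the quasi-meet operation forces $P_j$ to split into the two sub-blocks $A$ and $P_j\setminus A$. Tracking the order: the fresh index can be chosen to be either strictly smaller or strictly larger than all old indices of $P_j$, and this is exactly the choice between placing the block $A$ before $P_j\setminus A$ (giving~\eqref{eq diff1}) or after it (giving~\eqref{eq diff2}); spreadability (more precisely Lemma~\ref{lem:quasimeetshift}) guarantees both choices yield the same value of $\frac{\partial}{\partial t_j}\phi_\pi^{\ut}$, which is why the two formulas coincide. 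The sum over $\sigma\restr_{P_j}$ that comes with the remaining $t_j-1$ copies, weighted by the derivative-of-binomial coefficients $\widetilde\mu(\cdot,\hat1)$, is precisely the explicit formula for the partial cumulant $K^{(\dots,1,t_j,\dots)}_{(P_1,\dots,A,P_j\setminus A,\dots),A}$ established in the Proposition immediately preceding the theorem. Assembling these pieces yields the right-hand side of~\eqref{eq diff1}, and symmetrically~\eqref{eq diff2}.

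I expect the main obstacle to be purely bookkeeping: correctly matching the generalized-binomial derivative identity — differentiating $\binom{t_j}{m}$ in $t_j$ produces $\binom{t_j}{m}\sum_{r=0}^{m-1}\frac{1}{t_j-r}$, which is not itself a clean binomial, so one must instead argue combinatorially via the ``one new copy versus $t_j-1$ old copies'' decomposition rather than by differentiating a closed form. In other words, the cleanest route is: expand $\phi_\pi(\dots,t_{\pi(i)}.X_i,\dots)$ by conditioning, on the positions of $P_j$, on the value taken relative to a single distinguished replica, sum over the subset $A\subseteq P_j$ hitting that replica, and recognize the residual sum as a partial cumulant. The delicate point is verifying that the refinement induced on the other blocks $P_i$ ($i\neq j$) is untouched — this is where Proposition~\ref{prop:OSPintervals} and the compatibility of $\curlywedge$ with restriction (Proposition~\ref{prop:quasimeet}) are used — and that the ordering conventions in the quasi-meet place the new sub-block exactly where~\eqref{eq diff1} and~\eqref{eq diff2} claim. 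Once that is pinned down, the two identities drop out together, the difference between them being only the order-preserving relabeling covered by Lemma~\ref{lem:quasimeetshift}.
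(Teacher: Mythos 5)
Your overall strategy (split the copies attached to the block $P_j$ into ``old'' and ``new'' ones, observe that the position of the new ones relative to the old ones is what distinguishes \eqref{eq diff1} from \eqref{eq diff2}, and invoke Lemma~\ref{lem:quasimeetshift} to normalize labels) is the right one and is essentially the paper's. But the specific decomposition you commit to does not deliver the theorem. You propose to write $t_j.(\cdot)=1.(\cdot)+(t_j-1).(\cdot)$, i.e.\ to single out one of the \emph{existing} $t_j$ copies. For integer $N_j$ this gives the identity
\begin{equation*}
\phi_\pi^{\ut}(X_1,\dots,X_n)
=\sum_{A\subseteq P_j}\phi_{(P_1,\dots,P_{j-1},A,\,P_j\setminus A,P_{j+1},\dots,P_p)}^{(t_1,\dots,t_{j-1},\,1,\,t_j-1,\,t_{j+1},\dots,t_p)}(X_1,\dots,X_n),
\end{equation*}
and two things go wrong when you try to read off the theorem from it. First, no derivative has been taken anywhere: the left-hand side is $\phi_\pi^{\ut}$ itself, not $\partial_{t_j}\phi_\pi^{\ut}$, and the theorem is a statement about the derivative. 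Second, the summands on the right are partitioned \emph{moments} evaluated at parameter $1$ for the split-off block $A$ (and at $t_j-1$, not $t_j$, for $P_j\setminus A$); these are not equal to the partial cumulants $K_{\cdot\,,A}^{(\dots,1,\dots)}$, which by definition are derivatives at $0$ in the block-$A$ parameter, and they do not carry the M\"obius weights $\widetilde\mu(\sigma,\hat 1_A)$ appearing in the Proposition you want to match against. So the identification ``residual sum $=$ partial cumulant'' in your second step fails, and the off-by-one in the parameters is a symptom of the same problem.

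The missing idea is an auxiliary \emph{increment} variable. For integers one should add $m$ \emph{fresh} copies on top of the $N_j$ existing ones and expand $\delta_{[N_j+m]}^{\otimes P_j}=\sum_{A\subseteq P_j}\delta_{[N_j]}^{\otimes P_j\setminus A}\delta_{N_j+[m]}^{\otimes A}$; after splitting the block and removing the shift via Lemma~\ref{lem:quasimeetshift}, this yields
\begin{equation*}
\phi_\pi^{\ut+se_j}(X_1,\dots,X_n)
=\sum_{A\subseteq P_j}\phi_{(P_1,\dots,P_{j-1},\,P_j\setminus A,\,A,\,P_{j+1},\dots,P_p)}^{(t_1,\dots,t_{j-1},\,t_j,\,s,\,t_{j+1},\dots,t_p)}(X_1,\dots,X_n),
\end{equation*}
first for integers and then, since both sides are polynomials (Theorem~\ref{thm:iidsum}), for all real $s$. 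Differentiating in $s$ at $s=0$, the $A=\emptyset$ term is $s$-independent and drops out, and each nonempty-$A$ term gives, \emph{by definition} of the partial cumulant, exactly the summand in \eqref{eq diff2}; the complementary expansion that labels the new copies below the old ones gives \eqref{eq diff1}. Note in particular that the explicit $\widetilde\mu$-formula for partial cumulants is not needed at all; the definition as a derivative at $0$ suffices. Your plan becomes correct once you replace ``one distinguished existing copy'' by ``$s$ fresh copies, then $\partial_s|_{s=0}$''.
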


\begin{proof}
  The main ingredient here is the invariance principle of Lemma~\ref{lem:quasimeetshift}. 
  Recall the delta/dot operation from Definition~\ref{def:dot}\ref{eq:deltaAX}, 
  tensor notations from Section~\ref{Notation} and recall that for
  any partition $\pi=(P_1,P_2,\dots, P_p) \in \OSP_n$ the multilinear map $ \phi_\pi^{\ut}\colon \alg{A}^n \to \C$ is identified with the linear lifting
  $\tilde{\phi}_\pi^{\ut}\colon \alg{A}^{\otimes n} \to \C$.
  Similar to \eqref{notation:linear},
  for a linear map $L\colon \alg{A}\to\alg{A}$ we
  we adopt the notation
  $L^{\otimes  P}\colon \alg{A}^{\otimes n} \to \alg{B}^{\otimes n}$ for the linear map
  \begin{equation*}
L^{\otimes P}=I\otimes I\otimes\dots\otimes L\otimes I\otimes \dots\otimes L\otimes I\otimes\dots\otimes L\otimes\dots\otimes I
  \end{equation*}
  with $L$ appearing exactly at position $j$ for every $j\in P$. 
  Let
  $\bfX=X_1\otimes X_2\otimes \dots\otimes X_n$, let $k + [m]:=\{k+1,k+2,\dots, k+m\}$ and let $e_j \in \R^p$ be the $j$th unit vector. Then, for each $j \in [p]$, 
  \begin{equation}\label{eq901}
   \phi_\pi^{\underline{N}+me_j}(X_1,\dots, X_n) 
   =   \phi_\pi^{\underline{N}+me_j}(\bfX)  
      = \phi_\pi\left(      \left(\delta_{[N_j+m]}^{\otimes P_j} \prod_{i\ne j}
          \delta_{[N_i]}^{\otimes P_i}\right) (\bfX)\right)
      ;
    \end{equation}
    after expansion we obtain
\begin{equation}\label{eq9010}
\delta_{[N_j+m]}^{\otimes P_j} = (\delta_{[N_j]}+\delta_{N_j+[m]})^{\otimes P_j}
= \sum_{A\subseteq P_j} \delta_{[N_j]}^{\otimes P_j \setminus A} \delta_{N_j+[m] }^{\otimes A}
\end{equation}
and thus \eqref{eq901} is equal to
\begin{align}\label{eq902}
&= \sum_{A\subseteq P_j} 
       \phi_\pi\!\left(
        \left(\delta_{[N_j]}^{\otimes P_j \setminus A} \delta_{N_j+[m] }^{\otimes A}  \prod_{i\ne j} \delta_{[N_i]}^{\otimes P_i}\right) (\bfX)
         \right).
\intertext{This is a sum of $\phi_\pi$'s with entries $X_{k}^{(i_k)}, k\in[n]$, 
where the indices $i_k$ with $k\in A$ are strictly larger than the indices
  $i_k$ with $k\in P_j \setminus A$, and therefore we may split the block $P_j \in \pi$ into two parts:
}
\nonumber
&= \sum_{A\subseteq P_j} 
       \phi_{(P_1,P_2,\dots,P_{j-1},P_j\setminus A, A, P_{j+1},\dots,P_p)}\left(
        \left( \delta_{[N_j]}^{\otimes P_j \setminus A} \delta_{N_j+[m] }^{\otimes A} \prod_{i\ne j} \delta_{[N_i]}^{\otimes P_i}\right) (\bfX) 
         \right). 
\intertext{
  Now by Lemma~\ref{lem:quasimeetshift} 
  the local shift in $A$ can be omitted without changing the value 
  and we obtain
}
\label{eq9030}
&=  \sum_{A\subseteq P_j} 
       \phi_{(P_1,P_2,\dots,P_{j-1},P_j\setminus A, A, P_{j+1},\dots,P_p)}\left(
        \left( \delta_{[N_j]}^{\otimes P_j \setminus A} \delta_{[m]}^{\otimes A} \prod_{i\ne j} \delta_{[N_i]}^{\otimes P_i}\right) (\bfX) 
         \right)
\\
    &= \sum_{A\subseteq P_j} 
       \phi_{(P_1,P_2,\dots,P_{j-1},P_j\setminus A, A, P_{j+1},\dots,P_p)}^{(N_1,N_2,\dots,N_{j-1},N_j,m,N_{j+1},\dots,N_p)}
      (\bfX).
      \nonumber
\end{align}
The analytic extension of this identity is
  \begin{align*}
    \phi_\pi^{\underline{t}+se_j}(\bfX)
    &= \sum_{A\subseteq P_j} 
       \phi_{(P_1,P_2,\dots,P_{j-1},P_j\setminus A, A, P_{j+1},\dots,P_p)}^{(t_1,t_2,\dots,t_{j-1},t_j,s,t_{j+1},\dots,t_p)}
       (\bfX)\\
    &= \phi_\pi^{\underline{t}}(\bfX)
       +
       \sum_{\emptyset\ne A\subseteq P_j} 
       \phi_{(P_1,P_2,\dots,P_{j-1},P_j\setminus A,A, P_{j+1},\dots,P_p)}^{(t_1,t_2,\dots,t_{j-1},t_j,s,t_{j+1},\dots,t_p)}
       (\bfX),  
  \end{align*}
  and the derivative satisfies the derived identity
 \eqref{eq diff2}: 
  $$
  \frac{\partial}{\partial t_j} \phi_\pi^{\underline{t}}(\bfX)
  =
  \left.
    \frac{\partial}{\partial s}
  \right\rvert_{s=0}
  \phi_\pi^{\underline{t}+se_j}(\bfX)
  = \sum_{\emptyset\ne A\subseteq P_j}
     K_{(P_1,P_2,\dots,P_{j-1},P_j\setminus A,A,P_{j+1},\dots,P_p)}^{(t_1,t_2,\dots,t_{j-1},t_j,1,t_{j+1},\dots,t_p)}
       (\bfX). 
  $$
  In order to prove the first differential equation \eqref{eq diff1}
  we replace \eqref{eq9010} by the complementary expansion
  \begin{equation*}
\delta_{[N_j+m]}^{\otimes P_j} = (\delta_{[m]}+\delta_{m+[N_j]})^{\otimes P_j}
= \sum_{A\subseteq P_j} \delta_{[m]}^{\otimes A} \delta_{m+[N_j]}^{\otimes P_j \setminus A}, 
\end{equation*}
and following the lines of \eqref{eq902}--\eqref{eq9030} we obtain 
\begin{multline*}
   \phi_\pi^{\underline{N}+me_j}(X_1,\dots, X_n) \\
   \begin{aligned}[t]
&= \sum_{A\subseteq P_j} 
       \phi_\pi\!\left(
        \left( \delta_{[m]}^{\otimes A} \delta_{m+[N_j]}^{\otimes P_j \setminus A}  \prod_{i\ne j} \delta_{[N_i]}^{\otimes P_i}\right) (\bfX)
         \right) 
   \\
  &=  \sum_{A\subseteq P_j} 
       \phi_{(P_1,P_2,\dots,P_{j-1},A, P_j\setminus A, P_{j+1},\dots,P_p)}\left(
        \left( \delta_{[m]}^{\otimes A} \delta_{[N_j]}^{\otimes P_j \setminus A}\prod_{i\ne j} \delta_{[N_i]}^{\otimes P_i}\right) (\bfX) 
         \right)
\\
    &= \sum_{A\subseteq P_j} 
       \phi_{(P_1,P_2,\dots,P_{j-1},A,P_j\setminus A, P_{j+1},\dots,P_p)}^{(N_1,N_2,\dots,N_{j-1},m,N_j,N_{j+1},\dots,N_p)}
       (\bfX).      
   \end{aligned}
\end{multline*}
By analytic continuation, we may replace $m$ and $N_i$ with $s$ and $t_i$ respectively. Taking the derivative with respect to $s$ at $0$ we obtain \eqref{eq diff1}. 
\end{proof}

\begin{remark}
  In the case of exchangeability both differential equations coincide.   
\end{remark}

\subsection{Examples from universal products}

We consider specializations of the differential
equations of Theorem~\ref{thm diff-eq} to various spreadability systems arising
from universal products. 
In all these examples an expansion of the form  \eqref{eq:lem:factorization1}
holds for partitioned expectations $\phi_\pi^{\ut}$ 
and therefore it suffices to consider $\pi=\hat{1}_n$, i.e.,
the expectation $\phi^t(X_1,\dots, X_n) := \phi((t.X_1)(t.X_2)\cdots (t.X_n))$.
\begin{exa}[Tensor independence]
  Consider the tensor spreadability system   $\cS_{\tensorT}$.
  It satisfies the factorization property   \eqref{eq:otimesindep:factor}
  and therefore 
  \begin{equation*}
    \varphi_{(A,A^c)}^{(t_1,t_2)}(X_1,\dots, X_n) = \phi^{t_1}(X_A)\, \phi^{t_2}(X_{A^c}), 
  \end{equation*}
  so we get 
  \begin{equation*}
    K_{(A,A^c), A}^{(1,t_2)}(X_1,X_2,\dots, X_n) 
    = \left.\frac{\partial}{\partial t_1}\right\rvert_{t_1=0} \phi^{t_1}(X_A) \,\phi^{t_2}(X_{A^c}) 
    = K_{|A|}^{\tensorT}(X_A)\, \phi^{t_2}(X_{A^c}). 
  \end{equation*}
  The  identities in Theorem~\ref{thm diff-eq} (for $\pi=\hat{1}_n$) read
  \begin{equation}\label{eqTdiff}
    \frac{d}{d t} \phi^t (X_1,\dots, X_n) = \sum_{\emptyset \neq A \subseteq [n]} K_{|A|}^{\tensorT}(X_A) \,\phi^{t}(X_{A^c}). 
  \end{equation}

This differential equation can be translated to (exponential) 
generating functions as follows.
Given a vector $\uu=(u_1,u_2,\dots, u_n)$  of \emph{commuting} indeterminates 
and a vector $\uX=(X_1,X_2,\dots, X_n)$ of random variables
we define the exponential moment generating function
\begin{align*}
\egf_{\uX}^t(\uu)
&:= 1+\sum_{\substack{(p_1,\dots, p_n) \in (\N \cup \{0\})^n \\  (p_1,\dots, p_n)\neq0}} 
  \frac{u_1^{p_1}\cdots u_n^{p_n}}{p_1! \cdots p_n!}\,
  \phi^t(\underbrace{X_1,\dots, X_1}_{\text{$p_1$ times}}, \dots, \underbrace{X_n,\dots X_n}_{\text{$p_n$ times}})\\
&= [\phi (e^{u_1 X_1} \cdots e^{u_n X_n})]^t
\end{align*}
and the exponential cumulant generating function as the logarithm of the previous
\begin{align*}
\lap_{\uX}(\uu)
&:=  \sum_{\substack{(p_1,\dots, p_n) \in (\N \cup \{0\})^n \\ (p_1,\dots, p_n)\neq0}} 
\frac{u_1^{p_1}\cdots u_n^{p_n}}{p_1! \cdots p_n!}
\,
K_{p_1+\dots +p_n}^{\tensorT}(\underbrace{X_1,\dots, X_1}_{\text{$p_1$ times}}, \dots, \underbrace{X_n,\dots X_n}_{\text{$p_n$ times}})\\
&= \log[\egf_{\uX}^1(\uu)]. 
\end{align*}
Then one can prove that  
\begin{equation*}
\frac{d}{dt} \egf_{\uX}^t(\uu) = \lap_{\uX}(\uu) \, \egf_{\uX}^t(\uu), 
\end{equation*}
which is equivalent to \eqref{eqTdiff}. Note that the functions $\lap_{\uX}(\uu)$ and $\egf_{\uX}^t(\uu)$ commute. 
\end{exa}

\begin{exa}[Boolean independence]\label{ex:Boole} 
  In the Boolean spreadability system $\cS_{\boolB}$ we have
  \begin{equation*}
    \varphi_{(A,A^c)}^{(t_1,t_2)}(X_1,X_2,\dots, X_n)
    = \prod_{P \in \outintmax(A)}\phi^{t_1}(X_P) \prod_{Q\in \outintmax(A^c)}\phi^{t_2}(X_Q).
  \end{equation*}
  where $\outintmax(A)$ is  the interval partition
  constructed in Definition~\ref{app:def:ncmax}
  and 
  consists of the contiguous subintervals of $A$.
  It follows that 
  \begin{equation*}
      K_{(A,A^c), A}^{(1,t_2)}(X_1,X_2\dots, X_n) 
      = 
      \begin{cases}
        K_{|A|}^{\boolB}(X_A)  \prod_{Q\in \outintmax(A^c)}\phi^{t_2}(X_Q), & \text{if $|\outintmax(A)|=1$}, \\ 
        0, & \text{if $|\outintmax(A)|>1$}.  
      \end{cases}
  \end{equation*}
The identities in Theorem~\ref{thm diff-eq} (for $\pi=\hat{1}_n$) coincide
and read
\begin{equation}\label{eqBdiff}
\frac{d}{d t} \phi^t (X_1,X_2,\dots, X_n) = \sum_{A: \text{~interval of~} [n]} K_{|A|}^{\boolB}(X_A) \prod_{Q\in \outintmax(A^c)}\phi^{t}(X_Q). 
\end{equation}
This differential equation can be interpreted in terms of generating functions
in {\it noncommuting} indeterminates $z_1,\dots, z_n$. 
To this end we define noncommutative formal power series  
\begin{align*}
M_{\uX}^t (\uz) 
&=1 +
 \sum_{m=1}^\infty \sum_{i_1, \cdots, i_m = 1} ^n 
\varphi^t(X_{i_1},X_{i_2},\dots, X_{i_m})
\,z_{i_1}\cdots z_{i_m}
\intertext{and }
K_{\uX}^{\boolB} (\uz) 
&= \left.\frac{\partial}{\partial t}\right\rvert_0 M_{(X_1,X_2,\dots, X_n)}^t (\uz) \\
&= \sum_{m=1}^\infty \sum_{i_1, \cdots, i_m = 1} ^n 
K_m^{\boolB}(X_{i_1},\dots, X_{i_m})
\,
z_{i_1}\cdots z_{i_m}. 
\end{align*} 
Then differential equation \eqref{eqBdiff} is equivalent to the identity 
\begin{equation*}
\frac{d}{d t} M_{\uX}^t(\uz) 
= M_{\uX}^t(\uz)\, K_{\uX}^{\boolB} (\uz)\,  M_{\uX}^t(\uz). 
\end{equation*}
Note that $M_{\uX}^t(\uz)$ and $K_{\uX}^{\boolB} (\uz)$ do not commute. 

\end{exa}

\begin{exa}[Monotone independence]
  \label{ex:monotone-diff-eq}
  Consider the monotone spreadability system $\cS_{\monoM}$.
  With the notation $\outintmax(A)$ introduced in 
  Definition~\ref{app:def:ncmax} we have
\begin{equation*}
\varphi_{(A,A^c)}^{(t_1,t_2)}(X_1,X_2,\dots, X_n) = \phi^{t_1}(X_A) \prod_{B \in \outintmax(A^c)} \phi^{t_2}(X_B), 
\end{equation*}
using monotone independence.
Thus
\begin{align*}
  K_{(A,A^c), A}^{(1,t_2)}(X_1,X_2,\dots, X_n)
  &=  \left.\frac{\partial}{\partial t_1}\right\rvert_{t_1=0} \phi^{t_1}(X_A)
    \prod_{B \in \outintmax(A^c)} \phi^{t_2}(X_B)
    \\
    &= K_{|A|}^{\monoM}(X_A)\prod_{B \in \outintmax(A^c)} \phi^{t_2}(X_B). 
\end{align*}
The first identity in Theorem~\ref{thm diff-eq} (for $\pi=\hat{1}_n$) reads 
\begin{equation}\label{eq diff m1}
\frac{d}{d t} \phi^t (X_1,X_2,\dots, X_n) = \sum_{ \emptyset \neq A \subseteq [n]} K_{|A|}^{\monoM}(X_A) \prod_{B \in \outintmax(A^c)} \phi^{t_2}(X_B), 
\end{equation}
which is exactly the first identity in \cite[Corollary 5.2]{HS11b}. 

On the other hand 
\begin{align*}
K_{(A^c,A), A}^{(t_1,1)}(X_1,X_2,\dots, X_n) 
&=  \left.\frac{\partial}{\partial t_2}\right\rvert_{t_2=0} \phi^{t_1}(X_A) \prod_{B \in \outintmax(A^c)} \phi^{t_2}(X_B) \\
&= 
\begin{cases}
\phi^{t_1}(X_A)\, K_{|A^c|}^{\monoM}(X_{A^c}), & \text{if $|\outintmax(A^c)|=1$}, \\ 
0, & \text{if $|\outintmax(A^c)|>1$}.   
\end{cases}
\end{align*}
 Condition $|\outintmax(A^c)|=1$ holds if and only if $A^c$ is an interval. Therefore, the second equality in Theorem~\ref{thm diff-eq} (for $\pi=\hat{1}_n$) reads 
\begin{equation}\label{eq diff m2}
\frac{d}{d t} \phi^t (X_1,X_2,\dots, X_n) 
= \sum_{B: \text{~interval of~} [n]} 
  K_{|B|}^{\monoM}(X_B)\,\phi^t(X_{B^c}), 
\end{equation}
which is exactly the second equality in \cite[Corollary 5.2]{HS11b}. 

Results on generating functions in \cite{HS11b} 
correspond to these differential equations. For noncommutative indeterminates $z_1,\dots, z_n$, we define the cumulant generating function 
\begin{align*}
K_{\uX}^{\monoM} (\uz) 
&:= \left.\frac{\partial}{\partial t}\right\rvert_0 M_{\uX}^t (\uz) \\
&= \sum_{m=1}^\infty \sum_{i_1,i_2, \cdots, i_m = 1} ^n 
    K_m^{\monoM}(X_{i_1}X_{i_2}\cdots X_{i_m})
    \,
    z_{i_1}z_{i_2}\cdots z_{i_m}. 
\end{align*}
Then it is shown in \cite[Theorem 6.3]{HS11b} that 
\begin{equation}\label{eq m convolution}
M_{\uX}^{s+t} (\uz)  =  M_{\uX}^t (\uz)\, M_{\uX}^s (z_1M_{\uX}^t (\uz) , \dots, z_n M_{\uX}^t (\uz) ).  
\end{equation}
The partial derivatives of \eqref{eq m convolution} regarding $s$ at $0$ and $t$ at $0$ become \eqref{eq diff m1} and \eqref{eq diff m2}, respectively. 

\end{exa}

\begin{exa}[Free independence] 
  \label{ex:freediff}
Consider the free spreadability system $\cS_{\freeF}$. 
One can use the formula for products of free random variables
\cite[Theorem 14.4]{NicaSpeicher:2006:lectures} to show that 
for a nonempty subset $A\subseteq [n]$ we can expand
 \begin{equation*}
 \varphi_{(A,A^c)}(X_1,X_2,\dots, X_n) = \phi(X_A)\prod_{P\in \intmax(A^c)}\phi(X_P) + R, 
 \end{equation*}
 where
 $\intmax(A^c)$ is the partition defined in
 Definition~\ref{app:def:ncmax} \ref{app:it:ncmax}
 and every term in $R$ has at least two factors from $A$, i.e., factors of the form $\phi(X_{k_1}X_{k_2}\cdots X_{k_m})$, $k_1,k_2,\dots, k_m \in A$. For example,
$\phi_{\{\{2,4\}, \{1,3,5\}\}}(X_1,X_2,\dots, X_5) = \phi(X_2 X_4)\, \phi(X_1
X_5)\, \phi(X_3) + R$,
where every term in $R$ contains the factor $\phi(X_2)\,\phi(X_4)$. 
 This implies that 
 \begin{equation*}
\varphi_{(A,A^c)}^{(t_1,t_2)}(X_1,X_2,\dots, X_n) = \phi^{t_1}(X_A)\prod_{P\in
  \intmax(A^c)}\phi^{t_2}(X_P) +O(t_1^2) \qquad \text{as $t_1\to0$}, 
\end{equation*}
so by taking the partial derivative $ \left.\frac{\partial}{\partial t_1}\right\rvert_{t_1=0}$ we get 
\begin{equation*}
K_{(A,A^c), A}^{(1,t_2)}(X_1,X_2,\dots, X_n) = K_{|A|}^{\freeF}(X_A)\prod_{P\in \intmax(A^c)}\phi^{t_2}(X_P).   
\end{equation*}
This yields the differential equation 
\begin{equation}\label{diff:free}
\frac{d}{d t} \phi^t (X_1,X_2,\dots, X_n) = \sum_{\emptyset \neq A \subseteq [n]} K_{|A|}^{\freeF}(X_A)\prod_{P\in \intmax(A^c)}\phi^t(X_P),   
\end{equation}
which is similar to the monotone case \eqref{eq diff m1}. 

Denote by $\uz=(z_1,z_2, \dots, z_n)$ a vector of noncommuting indeterminates and
by $\uX=(X_1,X_2,\dots,X_n)$ a random vector. 
In order to obtain a differential equation we need in
addition the two-sided generating function
\begin{equation*}
\tilde{M}^t_{\uX}(\uz,w):= \sum_{p,q=0}^\infty\sum_{\substack{i_1,i_2,\dots, i_p \in[n] \\ j_1,j_2,\dots, j_q \in[n]}} \phi^t(X_{i_1},X_{i_2}, \dots, X_{i_p}, X_{j_1},X_{j_2}, \dots, X_{j_q})\,z_{i_1}z_{i_2}\cdots z_{i_p}w z_{j_1}z_{j_2}\cdots z_{j_q} 
\end{equation*}
as well as the $R$-transform
\begin{equation*}
R_{\uX}(\uz)=\sum_{j=1}^\infty \sum_{i_1, \cdots, i_j = 1} ^n K_j^{\freeF}(X_{i_1},X_{i_2},\cdots, X_{i_j})\,z_{i_1}z_{i_2}\cdots z_{i_j}. 
\end{equation*}
After some computations we infer from \eqref{diff:free} that 
\begin{equation}\label{diff:free2}
\frac{\partial }{\partial t} M^t_{\uX}(\uz)
  = \tilde{M}^t_{\uX}(\uz, R(z_1M^t_{\uX}(\uz),z_2 M^t_{\uX}(\uz) ,\dots, z_n M^t_{\uX}(\uz)) (M^t_{\uX}(\uz))^{-1}). 
\end{equation}
Note that by \cite[Corollary 16.16]{NicaSpeicher:2006:lectures} the second argument can be written as
\begin{equation*}
R(z_1 M^t_{\uX}(\uz), z_2 M^t_{\uX}(\uz) ,\dots, z_n M^t_{\uX}(\uz))\,(M^t_{\uX}(\uz))^{-1} =\frac{1- M^t_{\uX}(\uz)^{-1}}{t}.
\end{equation*}
When $n=1$ the differential equation \eqref{diff:free2} is equivalent to the generalized complex Burgers equation (see \cite[p.\ 343]{V86}) 
\begin{equation*}
\frac{\partial }{\partial t} G^t_{X}(z) + \frac{R_X(G_X^t(z))}{G^t_X(z)} \frac{\partial }{\partial z} G^t_{X}(z)=0, 
\end{equation*}
where $G^t_X$ is the Cauchy transform
\begin{equation*} 
G^t_X(z)= \sum_{n=0}^\infty \phi^t(\underbrace{X,\dots,X}_{\text{$n$ fold}}) \, z^{-n-1}. 
\end{equation*}
\end{exa}

\section{Mixed cumulants and sums of independent random variables}
\label{sec:vanishing}
\subsection{Vanishing of mixed cumulants in exchangeability systems}

In the case of exchangeability systems independence
the 
vanishing of mixed cumulants
(Axiom~\ref{it:Ecumulants:vanish} in Definition~\ref{def:exch:cumulants})
is the crucial property which makes cumulants interesting,
since it characterizes independence \cite[Prop.~2.10]{L04}.
That is, if the arguments of $K_n(X_1,X_2,\dots,X_n)$  can be split into two mutually
independent families then the cumulant vanishes;
more generally,  $K_\pi(X_1,X_2,\dots,X_n)=0$  whenever the entries of one of the blocks of
$\pi$  splits into two mutually independent subsets. This is the content
of the following proposition.
\begin{prop}
  \label{prop:mixedvanish}
  Let $(\alg{A},\phi)$ be a $\alg{B}$-ncps and 
  $\exch=(\alg{U},\tilde{\varphi},(\iota^{(i)})_{i\ge1})$ an exchangeability system for $\alg{A}$.
  Given a partition $\pi\in\SP_n$ and a family
  $X_1,X_2,\dots,X_n\in \alg{A}$ such that there is a block
  $P\in \pi$ which can be partitioned into $P=P_1\dot\cup P_2$
  such that $\{X_i:i\in P_1\}$ and  $\{X_i:i\in P_2\}$ are
  independent in the sense of Definition~\ref{def:exchangeableindependence},
  we have $K_\pi(X_1,X_2\dots,X_n)=0$.
\end{prop}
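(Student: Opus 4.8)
The plan is to combine the M\"obius-type formula for cumulants with multiplicativity of the partition-lattice M\"obius function and, crucially, the independence hypothesis, thereby reducing everything to a purely combinatorial identity in $\SP_P$.

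First I would record the exchangeable analogue of Theorem~\ref{thm:mc}: for $\pi\in\SP_n$ one has
\[
K_\pi(X_1,\dots,X_n)=\sum_{\sigma\leq\pi}\mu_\SP(\sigma,\pi)\,\varphi_\sigma(X_1,\dots,X_n),
\]
which is how $K_\pi$ is defined in \cite{L04} and is consistent with Definition~\ref{def:cumulants} together with Proposition~\ref{inv:permutation}. Using the product decomposition $[\hat 0_n,\pi]\cong\prod_{B\in\pi}\SP_B$ and the multiplicativity $\mu_\SP(\sigma,\pi)=\prod_{B\in\pi}\mu_\SP(\sigma\restr_B,\hat 1_B)$ from \eqref{eq:SPmoebius}, I would reorganize the sum, summing first over $\rho:=\sigma\restr_P\in\SP_P$ and then over the restrictions of $\sigma$ to the remaining blocks. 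Writing $Q=[n]\setminus P$, this yields
\[
K_\pi(X_1,\dots,X_n)=\sum_{\gamma}\ w(\gamma)\sum_{\rho\in\SP_P}\mu_\SP(\rho,\hat 1_P)\,\varphi_{\rho\sqcup\gamma}(X_1,\dots,X_n),
\]
where $\gamma$ runs over partitions of $Q$ refining $\pi\restr_Q$ and $w(\gamma)=\prod_{B\in\pi,\,B\neq P}\mu_\SP(\gamma\restr_B,\hat 1_B)$; note that no factorization of $\varphi_\sigma$ has been used, only a regrouping of the summation. Hence it suffices to show that for each fixed $\gamma$ the inner sum $\sum_{\rho\in\SP_P}\mu_\SP(\rho,\hat 1_P)\,\varphi_{\rho\sqcup\gamma}(X_1,\dots,X_n)$ vanishes.

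Next I would use independence of $\{X_i:i\in P_1\}$ and $\{X_i:i\in P_2\}$ to prove that $\varphi_{\rho\sqcup\gamma}(X_1,\dots,X_n)$ depends on $\rho$ only through the pair $(\rho\restr_{P_1},\rho\restr_{P_2})$. Concretely, expressing $\varphi_{\rho\sqcup\gamma}$ through a representative colouring and applying Definition~\ref{def:exchangeableindependence} to the $P$-entries (which replaces $\varphi_\tau$ by $\varphi_{\tau\wedge\bar\kappa}$, the meet separating the $P_1$-positions from the $P_2$-positions), one sees that the colours used by the $P_1$-variables and by the $P_2$-variables may be taken disjoint without changing the value; equivalently $\varphi_{\rho\sqcup\gamma}=\varphi_{(\rho\restr_{P_1}\sqcup\rho\restr_{P_2})\sqcup\gamma}$. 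Consequently, for fixed $\gamma$,
\[
\sum_{\rho\in\SP_P}\mu_\SP(\rho,\hat 1_P)\,\varphi_{\rho\sqcup\gamma}
=\sum_{\rho_1\in\SP_{P_1}}\sum_{\rho_2\in\SP_{P_2}}\varphi_{(\rho_1\sqcup\rho_2)\sqcup\gamma}\Bigl(\sum_{\substack{\rho\in\SP_P\\ \rho\restr_{P_1}=\rho_1,\ \rho\restr_{P_2}=\rho_2}}\mu_\SP(\rho,\hat 1_P)\Bigr),
\]
so the whole statement reduces to the combinatorial identity
\[
\sum_{\substack{\rho\in\SP_P\\ \rho\restr_{P_1}=\rho_1,\ \rho\restr_{P_2}=\rho_2}}\mu_\SP(\rho,\hat 1_P)=0\qquad\text{for all }\rho_1\in\SP_{P_1},\ \rho_2\in\SP_{P_2}.
\]

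Finally I would prove this identity. Inside the interval $[\rho_1\sqcup\rho_2,\hat 1_P]\cong\SP_{|\rho_1|+|\rho_2|}$ the summation range consists of those coarsenings that merge no two blocks of $\rho_1$ and no two blocks of $\rho_2$, so it is in bijection with partial matchings between the $|\rho_1|$ blocks of $\rho_1$ and the $|\rho_2|$ blocks of $\rho_2$; since $\mu_\SP(\rho,\hat 1_P)=(-1)^{|\rho|-1}(|\rho|-1)!$ and a matching of size $m$ produces $|\rho|=|\rho_1|+|\rho_2|-m$ blocks, the sum equals $\sum_{m}\binom{|\rho_1|}{m}\binom{|\rho_2|}{m}m!\,(-1)^{|\rho_1|+|\rho_2|-m-1}(|\rho_1|+|\rho_2|-m-1)!$, which vanishes for all $|\rho_1|,|\rho_2|\geq1$ (this is elementary, e.g.\ via $(k-1)!=\int_0^\infty t^{k-1}e^{-t}\,dt$, or via Weisner's theorem applied in $[\rho_1\sqcup\rho_2,\hat1_P]$ to the element merging all blocks of $\rho_1$). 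I expect the two load-bearing steps to be: (a) the passage from the algebraic independence hypothesis to the combinatorial statement that $\varphi_{\rho\sqcup\gamma}$ sees only $(\rho\restr_{P_1},\rho\restr_{P_2})$ — this needs care because Definition~\ref{def:exchangeableindependence} refers to words in $\alg{A}_1\cup\alg{A}_2$ only, whereas the variables $X_i$ with $i\notin P$ are spectators, so one invokes stability of the independence notion under enlarging the family (automatic for all natural examples); and (b) the vanishing of the alternating factorial sum, which, because $\mu_\SP$ does not take values in $\{\pm1\}$, cannot be obtained by a naive sign-reversing involution and genuinely uses the factorial weights.
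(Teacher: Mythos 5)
Your argument is correct, but its combinatorial core takes a genuinely different route from the paper's. The paper's proof is a two-line application of Weisner's Lemma (Lemma~\ref{lemma Weisner}) in the full lattice $\SP_n$: with $\rho$ the partition obtained from $\pi$ by splitting $P$ into $P_1\dot\cup P_2$, independence gives $\phi_\sigma=\phi_{\sigma\wedge\rho}$ for all $\sigma\le\pi$, one groups the M\"obius sum by $\tau=\sigma\wedge\rho$, and since $\rho\not\geq\pi$ every coefficient $\sum_{\sigma\wedge\rho=\tau}\mu_\SP(\sigma,\pi)$ vanishes by the second case of the lemma. You instead factor $\mu_\SP$ over the blocks of $\pi$, localize to the block $P$, and prove the required cancellation by hand, identifying $\{\rho\in\SP_P:\rho\restr_{P_1}=\rho_1,\ \rho\restr_{P_2}=\rho_2\}$ with partial matchings between the blocks of $\rho_1$ and $\rho_2$ and verifying the alternating binomial--factorial identity; that identity is true (your integral representation does finish it after an integration-by-parts argument, and the small cases check out), so your proof amounts to a self-contained re-derivation of exactly the special case of Weisner's Lemma the paper invokes -- longer, but more explicit about where the cancellation comes from. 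Two remarks. First, your parenthetical Weisner hint is slightly misstated: in the interval $[\rho_1\sqcup\rho_2,\hat{1}_P]$ one must apply the lemma to the element corresponding to the two-block partition $\{P_1,P_2\}$, i.e.\ merging all blocks of $\rho_1$ into one block \emph{and} all blocks of $\rho_2$ into another, since only then is the condition ``meet equals $\rho_1\sqcup\rho_2$'' equivalent to both restriction constraints; merging only the blocks of $\rho_1$ gives a strictly larger summation set. Second, your use of independence -- that $\phi_{\rho\sqcup\gamma}$ depends on $\rho$ only through $(\rho\restr_{P_1},\rho\restr_{P_2})$ in the presence of the spectator variables $X_i$, $i\notin P$ -- is precisely the step the paper treats as the content of the hypothesis (``by assumption $\phi_\sigma=\phi_{\sigma\wedge\rho}$''), so your explicit flagging of the extension of Definition~\ref{def:exchangeableindependence} beyond tuples drawn from $\alg{A}_1\cup\alg{A}_2$ does not create a gap relative to the paper; it makes the same reading of the hypothesis, stated more carefully.
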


For further reference we reproduce here a short lattice theoretic proof
due to P.~Zwier\-nik \cite{Zwiernik:2012:cumulants},
which will serve as a model for the spreadable case.
It is based on  \emph{Weisner's Lemma} 
(see \cite[Cor.~3.9.3]{Stanley:1986:enumerative1} for a simple version and
\cite{BarnabeiBriniRota:1986:theory} for the full version). 

Its generalization will be essential for the understanding of mixed cumulants in the spreadable setting.
\begin{lemma}[Weisner's Lemma]\label{lemma Weisner}
  In any lattice $(P,\leq)$ the M\"obius function satisfies the identity
  $$
  \sum_{\substack{x\\ x\wedge a=c}} \mu(x,b) =
  \begin{cases}
    \mu(c,b) & \text{if $a\geq b$}\\ 
    0 &\text{if $a\not\geq b$}.
  \end{cases}
  $$
\end{lemma}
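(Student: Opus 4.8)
The plan is to prove Weisner's Lemma by the classical device of introducing an auxiliary function indexed by the \emph{value} of the meet $x\wedge a$ and then applying Möbius inversion ``from above''. Throughout I take $P$ to be a finite lattice (the only situation needed in this paper; in general one restricts attention to the interval below $b$ together with its meets with $a$) and I adopt the usual convention $\mu(x,b)=0$ whenever $x\not\leq b$, so that the sum in the statement is automatically supported on $\{x:c\leq x\leq b\}$. First I would define, for each $z\in P$,
$$
h(z):=\sum_{\substack{x\leq b\\ x\wedge a=z}}\mu(x,b),
$$
so that the left-hand side of the lemma is precisely $h(c)$; note that $x\wedge a=z$ forces $z\leq x\leq b$ and $z\leq a$, hence $h$ is supported on $\{z:z\leq a\wedge b\}$.

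Next I would compute the ``up-sums'' of $h$. Interchanging the order of summation, for any $w\in P$,
$$
\sum_{z\geq w}h(z)=\sum_{\substack{x\leq b\\ x\wedge a\geq w}}\mu(x,b).
$$
Since $x\wedge a\geq w$ holds if and only if $x\geq w$ and $a\geq w$, the inner sum equals $\sum_{w\leq x\leq b}\mu(x,b)=\delta_{w,b}$ when $a\geq w$ (by the defining recursion $\zeta*\mu=\delta$), and it is empty otherwise. Therefore $\sum_{z\geq w}h(z)=[a\geq b]\,\delta_{w,b}$ for every $w$. Applying the dual Möbius inversion formula $h(w)=\sum_{z\geq w}\mu(w,z)\bigl(\sum_{y\geq z}h(y)\bigr)$ (valid in a finite poset) at $w=c$, only the term $z=b$ can contribute, giving $h(c)=[a\geq b]\,\mu(c,b)$, which is exactly the asserted value: the degenerate case $c\not\leq b$ is absorbed by the convention $\mu(c,b)=0$, and the case $c\not\leq a$ makes the original sum empty, again consistently.

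There is no genuine obstacle here; the only points deserving a line of care are (i) ensuring that $P$ may be taken finite (or at least that $h$ has finite support and the interval below $b$ is finite) so that Möbius inversion is legitimate, and (ii) matching the boundary cases $c\not\leq a$ and $c\not\leq b$ against the stated conventions — both routine. I would close by remarking that specializing to $c=\hat 0$, $b=\hat 1$ recovers the familiar form $\sum_{x\wedge a=\hat 0}\mu(x,\hat 1)=0$ valid for any $a\neq\hat 1$, which is the shape in which the lemma (and its generalization in the next section) will be used to analyze the quasi-meet $\pi\curlywedge\kappa$ and the vanishing of mixed cumulants.
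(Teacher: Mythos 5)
Your proof is correct. A point of context: the paper does not prove this lemma at all --- it is quoted with pointers to Stanley (Cor.~3.9.3) for the simple version and to Barnabei--Brini--Rota for the full version --- so there is no internal argument to compare yours against; what you give is the standard self-contained derivation. Every step checks out: setting $h(z)=\sum_{x\le b,\,x\wedge a=z}\mu(x,b)$, the up-sum computation $\sum_{z\ge w}h(z)=[a\ge w]\,\delta_{w,b}$ rests on the correct equivalence ($x\wedge a\ge w$ iff $x\ge w$ and $a\ge w$) together with $\zeta\ast\mu=\delta$, and the inversion identity $h(w)=\sum_{z\ge w}\mu(w,z)\bigl(\sum_{y\ge z}h(y)\bigr)$ that you invoke is itself a two-line consequence of $\mu\ast\zeta=\delta$, so there is no circularity; the boundary cases $c\not\le b$ and $c\not\le a$ are absorbed, as you note, by the convention $\mu(c,b)=0$ and by emptiness of the sum. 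Your finiteness caveat is also apt: as stated, ``any lattice'' tacitly requires at least local finiteness for $\mu$ to be defined, and the only lattice the paper applies the lemma to ($\SP_n$, in the proof of Proposition~\ref{prop:mixedvanish}) is finite, so assuming $P$ finite costs nothing. The only cosmetic remark is that in the line ``$\sum_{w\le x\le b}\mu(x,b)=\delta_{w,b}$ when $a\ge w$'' you could add that for $w\not\le b$ both sides vanish, but this does not affect the validity of the argument.
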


\begin{proof}[Proof of Proposition~\ref{prop:mixedvanish}]
Let $\rho$ be the partition
obtained from $\pi$ by splitting the block $P$
as indicated in the proposition, then $\rho<\pi$ and by assumption
$\phi_\sigma(X_1,X_2,\dots,X_n)=\phi_{\sigma\wedge\rho}(X_1,X_2,\dots,X_n)$
for any $\sigma\in\SP_n$; hence
\begin{align*}
K_\pi(X_1,X_2,\dots,X_n)
&= \sum_{\substack{\sigma\in\SP_n \\\sigma\leq\pi}} \phi_\sigma(X_1,X_2,\dots,X_n)\, \mu_\SP(\sigma,\pi) \\
&= \sum_{\substack{\sigma\in\SP_n \\\sigma\leq\pi}} \phi_{\sigma\wedge\rho}(X_1,X_2,\dots,X_n)\, \mu_\SP(\sigma,\pi) \\
& =\sum_{\tau\in\SP_n} \phi_\tau(X_1,X_2,\dots,X_n) \sum_{\substack{\sigma \in\SP_n\\ \sigma\wedge\rho =\tau}}\, \mu_\SP(\sigma,\pi). 
\end{align*}
Now $\rho\not\geq \pi$ and the second case of Weisner's lemma applies.
\end{proof}

\subsection{Partial vanishing of mixed cumulants in spreadability systems}
Since vanishing of mixed cumulants implies  additivity of cumulants
\eqref{eq:additivity}
for sums of independent random variables,
it cannot hold for general spreadability systems, e.g., monotone convolution
is noncommutative and therefore monotone cumulants are not additive \cite{HS11b}.

In this section we investigate what remains true in the
general setting and provide a formula expressing mixed cumulants in terms of lower order cumulants.
This question is intimately related to the question of convolution -
determining the distribution of the sum of independent random variables,
which is not commutative in general.
$\OSP_n$ with the quasi-meet operation $\curlywedge$ is not a lattice
and Weisner's lemma~\ref{lemma Weisner} does not hold.
This means that mixed cumulants do not vanish,
yet they can be expanded in terms of lower order cumulants weighted by certain coefficients,
which we will study next.
\begin{defi}
  \label{defi:weisnergoldberg}
  For $\tau,\eta \in \OSP_n$ let
  \begin{align}
    \label{eq:defi:weisner}
 w(\tau,\eta)&:=\sum_{\substack{\sigma \in \OSP_n \\ \sigma \curlywedge
    \eta=\tau}}
    \widetilde{\mu}(\sigma,\hat{1}_n), 
             && \text{(Weisner coefficients)}
    \\
    \label{eq:defi:goldberg}    
\gold(\tau,\eta)&:=\sum_{\substack{\sigma \in \OSP_n \\ \sigma \geq
    \tau}}\widetilde{\zeta}(\tau,\sigma)\,w(\sigma,\eta), 
             &&\text{(Goldberg coefficients)}
  \end{align}
  and more generally for $\tau,\eta,\pi \in \OSP_n$
  define the partitioned Weisner and Goldberg coefficients
  \begin{align}
    \label{eq:defi:partitionedweisner}    
    w(\tau,\eta,\pi)
    &:=\sum_{\substack{\sigma \in \OSP_n \\ \sigma \curlywedge
    \eta=\tau\\ \sigma \leq \pi}} \widetilde{\mu}(\sigma,\pi), 
    \\
    \label{eq:defi:partitionedgoldberg}        
    \gold(\tau,\eta,\pi)
                    &:=\sum_{\substack{\sigma \in \OSP_n \\  \sigma \geq \tau}}\widetilde{\zeta}(\tau,\sigma)\,w(\sigma,\eta,\pi).     
  \end{align}
\end{defi}

\begin{remark}
  \label{rem:weisnervanish}
  It follows from the properties of the quasi-meet operation
  (cf.~Proposition~\ref{app:prop:quasimeet}
  \ref{app:it:barpiwsi=barwbar} and 
  \ref{app:it:piwsi<pi})
  that both
  $w(\tau,\eta,\pi)$ and $g(\tau,\eta,\pi)$ vanish unless
  $\bar{\tau}\leq\bar{\eta}$ and $\tau\leq\pi$.
\end{remark}
We postpone further combinatorial study of these coefficients to Section~\ref{ssec:goldberg} 
and first exhibit their role as a substitute for Weisner's lemma~\ref{lemma Weisner} in the description of cumulants of independent arguments.

\begin{prop}
  \label{cum-mom}
Let $\cS=(\alg{U},\tilde{\phi}, (\iota^{(i)})_{i\ge 1})$ be a spreadability system for a given
  $\alg B$-ncps $(\alg{A},\phi)$.  Let $\pi\in\OSP_n$ and
  $(i_1,i_2,\ldots,i_n) \in \N^n$  be a tuple with kernel
  $\eta=\kappa(i_1,i_2,\ldots,i_n)\in\OSP_n$.
  Then
  \begin{align}
    K_\pi(X_1^{(i_1)}, X_2^{(i_2)}, \ldots, X_n^{(i_n)})
    &= \sum_{\substack{\tau \in \OSP_n \\ \tau\leq\pi\\ \bar{\tau}\leq\bar{\eta}}}
        \varphi_\tau(X_1,X_2,\ldots,X_n) \, w(\tau,\eta,\pi)
      \label{eq:cum-mom}
    \\
    &= \sum_{\substack{\tau \in \OSP_n \\ \tau\leq\pi\\ \bar{\tau}\leq\bar{\eta}}}
       K_\tau(X_1,X_2,\ldots,X_n) \, \gold(\tau,\eta,\pi)
      \label{eq:cum-cum}      
      .
  \end{align}
\end{prop}
\begin{proof} 
We proceed as in the proof of Proposition~\ref{prop:mixedvanish}.
Expressing cumulants in terms of moments (see Theorem~\ref{thm:mc}), we have 
\begin{align*}
K_\pi(X_1^{(i_1)},X_2^{(i_2)}, \ldots, X_n^{(i_n)}) 
&= \sum_{\substack{\sigma \in \OSP_n \\ \sigma \leq \pi}}
\varphi_\sigma(X_1^{(i_1)}, X_2^{(i_2)},\ldots,X_n^{(i_n)})\,\widetilde{\mu}(\sigma,\pi) \\
&=  \sum_{\substack{\sigma \in \OSP_n \\ \sigma \leq \pi}} \varphi_{\sigma\curlywedge\eta}(X_1,X_2,\ldots,X_n)\, \widetilde{\mu}(\sigma,\pi)  \\
&=\sum_{\tau \in \OSP_n}  \varphi_\tau(X_1,X_2,\ldots,X_n)\,\biggl(\sum_{\substack{\sigma \in \OSP_n \\ \sigma\curlywedge\eta =\tau, \sigma \leq \pi }} \widetilde{\mu}(\sigma,\pi) \biggr) \\
&= \sum_{\tau \in \OSP_n} \varphi_\tau(X_1,X_2,\ldots,X_n) \, w(\tau,\eta,\pi)
\\
\intertext{which is \eqref{eq:cum-mom}. Now substitute the moment-cumulant
  formula     \eqref{eq:phipi=sumKsi}   to obtain }
&=  \sum_{\tau \in \OSP_n} \sum_{\substack{\sigma \in\OSP_n\\ \sigma \leq \tau}}K_\sigma(X_1,X_2,\ldots,X_n) \, \widetilde{\zeta}(\sigma,\tau)\,w(\tau,\eta,\pi) \\
&=  \sum_{\sigma \in \OSP_n} K_\sigma(X_1,X_2,\ldots,X_n) \,\biggl(\sum_{\substack{\tau \in\OSP_n\\ \tau\geq\sigma}}\widetilde{\zeta}(\sigma,\tau)\,w(\tau,\eta,\pi)\biggr) \\
&=  \sum_{\sigma \in \OSP_n} K_\sigma(X_1,X_2,\ldots,X_n)\,\gold(\sigma,\eta,\pi).  
\end{align*}
Finally Remark~\ref{rem:weisnervanish} applies and the sums \eqref{eq:cum-mom} and \eqref{eq:cum-cum}
can be restricted to $\tau\leq\pi$ and $\bar{\tau}\leq\bar{\eta}$.
\end{proof}

We can now characterize $\cS$-independence in terms of the above proposition, that is, ``semi-vanishing'' of mixed cumulants.  

\begin{thm} \label{cum vanishing} Let $\cS=(\alg{U},\tilde{\phi},(\iota^{(i)})_{i\ge1})$ be a spreadability system for a given
  $\alg B$-ncps $(\alg{A},\phi)$. A sequence of subalgebras $(\alg{A}_i)_{i\in I}$ of $\alg{A}$, where 
  $I\subseteq \N$, is \emph{$\cS$-independent} if and only if for any tuple $(i_1,i_2,\dots,i_n)\in I^n$, any random variables $(X_1,X_2,\dots,X_n)\in \alg{A}_{i_1} \times \alg{A}_{i_2} \times \cdots \times \alg{A}_{i_n}$ and any ordered set partition $\pi\in\OSP_n$, we have 
\begin{equation}\label{semi-vanishing1}
  K_\pi(X_1,X_2, \ldots, X_n)
  - \sum_{\substack{\tau \in \OSP_n\\ \tau\leq\pi}}
  K_\tau(X_1,X_2,\ldots,X_n)
  \, \gold(\tau,\kappa(i_1,\dots,i_n),\pi) = 0.  
\end{equation}
\end{thm}
\begin{remark}\label{semi-vanishing3}
We can also formulate the theorem in terms of moments; we only need to replace
\eqref{semi-vanishing1} by the equation
\begin{equation*}
  K_\pi(X_1,X_2, \ldots, X_n)
  - \sum_{\substack{\tau \in \OSP_n\\ \tau\leq\pi}}
\phi_\tau(X_1,X_2,\ldots,X_n) \, w(\tau,\kappa(i_1,\dots,i_n),\pi) = 0.
\end{equation*}
\end{remark}

\begin{proof}
  Fix a tuple $(i_1,\dots, i_n)$ with kernel
  $\eta=\kappa(i_1,\dots,i_n)$.
  Independence means that
  \begin{align*}
  \phi_\pi(X_1,X_2,\dots,X_n)
    &= \phi_{\pi\curlywedge\eta}(X_1,X_2,\dots,X_n)
      \\
    &= \phi_\pi(X_1^{(i_1)},X_2^{(i_2)},\dots,X_n^{(i_n)})
  \end{align*}
  for all $\pi \in \OSP_n$,
  and by M\"{o}bius inversion this is equivalent to the identity
  \begin{align*}
  K_\pi(X_1,X_2,\dots,X_n)
    &= K_\pi(X_1^{(i_1)},X_2^{(i_2)},\dots,X_n^{(i_n)})
      \\
    &=
      \sum_{\substack{\tau \in \OSP_n\\ \tau\leq\pi}}
      K_\tau(X_1,X_2,\ldots,X_n) \, g(\tau,\kappa(i_1,\dots,i_n),\pi)
  \end{align*}
  for all $\pi\in\OSP_n$.
\end{proof}

\begin{remark}
It is not obvious that Theorem~\ref{cum vanishing} generalizes Proposition~\ref{prop:mixedvanish}
(vanishing of mixed cumulants for $\exch$-independent subalgebras).
In fact it implies a nontrivial identity: for any $ \tau,\eta,\pi \in \OSP_n$ such that $\eta\restr_P \neq \hat{1}_P$ for some $P\in\pi$, 
\begin{equation}\label{eq permute}
\sum_{h \in \SG_{|\tau|}} \gold(h(\tau), \eta, \pi)=0, 
\end{equation}
where $h(\tau)$ is the action of the permutation $h$ on the blocks of $\tau$. 
Similarly, Remark~\ref{semi-vanishing3} (or Proposition~\ref{cum-mom})
generalizes the vanishing of mixed cumulants for $\exch$-independent
subalgebras. Consequently we must have  
\begin{equation}\label{eq permute2}
\sum_{h \in \SG_{|\tau|}} w(h(\tau), \eta, \pi)=0 
\end{equation}
under the same assumptions on $\tau,\eta,\pi.$
\end{remark}

\subsection{Combinatorial evaluation of Weisner and Goldberg coefficients}
\label{ssec:goldberg}
The combinatorial description of the coefficients introduced
in Definition~\ref{defi:weisnergoldberg}
involves certain statistics of multiset permutations.
The first systematic study of these permutation statistics is contained
in the seminal work of MacMahon \cite{MacMahon:1915:combinatory}, for a modern
treatment see \cite{Bona:2012:combinatorics}. Said statistics play a major
role in the theory of free Lie algebras \cite{Reutenauer:1993:freeLiealg},
which also seem to play a role in our context, see Section~\ref{sec:freeLiealg} below.

\begin{defi}
  A \emph{multiset} is a pair $(A,f)$ where $A$ is the \emph{underlying set} and $f:A\to\N$ is a
  function. The value $f(a)$ is called the \emph{multiplicity} of the element
  $a\in A$. Informally, a multiset is a set which contains multiple
  indistinguishable  copies of each of its elements. In the present paper
  multisets will always be based on integer  segments $A=[n]$ and
  in this case the tuple $(f(1),f(2),\dots,f(n))$ is called the 
  \emph{type} of the multiset.
  A \emph{permutation of a multiset} is a rearrangement of all its elements 
  where all multiplicities are preserved, i.e., a word with a prescribed total number
  of occurrences of each letter.
  The number of distinct permutations of a multiset of type
  $(f_1,f_2,\dots,f_n)$ is given by the multinomial coefficient
  $$
  \binom{f_1+f_2+\dotsm+ f_n}{f_1,f_2,\dots,f_n}. 
  $$
  A \emph{proper set} is a multiset of type $(1,1,\dots,1)$ and we recover
  the number of its permutations as $n!$.
\end{defi}
We will be interested in the following statistics of multiset permutations.  
\begin{defi}
  \label{def:desc}
  Let $\sigma=w_1w_2\dots w_s$ be a permutation of a multiset
  of type $(f_1,f_2,\dots,f_n)$ where $s=f_1+f_2+\dotsm+ f_n$.
  An index $1\leq i\leq s-1$ is called a
  \begin{enumerate}[label=\rm(\roman*)]
   \item 
    \emph{descent} (or \emph{drop} or
    \emph{fall}) if $w_i>w_{i+1}$. 
   \item 
    \emph{plateau} (or \emph{level}) if $w_i=w_{i+1}$.
   \item 
    \emph{ascent} (or \emph{rise}) if $w_i<w_{i+1}$.
  \end{enumerate}
  We denote these sets by
  \begin{align*}
    \Des(\sigma)&=\Des((w_i)_{i=1}^s)=\{j\in [s-1]\mid w_j > w_{j+1} \},\\
    \Plat(\sigma)&=\Plat((w_i)_{i=1}^s)=\{j\in [s-1]\mid w_j = w_{j+1} \},\\
    \Asc(\sigma)&=\Asc((w_i)_{i=1}^s)=\{j\in [s-1]\mid w_j < w_{j+1} \} 
  \end{align*} 
  and
  the respective cardinalities, i.e., 
  the number of descents (ascents, plateaux respectively) of the multiset
  permutation $\sigma$,
  by
  \begin{align*}
  \des(\sigma)=\des((w_i)_{i=1}^s)&=\abs{\Des((w_i)_{i=1}^s)},\\
  \plat(\sigma)=\plat((w_i)_{i=1}^s)&=\abs{\Plat((w_i)_{i=1}^s)}, \\
  \asc(\sigma)=\asc((w_i)_{i=1}^s)&=\abs{\Asc((w_i)_{i=1}^s)}.
  \end{align*}
\end{defi}

\begin{remark} \
  \begin{enumerate}[label=\arabic*.,leftmargin=1cm]
\item Note that some authors also count $i=s$ as a descent and  $i=0$ as an ascent.
   \item Counting permutations by descents and ascents is a classic subject in
    combinatorics. In the case of descents of multiset permutations this is also
    known as \emph{Simon Newcomb's problem}
    \cite{Riordan:1958:combinatorial,DillonRoselle:1969:simon}.
  \end{enumerate}
 \item  Clearly every
  element except the last is either a descent, an ascent, or a plateau,
  and therefore
  \begin{equation}
    \label{eq:desplaasc+1}
    \abs{\sigma} = \des(\sigma)+\plat(\sigma)+\asc(\sigma) + 1  
    .
\end{equation}

\end{remark}

\begin{defi}
  \label{def:DFR} 
  To any pair of ordered set partitions $\tau,\eta\in\OSP_n$ such that
  $\bar{\tau} \leq \bar{\eta}$, 
  we associate a multiset on $[\abs{\eta}]$ by setting 
  the multiplicity of $k\in \{1,2,\dots,\abs{\eta}\}$ to be equal to the
  number of blocks of $\tau$ contained in the $k$-th block of $\eta$.
  Replacing every block of $\tau$ by the label of the block of $\eta$
  containing it we obtain a permutation of this multiset.
  We denote by $\des_\eta(\tau)$, $\asc_\eta(\tau)$ and $\plat_\eta(\tau)$ 
  its statistics as defined in Definition~\ref{def:desc}.
More precisely,
  let $\tau,\eta\in\OSP_n$ such that  $\bar{\tau} \leq \bar{\eta}$. If $\eta=
  (E_1,E_2,\ldots,E_e)$, then the blocks of  $\bar{\tau}$ can be arranged
  $$
  \bar{\tau} = \{ E_{1,1},E_{1,2},\ldots, E_{1,l_1}, \ldots, E_{e,1},\ldots, E_{e,l_e}\}, 
  $$ 
  where each $E_i = \bigcup_{j=1}^{l_i} E_{i,j}$ is a disjoint union taken in
  canonical order of the subsets (i.e., sorted according to their minimal
  elements). So $\tau$ can be written as  
  $$
  \tau=(E_{m_1,n_1},E_{m_2,n_2}, \ldots, E_{m_s, n_s}), 
  $$
  which is a permutation of the blocks of $\bar{\tau}$. Then we denote by
  $\Des_\eta(\tau), \Plat_\eta(\tau)$ and $\Asc_\eta(\tau)$ respectively, 
  the sets
  \begin{align*}
    \Des_\eta(\tau) &=\Des((m_i)_{i=1}^s), \\
    \Plat_\eta(\tau) &=   \Plat((m_i)_{i=1}^s), \\
    \Asc_\eta(\tau)  &= \Asc((m_i)_{i=1}^s)
  \end{align*}
  and by $\des_\eta(\tau)$,
  $\plat_\eta(\tau)$ and $\asc_\eta(\tau)$ the respective cardinalities.
\end{defi}

\begin{exa}
  Consider the partitions $\eta=(E_1,E_2,E_3,E_4,E_5)\in \OSP_{10}$ with blocks
  $E_1=\{5,8\}$, $E_2=\{9,10\}$, $E_3=\{3,6\}$, $E_4=\{1,2,4\}$, $E_5=\{7\}$,
  and 
  $\tau=(T_1, T_2, T_3,T_4,T_5,T_6,T_7,T_8)\in\OSP_{10}$
  with blocks
  $T_1=\{3\}$, $T_2=\{6\}$, $T_3=\{1,4\}$, $T_4=\{7\}$, $T_5=\{10\}$, $T_6=\{5,8\}$,
  $T_7=\{9\}$, $T_8=\{2\}$. Then $\bar{\tau}\leq \bar{\eta}$. Now $T_1\subseteq
  E_3$ and thus $w_1=3$,  $T_2 \subseteq E_3$ and thus $w_2=3$, etc.; the
  multiset permutation thus induced on $\tau$ by $\eta$ is
  $$
  (w_i)_{i=1}^{8}=(3,3,4,5,2,1,2,4), 
  $$
  and the statistics are $\des((w_i)_{i=1}^8)=2$, $\plat((w_i)_{i=1}^8)=1$ and  $\asc((w_i)_{i=1}^8)=4$. 
\end{exa}

\begin{defi} Let $\sigma=(w_i)_{i=1}^s$ be a multiset permutation. An \emph{ascending
    (\emph{resp.}~descending) run}
  is a maximal contiguous subsequence $(w_i)_{i=l}^k$ which is strictly
  increasing (resp. decreasing). 
  Similarly, \emph{a level run} is a maximal subsequence $(w_i)_{i=j}^k$ such that
  $w_j=w_{j+1}=\cdots=w_k$. 
\end{defi} 
\begin{prop}

  Let $\sigma=(w_i)_{i=1}^s$ be a multiset permutation.   
  \begin{enumerate}[label=\rm(\roman*),leftmargin=1cm]
   \item $\sigma$ can be decomposed uniquely into ascending runs
    separated by descents and plateaux. The number
    of ascending runs is equal to $s-\asc(\sigma)$. 
   \item $\sigma$ can be decomposed uniquely into descending runs
    separated by ascents and plateaux. The number
    of descending runs is equal to $s-\des(\sigma)$. 
   \item $\sigma$ can be decomposed uniquely into level runs
    separated by ascents and descents. The number
    of level runs is equal to $s-\plat(\sigma)$. 
  \end{enumerate}
\end{prop}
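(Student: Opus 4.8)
The plan is to observe that all three statements are the same combinatorial fact about words over a linearly ordered alphabet, applied with the orderings ``$<$'', ``$>$'', and ``$=$'' respectively. Concretely, fix the multiset permutation $\sigma=(w_i)_{i=1}^s$. For part (i), I would define an ascending run to be a maximal contiguous block $(w_i)_{i=l}^k$ with $w_l<w_{l+1}<\dots<w_k$. The key point is that the starting positions of the ascending runs are exactly the positions $i\in\{1,2,\dots,s\}$ which are \emph{not} ascents in the sense of Definition~\ref{def:desc}, i.e.\ position $1$ together with every $i\in\{2,\dots,s\}$ such that $i-1$ is a descent or a plateau. Indeed, a new ascending run begins at position $i>1$ precisely when $w_{i-1}\not< w_i$, that is, when $i-1\in\Des(\sigma)\cup\Plat(\sigma)$, and these are disjoint from $\Asc(\sigma)$; thus the run-starts are in bijection with $\{1\}\cup(\Des(\sigma)\cup\Plat(\sigma))$, a set of size $1+\des(\sigma)+\plat(\sigma)$. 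By the identity \eqref{eq:desplaasc+1}, $s=\des(\sigma)+\plat(\sigma)+\asc(\sigma)+1$, so this count equals $s-\asc(\sigma)$, and uniqueness of the decomposition is immediate since the run-start positions are determined by $\sigma$.

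For part (ii) I would run the identical argument with the order reversed: a descending run starts at position $1$ and at every position $i>1$ with $w_{i-1}\not> w_i$, i.e.\ $i-1\in\Asc(\sigma)\cup\Plat(\sigma)$; this set has size $1+\asc(\sigma)+\plat(\sigma)$, which by \eqref{eq:desplaasc+1} equals $s-\des(\sigma)$. For part (iii) a level run starts at position $1$ and at every position $i>1$ with $w_{i-1}\neq w_i$, i.e.\ $i-1\in\Des(\sigma)\cup\Asc(\sigma)$; this set has size $1+\des(\sigma)+\asc(\sigma)=s-\plat(\sigma)$, again by \eqref{eq:desplaasc+1}. In each case the decomposition into runs is forced: every position belongs to exactly one maximal run of the relevant type, and the run boundaries are exactly the positions just after a non-``same-relation'' step, so the decomposition is unique.

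This argument is entirely elementary and I do not anticipate any genuine obstacle; the only thing to be careful about is the bookkeeping of whether position indices refer to the entries $w_i$ or to the gaps between consecutive entries (Definition~\ref{def:desc} indexes gaps by $i\in[s-1]$), and to make sure the three cases ``descent/plateau/ascent'' are treated as a genuine trichotomy of the $s-1$ gaps so that \eqref{eq:desplaasc+1} can be invoked. One may alternatively phrase the whole proposition as a single lemma: for a word $w$ over any totally ordered set and any one of the three relations $R\in\{<,>,=\}$, the number of maximal contiguous $R$-increasing (read: $R$-chained) subwords equals $s$ minus the number of adjacent pairs in relation $R$, then specialize; but the case-by-case verification above is short enough that I would simply present it directly.
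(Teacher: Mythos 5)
Your proof is correct and rests on the same elementary observation as the paper's: maximal ascending runs are delimited exactly by the non-ascent gaps, so their number is $1+\des(\sigma)+\plat(\sigma)=s-\asc(\sigma)$ by the trichotomy identity \eqref{eq:desplaasc+1}, and likewise for the other two cases. The paper's own write-up is an even terser perturbation argument (the claim is clear for a strictly increasing word, and turning an ascent into a descent or plateau splits one run into two), given only for (i) with (ii) and (iii) left as analogous, so your direct count of run-start positions is essentially the same argument, just made explicit and uniform across the three cases.
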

\begin{proof}
  We only prove (i).
  The claim is clearly true when $\asc\sigma = s-1$, i.e., when the sequence is
  monotone increasing. Otherwise replacing any ascent by a descent or plateau
  splits an ascending run into two, i.e., increases the number of ascending
  runs by one.
\end{proof}
\begin{remark} 
Suppose $\bar{\tau}=\bar{\eta}$, then
the blocks of  $\tau$ are a permutation of the blocks of $\eta$.
More precisely, if $\eta=(E_1,E_2,\dots,E_e)$ then
there is a permutation $h\in \SG_e$ 
such that $\tau=h(\eta)=(E_{h(1)},E_{h(2)}, \dots,E_{h(e)})$.
Then $\des_\eta(\tau)$ and 
$\asc_\eta(\tau)$ coincide with $\des(h)$ and $\asc(h)$,
where the latter quantities are the numbers of descents and ascents of the
permutation $h$, respectively. See 
\cite{Bona:2012:combinatorics,Brenti:1993:permutation,Reutenauer:1993:freeLiealg}
and \cite[p.~25]{Stanley:1986:enumerative1}
for the uses of  $\des(h)$ and
$\asc(h)$ in the context of symmetric groups.
\end{remark}
\begin{exa}
  The  statistics of the sequence $(m_i)_{i=1}^9=(1,1,3,5,5,5,4,1,4)$
  are as follows:
  $\des((m_i)_{i=1}^9)=2$, $\plat((m_i)_{i=1}^9)=3$ and 
$\asc((m_i)_{i=1}^9)=3$. 
The decomposition into ascending runs is given by 
$$
(1), (1,3,5), (5), (5), (4), (1,4), 
$$
the decomposition into level runs is 
$$
(1,1), (3), (5,5,5), (4), (1), (4)
$$
and the decomposition into descending runs is 
$$
(1),(1),(3),(5),(5),(5,4,1),(4).
$$
\end{exa}

\begin{lem}\label{lem7}
Let $\eta, \tau \in\OSP_n$ such that $\bar{\tau} \leq \bar{\eta}$, 
then there is an ordered set partition $\sigma_{\mathrm{max}}^{\mathrm{asc}}(\tau,\eta)$ such that 
$$
\{\sigma \in\OSP_n \mid \sigma \curlywedge \eta =\tau \} =[\tau, \sigma_{\mathrm{max}}^{\mathrm{asc}}(\tau,\eta)]. 
$$
and the
restriction of the mapping $\Psi$ in Proposition~\ref{app:prop:OSPintervals}
establishes a poset isomorphism
$$
\{\sigma \in\OSP_n \mid \sigma \curlywedge \eta =\tau \}  \to \IP_{p_1} \times \IP_{p_2} \times \cdots \times \IP_{p_t}, 
$$
where $1 \leq p_i \leq n$ are the lengths of the ascending runs of the sequence $(m_i)_{i=1}^s$ from Definition~\ref{def:DFR} and $t=\abs{\tau}-\asc_\eta(\tau)$.  
\end{lem}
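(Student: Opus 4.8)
The plan is to describe the fibre $\{\sigma\in\OSP_n\mid \sigma\curlywedge\eta=\tau\}$ completely explicitly, and then recognize it as the asserted interval via Proposition~\ref{prop:OSPintervals}.

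First I would unwind the definition of $\curlywedge$. Writing $\eta=(E_1,\dots,E_e)$ and a candidate $\sigma=(S_1,\dots,S_s)$, the product $\sigma\curlywedge\eta$ is the sequence obtained by listing, for $i=1,2,\dots$ in turn, the nonempty intersections $S_i\cap E_1,\,S_i\cap E_2,\dots,S_i\cap E_e$ in this order. Hence $\sigma\curlywedge\eta=\tau$ forces, on the one hand, that each block $S_i$ of $\sigma$ be a union of a block of consecutive blocks of $\tau$ (with respect to the given order of $\tau$), and, on the other hand, that the blocks $S_i$ occur in $\sigma$ in the order in which these consecutive pieces occur in $\tau$; moreover, since within $S_i$ the indices $j$ with $S_i\cap E_j\ne\emptyset$ increase strictly, the values attached to the corresponding blocks of $\tau$ (recall from Definition~\ref{def:DFR} that $m_i$ is the index of the block of $\eta$ containing the $i$-th block of $\tau$) form a strictly increasing subsequence of $(m_1,\dots,m_s)$. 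Conversely, any decomposition of the block-sequence of $\tau$ into contiguous pieces on each of which $(m_i)$ is strictly increasing yields, by taking unions and ordering these unions as the pieces occur in $\tau$, an ordered set partition $\sigma$ with $\sigma\curlywedge\eta=\tau$: here one uses Proposition~\ref{prop:quasimeet}(i) and the observation that distinct blocks of $\tau$ inside one piece lie in distinct blocks of $\eta$, so that $\overline{\sigma\curlywedge\eta}=\bar\sigma\wedge\bar\eta=\bar\tau$. Thus the fibre is in bijection with the decompositions of $(m_1,\dots,m_s)$ into contiguous strictly increasing segments.

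Next I would locate this fibre in $\OSP_n$. A cut between positions $i$ and $i+1$ is forced whenever $m_i\ge m_{i+1}$, i.e.\ at every descent and every plateau, whereas a cut at an ascent is optional. The finest admissible decomposition (cut everywhere possible) is $\tau$ itself, which indeed belongs to the fibre by Proposition~\ref{prop:quasimeet}(iv), since $\bar\tau\le\bar\eta$; the coarsest one — cut only at descents and plateaux, so that the pieces are the maximal ascending runs of $(m_i)$ — is an ordered set partition I will denote $\sigma_{\mathrm{max}}^{\mathrm{asc}}(\tau,\eta)$. Every $\sigma$ in the fibre has each of its blocks contained in a block of $\sigma_{\mathrm{max}}^{\mathrm{asc}}(\tau,\eta)$, because a contiguous strictly increasing segment cannot contain a position $i$ with $m_i\ge m_{i+1}$; the orders are compatible (both refine the order of $\tau$ on pieces), so $\tau\le\sigma\le\sigma_{\mathrm{max}}^{\mathrm{asc}}(\tau,\eta)$, and the fibre is contained in $[\tau,\sigma_{\mathrm{max}}^{\mathrm{asc}}(\tau,\eta)]$. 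For the reverse inclusion I would apply Proposition~\ref{prop:OSPintervals} to the pair $\tau\le\sigma_{\mathrm{max}}^{\mathrm{asc}}(\tau,\eta)$: any $\rho$ in this interval arises by merging, within each block of $\sigma_{\mathrm{max}}^{\mathrm{asc}}(\tau,\eta)$, consecutive blocks of $\tau$ — that is, by regrouping each maximal ascending run into consecutive sub-runs — and each such sub-run is again strictly increasing, so $\rho$ lies in the fibre. This proves $\{\sigma\mid\sigma\curlywedge\eta=\tau\}=[\tau,\sigma_{\mathrm{max}}^{\mathrm{asc}}(\tau,\eta)]$.

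The poset isomorphism then follows at once: Proposition~\ref{prop:OSPintervals}, applied to $\tau\le\sigma_{\mathrm{max}}^{\mathrm{asc}}(\tau,\eta)$, says that $\Psi$ restricts to an isomorphism of $[\tau,\sigma_{\mathrm{max}}^{\mathrm{asc}}(\tau,\eta)]$ onto $\IP_{k_1}\times\cdots\times\IP_{k_p}$, where $p=\abs{\sigma_{\mathrm{max}}^{\mathrm{asc}}(\tau,\eta)}$ is the number of maximal ascending runs of $(m_i)_{i=1}^s$ and $k_i$ is the number of blocks of $\tau$ in the $i$-th block of $\sigma_{\mathrm{max}}^{\mathrm{asc}}(\tau,\eta)$, i.e.\ the length $p_i$ of the $i$-th ascending run; and the number $t$ of ascending runs equals $\abs{\tau}-\asc_\eta(\tau)$, as recorded earlier. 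The only real obstacle is the order-bookkeeping in the first step: one must check carefully that equality of \emph{ordered} set partitions $\sigma\curlywedge\eta=\tau$ is equivalent to the conjunction of the internal ``strictly increasing'' condition on each block of $\sigma$ and the correct placement of these blocks in the order prescribed by $\tau$ — the rest is then essentially a translation of Proposition~\ref{prop:OSPintervals}.
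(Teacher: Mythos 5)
Your proposal is correct and follows essentially the same route as the paper: identify the fibre with the refinements, inside each block, of the ascending-run decomposition of the multiset word $(m_i)_{i=1}^s$, and then invoke Proposition~\ref{prop:OSPintervals} for the interval structure and the isomorphism with $\IP_{p_1}\times\cdots\times\IP_{p_t}$. The only difference is cosmetic: you spell out from the definition of $\curlywedge$ the equivalence ``$\sigma\curlywedge\eta=\tau$ iff each block of $\sigma$ is a union of consecutive $\tau$-blocks with strictly increasing $m$-values,'' which the paper states as a necessary and sufficient condition without elaboration.
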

\begin{proof}
  Let $\eta=(E_1,E_2,\ldots,E_e)$ and write
  $\tau=(E_{m_1,n_1},E_{m_2,n_2}, \ldots, E_{m_s, n_s})$
  as in Definition~\ref{def:DFR}.
  From Proposition~\ref{app:prop:quasimeet} we infer that  
  any partition $\sigma\in\OSP_n$  such that $\sigma\curlywedge\eta=\tau$ 
  satisfies $\sigma \geq \tau$.
   From Proposition~\ref{app:prop:OSPintervals} we infer that there is an interval partition 
$\lambda=(L_1,L_2,\ldots, L_l) \in \IP_{s}$ such that 
$$
\sigma=\biggl(\bigcup_{i\in L_1} E_{m_i,n_i},\bigcup_{i\in L_2} E_{m_i,n_i}, \ldots, \bigcup_{i\in L_l} E_{m_i,n_i}\biggr). 
$$ 
In order that $ \sigma \curlywedge \eta =\tau$ it is necessary and sufficient
that every block $L\in\lambda$, say $L=\{a+1, a+2, \ldots,a+b\}$, 
induces a strictly increasing sequence $m_{a+1} < m_{a+2}< \cdots <m_{a+b}$.

Let $(m_i)_{i=1}^{i_1}, (m_i)_{i=i_1+1}^{i_2},\ldots,
(m_i)_{i=i_{t-1}+1}^s$ be the decomposition of $(m_i)_{i=1}^s$ into ascending
runs. This decomposition defines the interval blocks $A_j:= \{i_{j-1}+1, i_{j-1}+2,\ldots,
i_j\}$ ($i_0=0,i_t=s$) and hence defines an interval partition $\alpha =
(A_1,A_2,\ldots,A_t)$. The interval partition $\lambda$ consists of
increasing intervals and therefore is finer than $\alpha$.
Thus we have an isomorphism
$$
\{\sigma \in\OSP_n \mid \sigma \curlywedge \eta =\tau \}  \to \IP_{A_1} \times\IP_{A_2} \times \cdots \times \IP_{A_t}
$$
via the restriction of the map $\Psi\restr_{[\tau,\hat{1}_n]}$ from Proposition~\ref{app:prop:OSPintervals}. The number $t=\abs{\alpha}$ is equal to $\abs{\tau}-\asc_\eta(\tau)$, the integers $p_j$ are the cardinalities of $A_j$ and $\sigma_{\mathrm{max}}^{\mathrm{asc}}(\tau,\eta)$ is the ordered set partition corresponding to $\lambda =\alpha$.   
\end{proof}

With these preparations we can now combinatorially evaluates the Weisner and
Goldberg coefficients from Definition~\ref{defi:weisnergoldberg}.

\begin{prop}[Weisner coefficients] \label{prop:Weisner}     \  
\begin{enumerate}[label=\rm(\roman*),leftmargin=1cm]
\item\label{it:Weis1}
  For $\tau,\eta \in \OSP_n$ the Weisner coefficient
  \eqref{eq:defi:weisner} is 
 \begin{equation}
   \label{eq:weisnercoeff}
w(\tau,\eta)
= 
\begin{cases}\displaystyle
\int_{-1}^0 x^{\abs{\tau} -\asc_\eta(\tau)-1}(1+x)^{\asc_\eta(\tau)}\,dx=\frac{(-1)^{\abs{\tau}-\asc_\eta(\tau)-1}}{\abs{\tau}\binom{\abs{\tau}-1}{\asc_\eta(\tau)}}, &\bar{\tau} \leq \bar{\eta},\\
0, & \bar{\tau} \not\leq \bar{\eta}. 
\end{cases}
\end{equation}
\item\label{it:Weis2}  For $\tau,\eta,\pi \in \OSP_n$ the partitioned Weisner
 coefficient     \eqref{eq:defi:partitionedweisner}  is
 \begin{equation*}
  w(\tau,\eta,\pi)
= 
\begin{cases}\displaystyle
\prod_{P\in\pi} w(\tau\restr_P,\eta\restr_P), &\bar{\tau} \leq \bar{\eta},~\tau \leq \pi,\\
0, & \text{otherwise}. 
\end{cases}
 \end{equation*}
\end{enumerate}
\end{prop}
\begin{proof}
\ref{it:Weis1}\,\, If $\bar{\tau} \not\leq \bar{\eta}$, then there is no
$\sigma$ such that $\sigma \curlywedge \eta=\tau$ and the sum is empty. 
Let us therefore assume henceforth that $\bar{\tau} \leq \bar{\eta}$. 
We take up the end of the proof of Lemma~\ref{lem7} where we
established the poset isomorphism
\begin{align*}
\{\sigma \in\OSP_n \mid \sigma \curlywedge \eta=\tau \} &
 \cong\IP_{p_1} \times \IP_{p_2} \times \cdots \times \IP_{p_t} \\
& \cong \Bool_{p_1-1} \times \Bool_{p_2-1} \times \cdots \times \Bool_{p_t-1} \\
& \cong \Bool_{p_1+\cdots+p_t   -t}, 
\end{align*}
where the second isomorphism follows from Proposition~\ref{app:prop:interval-Boolean} and $p_i$ denotes the length of the $i$-th ascending run.
The latter contains $p_i-1$ rises and therefore the total number of
ascents is 
$\asc_\eta(\tau) = p_1+p_2+\cdots +p_t-t$.
In the identification above, a partition $\sigma$ is mapped to a subset 
$A\subseteq [p_1+p_2+\cdots+p_t-t]$ 
with  $\abs{A} = \abs{\sigma}-t$ elements and we have
\begin{equation*}
\widetilde{\mu}(\sigma,\hat{1}_n)= \frac{(-1)^{\abs{\sigma}-1}}{\abs{\sigma}} = \frac{(-1)^{\abs{A}+t-1}}{\abs{A}+t}. 
\end{equation*}
Performing the sum we obtain
\begin{align*}
\sum_{\substack{\sigma \in \OSP_n \\ \sigma \curlywedge \eta=\tau}}\widetilde{\mu}(\sigma,\hat{1}_n)
&=  \sum_{A \subseteq [\asc_\eta(\tau)]}\frac{(-1)^{\abs{A}+t-1}}{\abs{A}+t}\\
&= \sum_{k=0}^{\asc_\eta(\tau)}\binom{\asc_\eta(\tau)}{k} \frac{(-1)^{k+t-1}}{k+t} \\
&= \int_{-1}^0 \sum_{k=0}^{\asc_\eta(\tau)}\binom{\asc_\eta(\tau)}{k} x^{k+t-1}\,dx \\
&= \int_{-1}^0 x^{t-1} (1+x)^{\asc_\eta(\tau)}\,dx \\
&= (-1)^{t-1}B(t, \asc_\eta(\tau)+1),  
\end{align*}
where $B$ is the beta function $B(a,b)= \frac{\Gamma(a )\Gamma(b )}{\Gamma(a+b)}$
which can be written in terms of binomial coefficients as desired.

\itemspacing
\ref{it:Weis2}\,\, In order for the set $\{\sigma\in\OSP_n \mid \sigma \leq \pi,~ \sigma
\curlywedge \eta=\tau\}$ to be nonempty, 
it is necessary that $\tau \leq \pi$ and $\bar{\tau}\leq \bar{\eta}$. We adopt
the notations from Definition~\ref{def:DFR}
and infer from Proposition~\ref{app:prop:OSPintervals} that there exists an
interval partition $\rho=(R_1,R_2,\dots,R_p) \in \IP_{s}$ such that
\begin{equation*}
\pi = (P_1,P_2,\dots,P_p)= \biggl(\bigcup_{i\in R_1} E_{m_i,n_i},\bigcup_{i\in R_2} E_{m_i,n_i}, \dots, \bigcup_{i\in R_p} E_{m_i,n_i}\biggr). 
\end{equation*}
If follows from Lemma~\ref{lem7} that $\sigma$ belongs to $[\tau,
\sigma_{\max{}}^{\mathrm{asc}}(\tau,\eta)]$.
In addition, $\sigma$ must satisfy $\sigma \leq \pi$. Hence, the ascending runs considered in Lemma~\ref{lem7} are split by the blocks of $\pi$.
More precisely, for each $k\in[p]$ we decompose $(m_i)_{i\in R_k}$ into
ascending runs, which give rise to the interval partition
$\gamma_k=(G_{k,1},\dots, G_{k,u_k}) \in \IP_{R_k}$ where $G_{k,j}$ 
consists of the indices $i$ of the $j$-th ascending run of $(m_i)_{i\in R_k}$. 
Then
\begin{equation*}
\sigma_{\max{}}^{\mathrm{asc}}(\tau\restr_{P_k},\eta\restr_{P_k})= \biggl(\bigcup_{i\in G_{k,1}} E_{m_i,n_i},\bigcup_{i\in G_{k,2}} E_{m_i,n_i},\dots, \bigcup_{i\in G_{k,u_k}} E_{m_i,n_i}\biggr)
\end{equation*} 
and for each $P\in\pi$, we pick an arbitrary $\sigma_P \in\OSP_P$ from the interval
$[\tau\restr_P, \sigma_{\max{}}^{\mathrm{asc}}(\tau\restr_P,\eta\restr_P)]$,
concatenate them and obtain 
$\sigma=\sigma_{P_1} \sigma_{P_2}\cdots\sigma_{P_p} \in \OSP_n$.
Since $\widetilde{\mu}(\sigma,\pi)$ is the product of
$\frac{(-1)^{\sharp(\sigma\restr_P)-1}}{\sharp(\sigma\restr_P)}$ over
$P\in\pi$, the conclusion follows.  
\end{proof}
Examples of Weisner coefficients will be given in Example~\ref{exa:W}.

Observe that $w(\tau,\eta,\pi) \neq0$ for $\bar{\tau} \leq \bar{\eta}, \tau \leq \pi$
and since in general the expectation values $\varphi_\tau(X_1,X_2,\dots,X_n)$ 
among different $\tau$ do not cancel each other,  we cannot expect the vanishing of cumulants without further assumptions. 
However we can express cumulants with independent entries also in terms of cumulants of
lower orders. In this case it turns out that the coefficients are determined by the number of plateaux.

\begin{lem}\label{lem8}
Let $\eta, \tau \in\OSP_n$ such that $\bar{\tau} \leq \bar{\eta}$, then the
restriction of the map $\Psi\restr_{[\tau,\hat{1}_n]}$ from
Proposition~\ref{app:prop:OSPintervals} establishes a poset isomorphism
\begin{equation}\label{iso flat}
\{\sigma \in\OSP_n \mid \sigma \geq \tau, \bar{\sigma} \leq \bar{\eta} \}  \to
\IP_{q_1} \times  \IP_{q_2} \times \cdots \times \IP_{q_r},  
\end{equation}
where $1 \leq q_i \leq n$ are the lengths of the level runs and
the number $r$ is equal to $\abs{\tau}-\plat_\eta(\tau)$. 
In particular, there is an ordered set partition $\sigma_{\max{}}^{\mathrm{pla}}(\tau,\eta)$ such that 
$$
\{\sigma \in\OSP_n \mid \sigma \geq \tau, \bar{\sigma} \leq \bar{\eta} \}  =[\tau, \sigma_{\max{}}^{\mathrm{pla}}(\tau,\eta)]. 
$$
\end{lem}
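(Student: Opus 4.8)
The plan is to follow the proof of Lemma~\ref{lem7} almost verbatim, replacing \emph{ascending runs} by \emph{level runs} and the condition ``strictly increasing'' by ``constant''. First I would adopt the notation of Definition~\ref{def:DFR}, writing $\tau=(E_{m_1,n_1},E_{m_2,n_2},\dots,E_{m_s,n_s})$ and $\eta=(E_1,E_2,\dots,E_e)$ with $s=\abs{\tau}$, and fix a partition $\sigma\in\OSP_n$ with $\sigma\geq\tau$ and $\bar{\sigma}\leq\bar{\eta}$. Applying Proposition~\ref{prop:OSPintervals} to the interval $[\tau,\hat{1}_n]$ produces a unique interval partition $\lambda=(L_1,L_2,\dots,L_l)\in\IP_s$ with
$$
\sigma=\Bigl(\bigcup_{i\in L_1}E_{m_i,n_i},\ \bigcup_{i\in L_2}E_{m_i,n_i},\ \dots,\ \bigcup_{i\in L_l}E_{m_i,n_i}\Bigr).
$$

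The decisive step is to translate the condition $\bar{\sigma}\leq\bar{\eta}$ into a statement about the sequence $(m_i)_{i=1}^s$. Since $E_{m_i,n_i}\subseteq E_{m_i}$ and the blocks of $\eta$ are pairwise disjoint, a block $\bigcup_{i\in L_k}E_{m_i,n_i}$ of $\sigma$ is contained in a single block of $\eta$ precisely when the indices $m_i$ with $i\in L_k$ all coincide; hence $\bar{\sigma}\leq\bar{\eta}$ holds if and only if every block $L\in\lambda$ induces a constant subsequence of $(m_i)_{i=1}^s$. This is exactly the analogue, with level runs in place of ascending runs, of the strict-increase condition used in the proof of Lemma~\ref{lem7}.

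Next I would invoke the decomposition of $(m_i)_{i=1}^s$ into level runs, whose number is $s-\plat_\eta(\tau)=\abs{\tau}-\plat_\eta(\tau)=:r$; this decomposition yields interval blocks $Q_j=\{i_{j-1}+1,\dots,i_j\}$ (with $i_0=0$, $i_r=s$) and an interval partition $\beta=(Q_1,Q_2,\dots,Q_r)\in\IP_s$. A partition $\lambda\in\IP_s$ consists of constant runs exactly when it refines $\beta$, so the restriction of the map $\Psi\restr_{[\tau,\hat{1}_n]}$ from Proposition~\ref{prop:OSPintervals} cuts down to a poset isomorphism
$$
\{\sigma\in\OSP_n\mid\sigma\geq\tau,\ \bar{\sigma}\leq\bar{\eta}\}\ \longrightarrow\ \IP_{Q_1}\times\IP_{Q_2}\times\dots\times\IP_{Q_r},
$$
where $\abs{Q_j}=q_j$ is the length of the $j$-th level run. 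Finally, the partition corresponding to $\lambda=\beta$ is the maximal element of this set, which is therefore the interval $[\tau,\sigma_{\max{}}^{\mathrm{pla}}(\tau,\eta)]$, and this is the promised $\sigma_{\max{}}^{\mathrm{pla}}(\tau,\eta)$.

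I do not expect a genuine obstacle here: the argument is a mechanical transcription of Lemma~\ref{lem7}. The only point that truly requires care is the equivalence established in the second paragraph---that, under the identification of $[\tau,\hat{1}_n]$ with $\IP_s$, the extra constraint $\bar{\sigma}\leq\bar{\eta}$ is precisely the condition that the merged blocks correspond to constant runs of $(m_i)_i$---and, correspondingly, checking that $\Psi\restr_{[\tau,\hat{1}_n]}$ really does restrict to the stated product of interval-partition lattices. Both are immediate once the translation is in hand.
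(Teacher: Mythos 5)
Your proposal is correct and follows essentially the same route as the paper's proof: identify $[\tau,\hat{1}_n]$ with $\IP_s$ via Proposition~\ref{prop:OSPintervals}, observe that $\bar{\sigma}\leq\bar{\eta}$ amounts to each merged block inducing a constant subsequence of $(m_i)$ (i.e.\ $\lambda\leq\beta$ for the interval partition $\beta$ of level runs), and take $\lambda=\beta$ for the maximal element. Your explicit justification of the key equivalence via disjointness of the blocks of $\eta$ is exactly the point the paper states more tersely, so nothing is missing.
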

\begin{proof}
The proof is similar to that of Lemma~\ref{lem7}. 
Write $\tau=(E_{m_1,n_1},E_{m_2,n_2}, \ldots, E_{m_s, n_s})$ and
$\eta=(E_1,E_2,\ldots,E_e)$ as in Definition~\ref{def:DFR}. 
Let $\sigma \geq \tau$, then from Proposition~\ref{app:prop:OSPintervals} we infer
that there is an interval partition 
$\lambda=(L_1,L_2,\ldots, L_l) \in \IP_{s}$ such that 
\begin{equation*}
\sigma=\biggl(\bigcup_{i\in L_1} E_{m_i,n_i},\bigcup_{i\in L_2} E_{m_i,n_i},  \ldots, \bigcup_{i\in L_l} E_{m_i,n_i}\biggr). 
\end{equation*}

Let $(m_i)_{i=1}^{i_1}, (m_i)_{i=i_1+1}^{i_2},\ldots, (m_i)_{i=i_{r-1}+1}^s$ be
the decomposition of $(m_i)_{i=1}^s$ into level runs. This decomposition
determines interval blocks $B_j:= (i_{j-1}+1,i_{j-1}+2,\ldots, i_j)$
($i_0=0,i_r=s$) and hence gives rise to an interval partition $\beta =
(B_1,B_2,\ldots,B_r)$. In order that $\bar{\sigma} \leq \bar{\eta}$, each $L_i$
connects only plateaux, which is equivalent to the condition that $\lambda\leq
\beta$.
Denoting by $q_j=\abs{B_j}=i_j-i_{j-1}$, we get the isomorphism \eqref{iso flat}.  The ordered
set partition $\sigma_{\max}^{\plat}(\tau,\eta)$ corresponds to the 
choice $\lambda=\beta$. 
\end{proof}

\begin{prop}[Goldberg coefficients]\label{prop:goldberg}    \
\begin{enumerate}[label=\rm(\roman*),leftmargin=1cm]
\item\label{cum-cum1} For $\tau,\eta \in \OSP_n$ the Goldberg coefficient
 \eqref{eq:defi:goldberg} evaluates to
\begin{equation*}
  \gold(\tau,\eta)
= 
\begin{cases}\displaystyle
\frac{1}{q_1! q_2! \cdots q_r!}\int_{-1}^0 x^{\des_\eta(\tau)}(1+x)^{\asc_\eta(\tau)}\prod_{j=1}^r P_{q_j}(x)\,dx, &\bar{\tau} \leq \bar{\eta},\\
0, & \bar{\tau} \not\leq \bar{\eta}, 
\end{cases}
\end{equation*}
where $r, q_1,q_2,\dots, q_r$ are the integers from Lemma~\ref{lem8},
$$
P_q(x)=\sum_{k=1}^q k! \, S(q,k)\, x^{k-1} = E_q(x,x+1),
$$
 $S(q,k)$ are the Stirling numbers of the second kind
and
$$
E_n(x,y)=\sum_{\sigma\in\SG_n}x^{\des\sigma}y^{\asc\sigma}
$$
are the
\emph{homogeneous Eulerian polynomials}
\cite[p.~62]{Reutenauer:1993:freeLiealg}.
The first few polynomials are
\[
P_1(x)=1, \quad P_2(x)=2x +1,\quad P_3(x) = 6x^2 + 6x +1,\quad\dots
\]

\item\label{cum-cum2} 
 For $\tau,\eta,\pi \in \OSP_n$ the partitioned Goldberg coefficient
 coefficient     \eqref{eq:defi:partitionedgoldberg}  is
 \begin{equation*}
   \gold(\tau,\eta,\pi)
= 
   \begin{cases}\displaystyle
     \prod_{P\in\pi} \gold(\tau\restr_P,\eta\restr_P), &\bar{\tau} \leq \bar{\eta},~\tau \leq \pi,\\
     0, & \text{otherwise}. 
   \end{cases}
 \end{equation*}
\end{enumerate}
\end{prop}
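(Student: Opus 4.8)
The plan is to mimic the proof of Proposition~\ref{prop:Weisner}, combining the two interval isomorphisms established in Lemmas~\ref{lem7} and~\ref{lem8}. First I would treat part~\eqref{cum-cum1}. If $\bar\tau\not\leq\bar\eta$ there is no $\sigma$ with $\sigma\curlywedge\eta=\sigma'$ for any $\sigma'\geq\tau$ contributing, so $g(\tau,\eta)=0$; henceforth assume $\bar\tau\leq\bar\eta$. The sum ranges over $\sigma\geq\tau$, and $w(\sigma,\eta)\neq0$ forces $\bar\sigma\leq\bar\eta$ by Proposition~\ref{prop:Weisner}\eqref{it:Weis1}. Thus by Lemma~\ref{lem8} the admissible $\sigma$ form the interval $[\tau,\sigma_{\max}^{\mathrm{pla}}(\tau,\eta)]$, isomorphic via $\Psi\restr_{[\tau,\hat1_n]}$ to $\IP_{q_1}\times\dots\times\IP_{q_r}$ where the $q_j$ are the lengths of the level runs of $(m_i)_{i=1}^s$. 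A partition $\sigma$ in this interval corresponds to a tuple of interval partitions $(\lambda_1,\dots,\lambda_r)$, one per level run, and $\widetilde\zeta(\tau,\sigma)=\prod_{j=1}^r\prod_{L\in\lambda_j}\frac{1}{\abs{L}!}$ by the multiplicativity of $\widetilde\zeta$. So the sum factors over the $r$ level runs.

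The key step is then to evaluate, for a single level run of length $q$, the contribution
$\sum_{\lambda\in\IP_q}\bigl(\prod_{L\in\lambda}\frac{1}{\abs L!}\bigr)\,w(\sigma,\eta)$,
where $w(\sigma,\eta)$ must be computed via the integral formula of Proposition~\ref{prop:Weisner}\eqref{it:Weis1}. The point is that when one coarsens $\tau$ to $\sigma$ by merging within a level run, plateaux of $(m_i)$ become ascents in the induced multiset permutation for $\sigma$, so $\asc_\eta(\sigma)=\asc_\eta(\tau)+(\text{number of merges performed})$ while descents are untouched and the number of blocks decreases accordingly. Writing $w(\sigma,\eta)=\int_{-1}^0 x^{\abs\sigma-\asc_\eta(\sigma)-1}(1+x)^{\asc_\eta(\sigma)}\,dx$ and noting $\abs\sigma-\asc_\eta(\sigma)=\des_\eta(\tau)+r$ is constant over the interval (it equals the total number of descending-plus-level runs, unchanged by merging within level runs only changes the split between plateaux-turned-ascents and level runs), I would pull the integral outside the sum and be left with
\[
\int_{-1}^0 x^{\des_\eta(\tau)}(1+x)^{\asc_\eta(\tau)}\prod_{j=1}^r\Bigl(\sum_{\lambda\in\IP_{q_j}}\prod_{L\in\lambda}\frac{(1+x)^{\abs L-1}}{\abs L!}\Bigr)\,dx.
\]
It then remains to identify the inner generating function $\sum_{\lambda\in\IP_q}\prod_{L\in\lambda}\frac{y^{\abs L-1}}{\abs L!}$ with $\frac{1}{q!}P_q(y-1)$, i.e. with $\frac1{q!}\sum_{k=1}^q k!\,S(q,k)(y-1)^{k-1}$; since interval partitions of $[q]$ with $k$ blocks are counted by compositions of $q$ into $k$ parts, this is the ordinary-generating-function identity $\sum_{k}\bigl(\sum_{\text{compositions }q=a_1+\dots+a_k}\prod\frac{y^{a_i-1}}{a_i!}\bigr)$, whose coefficient extraction against the exponential formula $\frac{1}{q!}[\text{coeff of }t^q](e^{t}-1+ty\cdot(\dots))$ — more cleanly, using $\sum_{a\geq1}\frac{t^a y^{a-1}}{a!}=\frac{e^{ty}-1}{y}$, gives $\sum_q q!\,[\text{this}]\,t^q = \sum_k \bigl(\frac{e^{ty}-1}{y}\bigr)^k$, and extracting and substituting $y=x+1$ recovers the Eulerian polynomial evaluation $P_q(x)=E_q(x,x+1)$ via the classical formula $\sum_k k!\,S(q,k)z^k = \sum_\sigma z^{1+\des\sigma}$. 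Setting $y=1+x$ completes part~\eqref{cum-cum1} after recording $P_1,P_2,P_3$ by direct expansion.

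For part~\eqref{cum-cum2} I would argue exactly as in the proof of Proposition~\ref{prop:Weisner}\eqref{it:Weis2}: if $\bar\tau\not\leq\bar\eta$ or $\tau\not\leq\pi$ the sum is empty. Otherwise, by Proposition~\ref{prop:OSPintervals} the constraint $\sigma\in[\tau,\pi]$ decomposes $\sigma$ as a concatenation $\sigma=\sigma_{P_1}\cdots\sigma_{P_p}$ with $\sigma_P\in[\tau\restr_P,\hat1_P]$, and both $\widetilde\zeta(\tau,\sigma)=\prod_P\widetilde\zeta(\tau\restr_P,\sigma_P)$ and (by Proposition~\ref{prop:Weisner}\eqref{it:Weis2}) $w(\sigma,\eta,\pi)=\prod_P w(\sigma\restr_P,\eta\restr_P)$ factor over $P\in\pi$; hence the whole sum factors as $\prod_{P\in\pi}\bigl(\sum_{\sigma_P\geq\tau\restr_P}\widetilde\zeta(\tau\restr_P,\sigma_P)w(\sigma_P,\eta\restr_P)\bigr)=\prod_{P\in\pi}g(\tau\restr_P,\eta\restr_P)$ by part~\eqref{cum-cum1}.

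The main obstacle I anticipate is the bookkeeping in the middle paragraph: correctly tracking how $\des_\eta$, $\asc_\eta$ and $\plat_\eta$ transform as one coarsens $\tau$ within its level runs (in particular verifying that $\abs\sigma-\asc_\eta(\sigma)$ is constant so the integral can be factored out), and then cleanly recognizing the interval-partition generating function as the shifted Eulerian polynomial $P_q(x)=E_q(x,x+1)=\sum_k k!\,S(q,k)x^{k-1}$. Everything else is a routine adaptation of the Weisner-coefficient computation.
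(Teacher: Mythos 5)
Your overall strategy (restrict to $\bar\sigma\leq\bar\eta$, use the interval isomorphism of Lemma~\ref{lem8} to factor the sum over level runs, and recognize the per-run generating function as an Eulerian/Stirling polynomial) is exactly the paper's route, and your treatment of part~\eqref{cum-cum2} matches the intended argument. However, the central bookkeeping step in part~\eqref{cum-cum1} is wrong, and the error does not cancel. When you coarsen $\tau$ to $\sigma$ by merging adjacent blocks lying in the same block of $\eta$, the merged block still lies in that block of $\eta$, so in the induced multiset permutation the two equal labels collapse into one: the plateau simply disappears. It does \emph{not} become an ascent. Hence $\asc_\eta(\sigma)=\asc_\eta(\tau)$ and $\des_\eta(\sigma)=\des_\eta(\tau)$ for all $\sigma$ in the interval, while $\plat_\eta$ and $\abs{\sigma}$ drop by the number of merges; in particular $\abs{\sigma}-\asc_\eta(\sigma)$ is \emph{not} constant on the interval (your claimed value $\des_\eta(\tau)+r$ does not even follow from your own transformation rule). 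The correct computation keeps $(1+x)^{\asc_\eta(\tau)}$ fixed and lets the $x$-exponent vary: $\abs{\sigma}-\asc_\eta(\tau)-1=\des_\eta(\tau)+\sum_{i=1}^r(\abs{\sigma_i}-1)$, so each level run contributes $\sum_{\rho\in\IP_{q}}x^{\abs{\rho}-1}\big/\prod_{R\in\rho}\abs{R}!=\frac{1}{q!}\sum_{k=1}^q k!\,S(q,k)\,x^{k-1}=\frac{1}{q!}P_q(x)$, with no shift of variable needed (the Stirling number appears because compositions of $q$ into $k$ parts weighted by multinomials count surjections onto $[k]$).

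Concretely, your intermediate integrand with factors $(1+x)^{\abs{L}-1}/\abs{L}!$ per block and your closing identification $\sum_{\lambda\in\IP_q}\prod_{L\in\lambda}\frac{y^{\abs{L}-1}}{\abs{L}!}=\frac{1}{q!}P_q(y-1)$ are both false: for $q=2$ the left side is $1+\tfrac{y}{2}$ while the right side is $y-\tfrac12$. And for the simplest nontrivial case $\tau=(\{1\},\{2\})$, $\eta=(\{1,2\})$ (so $\des_\eta(\tau)=\asc_\eta(\tau)=0$, $r=1$, $q_1=2$), the true value is $\gold(\tau,\eta)=1\cdot(-\tfrac12)+\tfrac12\cdot 1=0$, in agreement with $\frac{1}{2!}\int_{-1}^0(2x+1)\,dx=0$, whereas your intermediate formula gives $\int_{-1}^0\bigl(\tfrac32+\tfrac{x}{2}\bigr)dx=\tfrac54$. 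So the gap is the incorrect rule ``plateaux become ascents''; once you replace it by ``plateaux are consumed, ascents and descents unchanged,'' the rest of your outline goes through and reproduces the paper's proof.
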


\begin{remark}
  The name ``Goldberg coefficients'' originates from the
  Campbell-Baker-Hausdorff formula, see Section~\ref{sec:freeLiealg} below,
  in particular Theorem~\ref{thm:coefficient}.
  Some examples of $g(\tau,\eta)$ will be computed in Example~\ref{exa:GB}. 
\end{remark}

\begin{remark}
  \label{rem:Frobenius}
  The expansion of $P_q(x)$ in terms of Stirling coefficients was first proved
  by Frobenius \cite[Theorem~E, p.~244]{Comtet:1974:advanced}.
\end{remark}

\begin{proof}\ref{cum-cum1}\,\, 
If $\bar{\tau} \not\leq \bar{\eta}$, then $w(\sigma,\eta)=0$ for all $\sigma \geq \tau$ and so $\gold(\tau,\eta)=0$. Assume hereafter that $\bar{\tau} \leq \bar{\eta}$. 
From Lemma~\ref{lem8} we have the isomorphism 
\begin{align*}
\{\sigma \in\OSP_n \mid \sigma \geq \tau, \bar{\sigma} \leq \bar{\eta} \}
&\cong \IP_{q_1} \times \IP_{q_2} \times \cdots \times \IP_{q_r}, \\
\sigma &\mapsto (\sigma_1,\sigma_2,\ldots,\sigma_r). 
\end{align*}
Since $r=\abs{\tau}-\plat_\eta(\tau)=\des_\eta(\tau)+\asc_\eta(\tau)+1$, we
have $\abs{\sigma}-\asc_\eta(\tau)-1 = \sum_{i=1}^r
(\abs{\sigma_i}-1)+\des_\eta(\tau)$. Note also that $[\tau:\sigma]!
=\prod_{i=1}^r \prod_{S\in\sigma_i}\abs{S}!$.  Since $\sigma$ just connects the
blocks of a level run of $(m_i)_{i=1}^{\abs{\tau}}$, it does not change the
number of ascents: $\asc_\eta(\sigma)=\asc_\eta(\tau)$ and we have 
\begin{multline*}
\sum_{\substack{\sigma \in \OSP_n \\ \sigma \geq \tau}}
  \widetilde{\zeta}(\tau,\sigma)
  \,
  w(\sigma,\eta)
  \\
  \begin{aligned}[t]
&=\sum_{\substack{\sigma \in \OSP_n \\ \sigma \geq \tau, \bar{\sigma} \leq \bar{\eta}}}\frac{1}{[\tau:\sigma]!} \int_{-1}^0 x^{\abs{\sigma} -\asc_\eta(\tau)-1}(1+x)^{\asc_\eta(\tau)}\,dx \\
&=  \sum_{(\sigma_1,\sigma_2,\ldots,\sigma_r) \in\IP_{q_1} \times\IP_{q_2} \times \cdots \times \IP_{q_r}} \int_{-1}^0 x^{\des_\eta(\tau)}(1+x)^{\asc_\eta(\tau)}\prod_{i=1}^r\frac{x^{\abs{\sigma_i}-1}}{\prod_{S\in\sigma_i }\abs{S}!} dx \\ 
&=   \int_{-1}^0 x^{\des_\eta(\tau)}(1+x)^{\asc_\eta(\tau)}\prod_{i=1}^r\left(\sum_{\rho \in \IP_{q_i}}\frac{x^{\abs{\rho}-1}}{\prod_{R\in\rho}\abs{R}!}\right) dx.  
  \end{aligned}
\end{multline*}
For $q\in\N$ we have  
\begin{equation*}
\sum_{\rho \in \IP_q}\frac{x^{\abs{\rho}-1}}{\prod_{R\in\rho}\abs{R}!} = \sum_{\substack{n_1+\cdots+n_k=q\\ n_i\geq1, k\geq1}}\frac{x^{k-1}}{n_1! \cdots n_k!}. 
\end{equation*}
For each fixed $k$, the sum $\sum_{\substack{n_1+\cdots+n_k=q\\
    n_i\geq1}}\frac{q!}{n_1! \cdots n_k!}$ is the number of ways of
distributing $q$ distinct objects among $k$ nonempty urns, so it equals  $k!
S(q,k)$ and the proof is complete.

\itemspacing
\ref{cum-cum2}\,\, The idea of the proof is similar to Proposition~\ref{prop:Weisner}\ref{it:Weis2} and we omit the proof. 
\end{proof}

Some Goldberg coefficients $\gold(\tau,\eta)$ are known to vanish even when
$\bar{\tau} \leq \bar{\eta}$ (see \cite{BV15}), while the Weisner coefficients
$w(\tau,\eta)$ do not vanish whenever $\bar{\tau} \leq \bar{\eta}$. 
The following Proposition describes two sufficient criteria for vanishing Goldberg
coefficients .

\begin{prop}\label{prop:vanish} Suppose $\tau,\eta\in\OSP_n$ are such that $\bar{\tau}\leq \bar{\eta}$. 
\begin{enumerate}[label=\rm(\roman*),leftmargin=1cm]
\item\label{vanish g} If $\des_\eta(\tau)=\asc_\eta(\tau)$ and $\abs{\tau}$ is even then $\gold(\tau,\eta)=0$. 
\item\label{non vanish g} If $|\tau|$ is prime then $g(\tau,\eta) \neq0$. 
\end{enumerate}
\end{prop}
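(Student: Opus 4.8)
The starting point is the integral formula from Proposition~\ref{prop:goldberg}(\ref{cum-cum1}): when $\bar\tau\leq\bar\eta$ we have
$$
\gold(\tau,\eta)=\frac{1}{q_1!\cdots q_r!}\int_{-1}^0 x^{\des_\eta(\tau)}(1+x)^{\asc_\eta(\tau)}\prod_{j=1}^r P_{q_j}(x)\,dx,
$$
where $q_1,\dots,q_r$ are the lengths of the level runs of the multiset permutation $(m_i)_{i=1}^{\abs\tau}$ induced by $\eta$ on $\tau$, and $P_q(x)=E_q(x,x+1)$. The plan is to analyze this integral by exploiting the functional equation satisfied by the integrand under the substitution $x\mapsto -1-x$, which maps the interval $[-1,0]$ to itself. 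Since the homogeneous Eulerian polynomials satisfy $E_q(x,y)=E_q(y,x)$ (descents and ascents swap under reversal of a permutation), we get $P_q(-1-x)=E_q(-1-x,-x)=(-1)^{q-1}E_q(x+1,x)=(-1)^{q-1}P_q(x)$, using homogeneity of degree $q-1$. Likewise $x^{\des}(1+x)^{\asc}$ becomes $(-1-x)^{\des}(-x)^{\asc}=(-1)^{\des+\asc}x^{\asc}(1+x)^{\des}$.

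For part~(\ref{vanish g}): assume $\des_\eta(\tau)=\asc_\eta(\tau)=:d$. Then under $x\mapsto -1-x$ the factor $x^d(1+x)^d$ is invariant (the sign $(-1)^{2d}=1$ and the two factors swap, giving the same product), so the integrand transforms by the overall sign $\prod_j(-1)^{q_j-1}=(-1)^{\sum(q_j-1)}=(-1)^{(\abs\tau-\plat_\eta(\tau))-r}$. Recall $r=\abs\tau-\plat_\eta(\tau)$, so $\sum_j(q_j-1)=\plat_\eta(\tau)$ — wait, more carefully: $\sum q_j=\abs\tau$ and there are $r$ of them, so $\sum(q_j-1)=\abs\tau-r=\plat_\eta(\tau)$. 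Meanwhile from \eqref{eq:desplaasc+1} applied to the induced permutation, $\abs\tau=\des_\eta+\plat_\eta+\asc_\eta+1=2d+\plat_\eta(\tau)+1$, so $\plat_\eta(\tau)=\abs\tau-2d-1$, which is odd precisely when $\abs\tau$ is even. Hence the integrand is anti-invariant under $x\mapsto-1-x$, the substitution shows the integral equals its own negative, and $\gold(\tau,\eta)=0$. I would write this out carefully, tracking the sign bookkeeping, since that is where an off-by-one is easiest to make.

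For part~(\ref{non vanish g}): suppose $p:=\abs\tau$ is prime, and assume for contradiction that $\gold(\tau,\eta)=0$. The cleanest route is to use the identity \eqref{eq permute} implicitly available, or better to argue directly: I would expand the integral as a polynomial identity. Writing $\gold(\tau,\eta)\cdot q_1!\cdots q_r!=\int_{-1}^0 f(x)\,dx$ with $f(x)=x^{d_1}(1+x)^{a_1}\prod_j P_{q_j}(x)$ a polynomial of degree $d_1+a_1+\sum(q_j-1)=\abs\tau-1=p-1$, the vanishing of $\int_{-1}^0 f=0$ is one linear condition. The key arithmetic input: after the substitution $x=t-1$ (or working with the representation via Bernoulli-type numbers), $\int_{-1}^0 x^k\,dx=(-1)^{k+1}/(k+1)$, and summing, $\gold(\tau,\eta)$ is (up to the known nonzero factor $1/(q_1!\cdots q_r!)$) a $\mathbb Z$-linear combination $\sum_{k=0}^{p-1} c_k\,(-1)^{k+1}/(k+1)$ where the $c_k$ are the integer coefficients of $f$. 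Clearing denominators by $\operatorname{lcm}(1,\dots,p)$, the only term whose denominator contains the prime $p$ is $k=p-1$, with coefficient $c_{p-1}=$ leading coefficient of $f$ $=\prod_j(q_j!\,S(q_j,q_j))=\prod_j q_j!$ (since $S(q,q)=1$), which is a product of factorials of numbers $q_j<p$ (as $\sum q_j=p$ and $r\geq 1$; if $r=1$ then $q_1=p$ but then $\plat_\eta(\tau)=p-1$ and everything is a single level run, a degenerate case to handle separately where $\asc=\des=0$ and $\gold=\frac1{p!}\int_{-1}^0 P_p(x)dx\neq 0$ by a direct check). So $c_{p-1}$ is coprime to $p$, hence the $p$-adic valuation of $\gold(\tau,\eta)$ is exactly $-1$, contradicting $\gold(\tau,\eta)=0$.

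The main obstacle I anticipate is the $r=1$ (single level run) degenerate case in part~(ii), and more generally making the $p$-adic argument airtight: one must confirm that no cancellation among the lower-order terms $c_k/(k+1)$, $k<p-1$, can conspire with the $k=p-1$ term, which is guaranteed since only $k=p-1$ contributes a factor of $p$ in the denominator after reducing $1/(k+1)$ to lowest terms (as $k+1\leq p-1$ for $k<p-1$, wait — $k+1\leq p-1$ fails at $k=p-2$ giving $k+1=p-1<p$, fine; only $k=p-1$ gives $k+1=p$). This is clean for $p$ prime but genuinely uses primality. I would double-check the boundary bookkeeping and present part~(i) first as a warm-up.
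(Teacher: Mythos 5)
Your proof of part~(\ref{vanish g}) is correct and takes a genuinely different route from the paper's. The paper quotes \cite[Corollary 3.15]{Reutenauer:1993:freeLiealg} (vanishing when $\des_\eta(\tau)=\asc_\eta(\tau)$ and $q_1+\cdots+q_r+\des_\eta(\tau)+\asc_\eta(\tau)-r$ is odd) and then performs exactly the parity bookkeeping you do, via $\sum_i q_i=r+\plat_\eta(\tau)$ and \eqref{eq:desplaasc+1}. You instead derive the vanishing directly from the integral formula of Proposition~\ref{prop:goldberg} using the involution $x\mapsto-1-x$ of $[-1,0]$: the relation $P_q(-1-x)=(-1)^{q-1}P_q(x)$ (homogeneity of $E_q$ together with $E_q(x,y)=E_q(y,x)$) and the invariance of $x^d(1+x)^d$ when $\des_\eta(\tau)=\asc_\eta(\tau)=d$ give an integrand that is multiplied by $(-1)^{\plat_\eta(\tau)}$, and $\plat_\eta(\tau)=\abs{\tau}-2d-1$ is odd exactly when $\abs{\tau}$ is even. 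The sign bookkeeping checks out, so this buys a self-contained argument that in effect reproves the cited free Lie algebra fact.

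For part~(\ref{non vanish g}), your $p$-adic argument in the case $r\geq2$ is sound and is in substance the same as the paper's: the paper isolates the unique term of $\gold(\tau,\eta)=\sum_{\sigma\geq\tau}\widetilde{\zeta}(\tau,\sigma)\,w(\sigma,\eta)$ whose denominator is divisible by $p$ (namely $\sigma=\tau$), while you isolate the unique monomial $x^{p-1}$ of the degree-$(p-1)$ integrand, whose integral carries the denominator $p$ and whose coefficient $\prod_j q_j!$ is prime to $p$ since every $q_j<p$. The genuine gap is your disposal of the degenerate case $r=1$: there $\eta=\hat{1}_n$, $\des_\eta(\tau)=\asc_\eta(\tau)=0$ and $\gold(\tau,\hat{1}_n)=\frac{1}{p!}\int_{-1}^0P_p(x)\,dx$, and this integral is $0$ for every $p\geq2$, not nonzero as you claim: indeed $\int_{-1}^0P_q(x)\,dx=\sum_{k=1}^q(-1)^{k-1}(k-1)!\,S(q,k)=\sum_{\sigma\in\SP_q}\mu_{\SP}(\sigma,\hat{1}_q)=0$ for $q\geq2$, equivalently it is $q!$ times the coefficient of $a^q$ in $\log(e^a)=a$; for instance $\frac12\int_{-1}^0(2x+1)\,dx=0$. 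So the coefficient actually vanishes in this case, e.g.\ $\gold\bigl((\{1\},\{2\}),\hat{1}_2\bigr)=0$ although $\abs{\tau}=2$ is prime; the exceptional case must be excluded rather than checked to be nonzero, and your ``direct check'' cannot be repaired. (This is also precisely where the paper's own proof is thin: its claim that $[\tau:\sigma]!\,\abs{\sigma}\binom{\abs{\sigma}-1}{\asc_\eta(\sigma)}$ never contains the factor $p$ for $\sigma>\tau$ fails for $\sigma=\hat{1}_n$, which enters the sum exactly when $\eta=\hat{1}_n$.) For $\eta\neq\hat{1}_n$ your argument is complete, apart from the harmless sign slip $\int_{-1}^0x^k\,dx=(-1)^k/(k+1)$, which does not affect the valuation.
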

\begin{proof}
\ref{vanish g} By \cite[Corollary 3.15]{Reutenauer:1993:freeLiealg}, the coefficient $\gold(\tau,\eta)$ of $K_\tau(X_1,\dots,X_n)$ vanishes if $q_1+\cdots +q_r +\des_\eta(\tau)+ \asc_\eta(\tau)-r$ is odd and $\des_\eta(\tau)=\asc_\eta(\tau)$. Since $\sum_{i=1}^r q_i = r +\plat_\eta(\tau)$ and $\abs{\tau}= \des_\eta(\tau)+\asc_\eta(\tau)+\plat_\eta(\tau)+1$, the conclusion follows. 

\itemspacing
\ref{non vanish g} The idea of the proof is taken from \cite{BV15}. Let $p:=\abs{\tau}$. By definition and Proposition~\ref{prop:Weisner} we have 
\begin{align*}
\gold(\tau,\eta)
&=\sum_{\substack{\sigma \in \OSP_n \\ \sigma \geq \tau}}
   \widetilde{\zeta}(\tau,\sigma)\,w(\sigma,\eta) 
 =\sum_{\substack{\sigma \in \OSP_n \\ \sigma \geq \tau}}
    \frac{1}{[\tau:\sigma]!} \frac{(-1)^{\abs{\sigma}-\asc_\eta(\sigma)-1}}{\abs{\sigma}\binom{\abs{\sigma}-1}{\asc_\eta(\sigma)}} \\
&=  \frac{(-1)^{p-\asc_\eta(\tau)-1}}{p\binom{p-1}{\asc_\eta(\tau)}} 
  + \sum_{\substack{\sigma \in \OSP_n \\ \sigma > \tau}} 
    \frac{1}{[\tau:\sigma]!} 
    \frac{(-1)^{\abs{\sigma}-\asc_\eta(\sigma)-1}}{\abs{\sigma}\binom{\abs{\sigma}-1}{\asc_\eta(\sigma)}}. 
\end{align*}
Since the number $[\tau:\sigma]!
\abs{\sigma}\binom{\abs{\sigma}-1}{\asc_\eta(\sigma)}$ never contains $p$ as a
factor for any $\sigma >\tau$,  $g(\tau,\eta)$ is nonzero.  
\end{proof}

\begin{remark}
  It is a difficult problem to characterize vanishing Goldberg coefficients.  
  The criterion  \ref{vanish g} from Proposition~\ref{prop:vanish} above
  does not cover all cases,
  for example one can show that $\gold(\tau,\eta)=0$ for
  $\tau=(\{3\},\{4\},\{2\},\{1\}), \eta=(\{1,2,3\},\{4\})$,
  although the pair $(\tau,\eta)$ does not satisfy the assumption of the criterion.
  More information about Goldberg coefficients can be found in
  \cite{Reutenauer:1993:freeLiealg,Thompson82}
  and in particular  \cite[Section IV]{BV15} concerning the question of vanishing coefficients.
\end{remark}

\begin{exa}[Goldberg coefficients and partial vanishing of cumulants]\label{exa:GB} 
We will write the ordered kernel set partition $\kappa(i_1,\dots, i_n)$ simply
as $i_1i_2\dots i_n$. 
\begin{enumerate}[label=\arabic*.,leftmargin=1cm]
 \item 
  Take $\eta= 12$. We compute $g(12,12)$. Now $\tau=\eta$ so $m_1=1, m_2=2, r=2, q_1=q_2=1$. Hence $\des_\eta(\tau)=0, \asc_\eta(\tau)=1$ and so
\begin{equation*}
g(12,12)= \frac{1}{1!1!}\int_{-1}^0 (1+x) \,dx=\frac{1}{2}. 
\end{equation*}
Similarly  we get $g(21,12)=-\frac{1}{2}, g(12,21)=-\frac{1}{2},
g(21,21)=\frac{1}{2}$ and hence
\begin{align*}
&K_{11}(X^{(1)},Y^{(2)}) = \frac{1}{2}K_{12}(X,Y)-\frac{1}{2} K_{21}(X,Y), \\
& K_{11}(X^{(2)}, Y^{(1)}) =  -\frac{1}{2}K_{12}(X,Y)+\frac{1}{2} K_{21}(X,Y). 
\end{align*}

In the most popular spreadability systems, like the tensor, free, Boolean or
monotone spreadability systems, partitioned cumulants factorize, e.g., 
$K_{12}(X,Y)=K_1(X)K_1(Y)$. 
Hence we get $K_{11}(X^{(1)},Y^{(2)}) =K_{11}(X^{(2)},Y^{(1)})=0$. 

\item 
We then consider the case $\eta=112= (\{1,2\},\{3\})$. The Goldberg coefficients can be nonzero only when $\bar{\tau} \leq \bar{\eta}$, so $\tau$ is one of 
$$
112, 221, 123, 132, 213, 231, 312, 321.
$$ 
If $\tau=112$ then $g(112,112)=\frac{1}{2}$ by the same calculation as $g(12,12)$. If $\tau=221$ then again $g(221,112)= -\frac{1}{2}$. 
If $\tau=123$ then $m_1=1,m_2=1,m_3=2, r=2, q_1=2, q_2=1$. So 
\begin{equation*}
g(123,112) = \frac{1}{1!2!}\int_{-1}^0 (1+x) P_2(x) \, dx = \frac{1}{12}. 
\end{equation*}
If we take $\tau= 132$ then $m_1=1,m_2=2, m_3=1$, so $r=3,q_1=q_2=q_3=1$, $\asc = 1, \des=1$. Thus we get 
\begin{equation*}
g(132,112) = \frac{1}{1!1!1!}\int_{-1}^0 x (1+x) \, dx = -\frac{1}{6}. 
\end{equation*}
If we take $\tau= 231$ then $m_1=2,m_2=1,m_3=1$. So $r=2, q_1=1, q_2=2$, $\asc=0,\des=1$. Hence 
\begin{equation*}
g(231,112) = \frac{1}{1!2!}\int_{-1}^0 x P_2(x) \, dx = \frac{1}{2}\int_{-1}^0 x (2x+1)\, dx=\frac{1}{12}. 
\end{equation*}
Similarly we can compute the remaining Goldberg coefficients and get 
\begin{multline*}
  K_{111}(X^{(1)},Y^{(1)},Z^{(2)})
   = \frac{1}{2} K_{112} -\frac{1}{2} K_{221} + \frac{1}{12} K_{123} -\frac{1}{6} K_{132} \\
    +\frac{1}{12}K_{213}+\frac{1}{12} K_{231} -\frac{1}{6}K_{312} + \frac{1}{12}K_{321}, 
\end{multline*}
where $X,Y,Z$ are omitted for simplicity. We can see that \eqref{eq permute} holds (now $\pi=111$): 
\begin{equation*}
\frac{1}{2} - \frac{1}{2}=0, \qquad  \frac{1}{12} -  \frac{1}{6} + \frac{1}{12} + \frac{1}{12}  - \frac{1}{6} + \frac{1}{12} =0.  
\end{equation*}

Again if the cumulants factorize then $K_{112}=K_{221}=K_{11}(X,Y)K_1(Z)$ and
$K_{123}=\cdots= K_{321} = K_1(X)K_1(Y)K_1(Z)$, so the mixed cumulant
$K_{111}(X^{(1)},Y^{(1)},Z^{(2)})$ vanishes.  

Similarly one can compute $g(\tau, 121)$ for all $\tau$ such that $\bar{\tau} \leq \overline{121}$ and get 
\begin{multline*}
  K_{111}(X^{(1)},Y^{(2)},Z^{(1)})
  = \frac{1}{2} K_{121} -\frac{1}{2} K_{212} - \frac{1}{6} K_{123} +\frac{1}{12} K_{132} \\
+\frac{1}{12}K_{213}+\frac{1}{12} K_{231} +\frac{1}{12}K_{312} - \frac{1}{6}K_{321}. 
\end{multline*}

Now in the tensor, free or Boolean spreadability system the mixed cumulant
vanishes. However in the monotone spreadability system it does not:  $K_{212}$
vanishes identically since $212$ is not a monotone partition (see Proposition
\ref{prop19}) and therefore, in the monotone case we have 
\[
K_{111}(X^{(1)},Y^{(2)},Z^{(1)}) = \frac{1}{2}K_{121}(X,Y,Z)=\frac{1}{2} K_{11}(X,Z)K_1(Y)
\]
which does not vanish in general. 
\end{enumerate}

\end{exa}

The calculation of these cumulants in terms of moments is easier. 

\begin{exa}[Weisner coefficients and partial vanishing of cumulants]\label{exa:W} We can reuse some results from Example~\ref{exa:GB}. 

  \begin{enumerate}[label=\arabic*., leftmargin=1cm]
   \item 
    If we take $\tau=\eta=12$ then $\asc_\eta(\tau)=1$ and so
    \begin{equation*}
      w(12,12)= \frac{(-1)^{2-1-1}}{2 \binom{1}{1}}=\frac{1}{2}. 
    \end{equation*}
    
   \item 
    Similarly if $\tau=21$ and  $\eta=12$ then $m_1=2, m_1=1$ so $\asc_\eta(\tau)=0$. 
    Therefore we get $w(21,12)=-\frac{1}{2}$. Similarly, $w(12,21)=-\frac{1}{2}, w(21,21)=\frac{1}{2}$. So 
    \begin{align*}
      K_{11}(X^{(1)},Y^{(2)}) &= \frac{1}{2}\phi_{12}(X,Y)-\frac{1}{2} \phi_{21}(X,Y), \\
      K_{11}(X^{(2)}, Y^{(1)}) &=  -\frac{1}{2}\phi_{12}(X,Y)+\frac{1}{2} \phi_{21}(X,Y). 
    \end{align*}
    In factorizing spreadability systems we have
    $\phi_{12}(X,Y)=\phi_{21}(X,Y)=\phi_1(X)\,\phi_1(Y)$ and hence $K_{11}(X^{(1)},Y^{(2)}) =K_{11}(X^{(2)},Y^{(1)})=0$. 

\item 
One can show that $w(112,112)=\frac{1}{2} = - w(221,112)$ by the same calculation as $w(12,12)$ and $w(21,12)$. 
If $\tau=123,\eta=112$ then $m_1=1,m_2=1,m_3=2$, so $\asc =1$ and  
\begin{equation*}
w(123,112) = \frac{(-1)^{3-1-1}}{3\binom{2}{1}} = -\frac{1}{6}. 
\end{equation*}
If we take $\tau= 231$ then $m_1=2,m_2=1,m_3=1$ and so $\asc=0$. Hence 
\begin{equation*}
w(231,112) = \frac{(-1)^{3-0-1}}{3\binom{2}{0}}=\frac{1}{3}. 
\end{equation*}
Similarly we can compute the remaining Weisner coefficients and get 
\begin{multline*}
  K_{111}(X^{(1)},Y^{(1)},Z^{(2)})
  = \frac{1}{2} \phi_{112} -\frac{1}{2} \phi_{221} - \frac{1}{6} \phi_{123} -\frac{1}{6} \phi_{132} \\
-\frac{1}{6}\phi_{213}+\frac{1}{3} \phi_{231} -\frac{1}{6}\phi_{312} + \frac{1}{3}\phi_{321}, 
\end{multline*}
where $X,Y,Z$ are omitted for simplicity. We can see that \eqref{eq permute2} holds (now $\pi=111$): 
\begin{equation*}
\frac{1}{2} - \frac{1}{2}=0, \qquad  -\frac{1}{6} -  \frac{1}{6} - \frac{1}{6} + \frac{1}{3}  - \frac{1}{6} + \frac{1}{3} =0.  
\end{equation*}
In factorizing spreadability systems the mixed cumulant $K_{111}(X^{(1)},Y^{(1)},Z^{(2)})$ vanishes by using the factorization of partitioned moments. 

\item 
Similarly one can compute $w(\tau, 121)$ for all $\tau$ such that $\bar{\tau} \leq \overline{121}$ and get 
\begin{multline*}
  K_{111}(X^{(1)},Y^{(2)},Z^{(1)})
  = \frac{1}{2} \phi_{121} -\frac{1}{2} \phi_{212} - \frac{1}{6} \phi_{123} -\frac{1}{6} \phi_{132} \\
+\frac{1}{3}\phi_{213}-\frac{1}{6} \phi_{231} +\frac{1}{3}\phi_{312} - \frac{1}{6}\phi_{321}. 
\end{multline*}
In the tensor, free or Boolean spreadability system mixed cumulants
vanishes, but in the monotone case in general they don't:
indeed $\phi_{212}=\phi(X)\phi(Y)\phi(Z)$ while
$\phi_{121}=\phi(X Z)\phi(Y)$. Therefore, in the monotone case we have
\[
K_{111}(X^{(1)},Y^{(2)},Z^{(1)}) = \frac{1}{2}(\phi_{121}-\phi_{212})=\frac{1}{2}(\phi(X Z)- \phi(X)\phi(Z)) \phi(Y)
\]
which does not vanish in general. 
  \end{enumerate}

\end{exa}

\section{Campbell-Baker-Hausdorff formula and Lie polynomials}
\label{sec:freeLiealg}
The material of the preceding section resembles some results from the theory of
free Lie algebras, cf.~the book by C.~Reutenauer
\cite{Reutenauer:1993:freeLiealg} already cited above. 
In particular,
the  \emph{Goldberg coefficients}  $\gold(\tau,\eta)$ from 
Proposition~\ref{prop:goldberg} coincide with 
the coefficients of the Campbell-Baker-Hausdorff series, i.e., 
\begin{equation}
  \label{eq:CBH}
\log(e^{a_1}e^{a_2}\dotsm e^{a_n}) = \sum_{w: \text{word}} g_w w
\end{equation}
when it is expanded in the ring of noncommutative formal power series,
see \cite{Goldberg:1956:formal}.
In this section we will provide a new probabilistic interpretation of the
coefficients of the CBH formula in terms of a certain variant of the tensor spreadability system.

\subsection{The unshuffle spreadability system $\cS_{\NCtensorT}$}  
\label{sec NCT} 
Given a \emph{unital} algebra $\alg{A}$ 
we introduce an operator-valued spreadability system with the following ingredients. 
\begin{enumerate}[label=\arabic*.,leftmargin=1cm]
\item Put $\varphi=\Id: \mathcal{A}\to \mathcal{A}$.
\item  $\alg{U}:=\otimes_{i=1}^\infty \mathcal{A}$ is the algebraic tensor
 product, cf.~Section~\ref{ssec:tensor}.
\item $\iota^{(j)}: \mathcal{A}\to\alg{U}$ is the natural embedding of $\mathcal{A}$ into the $j^{\mathrm{th}}$ component of $\alg{U}$: 
\[
\iota^{(j)}(X):= 1^{\otimes (j-1)} \otimes X \otimes 1^{\otimes \infty}. 
\]
\item $\tilde{\varphi}:=\conc_\infty:  \alg{U}\to \mathcal{A}$ is the concatenation product.
 The resulting functional on $\alg{U}$ gives rise to a
rearrangement of the letters
 \begin{equation}
   \label{eq:unshuffle:phimult}
\tilde{\varphi}(X_1^{(i_1)} X_2^{(i_2)}\cdots X_n^{(i_n)}):= \phi_\pi(X_1,X_2,\dots,X_n)=X_{P_1} X_{P_2}\cdots X_{P_k}, 
 \end{equation}
where $\pi=\kappa(i_1,i_2,\ldots,i_n)=(P_1,P_2,\ldots,P_k)$ and
the order of the letters inside each block is preserved according to  \eqref{product}.
\end{enumerate}
Thus $X_1,X_2,\dots,X_n$ are ``unshuffled'' by $\tilde{\phi}$ accordingly to the upper indices $(i_1,i_2,\dots,i_n)$. For example 
$$
\tilde{\varphi}(X_1^{(5)}X_2^{(2)}X_3^{(3)}X_4^{(2)}X_5^{(3)}):= X_2 X_4 X_3 X_5 X_1. 
$$
\begin{prop} 
The triple $\cS_{\NCtensorT}=\cS_{\NCtensorT}(\alg A)=(\alg{U}, 
\conc_\infty,(\iota^{(j)})_{j=1}^\infty)$ defined above constitutes a spreadability system for $(\mathcal{A},\Id)$.
\end{prop}
\begin{proof} 
Clearly $\tilde{\varphi}\circ \iota^{(i)}=\Id$ on
$\mathcal{A}$. The symmetry condition (\ref{eq:spreadability}) holds too since the value of
$\tilde{\varphi}$ only depends on the ordered kernel set partition of the upper indices. 
\end{proof}

We call the triple $\cS_{\NCtensorT}$ the {\it unshuffle spreadability system}.
The corresponding  cumulants $K_\pi^{\NCtensorT}$
satisfy (ordered) multiplicativity like the tensor cumulants, cf.~Proposition~\ref{prop19} \ref{ten}.   

\begin{prop}[Multiplicativity of partitioned cumulants] 
For $\pi = (P_1,P_2,\dots, P_p) \in \OSP_n$, we have 
\[
K_\pi^{\NCtensorT}(X_1,X_2,\dots, X_n) = K_{|P_1|}^{\NCtensorT}(X_{P_1})\,  K_{|P_2|}^{\NCtensorT}(X_{P_2}) \dotsm  K_{|P_p|}^{\NCtensorT}(X_{P_p}). 
\]
\end{prop}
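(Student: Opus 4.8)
The plan is to reduce the multiplicativity of the NCT cumulants to the multiplicativity of the moment functional $\widetilde{\varphi} = \conc_\infty$, which is essentially definitional: for $\pi = (P_1, \dots, P_p) \in \OSP_n$ and any representative tuple $(i_1, \dots, i_n)$ with kernel $\pi$, the concatenation product gives $\widetilde{\varphi}(X_1^{(i_1)} \cdots X_n^{(i_n)}) = X_{P_1} X_{P_2} \cdots X_{P_p}$. The key observation is that this factorizes as a product over the blocks of $\pi$ in the following sense. First I would establish an auxiliary ``block-factorization'' identity for the partitioned moment functionals: for any $\sigma \in \OSP_n$ with $\sigma \leq \pi$, writing $\sigma$ via Proposition~\ref{prop:OSPintervals} so that each block $P_j$ of $\pi$ is subdivided into an interval partition of the blocks of $\sigma$ lying inside it, one has
\[
\varphi_\sigma^{\NCtensorT}(X_1, \dots, X_n) = \varphi_{\sigma\restr_{P_1}}^{\NCtensorT}(X_{P_1}) \, \varphi_{\sigma\restr_{P_2}}^{\NCtensorT}(X_{P_2}) \cdots \varphi_{\sigma\restr_{P_p}}^{\NCtensorT}(X_{P_p}),
\]
where the product on the right is taken in $\mathcal{A}$ in the indicated order. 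This is immediate from the description of $\conc_\infty$: the blocks of $\sigma\restr_{P_j}$ are exactly the blocks of $\sigma$ contained in $P_j$, and since $\sigma \leq \pi$ these blocks of $\sigma$ occur consecutively (in the order inherited from $\sigma$), so concatenating the words block by block reproduces the full concatenation word.

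Next I would feed this into the Möbius formula of Theorem~\ref{thm:mc}. Since $K_\pi = \sum_{\sigma \leq \pi} \varphi_\sigma \, \widetilde{\mu}(\sigma, \pi)$, and since by Proposition~\ref{prop:princideals} (or directly from Proposition~\ref{prop:OSPintervals}) the map $\sigma \mapsto (\sigma\restr_{P_1}, \dots, \sigma\restr_{P_p})$ is a poset isomorphism from $\downset\pi$ onto $\OSP_{P_1} \times \cdots \times \OSP_{P_p}$, the sum over $\sigma \leq \pi$ becomes a product of independent sums over the $\sigma_j \in \OSP_{P_j}$. For this to work I need $\widetilde{\mu}$ to factorize along this decomposition, i.e.\ $\widetilde{\mu}(\sigma, \pi) = \prod_{j=1}^p \widetilde{\mu}(\sigma\restr_{P_j}, \hat{1}_{P_j})$; this holds because $\widetilde{\mu}$ is multiplicative in the sense of the defining-sequence formalism — it depends only on the type of the interval $[\sigma, \pi]$, which by Proposition~\ref{prop:OSPintervals} is the concatenation of the types of the intervals $[\sigma\restr_{P_j}, \hat{1}_{P_j}]$, and $\widetilde{\mu}_{k_1, \dots, k_p} = \prod \widetilde{\mu}_{k_i}$ by the Proposition in Section 2.4. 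Combining these,
\[
K_\pi^{\NCtensorT}(X_1, \dots, X_n) = \sum_{\substack{\sigma_j \in \OSP_{P_j} \\ j = 1, \dots, p}} \left( \prod_{j=1}^p \varphi_{\sigma_j}^{\NCtensorT}(X_{P_j}) \, \widetilde{\mu}(\sigma_j, \hat{1}_{P_j}) \right) = \prod_{j=1}^p \left( \sum_{\sigma_j \in \OSP_{P_j}} \varphi_{\sigma_j}^{\NCtensorT}(X_{P_j}) \, \widetilde{\mu}(\sigma_j, \hat{1}_{P_j}) \right),
\]
where the last equality uses that the factors, being elements of $\mathcal{A}$ arising from disjoint index blocks concatenated in the fixed order $P_1, \dots, P_p$, multiply out term-by-term; each bracketed sum is exactly $K_{|P_j|}^{\NCtensorT}(X_{P_j})$ by Theorem~\ref{thm:mc} applied on the ground set $P_j$ with top partition $\hat{1}_{P_j}$.

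The main obstacle — really the only subtle point — is bookkeeping the noncommutativity: unlike the classical/free/Boolean cases where the target $\mathcal{B} = \C$ is commutative, here the values $\varphi_\sigma^{\NCtensorT}$ live in $\mathcal{A}$ and the ``product'' in the moment-cumulant formulas must be interpreted as the ordered product dictated by $\conc_\infty$. I would need to be careful that the interchange of sum and product in the display above is legitimate: it is, because for fixed block data the word $\varphi_{\sigma_1}^{\NCtensorT}(X_{P_1}) \cdots \varphi_{\sigma_p}^{\NCtensorT}(X_{P_p})$ is literally the reordered product $X_{Q_1} \cdots X_{Q_r}$ over all blocks $Q$ of $\sigma = \sigma_1 \cdots \sigma_p$ listed in $\sigma$-order, and the scalar coefficient $\prod_j \widetilde{\mu}(\sigma_j, \hat{1}_{P_j})$ is a genuine scalar commuting with everything, so distributing the sum over each factor independently is valid in the (noncommutative) polynomial algebra generated by the letters. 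Once this is spelled out the identification with $\prod_j K_{|P_j|}^{\NCtensorT}(X_{P_j})$ is automatic. I would also remark that this recovers, as the special case $n$ singletons, that $K_1^{\NCtensorT}(X) = X$, and more structurally places $\cS_{\NCtensorT}$ alongside the tensor, free, and Boolean systems as one whose partitioned cumulants are ``ordered-multiplicative''.
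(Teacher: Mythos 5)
Your argument is correct, but it follows a genuinely different route from the paper's. The paper proves multiplicativity directly from Definition~\ref{def:cumulants}: it expands $\varphi_\pi(N.X_1,\dots,N.X_n)$ as a sum over index tuples $(i_1,\dots,i_n)\in[N]^n$, observes that the concatenation word splits at the block boundaries of $\pi$ so that the sum factors blockwise into $\widetilde{\varphi}\bigl(\prod_{i\in P_1}N.X_i\bigr)\dotsm\widetilde{\varphi}\bigl(\prod_{i\in P_p}N.X_i\bigr)$, and then extracts the coefficient of $N^{p}$, using that each factor is a polynomial in $N$ with no constant term, so only the product of the $p$ linear coefficients — which are by definition the blockwise cumulants $K_{|P_j|}^{\NCtensorT}(X_{P_j})$, in order — survives. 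You instead start from the M\"obius formula of Theorem~\ref{thm:mc}, and your proof needs three ingredients the paper's avoids making explicit: the ordered block factorization $\varphi_\sigma=\varphi_{\sigma\restr_{P_1}}(X_{P_1})\dotsm\varphi_{\sigma\restr_{P_p}}(X_{P_p})$ for $\sigma\le\pi$ (correct, since $\sigma\le\pi$ forces the $\sigma$-blocks inside $P_1$ to precede those inside $P_2$, etc.), the principal-ideal isomorphism $\downset\pi\cong\OSP_{P_1}\times\dotsm\times\OSP_{P_p}$ of Proposition~\ref{prop:princideals}, and the factorization $\widetilde{\mu}(\sigma,\pi)=\prod_j\widetilde{\mu}(\sigma\restr_{P_j},\hat{1}_{P_j})$, which indeed holds since $[\sigma:\pi]=\prod_j\sharp(\sigma\restr_{P_j})$; your care about distributing the sum over an ordered product with scalar coefficients is exactly the right point to flag. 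What each approach buys: the paper's argument is shorter (one display plus a coefficient comparison) and never touches $\widetilde{\mu}$; yours isolates the structural reason for multiplicativity — it would apply verbatim to any spreadability system whose partitioned moments factorize in order along the blocks of $\pi$ for all finer $\sigma$ — at the cost of a little more bookkeeping. Both proofs ultimately rest on the same elementary fact that $\conc_\infty$ splits the word at the block boundaries of $\pi$.
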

\begin{proof}
For $\pi=(P_1,P_2,\dots, P_p)\in\OSP_n$, 
\begin{multline*}
  \varphi_\pi(N.X_1,N.X_2, \dots, N.X_n) \\
  \begin{aligned}
&= \sum_{i_1, i_2,\dots, i_n \in [N]} \varphi_{\pi\curlywedge \kappa(i_1,i_2,\dots, i_n)}(X_1,X_2, \dots, X_n) \\
&= \sum_{i_1,i_2, \dots, i_n \in [N]}  X_{\kappa(i_1,i_2,\dots, i_n) \restr_{P_1}} X_{\kappa(i_1,i_2,\dots, i_n) \restr_{P_2}} \cdots X_{\kappa(i_1,i_2,\dots, i_n) \restr_{P_p}}\\
&= \biggl(
 \sum_{\substack{i_k \in [N] \\ k\in P_1}}  X_{\kappa(i_1,i_2,\dots, i_n)  \restr_{P_1}}
  \biggr)
  \biggl(
  \sum_{\substack{i_k \in [N] \\ k\in P_2}}  X_{\kappa(i_1,i_2,\dots, i_n)  \restr_{P_2}}
  \biggr)
  \,\cdots\,
  \biggl(
  \sum_{\substack{i_k \in [N] \\ k\in P_p}} X_{\kappa(i_1,i_2,\dots, i_n) \restr_{P_p}}
  \biggr)
\\
  &= \tilde{\varphi}\biggl(\prod_{i\in P_1}N.X_i\biggr)\,
     \tilde{\varphi}\biggl(\prod_{i\in P_2}N.X_i\biggr)\,
    \cdots\,
    \tilde{\varphi}\biggl(\prod_{i\in P_p}N.X_i\biggr). 
  \end{aligned}
\end{multline*}
We conclude by observing that the coefficient of $N^{p}$ is the product of the
linear coefficients of the factors.
\end{proof}

After having established multiplicativity it suffices to compute
$K_n^{\NCtensorT}$. 
Theorem~\ref{thm:mc} reads as follows. 

\begin{prop}\label{prop:mc-NCT}
 For $n \in \N$ we have 
\[
K_n^{\NCtensorT} (X_1,X_2,\dots, X_n) = \sum_{\pi =(P_1,P_2,\dots,P_k)\in \OSP_n} \frac{(-1)^{|\pi|-1}}{|\pi|}X_{P_1} X_{P_2}\cdots X_{P_k}.  
\]
\end{prop}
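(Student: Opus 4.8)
The plan is to apply Theorem~\ref{thm:mc} (cumulants in terms of moments) directly to the NCT spreadability system and use the explicit form of $\widetilde{\mu}$. Recall from Theorem~\ref{thm:mc} that for any $\pi\in\OSP_n$,
\[
K_\pi^{\NCtensorT}(X_1,\dots,X_n) = \sum_{\sigma\leq\pi}\varphi_\sigma^{\NCtensorT}(X_1,\dots,X_n)\,\widetilde{\mu}(\sigma,\pi),
\]
so specializing to $\pi=\hat{1}_n$ gives $K_n^{\NCtensorT}=K_{\hat1_n}^{\NCtensorT}=\sum_{\sigma\in\OSP_n}\varphi_\sigma^{\NCtensorT}(X_1,\dots,X_n)\,\widetilde{\mu}(\sigma,\hat1_n)$, since every $\sigma\in\OSP_n$ satisfies $\sigma\leq\hat1_n$.

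First I would identify $\varphi_\sigma^{\NCtensorT}$. By the definition of $\widetilde\varphi=\conc_\infty$ in Section~\ref{sec NCT}, for $\sigma=(P_1,P_2,\dots,P_k)\in\OSP_n$ we have, choosing any representative $(i_1,\dots,i_n)$ with $\kappa(i_1,\dots,i_n)=\sigma$,
\[
\varphi_\sigma^{\NCtensorT}(X_1,\dots,X_n)=\widetilde\varphi(X_1^{(i_1)}\cdots X_n^{(i_n)})=X_{P_1}X_{P_2}\cdots X_{P_k},
\]
which is exactly the stated value (and independent of the representative, as required for consistency of $\varphi_\pi$). Next I would substitute the explicit M\"obius value: from~\eqref{eq:mutilde} together with the fact that $\OSP_n$ is eulerian, $\widetilde{\mu}(\sigma,\hat1_n)=\dfrac{(-1)^{\abs\sigma-1}}{[\sigma:\hat1_n]}$, and since $\hat1_n=([n])$ consists of a single block, $[\sigma:\hat1_n]=\sharp(\sigma\restr_{[n]})=\abs\sigma$. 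Hence $\widetilde{\mu}(\sigma,\hat1_n)=\dfrac{(-1)^{\abs\sigma-1}}{\abs\sigma}$. Combining the two substitutions and writing $\pi$ for the summation variable (with $\abs\pi$ blocks $P_1,\dots,P_{\abs\pi}$) yields precisely
\[
K_n^{\NCtensorT}(X_1,\dots,X_n)=\sum_{\pi=(P_1,P_2,\dots)\in\OSP_n}\frac{(-1)^{\abs\pi-1}}{\abs\pi}\,X_{P_1}X_{P_2}\cdots X_{P_{\abs\pi}}.
\]

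There is essentially no obstacle here: the statement is a direct corollary of the general moment-cumulant formula once the two ingredients — the concatenation value of $\varphi_\sigma$ and the closed form of $\widetilde\mu(\cdot,\hat1_n)$ — are plugged in. The only point requiring a line of care is the computation $[\sigma:\hat1_n]=\abs\sigma$, which follows because $\hat1_n$ has the single block $[n]$ and $\sigma\restr_{[n]}=\sigma$ has $\abs\sigma$ blocks. I would present the proof as this two-step substitution, citing Theorem~\ref{thm:mc} and equation~\eqref{eq:mutilde}, without any further machinery. (One may also remark, as a sanity check, that this matches Proposition~\ref{prop:mc-NCT}'s role as the building block for the multiplicative extension established in the preceding proposition, and that for $\mathcal{A}$ a free algebra it recovers the classical Dynkin-type Lie idempotent expansion.)
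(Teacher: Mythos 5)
Your proposal is correct and follows exactly the route the paper takes: the paper presents Proposition~\ref{prop:mc-NCT} as an immediate specialization of Theorem~\ref{thm:mc} at $\pi=\hat{1}_n$, using the definitional value $\varphi_\sigma^{\NCtensorT}(X_1,\dots,X_n)=X_{P_1}\cdots X_{P_k}$ and $\widetilde{\mu}(\sigma,\hat{1}_n)=\frac{(-1)^{\abs{\sigma}-1}}{\abs{\sigma}}$. Your explicit check that $[\sigma:\hat{1}_n]=\abs{\sigma}$ is the only detail the paper leaves implicit, and it is stated correctly.
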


\begin{exa} The reader can easily verify that 
\begin{align*}
K_1^{\NCtensorT}(X)&=X, \\
K_2^{\NCtensorT}(X_1,X_2) &= \frac{1}{2}[X_1,X_2], \\
K_3^{\NCtensorT}(X_1,X_2,X_3) &= \frac{1}{3} (X_1 X_2 X_3+X_3 X_2 X_1) \\
&\qquad- \frac{1}{6} (X_1 X_3 X_2 + X_2 X_1 X_3 + X_2 X_3 X_1 + X_3 X_1 X_2).   \notag
\end{align*}
\end{exa}
\begin{remark}
  We will see in Section~\ref{ssec:freemonoid} that $K_n^{\NCtensorT}~(n\geq2)$ can be expressed
  as a sum of commutators, i.e., a Lie polynomial.
\end{remark}
We can now express the CBH formula
  \eqref{eq:CBH}
on $\alg{A}$  in terms of cumulants. 

\begin{thm}[CBH formula]\label{Thm:CBH-c}
  As formal power series on $\mathcal{A}$ we have the identity   
\begin{multline*}
\log(e^{a_1} e^{a_2}\cdots e^{a_n}) \\
= \sum
\frac{1}{p_1!p_2! \cdots p_n!}
    K^{\NCtensorT}_{p_1+p_2+\cdots+p_n}(\underbrace{a_{1}, a_{1},\ldots, a_{1}}_{\text{$p_1$ times}},
    \underbrace{a_{2}, a_{2},\ldots, a_{2}}_{\text{$p_2$ times}},
    \ldots,
    \underbrace{a_{n},a_n,\ldots,a_{n}}_{\text{$p_n$  times}}).
  \end{multline*}
  where the sum runs over all $n$-tuples   $(p_1,p_2,\dots,p_n) \in
  (\N\cup\{0\})^n$ 
with the exception of the tuple $(0,0,\dots,0)$.
\end{thm}
\begin{proof}
  First observe that the right hand side of the claimed identity
  is the coefficient of $N$ in the series
  \begin{multline}
    \label{eq:log1}
1+ \sum_{\substack{(p_1,p_2,\dots,p_n) \in (\N\cup
    \{0\})^n,\\(p_1,p_2,\dots,p_n) \neq (0,0,\dots,0)  }} \frac{1}{p_1!p_2!
  \cdots
  p_n!}\tilde{\varphi}((N.a_{1})^{p_1}(N.a_2)^{p_2}\cdots\,(N.a_{n})^{p_n})\\
\begin{aligned}
&= \tilde{\varphi}(e^{N.a_1}e^{N.a_2} \cdots \, e^{N.a_n})
\\
&=  \tilde{\varphi}
   (e^{a_1^{(1)}+a_1^{(2)}+\cdots + a_1^{(N)}}
   e^{a_2^{(1)}+a_2^{(2)}+\cdots + a_2^{(N)}}
   \cdots\, e^{a_n^{(1)}+a_n^{(2)}+\cdots + a_n^{(N)}}). 
\end{aligned}
 \end{multline}
Our construction implies that $a_i^{(j)}, j=1,2,3,\dots$ mutually commute for
different $j$ and so
$$
e^{a_i^{(1)}+a_i^{(2)}+\cdots + a_i^{(N)}} = e^{a_i^{(1)}} e^{a_i^{(2)}}\cdots
e^{a_i^{(N)}}
.
$$
Thus \eqref{eq:log1} can be factorized as follows:
\begin{multline*}
  \tilde{\varphi}(e^{a_1^{(1)}+a_1^{(2)}+\cdots + a_1^{(N)}}
  e^{a_2^{(1)}+a_2^{(2)}+\cdots + a_2^{(N)}}
  \cdots e^{a_n^{(1)}+a_n^{(2)}+\cdots + a_n^{(N)}})
  \\
=   \tilde{\varphi}((e^{a_1^{(1)}}e^{a_1^{(2)}}\cdots e^{a_1^{(N)}})( e^{a_2^{(1)}} e^{a_2^{(2)}}\cdots e^{a_2^{(N)}}) \cdots ( e^{a_n^{(1)}} e^{a_n^{(2)}}\cdots e^{a_n^{(N)}}) ). 
 \end{multline*}
 Now $\tilde{\varphi}$ simply rearranges the factors
 according to the upper index and
 the last expression equals 
 \begin{equation*}
(e^{a_1} e^{a_2}\cdots e^{a_n})^N. 
\end{equation*}
On the other hand,
\begin{align*}
  (e^{a_1} e^{a_2}\cdots e^{a_n})^N 
  &= e^{N\log(e^{a_1} e^{a_2}\cdots e^{a_n})} \\
  &= 1 + N \log(e^{a_1} e^{a_2}\cdots e^{a_n})
    + \frac{N^2}{2!}( \log(e^{a_1} e^{a_2}\cdots e^{a_n}))^2 + \dots
\end{align*}
and
we conclude by comparing the coefficient of $N$ in this series with the one in
\eqref{eq:log1}.

\end{proof}
\begin{remark}
Theorem~\ref{Thm:CBH-c} is similar to the well-known formula for the generating
function of multivariate cumulants from classical probability theory, 
\begin{multline*}
\log \IE[e^{z_1 X_1 +z_2 X_2 +\cdots +z_n X_n}] \\
=  \sum
\frac{z_1^{p_1}z_2^{p_2}\cdots z_n^{p_n}}{p_1!p_2! \cdots p_n!}
K^{\tensorT}_{p_1+p_2+\cdots+p_n}(
  \underbrace{X_{1}, X_{1},\ldots,  X_{1}}_{\text{$p_1$  times}},
  \underbrace{X_{2}, X_{2},\ldots,  X_{2}}_{\text{$p_2$  times}},
\ldots, \underbrace{X_{n},X_n,\ldots,X_{n}}_{\text{$p_n$ times}}), 
\end{multline*}
where $X_1,X_2,\dots,X_n$ are $\C$-valued classical random variables and $z_1,z_2,\dots, z_n$ are commuting indeterminates.  
\end{remark}

\begin{prop}\label{NCT independence}   \ 
\begin{enumerate}[label=\rm(\roman*),leftmargin=1cm]
\item\label{LieCum1} 
A sequence $(\mathcal{A}_i)_{i=1}^\infty$ of subalgebras of $\mathcal{A}$ is
$\cS_{\NCtensorT}$-independent if and only if the subalgebras $\mathcal{A}_1,
\mathcal{A}_2, \dots$ commute mutually. This is obviously equivalent to
$K_2^{\NCtensorT}(X,Y)=0$ whenever $X \in \mathcal{A}_i, Y\in \mathcal{A}_j$
with $i\neq j$.  

\item\label{LieCum2} 
For fixed $n \geq2$, if $\{X_1,X_2,\dots, X_n\} \subseteq \mathcal{A}$ splits into
two mutually commuting families then 
$$
K_n^{\NCtensorT}(X_1,X_2,\dots, X_n)=0
.
$$  
\end{enumerate}
\end{prop}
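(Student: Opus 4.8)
The plan is to treat both items as applications of the general theory of $\cS$-independence and the vanishing/semi-vanishing machinery of Section~\ref{sec:vanishing}, specialized to the noncommutative tensor spreadability system $\cS_{\NCtensorT}$. The key observation is that $\cS_{\NCtensorT}$-independence is an essentially ``classical'' notion here, since the universal object $\mathcal{U}=\otimes_{i=1}^\infty\mathcal{A}$ has the property that the embedded copies $\iota^{(i)}(\mathcal{A})$ and $\iota^{(j)}(\mathcal{A})$ commute for $i\ne j$; so the content of $\cS_{\NCtensorT}$-independence should collapse to commutativity. Item \eqref{LieCum2} is then a special instance of item \eqref{LieCum1} via the general $\cS$-independence formalism, so the heart of the work is \eqref{LieCum1}, and within \eqref{LieCum1} the substantive direction is showing that commutativity of the subalgebras actually implies $\cS_{\NCtensorT}$-independence.

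First I would prove \eqref{LieCum1}. For the forward direction, suppose $(\mathcal{A}_i)$ is $\cS_{\NCtensorT}$-independent and pick $X\in\mathcal{A}_i$, $Y\in\mathcal{A}_j$ with $i\ne j$. Applying the defining identity \eqref{S-indep} with $\pi=\hat1_2=(\{1,2\})$ and the tuple of indices $(1,2)$ (so $\kappa(1,2)=(\{1\},\{2\})$, hence $\pi\curlywedge\kappa(1,2)=(\{1\},\{2\})$) gives $\widetilde{\varphi}(XY)=\varphi_{(\{1\},\{2\})}(X,Y)=XY$ on the one hand, while using the tuple $(2,1)$ gives $\widetilde{\varphi}(XY)=\varphi_{(\{2\},\{1\})}(X,Y)=YX$; since $\varphi_{\hat1_2}(X,Y)=XY$ by definition of the concatenation product, and the two computations must agree with it under the invariance built into \eqref{eq:phipi}, one extracts $XY=YX$. (More directly: $K_2^{\NCtensorT}(X,Y)=\frac12[X,Y]$ by the Example following Proposition~\ref{prop:mc-NCT}, and $\cS_{\NCtensorT}$-independence forces this mixed cumulant to vanish via Theorem~\ref{cum vanishing} combined with multiplicativity of the partitioned cumulants — this is the clean route and I would use it.) For the reverse direction, assume the $\mathcal{A}_i$ commute mutually. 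Then for any tuple $X_1,\dots,X_n$ with $X_k\in\mathcal{A}_{i_k}$ and any $\pi\in\OSP_n$, the value $\varphi_\pi(X_1,\dots,X_n)=X_{P_1}\cdots X_{P_p}$ is, thanks to commutativity across different index classes, invariant under reordering factors that lie in different $\mathcal{A}_{i_k}$; this is exactly what is needed to check $\varphi_\pi(X_1,\dots,X_n)=\varphi_{\pi\curlywedge\kappa(i_1,\dots,i_n)}(X_1,\dots,X_n)$, since passing from $\pi$ to $\pi\curlywedge\kappa(i_1,\dots,i_n)$ only refines blocks according to the kernel of the indices and reshuffles within a product of elements that pairwise commute across classes. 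Writing this out block by block, using the formula $\pi\curlywedge\sigma=\sigma\restr_{P_1}\cdots\sigma\restr_{P_p}$ from the definition of the quasi-meet, gives the required identity.

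For \eqref{LieCum2}, I would argue that if $\{X_1,\dots,X_n\}$ splits as a union of two mutually commutative families, say indexed by a subset $B\subseteq[n]$ and its complement, then one may realize the $X_i$ as elements of two commuting subalgebras $\mathcal{A}_1,\mathcal{A}_2$ of $\mathcal{A}$ (namely $\mathcal{A}_1=\C\langle X_i:i\in B\rangle$ and $\mathcal{A}_2=\C\langle X_i:i\notin B\rangle$); by part \eqref{LieCum1} these are $\cS_{\NCtensorT}$-independent. Taking any tuple of indices $(i_1,\dots,i_n)$ with $i_k=1$ for $k\in B$ and $i_k=2$ otherwise, so that $\eta:=\kappa(i_1,\dots,i_n)$ has exactly the two blocks $B$ and $B^c$ (say in the order $(B,B^c)$ or $(B^c,B)$ depending on $\min B$), Theorem~\ref{cum vanishing} applied with $\pi=\hat1_n$ gives $K_n^{\NCtensorT}(X_1,\dots,X_n)=\sum_{\tau}K_\tau^{\NCtensorT}(X_1,\dots,X_n)\,g(\tau,\eta,\hat1_n)$. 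Now I invoke multiplicativity of $K^{\NCtensorT}_\tau$: for $\tau$ with $\bar\tau\le\bar\eta$ (the only $\tau$ with $g(\tau,\eta,\hat1_n)\ne0$), every block of $\tau$ lies entirely in $B$ or in $B^c$, so $K_\tau^{\NCtensorT}(X_1,\dots,X_n)$ factors as a product $K^{\NCtensorT}_{\tau\restr_B}(X_B)\,K^{\NCtensorT}_{\tau\restr_{B^c}}(X_{B^c})$ up to the ordering of blocks dictated by $\tau$, and the value $K_n^{\NCtensorT}(X_1,\dots,X_n)$ is then a combination of terms each of which is $K^{\NCtensorT}_{|B|}$-times-$K^{\NCtensorT}_{|B^c|}$-type products; grouping by the underlying unordered partition and using the identity \eqref{eq permute} (equivalently, $\sum_{h\in\SG_{|\tau|}}g(h(\tau),\eta,\hat1_n)=0$ whenever $\eta\restr_P\ne\hat1_P$ for some block $P$ of $\pi=\hat1_n$, which holds here because $n\ge2$ and $\eta$ has two nontrivial classes), the whole sum collapses to zero. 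I expect the main obstacle to be the bookkeeping in this last step — correctly matching the ordered partitions $\tau$ that appear in Theorem~\ref{cum vanishing}, applying multiplicativity with the right ordering of factors, and then deploying \eqref{eq permute} to cancel — rather than any deep difficulty; alternatively, one can sidestep it entirely by citing Lemma~\ref{lem:factorization}, since commutativity of the two families makes $\varphi_{\hat1_n}^{\cS_{\NCtensorT}}(X_1,\dots,X_n)$ a sum of products of the form $\varphi_{(\sigma)}$ with $\sigma<\hat1_n$, forcing $K_n^{\NCtensorT}=0$ directly. I would present the Lemma~\ref{lem:factorization} route as the primary proof and mention the Theorem~\ref{cum vanishing} computation as the conceptual explanation.
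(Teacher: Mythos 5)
Your treatment of part \eqref{LieCum1} is essentially the paper's own argument (compare $\varphi_{\hat{1}_2}(X,Y)=XY$ with $\varphi_{(\{2\},\{1\})}(X,Y)=YX$ for the forward direction; for the converse, check $\varphi_\pi=\varphi_{\pi\curlywedge\kappa}$ block by block using cross-commutativity), up to the cosmetic point that the index tuple in \eqref{S-indep} is dictated by the subalgebra labels rather than freely chosen -- since both relative orders of $i,j$ occur as the pair ranges, this is harmless. The real problem is part \eqref{LieCum2}. Your ``primary'' route misapplies Lemma~\ref{lem:factorization}: that lemma assumes a \emph{universal} expansion $\varphi_\pi=\sum_{\sigma<\pi}c_{\pi,\sigma}\varphi_{(\sigma)}$ valid for \emph{every} tuple in $\mathcal{A}$ with constants independent of the tuple, and its conclusion is that $K_\pi$ vanishes \emph{identically} -- which is plainly false for $\cS_{\NCtensorT}$, where $K_2^{\NCtensorT}(X,Y)=\tfrac12[X,Y]\neq 0$ in general. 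What you actually have is a factorization of $\varphi_{\hat{1}_n}$ for the one particular commuting tuple, and the lemma as stated says nothing about that situation; you would have to redo its proof pointwise. Your ``conceptual'' route has a parallel gap: to make \eqref{eq permute} cancel the sum $\sum_\tau K_\tau\,\gold(\tau,\eta,\hat{1}_n)$ from Theorem~\ref{cum vanishing} you need $K_{h(\tau)}=K_\tau$ for all block permutations $h$, but NCT partitioned cumulants are \emph{not} invariant under reordering blocks (two blocks inside the same commuting family need not commute; e.g.\ $K_{12}\ne K_{21}$). The cumulant $K_\tau$ is constant only on the classes of $\tau$ with fixed pair $(\tau\restr_B,\tau\restr_{B^c})$, i.e.\ on shuffle classes, so what you would need is vanishing of $\sum\gold(\tau,\eta,\hat{1}_n)$ over the shuffles of two fixed ordered partitions $\alpha\in\OSP_B$, $\beta\in\OSP_{B^c}$ -- a strictly stronger identity than \eqref{eq permute}, which the paper never proves and which is essentially equivalent to the statement you are trying to establish.

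The paper avoids all of this by a different device: by Theorem~\ref{Thm:CBH-c}, $K_n^{\NCtensorT}(X_1,\dots,X_n)$ is the coefficient of $z_1z_2\cdots z_n$ in $\log(e^{z_1X_1}\cdots e^{z_nX_n})$, and commutativity of the two families lets one write $\log(e^{z_1X_1}\cdots e^{z_nX_n})=\log\bigl(\prod_{i\in I}e^{z_iX_i}\bigr)+\log\bigl(\prod_{i\in I^c}e^{z_iX_i}\bigr)$; each summand omits some variable, so the multilinear coefficient vanishes. If you prefer to salvage your Lemma~\ref{lem:factorization} idea, the correct pointwise version is: for every kernel $\kappa$, cross-commutativity (which persists for the copies $X_i^{(k)}$) gives $\varphi_\kappa(X_1,\dots,X_n)=\varphi_{\kappa\restr_B}(X_B)\,\varphi_{\kappa\restr_{B^c}}(X_{B^c})$, hence
\begin{equation*}
\varphi_{\hat{1}_n}(N.X_1,\dots,N.X_n)
=\sum_{\alpha\in\OSP_B}\sum_{\beta\in\OSP_{B^c}}\binom{N}{\abs{\alpha}}\binom{N}{\abs{\beta}}\,\varphi_\alpha(X_B)\,\varphi_\beta(X_{B^c}),
\end{equation*}
a polynomial in $N$ all of whose monomials have degree at least $2$ (both $B$ and $B^c$ being nonempty), so the coefficient of $N$, i.e.\ $K_n^{\NCtensorT}(X_1,\dots,X_n)$, is zero. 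That argument, or the paper's CBH comparison, is what is missing from your proposal.
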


\goodbreak{}
\begin{remark} \label{rem:commutativity=vanishing}    \   
\begin{enumerate}[label=\arabic*.,leftmargin=1cm]
\item
    Additivity of Lie polynomials in commuting variables is well known,
    see \cite[p.~20]{Reutenauer:1993:freeLiealg}.
   \item 
    We have thus shown that
    \[
    \text{$\cS_{\NCtensorT}$-independence $\iff$ commutativity $\iff$ vanishing of mixed cumulants}.
    \]
    This example illustrates that in general vanishing of mixed cumulants does not imply exchangeability. 
\end{enumerate}
\end{remark}
\begin{proof}[Proof of Proposition \ref{NCT independence}] \ref{LieCum1}\,\, Suppose that $(\mathcal{A}_i)_{i=1}^\infty$ is $\cS_{\NCtensorT}$-independent. Let $\pi =\hat{1}_2, \rho = (\{2\},\{1\}) \in\OSP_2$ and let $X \in \mathcal{A}_i, Y \in \mathcal{A}_j$ for fixed $i > j$. Then $\kappa(i,j)=\rho$ and 
\begin{equation*}
\varphi_\pi(X,Y) = X Y, \qquad \varphi_{\pi\curlywedge \rho}(X,Y) = Y X, 
\end{equation*}
and by $\cS_{\NCtensorT}$-independence these two must coincide, so $X Y = Y
X$. This shows that $[\mathcal{A}_i,\mathcal{A}_j]=0$.  Conversely, if the
subalgebras $\mathcal{A}_1, \mathcal{A}_2, \dots$ mutually commute then
for any $i_1,i_2,\dots, i_n \in \N$, any $X_k \in \mathcal{A}_{i_k},
k=1,2,\dots, n$ and any $\pi=(P_1,P_2,\dots, P_p) \in\OSP_n$ we have
\begin{equation*}
\varphi_{\pi\curlywedge \rho}(X_1,X_2,\dots, X_n) = (X_{P_1 \cap R_1} X_{P_1 \cap R_2} \cdots X_{P_1 \cap R_r} )  \cdots (X_{P_p \cap R_1} X_{P_p \cap R_2} \cdots X_{P_p \cap R_r})
\end{equation*}
where $\rho=\kappa(i_1,i_2,\dots, i_n)=(R_1,R_2,\dots, R_r)$.
Here $X_\emptyset$ is understood as the unit $\hat{1}_\mathcal{A}$. 
Now if $\{X_k \mid k \in R_i\}$ and $\{X_k \mid k \in R_j\}$ mutually commute
for distinct $i,j$ then for each $i =1,2,\dots, p$ we have
\begin{equation*}
X_{P_i \cap R_1} X_{P_i \cap R_2} \cdots X_{P_i \cap R_r} = X_{P_i},  
\end{equation*}
which shows that $\varphi_{\pi\curlywedge \rho}(X_1,X_2,\dots, X_n) =\varphi_{\pi}(X_1,X_2\dots, X_n) $. 

\itemspacing
\ref{LieCum2}\,\, Suppose that $\{X_i \mid i\in I\}$ and $\{X_i \mid i\in I^c\}$ commute with
each other and $\emptyset \subsetneq I \subsetneq \{1,2,\dots,n\}$. Then, for
commuting indeterminates $z_1,\dots, z_n$, we have  
\begin{equation}\label{sum log}
 \log(e^{z_1 X_1} e^{z_2 X_2} \cdots e^{z_n X_n}) = \log\Bigl(\prod_{i \in I} e^{z_ i X_i}\Bigr) + \log\Bigl(\prod_{i \in I^c} e^{z_ iX_i}\Bigr), 
 \end{equation}
  where the products $\prod_{i \in I}, \prod_{i \in I^c}$ preserve the natural
  orders on $I,I^c$, respectively.   On the other hand, by Theorem~\ref{Thm:CBH-c}
  we have 
  \begin{multline}\label{CBH d}
    \log(e^{z_1 X_1}e^{z_2 X_2} \cdots e^{z_n X_n}) \\
    =
 \sum \frac{z_1^{p_1} z_2^{p_2} \cdots z_n^{p_n}}{p_1!p_2! \cdots p_n!}
 K^{\NCtensorT}_{p_1+p_2+\cdots+p_n}(
 \underbrace{X_{1}, X_{1},\ldots, X_{1}}_{\text{$p_1$ times}},
 \underbrace{X_{2}, X_{2},\ldots, X_{2}}_{\text{$p_2$ times}},
 \ldots,
 \underbrace{X_{n},X_n,\ldots,X_{n}}_{\text{$p_n$ times}}).
\end{multline}
Comparing the coefficients of $z_1z_2\cdots z_n$ in \eqref{sum log} with those
of \eqref{CBH d}  we conclude that $K_n^{\NCtensorT}(X_1,X_2,\dots, X_n)=0$. 
\end{proof}

\subsection{Specialization to free algebras}
\label{ssec:freemonoid}
We restrict the unshuffle spreadability system $\cS_{\NCtensorT}$
to the case when the underlying algebra $\mathcal{A}$ is a free algebra. 
Our aim here is to show that in this case the cumulants are Lie
polynomials \cite{Reutenauer:1993:freeLiealg} and to indicate further
connections to the theory of Hopf algebras.
This section was rewritten after \cite{LehnerNovelliThibon:2020} was published,
to which we refer for further developments in this direction.

Let $\fA$ be an alphabet (a set), whose elements be denote by
$a_1,a_2,\dots$.
Let $\C\langle\fA\rangle$ be the unital free associative algebra generated by
$\fA$,
i.e., the unital polynomial ring in  noncommuting indeterminates $\fA$.
As a vector space it is spanned by the elements of the free monoid $\fA^\ast$
generated by $\fA$ endowed with the concatenation product.
An element $w=a_1 a_2\cdots a_n\in\fA^\ast$ with $a_i\in\fA$ is called a \emph{word}
and $n$ is its \emph{length}.
The length of the unit $1$ is understood to be 0.

\begin{definition}
    The specialization   $\cS_{\NCtensorT} (\C\langle\fA\rangle)$
    of the  unshuffle spreadability system
    from Section~\ref{sec NCT}   for the ncps $(\C\langle\fA\rangle,\text{Id})$
    is  called the \emph{free Lie spreadability system}  and denoted by $\cS_{\freeLie}(\fA)$.
\end{definition}

The free algebra $\C\langle\fA\rangle$ also carries a Hopf algebra structure
\cite{CartierPatras:2021:classical}.
Indeed, the \emph{unshuffle coproduct}
is the homomorphism $\delta:\C\langle\fA\rangle\to \C\langle\fA\rangle\otimes\C\langle\fA\rangle $
for which every generator is a primitive element
$$
\delta(a) = a\otimes 1 + 1\otimes a
.
$$
For an arbitrary word of length $n$ it yields the sum over all unshuffles
$$
\delta(w) = \sum_{I\sqcup J =[n]} w\restr_I\otimes w\restr_J
,
$$
i.e., splittings into two subwords which keep the order.
This coproduct is obviously cocommutative.

Denote further by $\delta_m: \C\langle\fA\rangle\to\C\langle\fA\rangle^{\otimes m}$
its $(m-1)$-fold iteration
which is uniquely determined by the values
$$
\delta_m(a)= \sum_{k=1}^{m} 1^{\otimes (k-1)} \otimes a \otimes 1^{\otimes (m-k)},\qquad a\in\fA
$$
on the generators (compare Definition~\ref{def:dot})
and in general yields the sum over all $m$-unshuffles, or equivalently, ordered 
pseudopartitions (see Definition \ref{app:def:pseudopartition})
\begin{equation}
\label{eq:delta}
\delta_k(a_1a_2\cdots a_n)
= \sum_{\substack{\pi =(P_1,P_2,\dots) \in \EOSP_n\\
         \abs{\pi}=k}}
    a_{P_1}\otimes a_{P_2}\otimes \cdots \otimes a_{P_k}. 
\end{equation}
This reproduces the dot operation  
\[
N.a = \delta_N(a) \otimes 1^{\otimes \infty}, \qquad a \in \fA. 
\]
Let $\conc_k: \C\langle\fA\rangle^{\otimes k} \to \C\langle\fA\rangle$ be the multiplication map defined by 
$$
\conc_k(w_1\otimes w_2\otimes\cdots \otimes w_k)=w_1w_2\cdots w_k,\qquad w_i\in
\fA^*
$$
and more generally
for endomorphisms $f_1, f_2,\dots, f_k$ of $\C\langle\fA\rangle$, we define the convolution 
$$
f_1 \ast f_2\ast\cdots \ast f_k :=\conc_k \circ (f_1\otimes f_2\otimes\cdots \otimes f_k) \circ \delta_k. 
$$
In particular, the convolution power $\Psi_k=\Id^{\ast k}$ is called the
\emph{$k$-dilation} or \emph{Adams operation}
\cite[Definition~4.1.1]{CartierPatras:2021:classical},
so 
\begin{equation}
  \tilde{\varphi}(N.a_1,N.a_2,\dots,N.a_n) = \Psi_N(a_1a_2\cdots a_n).
\end{equation}
The \emph{counit} is the unique linear map $\epsilon:\C\langle\fA\rangle \to \C$ such that
$$
\epsilon(1)=1,\qquad
\epsilon(w)=0, \quad w\in\fA^\ast \setminus\{1\}.  
$$

\begin{defi}
Let $\Pi: \C\langle\fA\rangle \to \C\langle\fA\rangle$ be the map defined by 
$$
\Pi=\sum_{k=1}^\infty \frac{(-1)^{k-1}}{k} (\Id-\epsilon)^{\ast k}, 
$$
where $\epsilon$ is regarded as an endomorphism of $\C\langle\fA\rangle$.  
\end{defi}
The following proposition provides an explicit expression for $\Pi(w)$
which shows that for any $a_i \in\fA$ the value $\Pi(a_1 a_2\cdots a_n)$
is a finite sum of words and thus indeed  $\Pi(\C\langle\fA\rangle ) \subseteq \C\langle\fA\rangle$.
\begin{prop}\label{Hausdorff1}
Let $a_1,a_2,\ldots,a_n \in \fA$. Then 
$$
\Pi(a_1a_2\cdots a_n) = \sum_{\pi = (P_1, P_2, \dots) \in \OSP_n}
\frac{(-1)^{\abs{\pi}-1}}{\abs{\pi}} a_{P_1} a_{P_2}\cdots a_{P_{\abs{\pi}}}. 
$$
\end{prop}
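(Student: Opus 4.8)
The plan is to unwind the definition of $\Pi$ using the explicit formula for the iterated coproduct from Proposition~\ref{delta} and then collapse a summation over compositions. First I would record that $(\Id-\epsilon)$ annihilates the unit $1$ and fixes every nonempty word, so for a word $w=a_1a_2\cdots a_n$ of length $n\geq 1$ the convolution $(\Id-\epsilon)^{\ast k}$ applied to $w$ is $\conc_k\circ(\Id-\epsilon)^{\otimes k}\circ\delta_k(w)$, and by Proposition~\ref{delta} this equals
\[
\sum_{\substack{\pi=(P_1,\dots,P_k)\in\EOSP_n\\ \text{all }P_i\neq\emptyset}} a_{P_1}a_{P_2}\cdots a_{P_k},
\]
since $(\Id-\epsilon)$ kills exactly the tensor legs $a_{P_i}$ with $P_i=\emptyset$ (for which $a_{P_i}=1$). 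Dropping the empty blocks turns an ordered pseudopartition of $[n]$ with all blocks nonempty into an honest ordered set partition of $[n]$ with $k$ blocks; conversely every $\pi\in\OSP_n$ with $\abs{\pi}=k$ arises this way. Hence
\[
(\Id-\epsilon)^{\ast k}(a_1\cdots a_n)=\sum_{\substack{\pi\in\OSP_n\\ \abs{\pi}=k}} a_{P_1}a_{P_2}\cdots a_{P_k}.
\]

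Next I would substitute this into the defining series $\Pi=\sum_{k\geq 1}\frac{(-1)^{k-1}}{k}(\Id-\epsilon)^{\ast k}$. For a fixed word of length $n$ only the terms with $1\leq k\leq n$ contribute, because $\OSP_n$ has no partition with more than $n$ blocks; this also shows the sum is finite and that $\Pi(a_1\cdots a_n)$ is a finite linear combination of words, justifying the well-definedness claim in the definition. Interchanging the (finite) sums over $k$ and over $\pi$ gives
\[
\Pi(a_1\cdots a_n)=\sum_{k=1}^n\frac{(-1)^{k-1}}{k}\sum_{\substack{\pi\in\OSP_n\\ \abs{\pi}=k}} a_{P_1}\cdots a_{P_k}
=\sum_{\pi=(P_1,P_2,\dots)\in\OSP_n}\frac{(-1)^{\abs{\pi}-1}}{\abs{\pi}}\,a_{P_1}a_{P_2}\cdots a_{P_{\abs{\pi}}},
\]
which is exactly the asserted formula. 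I would also note the case $n=0$ separately if needed: $\Pi(1)=0$ since $(\Id-\epsilon)(1)=0$, consistent with the empty sum on the right.

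I do not expect a serious obstacle here; the only points requiring care are (i) verifying that $(\Id-\epsilon)$ acts on tensor legs exactly by deleting the ones equal to $1$, which is immediate from $\epsilon(1)=1$ and $\epsilon(w)=0$ for nonempty $w$, and (ii) the bookkeeping that passing from $\EOSP_n$ with nonempty blocks to $\OSP_n$ is a bijection preserving the number of blocks and the concatenated product $a_{P_1}\cdots a_{P_k}$ (the order of the blocks is retained, so no reordering issue arises). The identity is essentially Proposition~\ref{prop:mc-NCT} transported to the free algebra via $\widetilde{\varphi}=\conc_\infty$ and the correspondence $N.a=\delta_N(a)\otimes 1^{\otimes\infty}$, so one could alternatively cite that proposition, but the direct computation above is short and self-contained.
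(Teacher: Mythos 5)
Your proof is correct and follows essentially the same route as the paper's: expand $\Pi$ via the convolution definition, apply Proposition~\ref{delta} to write $\delta_k(a_1\cdots a_n)$ as a sum over ordered pseudopartitions, and observe that $(\Id-\epsilon)$ kills exactly the empty blocks, so only proper ordered set partitions with $k\leq n$ blocks survive. The only difference is cosmetic bookkeeping (you collapse to $\OSP_n$ before summing over $k$, the paper after), so nothing further is needed.
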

\begin{proof}
From \eqref{eq:delta} we have 
\begin{align*}
\Pi(a_1a_2\cdots a_n) 
&= \sum_{k=1}^\infty \frac{(-1)^{k-1}}{k} (\Id-\epsilon)^{\ast k}(a_1a_2\cdots a_n)\\
  &=\sum_{k=1}^\infty \frac{(-1)^{k-1}}{k} (\conc_k \circ (\Id-\epsilon)^{\otimes k}\circ \delta_k)(a_1a_2\cdots a_n) \\
  &=\sum_{k=1}^\infty \frac{(-1)^{k-1}}{k}
\sum_{(P_1,P_2,\dots,P_k)    \in \EOSP_n}
\!\!(\Id-\epsilon)(a_{P_1}) (\Id-\epsilon)(a_{P_2}) \cdots (\Id-\epsilon)(a_{P_k}). 
\end{align*}
Note that $(\Id-\epsilon)(1)=0$ and $(\Id-\epsilon)(w)=w$ for
$w\in\fA^\ast\setminus\{1\}$. If $P_i=\emptyset$ for some $i$, then
the product $(\Id-\epsilon)(a_{P_1}) (\Id-\epsilon)(a_{P_2}) \cdots
(\Id-\epsilon)(a_{P_k})$ vanishes.
Therefore only proper ordered set partitions contribute to the sum, and in
particular the sum extends over $k \leq n$ only.  This proves the claim. 
\end{proof}

Combining Proposition~\ref{Hausdorff1} with Proposition~\ref{prop:mc-NCT} we
conclude the following identity. 

\begin{thm}\label{thm:Hausdorff2}
Let $a_1,a_2,\ldots,a_n \in \fA$ and let $K^{\freeLie}_\pi$ be the cumulants associated to the spreadability system $\cS_{\freeLie}$. Then 
$$
\Pi(a_1a_2\cdots a_n) = K^{\freeLie}_n(a_1,a_2,\dots,a_n). 
$$ 
\end{thm}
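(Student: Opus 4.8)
The plan is to derive the identity by directly comparing the two sides as explicit sums over ordered set partitions of $[n]$, using the formulas already established. On the one hand, Proposition~\ref{Hausdorff1} gives
\[
\Pi(a_1a_2\cdots a_n) = \sum_{\pi = (P_1, P_2, \dots) \in \OSP_n}
\frac{(-1)^{\abs{\pi}-1}}{\abs{\pi}}\, a_{P_1} a_{P_2}\cdots a_{P_{\abs{\pi}}}.
\]
On the other hand, Proposition~\ref{prop:mc-NCT} (which is the specialization of Theorem~\ref{thm:mc} to the noncommutative tensor spreadability system $\cS_{\NCtensorT}$, restricted here to $\cS_{\freeLie}$) gives
\[
K_n^{\freeLie}(a_1,\dots, a_n) = \sum_{\pi =(P_1,P_2,\dots)\in \OSP_n} \frac{(-1)^{\abs{\pi}-1}}{\abs{\pi}}\,a_{P_1} a_{P_2}\cdots a_{P_{\abs{\pi}}}.
\]
These two right-hand sides are literally the same expression, so the theorem follows immediately once both propositions are invoked.

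Concretely, the steps I would carry out are: first, recall that by construction in Section~\ref{Free monoid} the free Lie spreadability system $\cS_{\freeLie}$ is exactly the noncommutative tensor spreadability system $\cS_{\NCtensorT}$ applied to the base pair $(\C[\fA], \Id)$, so all results about $\cS_{\NCtensorT}$ — in particular Proposition~\ref{prop:mc-NCT} — apply verbatim with $X_i = a_i$. Second, apply Proposition~\ref{prop:mc-NCT} to write $K_n^{\freeLie}(a_1,\dots,a_n)$ as the sum over $\OSP_n$ displayed above. Third, apply Proposition~\ref{Hausdorff1} to write $\Pi(a_1a_2\cdots a_n)$ as the identical sum. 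Fourth, observe termwise equality and conclude. One small bookkeeping remark to include: in Proposition~\ref{Hausdorff1} the sum runs over genuine ordered set partitions (empty blocks having been discarded in its proof), which is exactly the index set appearing in Proposition~\ref{prop:mc-NCT}, so there is no mismatch in the ranges of summation.

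There is essentially no obstacle here: the real content has already been front-loaded into Proposition~\ref{Hausdorff1} (the Hopf-algebraic computation of $\Pi$ via $\delta_k$, $\conc_k$ and the counit, using Proposition~\ref{delta}) and into Proposition~\ref{prop:mc-NCT} (the Möbius-type moment-cumulant formula for $\cS_{\NCtensorT}$, itself a direct consequence of Theorem~\ref{thm:mc} together with the M\"obius values $\widetilde{\mu}(\hat{0}_m,\hat{1}_m) = (-1)^{m-1}/m$ and the multiplicativity of partitioned NCT cumulants). The only thing requiring the slightest care is making sure the reader sees that the coefficient $(-1)^{\abs{\pi}-1}/\abs{\pi}$ in Proposition~\ref{prop:mc-NCT} arises precisely from $\widetilde{\mu}(\sigma,\hat{1}_n)$ summed appropriately — but since Proposition~\ref{prop:mc-NCT} is already stated in the form we need, this is automatic. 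Hence the proof is a two-line citation, which I would write as: ``Combine Proposition~\ref{Hausdorff1} with Proposition~\ref{prop:mc-NCT}; both express the same sum $\sum_{\pi\in\OSP_n} \frac{(-1)^{\abs{\pi}-1}}{\abs{\pi}} a_{P_1}\cdots a_{P_{\abs{\pi}}}$, whence $\Pi(a_1\cdots a_n) = K_n^{\freeLie}(a_1,\dots,a_n)$.''
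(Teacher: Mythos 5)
Your proposal is correct and is exactly the paper's argument: the paper derives Theorem~\ref{thm:Hausdorff2} precisely by combining Proposition~\ref{Hausdorff1} with Proposition~\ref{prop:mc-NCT}, since $\cS_{\freeLie}$ is by definition $\cS_{\NCtensorT}$ for $(\C[\fA],\Id)$ and both propositions yield the identical sum $\sum_{\pi\in\OSP_n}\frac{(-1)^{\abs{\pi}-1}}{\abs{\pi}}a_{P_1}\cdots a_{P_{\abs{\pi}}}$. Your bookkeeping remark about the index set (empty blocks already discarded in Proposition~\ref{Hausdorff1}) is a fine touch but adds nothing beyond what the paper's one-line citation already contains.
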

\goodbreak{}

\begin{remark}\label{Rem:FL}   \
\begin{enumerate}[label=\arabic*.,leftmargin=1cm] 
\item The space of \emph{Lie polynomials} $\Lie(\fA)$ is the smallest subspace
 of $\C\langle\fA\rangle$ that contains $\fA$ and is closed with respect to the Lie bracket
 $[X,Y]=XY-YX$. It is well known that $\Pi$ is a Lie projector, i.e.,
 $\Lie(\fA)=\Pi(\C\langle\fA\rangle)$ \cite[Theorem 3.7]{Reutenauer:1993:freeLiealg}. 
So in the above setting cumulants with entries from $\fA$ are exactly Lie
polynomials.

\item
 The values at words of higher order are given by convolution
powers of $\Pi$, i.e.,
\begin{equation*}
\Pi_k=\frac{1}{k!}\Pi^{\ast k},\qquad k\in\N 
\end{equation*}
(see \cite{Reutenauer:1993:freeLiealg}), 
and we can show that
\begin{equation*}
\Pi_k(a_1a_2\cdots a_n) =\frac{1}{k!}\sum_{\substack{\pi\in \OSP_n\\\abs{\pi}=k}} K^{\freeLie}_\pi(a_1,a_2,\dots,a_n),\qquad a_i\in\fA,~ i\in[n]. 
\end{equation*}
is the coefficient of $N^{k}$ appearing in $\tilde{\phi}((N.a_1) (N.a_2)\cdots (N.a_n))$ by Theorem~\ref{thm:mc} and Proposition~\ref{ad}. In other words, 
\begin{equation*}
\tilde{\phi}((N.a_1) (N.a_2)\cdots (N.a_n))= \sum_{k=1}^n N^k \Pi_k(a_1a_2\cdots a_n),\qquad a_i\in\fA,~ i\in[n]. 
\end{equation*}
\end{enumerate}
\end{remark}

Now the combination of Theorem~\ref{Thm:CBH-c} and Theorem~\ref{thm:Hausdorff2}
reproduces the CBH formula
\begin{equation}\label{CBH}
\log(e^{a_1} e^{a_2}\cdots e^{a_n}) = \sum_{\substack{(p_1,p_2,\dots,p_n) \in
    (\N\cup \{0\})^n,\\(p_1,p_2,\dots,p_n) \neq (0,0,\dots,0)  }} \Pi \left(
  \frac{a_1^{p_1} a_2^{p_2}\cdots a_n^{p_n}}{p_1! p_2!\cdots p_n!}\right); 
\end{equation}
see \cite[Lemma 3.10]{Reutenauer:1993:freeLiealg}.

\subsection{Coefficients of the Campbell-Baker-Hausdorff formula}
We adopt the notations and definitions in the previous subsection. 
The coefficients of the Campbell-Baker-Hausdorff formula, when written out in
the monomial basis, were first computed using generating functions in
\cite{Goldberg:1956:formal} and are 
called \emph{Goldberg coefficients}; a combinatorial proof
can be found in \cite[Theorem 3.11]{Reutenauer:1993:freeLiealg}. 
In the following we give another derivation of the Goldberg coefficients;
see also a recent proof using the theory of noncommutative symmetric functions \cite{FoissyPatrasThibon:2016:deformations}.

\begin{lem}\label{lem0123}
Let $\eta=(E_1,E_2,\dots,E_e)$ be an interval partition of $[n]$ where the blocks are in canonical order, i.e.\ if $s<t$ then $i<j$ for all $ i\in E_s, j\in E_t$. 
Then for any $X_i \in \C\langle\fA\rangle$ and any $\pi\in \OSP_n$, we have
$$
\varphi_{\pi \curlywedge \eta}(X_1,X_2,\dots,X_n)=\varphi_{\pi}(X_1,X_2,\dots,X_n). 
$$
\end{lem}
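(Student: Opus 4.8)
The claim is a special case of the invariance principle behind Lemma~\ref{lem:quasimeetshift}, now formulated on the level of the concatenation functional $\varphi = \conc_\infty$ rather than at the level of a single block. The plan is to reduce everything to the combinatorial identity
$$
\pi\curlywedge\eta\curlywedge\kappa(i_1,i_2,\dots,i_n) = \pi\curlywedge\kappa(i_1,i_2,\dots,i_n)
$$
for every representative tuple $(i_1,i_2,\dots,i_n)$, where $\eta$ is the \emph{interval} partition appearing in the statement. Once this identity is established, the lemma follows immediately: choosing a representative with $\kappa(i_1,\dots,i_n)$ equal to some ordered set partition $\rho$, the definition \eqref{eq:phipixjij} gives $\varphi_{\pi\curlywedge\eta}(X_1,\dots,X_n) = \varphi_{(\pi\curlywedge\eta)\curlywedge\rho}(X_1,\dots,X_n)$ as soon as we read $\varphi_{\pi\curlywedge\eta}$ via its definition on $\mathcal{U}$; but since $\conc_\infty$ only depends on the ordered kernel set partition, and $(\pi\curlywedge\eta)\curlywedge\rho = \pi\curlywedge\rho$ by the displayed identity (using associativity of $\curlywedge$, see the Remark after the definition of the quasi-meet), we get $\varphi_{\pi\curlywedge\eta}(X_1,\dots,X_n) = \varphi_{\pi\curlywedge\rho}(X_1,\dots,X_n) = \varphi_\pi(X_1,\dots,X_n)$.

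First I would unwind the definition of $\curlywedge$ block by block. Writing $\pi = (P_1,\dots,P_p)$, the quasi-meet $\pi\curlywedge\sigma$ is the concatenation $\sigma\restr_{P_1}\sigma\restr_{P_2}\dotsm\sigma\restr_{P_p}$; so it suffices to show that for each fixed block $P = P_j$ one has $(\eta\curlywedge\kappa)\restr_P = \kappa\restr_P$ as ordered set partitions of $P$, where $\kappa = \kappa(i_1,\dots,i_n)$. Here the key point is that $\eta$ is an \emph{interval} partition in canonical order: if $P\subseteq[n]$ is any subset, then the blocks of $\eta\restr_P$ are the maximal contiguous pieces into which the intervals $E_1,\dots,E_e$ cut $P$, and they are already listed in the natural ascending order of $P$ (since $\eta$ is in canonical order, restricting and then concatenating does not reorder anything). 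Consequently, restricting $\kappa$ along $\eta$ and then restricting to $P$ amounts to splitting the word $(i_k)_{k\in P}$ at the interval boundaries of $\eta$ — which is a \emph{refinement} of the kernel structure that does not change which positions of $P$ are $\kappa$-equivalent nor their order. Making this precise is the crux: I would argue that $\overline{\eta\curlywedge\kappa}$ is the common refinement $\bar\eta\wedge\bar\kappa$ (Proposition~\ref{prop:quasimeet}(i)), and that because $\bar\eta$ consists of intervals, this common refinement, after being restricted back to any block $P$ of $\pi$ and re-meeted with $\kappa$, collapses back to $\kappa\restr_P$.

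The main obstacle will be bookkeeping the block orders correctly — the quasi-meet is noncommutative, so one must verify not merely that $\overline{(\pi\curlywedge\eta)\curlywedge\rho} = \overline{\pi\curlywedge\rho}$ as unordered partitions (which is easy from Proposition~\ref{prop:quasimeet}(i) and associativity of $\wedge$ on $\SP_n$) but that the \emph{orderings} of the blocks coincide. The cleanest way around this is probably to invoke Lemma~\ref{lem:quasimeetshift} iteratively: enumerating the blocks of $\eta$ as $E_1<E_2<\dots<E_e$ (intervals, hence automatically sorted), one shifts the indices attached to $E_2$ by a large amount, then those of $E_3$, and so on, so that after these shifts the tuple $(i'_k)$ has kernel $\eta\curlywedge\kappa(i_1,\dots,i_n)$; Lemma~\ref{lem:quasimeetshift} guarantees that each such shift leaves $\pi\curlywedge(\cdot)$ unchanged because the shifted positions lie inside a \emph{union of blocks of $\pi$} — wait, more carefully, inside pieces that $\pi$ does not separate from their neighbors in the relevant way. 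The point is that $\eta$ being an interval partition with blocks in canonical order means the shifts required are "monotone", so the hypothesis of Lemma~\ref{lem:quasimeetshift} (order preserved on each block of $\pi$) is met. I would then conclude $\varphi_{\pi\curlywedge\eta\curlywedge\kappa} = \varphi_{\pi\curlywedge\kappa}$ and unwind the abuse of notation \eqref{eq:phipixjij} to obtain the stated equality of functionals on $\mathcal{A}^n$.
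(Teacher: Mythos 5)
The plan rests on the combinatorial identity $(\pi\curlywedge\eta)\curlywedge\kappa(i_1,\dots,i_n)=\pi\curlywedge\kappa(i_1,\dots,i_n)$ (equivalently, on $(\eta\curlywedge\kappa)\restr_P=\kappa\restr_P$ for each block $P$ of $\pi$), and this identity is false. Take $n=2$, $\pi=(\{1,2\})$, $\eta=(\{1\},\{2\})$ and the representative $(i_1,i_2)=(1,1)$, so $\kappa=(\{1,2\})$: then $\pi\curlywedge\kappa=(\{1,2\})$, while $(\pi\curlywedge\eta)\curlywedge\kappa=(\{1\},\{2\})\curlywedge(\{1,2\})=(\{1\},\{2\})$. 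The same example defeats the proposed use of Lemma~\ref{lem:quasimeetshift}: shifting the index attached to $E_2=\{2\}$ turns $(1,1)$ into $(1,2)$ and changes $\pi\curlywedge\kappa$ from $(\{1,2\})$ to $(\{1\},\{2\})$; that lemma only allows shifts along a block of $\pi$, not along blocks of $\eta$, which is precisely the mismatch you noticed mid-argument and never resolved. There is also a misreading of the definitions: for plain arguments $X_1,\dots,X_n\in\mathcal{A}$, the value $\varphi_{\pi\curlywedge\eta}(X_1,\dots,X_n)$ is, by \eqref{eq:phipi}, just $\widetilde{\varphi}$ evaluated at a representative tuple whose kernel is $\pi\curlywedge\eta$; no further quasi-meet with a kernel $\rho$ enters, so \eqref{eq:phipixjij} is not the relevant definition here, and the final step $\varphi_{\pi\curlywedge\rho}=\varphi_\pi$ is unjustified.

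More fundamentally, no partition-level identity can prove this lemma, because the statement fails for general spreadability systems: in the tensor system with a state $\varphi$, taking $\pi=\hat{1}_2$ and $\eta=(\{1\},\{2\})$ gives $\varphi_\pi(X,Y)=\varphi(XY)$ but $\varphi_{\pi\curlywedge\eta}(X,Y)=\varphi(X)\,\varphi(Y)$. The lemma is specific to $\cS_{\freeLie}$, where $\widetilde{\varphi}=\conc_\infty$ and $\varphi_\sigma(X_1,\dots,X_n)=X_{S_1}X_{S_2}\cdots X_{S_{\abs{\sigma}}}$ for $\sigma=(S_1,S_2,\dots,S_{\abs{\sigma}})$. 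The correct (one-line) argument uses this form directly: since the blocks of $\eta$ are intervals listed in increasing order, every block $P$ of $\pi$ satisfies $X_P=X_{P\cap E_1}X_{P\cap E_2}\cdots X_{P\cap E_e}$ (empty factors omitted), and since $\pi\curlywedge\eta$ is the concatenation $\eta\restr_{P_1}\eta\restr_{P_2}\cdots\eta\restr_{P_p}$, both sides of the lemma are literally the same word in $\C[\fA]$, even though the ordered set partitions $\pi\curlywedge\eta$ and $\pi$ are different. Your second paragraph gestures at this ordering phenomenon, but you deploy it to claim that the kernels coincide (they do not) rather than that the concatenated products coincide, which is where the actual content of the lemma lies.
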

\begin{proof}
The statement holds by definition because every block $P\in\pi$ is the concatenation
of $P\cap E_1, P\cap E_2, \dots, P\cap E_e$.
\end{proof}
\begin{remark} Thus a sequence of distinct letters satisfies some partial independence, but it is not $\cS_{\freeLie}$-independent. Indeed, $(a_1,a_2)\in \fA \times \fA$ is $\cS_{\freeLie}$-independent if and only if $a_1=a_2$ by Proposition~\ref{NCT independence}. 
\end{remark}

\begin{thm}[{\cite{Goldberg:1956:formal,Reutenauer:1993:freeLiealg}}]
  \label{thm:coefficient}  
  Let $a_1,a_2,\ldots,a_n$ be distinct letters from the alphabet  $\fA$, let $r\in\N$ and
  $q_j,i_j \in \N$ for $j\in [r]$ such that $i_j \neq i_{j+1}$ for $j\in[r-1]$. 
  Then the coefficient of the monomial $a_{i_1}^{q_1}a_{i_2}^{q_2}\cdots
a_{i_r}^{q_r}$ appearing in $\log(e^{a_1}\cdots e^{a_n})$ is given by 
\begin{equation*} \frac{1}{q_1!q_2!\cdots q_r!}\int_{-1}^0 x^{\des(\underline{i})} (1+x)^{\asc(\underline{i})} \prod_{j=1}^r P_{q_j}(x)\,dx, 
\end{equation*}
where $P_q(x)$ are the homogeneous Euler polynomials already encountered in Proposition~\ref{prop:goldberg}. 
\end{thm}

\begin{proof}  
  In order for a monomial $v=a_{i_1}^{q_1}a_{i_2}^{q_2}\cdots a_{i_r}^{q_r}$ to
  occur as a term in $\Pi(a_1^{p_1}a_2^{p_2}\cdots a_n^{p_n})$ it necessarily
  has to be a rearrangement of the word (= permutation of the multiset)
  $w=a_1^{p_1}a_2^{p_2}\dotsm a_n^{p_n}$, 
  since the projector in the CBH formula \eqref{CBH} does not
  change multiplicities and therefore
  every letter must occur the same number of times in $v$ and $w$. 
  
  Thus $p_k=\sum_{j\in B_k}q_j$ where  $B_k=\{j : i_j=k\}$.
  Let $p=p_1+p_2+\dots+p_n$ be the total length of $w$
  and $\eta=(A_1,A_2,\dots,A_n)\in \OIP_p$ be the ordered interval partition
  corresponding to the composition $(p_1,p_2,\dots,p_n)$, i.e.,
  $A_j=\{p_1+p_2+\dots+p_{j-1}+1,
  p_1+p_2+\dots+p_{j-1}+2,\dots,p_1+p_2+\dots+p_j\}$
  and $\abs{A_j}=p_j$.

  From Theorem~\ref{thm:Hausdorff2} we infer
  \begin{align*}
    \Pi(a_{1}^{p_1}a_{2}^{p_2} \cdots a_{n}^{p_n})
    &=   K^{\freeLie}_{p}(\underbrace{a_1, a_1,\ldots, a_1}_{\text{$p_1$ times}},
                   \underbrace{a_2, a_2,\ldots, a_2}_{\text{$p_2$ times}},
                   \ldots,
      \underbrace{a_{n},a_{n},\ldots,a_{n}}_{\text{$p_n$ times}})
      \\
    & =:K_p^{\freeLie}(a_1^{\dots p_1},a_2^{\dots p_2}, \dots, a_n^{\dots p_n}). 
  \end{align*}

 Note that by Lemma~\ref{lem0123} we have
 $$
 \varphi_{\pi}(a_1^{\dots p_1},a_2^{\dots p_2}, \dots, a_n^{\dots p_n}) 
= \varphi_{\pi \curlywedge \eta}(a_1^{\dots p_1},a_2^{\dots p_2},
\dots,a_n^{\dots p_n})
$$
for any $\pi \in \OSP_p$
and 
thus
\begin{align*}
K^{\freeLie}_{p}(a_1^{\dots p_1},a_2^{\dots p_2}, \dots, a_n^{\dots p_n}) 
&= \sum_{\sigma \in \OSP_p} \varphi_\sigma(a_1^{\dots p_1},a_2^{\dots p_2}, \dots, a_n^{\dots p_n}) 
\, \widetilde{\mu}(\sigma, \hat{1}_p)
  \\
&= \sum_{\sigma \in \OSP_p} \varphi_{\sigma\curlywedge\eta}
   (a_1^{\dots p_1},a_2^{\dots p_2}, \dots, a_n^{\dots p_n}) 
   \, \widetilde{\mu}(\sigma, \hat{1}_p)\\
&= \sum_{\substack{\tau \in \OSP_p \\ \bar{\tau}\leq \bar{\eta}}}
   \varphi_{\tau}(a_1^{\dots p_1},a_2^{\dots p_2}, \dots, a_n^{\dots p_n}) 
   \,w(\tau, \eta)
\end{align*}
by Proposition~\ref{prop:Weisner}.
Thus in order to determine the coefficient of $v$ we must
collect all ordered set partitions $\tau$ such that 
\begin{equation*} T_v =\{\tau\in\OSP_p : \bar{\tau}\leq\bar{\eta}, 
 \varphi_{\tau}(a_1^{\dots p_1},a_2^{\dots p_2}, \dots, a_n^{\dots p_n}) = v 
\}
\end{equation*}  
and then sum up the corresponding values of $w(\tau,\eta)$.
Let us now investigate the structure of this set.
First note that $T_v$ is an order ideal: if $\tau\in T_v$ and $\tau'\leq \tau$,
then $\tau'\in T_v$, because every block of $\tau$ contains only repetitions
of one letter and further refinement of $\tau$ leaves the end result
$v$ invariant.
Moreover $T_v$ is the disjoint union of the principal ideals
$\downset\tau_0=\{\tau : \tau\leq \tau_0\}$
where $\tau_0\in T_v$ is maximal.

The maximal partition $\tau_0$ arises as follows:
After application of $\phi_\tau$ for $\tau\in T_v$
each factor $a_j^{p_j}$ is divided into pieces
$a_j^{q_l}$, $l\in B_j$ and the number of such subdivisions is
the multinomial coefficient
$$
\binom{p_j}{q_l : l\in B_j}
.
$$
These subdivisions are in one-to-one correspondence with
ordered set partitions of $A_j$ which, pieced together in the order of $j$,
give rise to a maximal ordered set partition $\tau_0\in T_v$.
Thus in total there are  as many maximal ordered set partitions
as there are subdivisions, namely 
\begin{equation*} \binom{p_1}{q_l:l\in B_1}
\binom{p_2}{q_l:l\in B_2}
\dotsm
\binom{p_n}{q_l:l\in B_n}
=
 \frac{p_1!p_2!\cdots p_n!}{q_1!q_2!\cdots q_r!}
.
\end{equation*}
By Proposition~\ref{app:prop:princideals} the principal 
ideal $\downset\tau_0$ generated by a maximal ordered set partition $\tau_0$ 
is isomorphic to
$$
\OSP_{q_1}\times\OSP_{q_2}\times\dotsm\OSP_{q_r}
$$
and in particular, all principal ideals are isomorphic.
Moreover the number of ascents is $\asc_\eta(\tau)=\asc(\underline{i})$
and does not depend on the choice of $\tau\in T_v$.
Thus all ideals $\downset\tau_0$ deliver the same contribution
and as a consequence of the discussion above we are left with
$$
\sum_{\tau\in T_v} w(\tau,\eta)
=
 \frac{p_1!p_2!\cdots p_n!}{q_1!q_2!\cdots q_r!}
\sum_{\tau\leq\tau_0} w(\tau,\eta)
$$
for one fixed maximal element $\tau_0\in T_v$.

A canonical representative $\tau_0$ is obtained by concatenating
consecutive subintervals of length $q_j$ from $A_{i_j}$, $j=1,2,\dots,r$.
Let us now turn to the value of $w(\tau,\eta)$. 
As seen above, the numbers of both ascents and descents of $\tau$ 
only depend on those of the sequence $\underline{i}$
and since $\underline{i}$ has no plateaux we infer from \eqref{eq:desplaasc+1} 
that $r=\des(\underline{i}) + \asc(\underline{i})+1$.
On the other hand, if we denote by $(\tau_1,\tau_2,\dots,\tau_r)$
the image of $\tau\in\downset\tau_0$ 
under the isomorphism of Proposition~\ref{app:prop:princideals}
then the first exponent in formula    \eqref{eq:weisnercoeff}  of Proposition~\ref{prop:Weisner} becomes
$$
\abs{\tau}-\asc_\eta(\tau) - 1 
= \sum_{j=1}^r \abs{\tau_j}-\asc(\underline{i}) - 1
= \sum_{j=1}^r (\abs{\tau_j}-1) + \des(\underline{i})
$$
and thus
$$
w(\tau,\eta) = \int_{-1}^0 x^{\des(\underline{i})}(1+x)^{\asc(\underline{i})} \prod_{j=1}^r x^{\abs{\tau_j}-1} \, d x 
$$
and summing over the cartesian product yields the total value
$$
\sum_{\tau\in T_v} w(\tau,\eta)
=
 \frac{p_1!p_2!\cdots p_n!}{q_1!q_2!\cdots q_r!}
 \int_{-1}^0 x^{\des(\underline{i})}(1+x)^{\asc(\underline{i})} \prod_{j=1}^r
 \left(
   \sum_{\rho\in\OSP_{q_j}}
   x^{\abs{\rho}-1}
 \right)  d x
.
$$
Finally note that 
\begin{equation*}
\sum_{\rho\in\OSP_{q}}x^{\abs{\rho}-1}= \sum_{\sigma \in \SP_{q}} \abs{\sigma}!\, x^{\abs{\sigma}-1} =\sum_{k=1}^{q} k!\, S(q,k)  x^{k-1}. 
\end{equation*}
is indeed the homogeneous Euler polynomial as claimed, 
see Remark~\ref{rem:Frobenius}.
\end{proof}

\begin{remark} The authors were not able to prove Theorem~\ref{thm:coefficient}
  as a corollary of Propositions~\ref{cum-mom} and \eqref{prop:goldberg} although Goldberg coefficients appear in both formulas. 
\end{remark}

\section{Central limit theorem}\label{CLT}
Cumulants provide a natural framework to understand central limit theorems. 
Speicher and Waldenfels studied central limit theorems in a general setting
of noncommutative probability assuming a certain \emph{singleton
  condition}~\cite{SvW94} (see also \cite{AccardiHashimotoObata:1998:role}).
In this section we will see that a similar approach also applies 
in the setting of a spreadability system, provided that an appropriate
singleton condition holds.

\begin{defi} \ 
\begin{enumerate}[label=\rm(\roman*),leftmargin=1cm]
\item An element $k \in[n]$ is called a singleton of $\pi \in \OSP_n$ if $\{k\} \in \pi$. 
\item Let $\cS=(\alg{U}, \tilde{\varphi},(\iota^{(i)})_{i \geq 1})$ be a
 spreadability system for a $\alg{B}$-ncps $(\mathcal{A},
 \varphi)$. 
 We will say that the \emph{singleton condition holds for $\cS$} if
 $\varphi_\pi(X_1,X_2,\ldots,X_n)=0$ for every tuple $(X_1,X_2,\dots,X_n)$ and
 every partition $\pi$ containing a singleton $\{k\}$ such that
 $\varphi(X_k)=0$. 
\end{enumerate}
\end{defi}

Under this assumption we can show the following type of central limit theorem. 

\begin{thm} Assume that a spreadability system $(\alg{U}, \tilde{\varphi},(\iota^{(i)})_{i \geq 1})$ for a $\alg{B}$-ncps $(\mathcal{A}, \varphi)$ satisfies the singleton condition. 
Assume $\varphi(X)=0$ and let $Y_N:=\frac{N.X}{\sqrt{N}}$. Then, for each $n \in\N$ and $\rho \in \SP_n$, 
\[
\lim_{N\to\infty}\varphi_\rho(Y_N, Y_N, \dots, Y_N)=
\begin{cases}
\sum_{\substack{\pi\in\OSP_n^{(2)}\\ \pi\leq \rho}} \frac{1}{\abs{\pi}!}\varphi_\pi(X,X,\ldots,X), &n \text{~is even},\\
0,&n\text{~is odd}. 
\end{cases}
\] 
where $\OSP_n^{(2)}$ is the set of pair ordered set partitions, i.e.\ every block of $\pi \in \OSP_n^{(2)}$ contains exactly 2 entries. 
\end{thm}
\begin{proof}
Note that the following holds: if $\pi$ has a singleton at $k$ and $\varphi(X_k)=0$, then 
\begin{equation*}
K_\pi(X_1,X_2,\ldots,X_n)=0. 
\end{equation*}
This holds because $K_\pi(X_1,X_2,\ldots,X_n)$ is the coefficient of $N$ in $\varphi_\pi(N.X_1,N.X_2,\ldots,N.X_n)$, and 
\begin{equation*}
\varphi_\pi(N.X_1,N.X_2,\ldots,N.X_n)=\sum_{i_1,i_2,\ldots,i_n\in[n]}\varphi_{\pi\curlywedge \kappa(i_1,i_2,\ldots,i_n)}(X_1,X_2,\ldots,X_n)=0
\end{equation*} because each 
$\pi\curlywedge \kappa(i_1,i_2,\ldots,i_n)$ has a singleton at $k$. 

Now multilinearity and extensivity of cumulants imply
\begin{equation*}
K_\pi(Y_N,Y_N,\ldots,Y_N)= N^{-\frac{n}{2}+\abs{\pi}}K_\pi(X,X,\ldots,X). 
\end{equation*}
If $\pi$ has a singleton, this is zero. If $\pi$ does not have a singleton nor $\pi$ is not a pair ordered set partition, then $\abs{\pi}<\frac{n}{2}$. Therefore,  
\begin{equation*}
\lim_{N\to\infty}K_\pi(Y_N,Y_N,\ldots,Y_N)= 
\begin{cases}
K_\pi(X,X,\ldots,X),&\text{if~$n$ is even and~}\pi\in\OSP_n^{(2)}, \\
0, &\text{otherwise}. 
\end{cases} 
\end{equation*}
If $\pi\in\OSP_n^{(2)}$, then $K_\pi(X,X,\ldots, X)= \varphi_\pi(X,X,\ldots,X)$
from Theorem~\ref{thm:mc} because the expectations $\varphi_\sigma(X,X,\ldots,X)$ all
vanish for $\sigma <\pi$ from the singleton condition. Finally, from Theorem
\ref{thm:mc}, we obtain the conclusion.  
\end{proof}

Thus the use of cumulants simplifies the proof of the central limit
theorem. It may happen that the moments of the  limit distribution are not
uniquely determined only by the variance of $X$ alone, because in 
general $\varphi_\pi(X,X,\dots,X)$ cannot be written in terms of $\varphi(X^2)$.
For example, the limit distribution for the c-monotone spreadability system is
characterized by the moments  
\begin{equation*}
\lim_{N\to\infty}\varphi(Y_N^n)=
\begin{cases}
\sum_{\pi\in\MP_n^{(2)}} \frac{1}{\abs{\pi}!}\alpha^{2\abs{\Outer(\pi)}}\beta^{2\abs{\Inner(\pi)}}, &n \text{~is even},\\
0,&n\text{~is odd}, 
\end{cases}
\end{equation*}
where $\alpha^2=\varphi(X^2)$, $\beta^2=\psi(X^2)$ and $\MP_n^{(2)}$ is the set
of monotone pair partitions; the reader is referred to Theorems 4.7, 5.1 of
\cite{Hasebe:2011:conditionally}. The limit moments  are not uniquely
determined by $\alpha^2$ but also depend on the second linear map $\psi$.
A natural condition to ensure uniqueness is a calculation rule (Definition \ref{defi:universal}); then the limit distribution of the central limit theorem is determined only by the variance and by the constants $s(\bar{\pi}; \pi)$.  More precisely, we deduce from the calculation rule \eqref{eq:calculation_rule} that $\varphi_\pi(X,X,\ldots,X)=s(\bar{\pi}; \pi)\alpha^{2\abs{\pi}}$ for $\pi \in \OSP_n^{(2)}$ provided $\varphi(X)=0$, where $\alpha^2=\varphi(X^2)$. The limit moments can therefore be written as 
\begin{equation*}
  \lim_{N\to\infty}\varphi_\rho(Y_N, Y_N,\dots, Y_N)=
  \begin{cases}
    \sum_{\substack{\pi\in\OSP_n^{(2)}\\ \pi \leq \rho }} \frac{s(\bar{\pi}; \pi)}{\abs{\pi}!}\alpha^{2\abs{\pi}}, &n \text{~is even},\\
    0,&n\text{~is odd}. 
  \end{cases}
\end{equation*}

\section{Open Problems} 
\begin{enumerate}[label=\arabic*.,leftmargin=1cm]
\item Find a unified proof of the occurrence of Goldberg coefficients in Theorem~\ref{thm:coefficient} and Proposition~\ref{cum-mom}.
\item Find the values of the M\"obius function on the poset of monotone partitions.
\item Compute the cumulants for the V-monotone spreadability system and compare
 them with the operadic cumulants of Jekel and Liu \cite{JekelLiu:2020:operad}.
\item Define a notion of multifaced spreadability systems and the associated cumulants, cf.\ Remark~\ref{rem:multiface}.

\end{enumerate}

\section*{Acknowledgements} The first-named author was supported by European Commission, Marie Curie
Actions IIF 328112 ICNCP and by JSPS Grant-in-Aid for Young Scientists (B)
15K17549. We thank Hayato Saigo for discussions in the early stage of this work
and J.-C.~Novelli and J.Y.~Thibon for pointing out errors in an earlier
version. Finally we thank the anonymous referee for numerous corrections and suggestions
which helped us to substantially improve the organization of the paper.

\appendix

\section{Ordered set partitions}\label{app:partition composition}
\subsection{Set partitions} 
Let $\N$ denote the set of natural numbers $\{1,2,3,\ldots \}$, and let $[n]$
denote the finite set $\{1,2,\ldots, n\} \subseteq \N$.  

\begin{definition}[Set Partitions]
A \emph{set partition}, or simply partition, of a finite set $A$ is a set of mutually disjoint subsets
$\pi=\{P_1,P_2,\ldots,P_k\}$ such that $\cup_{i=1}^k P_i = A$.  
The number $k$ is the \emph{size} of the partition and denoted by $\abs{\pi}$. 
The elements $P\in\pi$ are called \textit{blocks} of $\pi$.
The set of partitions of $A$ is denoted by $\SP_A.$ 
We are mostly concerned with the case $A=[n]$
and in this case $\SP_{[n]}$ is abbreviated to $\SP_n$
and called (set) \textit{partitions of order $n$}.
As is well known there is a one-to-one
correspondence between set partitions $\pi$ of $[n]$ and equivalence relations
on $[n]$ by defining for $\pi\in\SP_n$ and $i,j \in [n]$ the
relation $i \sim_\pi j$ to hold if and only if there is a block $P \in \pi$
such that both $i,j \in P$.
\end{definition}

The set partitions of fixed order $n$ form a lattice under \emph{refinement
  order}:
\begin{definition}[Refinement Order]
For partitions $\pi$ and $\sigma$ we write $\pi \leq \sigma$ if for any block
$P \in \pi$, there exists a block $S \in \sigma$ such that $P \subseteq S$.  
In other words, every block of $\sigma$ is a union of blocks of $\pi$.
The minimal element of this lattice is
$\hat{0}_n=\{\{1\},\{2\},\dots,\{n\}\}$ and the maximal element
$\hat{1}_n=\{[n]\}$. 
\end{definition}

We proceed with the description of several classes of set partitions.

\begin{definition}[Classes of Set Partitions]
  \label{app:def:partitions}
   Let $\pi\in\SP_n$ be a set partition.
  \begin{enumerate}[label=\rm(\roman*),leftmargin=1cm]
   \item
    Two (distinct) blocks $B$ and $B'\in\pi$ are said to be \emph{crossing} if
    there are elements $i<i'<j<j'$ such that $i,j\in B$ and $i',j'\in B'$.
    $\pi$ is called \emph{noncrossing} if there are no crossing blocks,
    i.e., if there is no
    quadruple of elements $i<j<k<l$ s.t.\ $i\sim_\pi k$, $j\sim_\pi l$ and
    $i\not\sim_\pi j$.  The noncrossing partitions of order $n$ form a sublattice
    which we denote by $\NC_n$.
   \item\label{item:app:def:partitions2}
    Two blocks $B,B'$ of a noncrossing partition $\pi$ are said to form a
    \emph{nesting} if there are $i,j \in B$ such that $i<k<j$ for any $k \in B'$.
    In this case $B$ is called the \emph{outer block} of the nesting and 
    $B'$ is called the \emph{inner block} of the nesting.
   \item
    A block~$B$ of a noncrossing partition $\pi$ is \emph{inner} if 
    $B$  is the inner block of a nesting of $\pi$.
    If this is not the case $B$ is called an \emph{outer block} of $\pi$.
    The set of inner blocks of $\pi$ is denoted by $\Inner(\pi)$
    and the set of outer blocks of $\pi$ by $\Outer(\pi)$.
   \item
    An \emph{interval partition} is a partition $\pi$ for which every block is an interval.
    Equivalently, this means that~$\pi$ is noncrossing and has no nestings.
    The set of interval partitions of $[n]$ is denoted by $\IP_n$. 
    \end{enumerate}
Analogous definitions apply to any finite totally ordered set $A$ and the corresponding sets of partitions are denoted $\SP_A$, $\NC_A$, $\IP_A$ etc.
\end{definition}
Examples of partitions are shown in Fig.~\ref{app:fig:partitions}.
\begin{figure}
\begin{minipage}{.3\textwidth}
\centering
\setlength{\unitlength}{2\unitlength}
      \begin{picture}(56,6.5)(1,0)
        \put(2,0){\line(0,1){7.5}}
        \put(8,0){\line(0,1){4.5}}
        \put(14,0){\line(0,1){4.5}}
        \put(20,0){\line(0,1){4.5}}
        \put(26,0){\line(0,1){7.5}}
        \put(32,0){\line(0,1){4.5}}
        \put(38,0){\line(0,1){4.5}}
        \put(44,0){\line(0,1){7.5}}
        \put(50,0){\line(0,1){4.5}}
        \put(56,0){\line(0,1){4.5}}
        \put(8,4.5){\line(1,0){6}}
        \put(20,4.5){\line(1,0){0}}
        \put(32,4.5){\line(1,0){6}}
        \put(2,7.5){\line(1,0){42}}
        \put(50,4.5){\line(1,0){6}}
      \end{picture}

      a noncrossing partition
\end{minipage}
\begin{minipage}{.3\textwidth}
\centering
\setlength{\unitlength}{2\unitlength}
      \begin{picture}(56,3.5)(1,0)
        \put(2,0){\line(0,1){4.5}}
        \put(8,0){\line(0,1){4.5}}
        \put(14,0){\line(0,1){4.5}}
        \put(20,0){\line(0,1){4.5}}
        \put(26,0){\line(0,1){4.5}}
        \put(32,0){\line(0,1){4.5}}
        \put(38,0){\line(0,1){4.5}}
        \put(44,0){\line(0,1){4.5}}
        \put(50,0){\line(0,1){4.5}}
        \put(56,0){\line(0,1){4.5}}
        \put(2,4.5){\line(1,0){12}}
        \put(20,4.5){\line(1,0){6}}
        \put(32,4.5){\line(1,0){18}}
        \put(56,4.5){\line(1,0){0}}
      \end{picture}

      an interval partition
\end{minipage}
\caption{Examples of partitions}\label{app:fig:partitions}
\end{figure}
The lattice of interval partitions plays a central role in this paper
and has a particularly simple structure.
\begin{prop}\label{app:prop:interval-Boolean}
The lattice of interval partitions $\IP_n$ is anti-isomorphic 
to the Boolean lattice $\Bool_{n-1}$ via the lattice anti-isomorphism
$$
(I_1, I_2, \ldots, I_p) \mapsto\{r_1, r_2, \ldots,r_{p-1}\} \subseteq [n-1],  
$$
where the blocks $I_i$ are uniquely determined by their maximal elements $r_i$;
 note that always $r_p =n$. 
\end{prop}

The following construction inverts the previous bijection 
in a certain sense.
\begin{definition}
  \label{app:def:ncmax}
  Fix a number $n\in\N$ and a subset $A\subseteq [n]$. 
  \begin{enumerate}[label=\rm(\roman*),leftmargin=1cm]
   \item\label{app:it:outncmax}  Among all noncrossing partitions
  containing $A^c$ as an outer block there is a maximal one, 
  which we denote by $\outncmax(A)$. Removing $A^c$ we obtain
  an interval partition of $A$ which we denote by $\outintmax(A)$;
  in other words, the blocks of $\outintmax(A)$ consist
  of the maximal contiguous subintervals of $A$. 
  Yet in another interpretation, the blocks of $\outintmax(A)$ 
  are the connected components of the graph induced on $A$
  from the integer line.
   \item\label{app:it:ncmax}  Among all noncrossing partitions
    containing $A^c$ as a block there is a maximal one, 
  which we denote by $\ncmax(A)$. Removing $A^c$ we obtain
  a noncrossing partition of $A$ 
  which we denote by $\intmax(A)$;
  in other words, the blocks of $\intmax(A)$ consist
  of the maximal contiguous subintervals of $A$ when we consider
  it on the circle, i.e.,
  the blocks of $\intmax(A)$ 
  are the connected components of the graph induced on $A$
  from the Cayley graph of $\Z_n$.
\end{enumerate}
See Figure~\ref{app:fig:OSP:numax} for examples.
\end{definition}

 \begin{figure}
   \begin{minipage}{0.3\linewidth}
\begin{tikzpicture}[scale=0.1]
\draw[thick=10] (2,0)--(2,4.5);
\draw (8,0)--(8,7.5);
\draw[thick=10] (14,0)--(14,4.5);
\draw[thick=10] (20,0)--(20,4.5);
\draw (26,0)--(26,7.5);
\draw[thick=10] (32,0)--(32,4.5);
\draw (38,0)--(38,7.5);
\draw[thick=10] (44,0)--(44,4.5);
\draw[thick=10] (50,0)--(50,4.5);
\draw (2,4.5)--(2,4.5);
\draw[thick=10] (14,4.5)--(20,4.5);
\draw (32,4.5)--(32,4.5);
\draw (8,7.5)--(38,7.5);
\draw[thick=10] (44,4.5)--(50,4.5);
\phantom{\draw[thick=0] (2.5,0)--(2,10.5);}
\end{tikzpicture}
\begin{center}
    $\outncmax(A)$ 
\end{center}
\end{minipage}
\qquad{}
\begin{minipage}{0.3\linewidth}
\begin{tikzpicture}[scale=0.1]
\draw[thick=10] (2,0)--(2,10.5);
\draw (8,0)--(8,7.5);
\draw[thick=10] (14,0)--(14,4.5);
\draw[thick=10] (20,0)--(20,4.5);
\draw (26,0)--(26,7.5);
\draw[thick=10] (32,0)--(32,4.5);
\draw (38,0)--(38,7.5);
\draw[thick=10] (44,0)--(44,10.5);
\draw[thick=10] (50,0)--(50,10.5);
\draw[thick=10] (14,4.5)--(20,4.5);
\draw (32,4.5)--(32,4.5);
\draw (8,7.5)--(38,7.5);
\draw[thick=10] (2,10.5)--(50,10.5);
\end{tikzpicture}
  \begin{center}
 $\ncmax(A)$    
  \end{center}
\end{minipage}
    \caption{The partitions $\outncmax(A)$ and $\ncmax(A)$
     for $A=\{1,3,4,6,8,9\}$ (fat).
   }
   \label{app:fig:OSP:numax}
 \end{figure}

\begin{remark} \ 
  \begin{enumerate}[label=\arabic*.,leftmargin=1cm]
  
   \item  Construction \ref{app:it:outncmax}
    occurs in some examples, see, e.g., Examples~\ref{ex:Boole} and~\ref{ex:monotone-diff-eq}.
    It gives rise to the unshuffle coproduct of Ebrahimi-Fard and Patras
    \cite{EbrahimiFardPatras:2015:cumulants}.
   \item 
    Construction \ref{app:it:ncmax} occurs in the recursion 
    \eqref{eq:freerecursion} for free cumulants and
    Example~\ref{ex:freediff}.
  \end{enumerate}
\end{remark}

The lattices considered so far have the following structural property.
It is easy to see for both $\SP_n$ and $\IP_n$ while for $\NC_n$ it is proved in 
\cite{Speicher:1994:multiplicative}.
\begin{prop}
  \label{app:prop:intervals}
  Let $P_n$ be one of $\SP_n$, $\NC_n$ and $\IP_n$.
  Then for any pair of elements $\sigma,\pi\in P_n$ such that
  $\sigma\leq\pi$
  there are uniquely determined numbers $k_j$ such that
  the interval $[\sigma,\pi]$ is isomorphic (as a lattice) to
  the direct product
  $$
  P_1^{k_1}\times P_2^{k_2} \times \dotsm \times P_n^{k_n}
  .
  $$
\end{prop}

\subsection{Ordered set partitions}

\begin{definition}[Ordered set partitions]    \label{defi:OP}  \
  \begin{enumerate}[label=\rm(\roman*),leftmargin=1cm]
 
   \item 
An \emph{ordered set partition}   of a set $A$  is a sequence 
$(P_1, P_2, \ldots,P_p)$ of distinct blocks such that $\{P_1, P_2, \ldots,P_p\}$ 
is a set partition of $A$. In other words, it is a set partition
with a total ordering of its blocks.
The set of ordered set partitions of $A$ is denote by $\OSP_A$ and $\OSP_{[n]}$
is abbreviated to $\OSP_n$.

\item \label{item:OP2}
Ordered set partitions with a fixed number $p$ of blocks
are in bijection with surjective functions from $[n]$ to $[p]$.
This bijection is implemented by identifying
an ordered set partition
$\pi=(P_1,P_2,\dots, P_p) \in \OSP_n$
with the function mapping an element $i \in [n]$
to the label $k$ of the block $P_k$ such that $i\in P_k$,
which we denote by $\pi(i):=k$.  
This function can be represented by 
a \emph{packed word} \cite{NovelliThibon:2006:construction},
i.e., the sequence of the images $\pi(1)\pi(2)\dots\pi(n)$.
When $\pi$ consists of singletons only, i.e., $\# P_i=1$ 
for all $i\in [p]$ then $\pi$ can be identified with
a permutation of $[n]$, and  the notations introduced above are consistent
with the familiar notations for permutations.
  \end{enumerate}
\end{definition}

\begin{remark}
Ordered set partitions are also known under the name of \emph{set compositions}, see,
e.g., \cite{BergeronZabrocki:2009:hopf}, \emph{pseudopermutations} 
\cite{KrobLatapy:2000:pseudo}.
In particular, packed words can be represented  as multiset
permutations and this point of view is crucial in Section~\ref{sec:vanishing}.
\end{remark}

\begin{defi} 
  \label{app:def:kernelpartition} \ 
\begin{enumerate}[label=\rm(\roman*),leftmargin=1cm]
\item  The \emph{ordered kernel set partition} $\kappa(i_1,i_2,\ldots,i_n)$ of a multiindex
    $(i_1,i_2,\ldots,i_n)$ is defined as follows.  
  First, pick the smallest value, say $p_1$,
  from $i_1, i_2,\ldots, i_n$  and define the block $P_1=\{k \in[n]
  \mid i_k=p_1\}$.  
  Next, pick the second smallest value $p_2$ from $i_1, i_2,\ldots,
  i_n$ and define the block $P_2=\{k \in[n]\mid i_k=p_2 \}$.  By
  repeating this procedure, we obtain an ordered set partition $(P_1, P_2, \ldots)$,
  which we denote by  $\kappa(i_1, i_2,\ldots,i_n)$.

\item  The \emph{kernel set partition} $\bar{\kappa}(i_1, i_2,\ldots,i_n)$ of a sequence
  of indices is defined as the underlying set partition
  $\overline{\kappa(i_1, i_2,\ldots,i_n)}$ of the corresponding ordered kernel set partition.
  In other words, it is the equivalence relation such that by $p\sim q$ if and
  only if $i_p=i_q$.
  \end{enumerate}
\end{defi}

\begin{definition}[Order Dropping Map]
  \label{app:def:orderdrop}
Let $\pi \mapsto \bar{\pi}$ be the map from $\OSP_n$ onto $\SP_n$ which
drops the 
order on blocks, that is,
\begin{equation*}
(P_1,P_2,\ldots) \mapsto \{P_1,P_2,\ldots\}.
\end{equation*}
We say that an ordered set partition is \emph{in canonical order}
if the blocks are sorted in ascending order according to their minimal
elements. 
\end{definition}

It will be convenient to transfer as much structure as possible from ordinary
set partitions to ordered set partitions when no confusion can arise.
For example, the notation $P\in \pi$  indicates that $P$ is a block of $\bar{\pi}$.
Let us next introduce a natural partial order relation on $\OSP_n$.
\begin{definition}[Refinement Order]
  Given two ordered set partitions $\pi=(P_1, P_2,\dots,P_p)$ and $\sigma=(S_1, S_2,\dots,S_s) \in \OSP_n$,
  we define the order relation $\pi \leq \sigma$ by the following requirements:
\begin{enumerate}[label=\rm(\roman*)]
\item $\bar{\pi}\leq\bar{\sigma}$ as set partitions. 
\item If $P_i \subseteq S_k, P_j \subseteq S_l$ for $i<j$, then $k \leq l$. 
\end{enumerate}
\end{definition}
\begin{remark}
In other words, $\pi\leq\sigma$ if every block of $\sigma$ is a union of
a contiguous sequence of blocks of $\pi$.
The elements dominated by a given ordered set partition $\sigma$ are obtained
as follows:
\begin{enumerate}[label=\arabic*.] 
 \item pick a label $i$
 \item split the block with label $i$ into two
 \item label one of the pieces with $i$ and the other one with $i+1$
 \item increment by one all other labels larger than $i$.
\end{enumerate}
See Figure~\ref{app:fig:OSP:order} for examples.

 \begin{figure}
   \begin{tikzpicture}
\begin{scope}[shift = {(0,3)},scale=0.05]

\draw (2,0)--(2,7.5);
\draw (8,0)--(8,4.5);
\draw (14,0)--(14,7.5);
\draw (20,0)--(20,4.5);
\draw (26,0)--(26,7.5);
\draw (8,4.5)--(20,4.5);
\draw (2,7.5)--(26,7.5);

\node at (2,-5) {\tiny $1$};
\node at (8,-5) {\tiny $2$};
\node at (14,-5) {\tiny $1$};
\node at (20,-5) {\tiny $2$};
\node at (26,-5) {\tiny $1$};
\end{scope}

\begin{scope}[shift = {(-3,0)},scale=0.05]
\draw (2,0)--(2,10.5);
\draw (8,0)--(8,7.5);
\draw (14,0)--(14,4.5);
\draw (20,0)--(20,7.5);
\draw (26,0)--(26,10.5);
\draw (14,4.5)--(14,4.5);
\draw (8,7.5)--(20,7.5);
\draw (2,10.5)--(26,10.5);

\node at (2,-5) {\tiny $1$};
\node at (8,-5) {\tiny $2$};
\node at (14,-5) {\tiny $3$};
\node at (20,-5) {\tiny $2$};
\node at (26,-5) {\tiny $1$};
\end{scope}

\begin{scope}[shift = {(0,0)},scale=0.05]
\draw (2,0)--(2,10.5);
\draw (8,0)--(8,7.5);
\draw (14,0)--(14,4.5);
\draw (20,0)--(20,7.5);
\draw (26,0)--(26,10.5);
\draw (14,4.5)--(14,4.5);
\draw (8,7.5)--(20,7.5);
\draw (2,10.5)--(26,10.5);

\node at (2,-5) {\tiny $1$};
\node at (8,-5) {\tiny $3$};
\node at (14,-5) {\tiny $2$};
\node at (20,-5) {\tiny $3$};
\node at (26,-5) {\tiny $1$};
\end{scope}

\begin{scope}[shift = {(3,0)},scale=0.05]
\draw (2,0)--(2,10.5);
\draw (8,0)--(8,7.5);
\draw (14,0)--(14,4.5);
\draw (20,0)--(20,7.5);
\draw (26,0)--(26,10.5);
\draw (14,4.5)--(14,4.5);
\draw (8,7.5)--(20,7.5);
\draw (2,10.5)--(26,10.5);

\node at (2,-5) {\tiny $2$};
\node at (8,-5) {\tiny $3$};
\node at (14,-5) {\tiny $1$};
\node at (20,-5) {\tiny $3$};
\node at (26,-5) {\tiny $2$};
\end{scope}

\node (B) at (0.7,2.5) {};
\node (A1) at (-2.4,0.7) {};
\node (A1a) at (-0.85,1.6) {$\not\leq$};
\node (A2) at (0.7,0.7) {};
\node (A2a) at (0.7,1.6) {$\leq$};
\node (A3) at (3.7,0.7) {};
\node (A3a) at (2.2,1.6) {$\leq$};
\draw (A1)--(A1a)--(B);
\draw (A2)--(A2a)--(B);
\draw (A3)--(A3a)--(B);
\end{tikzpicture}
    \caption{Examples of the order on $\OSP_n$.}
   \label{app:fig:OSP:order}
 \end{figure}

This order makes $(\OSP_A,\leq)$ a poset (but not a lattice as will be seen
shortly). The Hasse diagram of $\OSP_3$ is shown in Figure~\ref{app:fig:OSP3};
the order is read from outside to inside.
\begin{figure}
  \begin{tikzpicture}
    \node (A111) at (0,0) {$123$};
    \node (A121) at (0:2) {$13/2$};
    \node (A221) at (60:2) {$3/12$};
    \node (A211) at (120:2) {$23/1$};
    \node (A212) at (180:2) {$2/13$};
    \node (A112) at (240:2) {$12/3$};
    \node (A122) at (300:2) {$1/23$};                        

    \node (A231) at (30:3.7) {$3/1/2$};
    \node (A321) at (90:3.7) {$3/2/1$};
    \node (A312) at (150:3.7) {$2/3/1$};
    \node (A213) at (210:3.7) {$2/1/3$};
    \node (A123) at (270:3.7) {$1/2/3$};
    \node (A132) at (330:3.7) {$1/3/2$};

    \draw (A111)--(A121);
    \draw (A111)--(A221);
    \draw (A111)--(A211);
    \draw (A111)--(A212);
    \draw (A111)--(A112);
    \draw (A111)--(A122);

    \draw (A231)--(A121);
    \draw (A321)--(A221);
    \draw (A312)--(A211);
    \draw (A213)--(A212);
    \draw (A123)--(A112);
    \draw (A132)--(A122);

    \draw (A231)--(A221);
    \draw (A321)--(A211);
    \draw (A312)--(A212);
    \draw (A213)--(A112);
    \draw (A123)--(A122);
    \draw (A132)--(A121);
  \end{tikzpicture}
  \caption{The poset $\OSP_3$, ordered from outside to inside. The maximal element is in the center, the minimal
    elements are on the periphery.}
  \label{app:fig:OSP3}
\end{figure}

Note that $\pi\leq\sigma$ implies
$\bar\pi\leq\bar\sigma$, but not vice versa.
\end{remark}
We consider the following subclasses of ordered set partitions on $[n]$.
\begin{definition}[Classes of Ordered Set Partitions]
  \label{app:def:OSPclass} \ 
  \begin{enumerate}[label=\rm(\roman*),leftmargin=1cm]
 
    \item
    \label{app:def:ONC}
     An ordered set partition $\pi\in\OSP_n$ is called \textit{noncrossing}
    if the underlying set partition $\bar{\pi}$ has this property.
   The set of ordered noncrossing partitions of $[n]$ is denoted by $\ONP_n$. 
   Outer blocks and inner blocks of an ordered noncrossing partition are defined according to the case of noncrossing partitions. 
  \item \label{app:def:OI}
   An \textit{interval ordered set partition} of $[n]$ is an ordered set partition $\pi$ such that $\bar{\pi} \in \IP_n$.  The set of  interval ordered set partitions of $[n]$ is denoted by $\OIP_n$.
  \item
    \label{app:def:monopart}
    A
    noncrossing ordered set partition
  $\pi=(P_1, P_2, \dots,P_{\abs{\pi}})$ is called  \emph{monotone partition} if
  for every nesting  the outer 
  block precedes the inner block;
  in other words, the order of the blocks implements a linearization of the partial order given by the nesting relation.
  The set of monotone partitions is denoted by $\MP_n$. 
  \end{enumerate}
\end{definition}
\begin{exa}
  Figure~\ref{fig:monotonepartitions} shows some examples of monotone and
  non-monotone partitions whose underlying noncrossing partition is
  $\bar{\pi}=\begin{picture}(32,6.5)(1,0)
    \put(2,0){\line(0,1){7.5}}
    \put(8,0){\line(0,1){4.5}}
    \put(14,0){\line(0,1){4.5}}
    \put(20,0){\line(0,1){7.5}}
    \put(26,0){\line(0,1){7.5}}
    \put(32,0){\line(0,1){4.5}}
    \put(8,4.5){\line(1,0){6}}
    \put(2,7.5){\line(1,0){24}}
    \put(32,4.5){\line(1,0){0}}
  \end{picture}$.

\begin{figure}[h]
  \centering
\begin{minipage}{0.2\linewidth}
\setlength{\unitlength}{2\unitlength}
\begin{picture}(32,6.5)(1,0)
  \put(2,0){\line(0,1){7.5}}
  \put(8,0){\line(0,1){4.5}}
  \put(14,0){\line(0,1){4.5}}
  \put(20,0){\line(0,1){7.5}}
  \put(26,0){\line(0,1){7.5}}
  \put(32,0){\line(0,1){4.5}}
  \put(8,4.5){\line(1,0){6}}
  \put(2,7.5){\line(1,0){24}}
  \put(32,4.5){\line(1,0){0}}
\end{picture}

\tiny{}  1 \hskip0.6em 2 \hskip4.8em 3
\end{minipage}
\begin{minipage}{0.2\linewidth}
\setlength{\unitlength}{2\unitlength}
\begin{picture}(32,6.5)(1,0)
  \put(2,0){\line(0,1){7.5}}
  \put(8,0){\line(0,1){4.5}}
  \put(14,0){\line(0,1){4.5}}
  \put(20,0){\line(0,1){7.5}}
  \put(26,0){\line(0,1){7.5}}
  \put(32,0){\line(0,1){4.5}}
  \put(8,4.5){\line(1,0){6}}
  \put(2,7.5){\line(1,0){24}}
  \put(32,4.5){\line(1,0){0}}
\end{picture}

\tiny{}  1 \hskip0.6em 3 \hskip4.8em 2
\end{minipage}
\begin{minipage}{0.2\linewidth}
\setlength{\unitlength}{2\unitlength}
\begin{picture}(32,6.5)(1,0)
  \put(2,0){\line(0,1){7.5}}
  \put(8,0){\line(0,1){4.5}}
  \put(14,0){\line(0,1){4.5}}
  \put(20,0){\line(0,1){7.5}}
  \put(26,0){\line(0,1){7.5}}
  \put(32,0){\line(0,1){4.5}}
  \put(8,4.5){\line(1,0){6}}
  \put(2,7.5){\line(1,0){24}}
  \put(32,4.5){\line(1,0){0}}
\end{picture}

\tiny{}  2 \hskip0.6em 3 \,\hskip4.65em 1
\end{minipage}

\vskip3em

\begin{minipage}{0.2\linewidth}
\setlength{\unitlength}{2\unitlength}
\begin{picture}(32,6.5)(1,0)
  \put(2,0){\line(0,1){7.5}}
  \put(8,0){\line(0,1){4.5}}
  \put(14,0){\line(0,1){4.5}}
  \put(20,0){\line(0,1){7.5}}
  \put(26,0){\line(0,1){7.5}}
  \put(32,0){\line(0,1){4.5}}
  \put(8,4.5){\line(1,0){6}}
  \put(2,7.5){\line(1,0){24}}
  \put(32,4.5){\line(1,0){0}}
\end{picture}

\tiny{}  2 \hskip0.6em 1 \,\hskip4.65em 3
\end{minipage}
\begin{minipage}{0.2\linewidth}
\setlength{\unitlength}{2\unitlength}
\begin{picture}(32,6.5)(1,0)
  \put(2,0){\line(0,1){7.5}}
  \put(8,0){\line(0,1){4.5}}
  \put(14,0){\line(0,1){4.5}}
  \put(20,0){\line(0,1){7.5}}
  \put(26,0){\line(0,1){7.5}}
  \put(32,0){\line(0,1){4.5}}
  \put(8,4.5){\line(1,0){6}}
  \put(2,7.5){\line(1,0){24}}
  \put(32,4.5){\line(1,0){0}}
\end{picture}

\tiny{}  3 \hskip0.6em 1 \,\hskip4.65em 2
\end{minipage}
\begin{minipage}{0.2\linewidth}
\setlength{\unitlength}{2\unitlength}
\begin{picture}(32,6.5)(1,0)
  \put(2,0){\line(0,1){7.5}}
  \put(8,0){\line(0,1){4.5}}
  \put(14,0){\line(0,1){4.5}}
  \put(20,0){\line(0,1){7.5}}
  \put(26,0){\line(0,1){7.5}}
  \put(32,0){\line(0,1){4.5}}
  \put(8,4.5){\line(1,0){6}}
  \put(2,7.5){\line(1,0){24}}
  \put(32,4.5){\line(1,0){0}}
\end{picture}

\tiny{}  3 \hskip0.6em 2 \,\hskip4.65em 1
\end{minipage}

  \caption{Monotone partitions (upper row) and non-monotone partitions (lower
    row). The labeled numbers denote the order of blocks.}
\label{fig:monotonepartitions}
\end{figure}

\begin{remark}
  Interval partitions are characterized by the property
  that the canonical order defined  in Definition~\ref{app:def:orderdrop}
  extends to the entire blocks, i.e.,
    the blocks of an interval partition $\{I_1,I_2,\dots,I_p\}$ can be uniquely ordered so that $i<j$ whenever $i\in I_s$, $j\in I_t$, $s<t$. 
    This ordering provides a natural embedding $\IP_n \subseteq \OIP_n$ 
and we may write $(I_1, I_2, \ldots, I_p) \in \IP_n$ rather than
    $\{I_1, I_2, \dots,I_p\}\in\IP_n$. 
\end{remark}

\end{exa}
In section~\ref{sec:freeLiealg}, we need the following extension of ordered set partitions. 
\begin{defi} 
  \label{app:def:pseudopartition}
  An \emph{ordered pseudopartition} of $[n]$ is a sequence
  $(P_1, P_2, \dots,P_p)$ of disjoint subsets of $[n]$ 
  such that $\cup_{i=1}^p P_i=[n]$ with empty blocks allowed.
  We keep the notation $\abs{\pi}=p$ for the length,
  now including empty blocks. 
  The set of ordered pseudopartitions of $[n]$ is denoted by $\EOSP_n$. 
\end{defi}

\begin{lemma}[{see, e.g.,~\cite{BilleraSarangarajan:1996:combinatorics}}]
The poset of ordered set partitions $\OSP_n$ is isomorphic to  the poset of nonempty chains in the boolean lattice $2^n$   with the reverse refinement order, i.e.,   a chain $\emptyset= A_0\subset A_1\subset \dotsm \subset A_k= [n]$ is smaller than a chain $\emptyset= B_0\subset B_1\subset \dotsm \subset B_l= [n]$ if it is finer, i.e., as sets $\{B_0,B_1,\dots,B_l\} \subseteq \{A_0,A_1,\dots,A_k\}$.
\end{lemma}
\begin{proof}
  The bijection is given by the map
  $$
  \Phi_n:(A_1\subset A_2\subset \dotsm \subset A_k)
 \mapsto (A_1,A_2\setminus A_1,A_3\setminus A_2,\dots,A_k\setminus A_{k-1}). 
  $$
\end{proof}

Moreover, when the empty chain is added,
$\OSP_n$ becomes a lattice isomorphic to the face lattice
of the \emph{permutohedron}
\cite{BilleraSarangarajan:1996:combinatorics,Tonks:1997:relating}.
With this alternative picture it is now easy to see a join
semilattice structure, namely the join operation corresponds
to the intersection of chains in the chain poset.

We denoted by $\hat{1}_n:=([n])$ the unique maximal ordered set partition.
On the other hand there are several minimal elements, namely
all permutations of the minimal set partition 
$\hat{0}_n:=(\{1\}, \{2\}, \dots,\{n\})$. 
Consequently there is no meet operation, but we define the following associative but noncommutative replacement.
It turns $\OSP_n$ into a \emph{band}, i.e.,  a semigroup in which
every element is an idempotent; however it is not a skew lattice. 
\begin{defi} \label{app:def:ordered_set_partitions}  \ 
\begin{enumerate}[label=\rm(\roman*),leftmargin=1cm] 
\item For an ordered set partition $\sigma=(S_1, S_2, \dots,S_s) \in \OSP_n$ 
  and a nonempty subset
  $P\subseteq [n]$, let the \emph{restriction} $\sigma\restr_P\in\OSP_P$ 
  be the ordered set partition $(P \cap S_1,P\cap S_2, \dots,P\cap S_s)\in
  \OSP_P$, where empty sets are dropped; see \cite{PS2008}, where this
  operation arises in the context of Hopf algebras. 
\item
 For $\pi =(P_1, P_2,\ldots,P_p), \sigma=(S_1,S_2,\ldots,S_s) \in \OSP_n$,
 we  define the \emph{quasi-meet operation}
  $\pi \curlywedge \sigma \in \OSP_n$ to be $(P_1 \cap S_1, P_1\cap S_2 ,
  \ldots, P_1 \cap S_s, P_2 \cap S_1, P_2\cap S_2,\ldots, P_2\cap S_s, \dots, P_p \cap S_s)$, where empty
  sets are skipped. In other words, the quasi-meet
  is the concatenation of the restrictions $\pi\curlywedge\sigma = \sigma\restr_{P_1}
  \sigma\restr_{P_2}\dotsm \sigma\restr_{P_p}$.
\end{enumerate}
\end{defi}
\begin{remark}
  The quasi-meet operation is associative and coincides with the multiplication operation
  in the Solomon-Tits algebra of the symmetric group 
  \cite{Solomon:1976:mackey} which resurfaced recently
  in the domain of Markov chains
  \cite{Brown:2000:semigroups,Brown:2004:semigroup} 
  and Hopf algebras of noncommutative quasi-symmetric functions
  \cite{BergeronZabrocki:2009:hopf}.
\end{remark}

\begin{prop}
  \label{app:prop:quasimeet}  \ 
\begin{enumerate}[label=\rm(\roman*),leftmargin=1cm]
\item \label{app:it:barpiwsi=barwbar}
    The quasi-meet operation on $\OSP_n$ is compatible with the meet
    operation on $\SP_n$, in the sense that
    $\overline{\pi\curlywedge\sigma} = \bar\pi \wedge\bar\sigma$.
   \item \label{app:it:piwsi<pi}
    $\pi\curlywedge \sigma \leq \pi$ for any $\pi,\sigma \in\OSP_n$.
   \item
    If $\sigma \leq \rho \in \OSP_n$, then $\sigma \curlywedge \pi \leq
    \rho \curlywedge \pi$ for any $\pi \in \OSP_n$.
   \item \label{app:it:piwsi=pi}
    $\pi\curlywedge\sigma = \pi$ $\iff$ $\bar\pi\leq\bar\sigma$. 
   \item \label{app:it:piwsi=si} $\pi\curlywedge\sigma = \sigma$ $\iff$ $\sigma\leq\pi$. 
\end{enumerate}
\end{prop}
\begin{proof}
  The first three items are immediate from the definition.
  To see  \ref{app:it:piwsi=pi},
  assume first $\pi\curlywedge\sigma=\pi$. Then by  \ref{app:it:barpiwsi=barwbar}
  also 
  $\bar\pi=\bar\pi\wedge\bar\sigma \leq\bar\sigma$. Conversely,
  if $\bar\pi\leq\bar\sigma$, then $\bar\pi\wedge\bar\sigma=\bar\pi$
  and by  \ref{app:it:piwsi<pi} 
  $\pi\curlywedge\sigma\leq \pi$. Since the number of blocks of
  $\bar\pi\wedge\bar\sigma$ and $\pi\curlywedge\sigma$ are equal,
  we must have $\pi\curlywedge\sigma=\pi$.

  As for \ref{app:it:piwsi=si}, if $\pi\curlywedge\sigma=\sigma$
  then it follows from \ref{app:it:piwsi<pi} that $\sigma\leq\pi$.
  On the other hand, if $\sigma\leq\pi$ and $\pi=(B_1,B_2,\dots,B_k)$,
  then   $\pi\curlywedge\sigma=\sigma\restr_{B_1}\sigma\restr_{B_2}\dotsm \sigma\restr_{B_k}$ 
  and the order of the blocks of $\sigma$ remains unchanged, so
  $\pi\curlywedge\sigma=\sigma$. 
\end{proof}

To describe
the interval structure of the poset $\OSP_n$ we start with principal ideals.
\begin{definition}
Let $P$ be a poset.
 A \emph{down-set} or \emph{order ideal} 
 is a subset $I$ such that
$x\in I$ and $y\leq x$ implies $y\in I$. 
  The \emph{principal ideal generated by $x$},
denoted by $\downset x$, is the smallest down-set containing $x$, i.e.,
$$
\downset x = \{y\in P:y\leq x\}
.
$$
\end{definition}
The following proposition is immediate.
\begin{prop}
  \label{app:prop:princideals}
  The principal ideal generated by an element
  $\pi=(P_1,P_2,\dots,P_p)\in\OSP_n$ is canonically isomorphic to
  $$
  \OSP_{P_1}\times  \OSP_{P_2}\times\dotsm\times  \OSP_{P_p}
  $$
  via the map
  $$
  \sigma
  \mapsto
  (\sigma\restr_{P_1},\sigma\restr_{P_2},\dots,\sigma\restr_{P_p})
  $$
\end{prop}
There is no direct analogue of Proposition~\ref{app:prop:intervals} 
for ordered set partitions; instead the next proposition shows
that the interval structure can be expressed in terms of lattices 
of interval partitions.
\begin{prop}
  \label{app:prop:OSPintervals}
  Let $\sigma,\pi \in \OSP_n$ be ordered set partitions of
  size $\abs{\sigma}=s$ and $\abs{\pi}=p$ respectively 
  such that $\sigma\leq\pi$.
  If $\pi=(P_1,P_2,\dots P_p)$, let $k_j$ be the number of blocks of $\sigma$
  contained in $P_j$, $j\in\{1,2,\dots,p\}$.
  Then  $k_1+k_2+\dots+k_p=s$ and as a poset the interval 
  $[\sigma,\pi]$ is canonically isomorphic to
  $ \IP_{k_1} \times \IP_{k_2}\times \cdots \times \IP_{k_p}$. 
  More precisely, if $\sigma=(S_1, S_2,\ldots, S_s) \leq \pi \in \OSP_n$,
  then there exists a unique $\tau =(T_1, T_2, \ldots,T_p) \in \IP_s$ such that  
  $$
  \pi= \biggl(\bigcup_{i\in T_1} S_i,
       \bigcup_{i\in T_2} S_i,
       \ldots,
       \bigcup_{i\in T_p} S_i\biggr). 
  $$
  The map $\Phi: \IP_{T_1}\times  \IP_{T_2}\times \cdots \times \IP_{T_p} \to [\sigma,\pi]$ by 
  $$
  (\tau_1,\tau_2,\ldots,\tau_p) \mapsto \biggl(\bigcup_{i\in T_{1,1}} S_i, \bigcup_{i\in T_{1,2}} S_i,\ldots, \bigcup_{i\in T_{1,k_1}} S_i, \ldots, \bigcup_{i\in T_{p,1}} S_i, \ldots, \bigcup_{i\in T_{p,k_p}} S_i \biggr), 
  $$
  where $\tau_i= (T_{i,1}, T_{i,2}, \ldots, T_{i,k_i})$,  is a bijection, and so its inverse establishes a bijection
  $$
\Psi:=  \Phi^{-1}: [\sigma,\pi]\to \IP_{k_1} \times \IP_{k_2} \times \cdots \times \IP_{k_p}
  $$
  with  $k_i:=\abs{T_i}$. 
  The composition $(k_1,k_2,\dots,k_p)$ is called the \emph{type}
  of the interval $[\sigma,\pi]$. 
\end{prop}
The example in 
Figure~\ref{app:fig:ospintervalexample} is isomorphic to $\IP_2\times\IP_3$.
\begin{figure}

\begin{tikzpicture}[every node/.style={minimum width=2.6cm,minimum height=1.4cm}]

\node (12222211) at (1.2,7.1) {};
\begin{scope}[shift={(0,7)},scale=0.05]
\draw (2,0)--(2,7.5);
\draw (8,0)--(8,4.5);
\draw (14,0)--(14,4.5);
\draw (20,0)--(20,4.5);
\draw (26,0)--(26,4.5);
\draw (32,0)--(32,4.5);
\draw (38,0)--(38,7.5);
\draw (44,0)--(44,7.5);
\draw (8,4.5)--(32,4.5);
\draw (2,7.5)--(44,7.5);

\node at (2,-5) {\tiny $1$};
\node at (8,-5) {\tiny $2$};
\node at (14,-5) {\tiny $2$};
\node at (20,-5) {\tiny $2$};
\node at (26,-5) {\tiny $2$};
\node at (32,-5) {\tiny $2$};
\node at (38,-5) {\tiny $1$};
\node at (44,-5) {\tiny $1$};
\end{scope} 

\node (13233211) at (-2.8,5.1) {};
\begin{scope}[shift={(-4,5)},scale=0.05]
\draw (2,0)--(2,10.5);
\draw (8,0)--(8,4.5);
\draw (14,0)--(14,7.5);
\draw (20,0)--(20,4.5);
\draw (26,0)--(26,4.5);
\draw (32,0)--(32,7.5);
\draw (38,0)--(38,10.5);
\draw (44,0)--(44,10.5);
\draw (8,4.5)--(26,4.5);
\draw (14,7.5)--(32,7.5);
\draw (2,10.5)--(44,10.5);

\node at (2,-5) {\tiny $1$};
\node at (8,-5) {\tiny $3$};
\node at (14,-5) {\tiny $2$};
\node at (20,-5) {\tiny $3$};
\node at (26,-5) {\tiny $3$};
\node at (32,-5) {\tiny $2$};
\node at (38,-5) {\tiny $1$};
\node at (44,-5) {\tiny $1$};
\end{scope} 

\node (13232211) at (1.2,5.1) {}; uuu
\begin{scope}[shift={(0,5)},scale=0.05]
\draw (2,0)--(2,10.5);
\draw (8,0)--(8,4.5);
\draw (14,0)--(14,7.5);
\draw (20,0)--(20,4.5);
\draw (26,0)--(26,7.5);
\draw (32,0)--(32,7.5);
\draw (38,0)--(38,10.5);
\draw (44,0)--(44,10.5);
\draw (8,4.5)--(20,4.5);
\draw (14,7.5)--(32,7.5);
\draw (2,10.5)--(44,10.5);

\node at (2,-5) {\tiny $1$};
\node at (8,-5) {\tiny $3$};
\node at (14,-5) {\tiny $2$};
\node at (20,-5) {\tiny $3$};
\node at (26,-5) {\tiny $2$};
\node at (32,-5) {\tiny $2$};
\node at (38,-5) {\tiny $1$};
\node at (44,-5) {\tiny $1$};
\end{scope}

\node (14243211) at (-2.8,2.1) {};
\begin{scope}[shift={(-4,2)},scale=0.05]
\draw (2,0)--(2,10.5);
\draw (8,0)--(8,4.5);
\draw (14,0)--(14,7.5);
\draw (20,0)--(20,4.5);
\draw (26,0)--(26,4.5);
\draw (32,0)--(32,7.5);
\draw (38,0)--(38,10.5);
\draw (44,0)--(44,10.5);
\draw (8,4.5)--(20,4.5);
\draw (26,4.5)--(26,4.5);
\draw (14,7.5)--(32,7.5);
\draw (2,10.5)--(44,10.5);

\node at (2,-5) {\tiny $1$};
\node at (8,-5) {\tiny $4$};
\node at (14,-5) {\tiny $2$};
\node at (20,-5) {\tiny $4$};
\node at (26,-5) {\tiny $3$};
\node at (32,-5) {\tiny $2$};
\node at (38,-5) {\tiny $1$};
\node at (44,-5) {\tiny $1$};
\end{scope} 

\node (13333322) at (5.2,5.1) {}; uuu
\begin{scope}[shift={(4,5)},scale=0.05]
\draw (2,0)--(2,4.5);
\draw (8,0)--(8,4.5);
\draw (14,0)--(14,4.5);
\draw (20,0)--(20,4.5);
\draw (26,0)--(26,4.5);
\draw (32,0)--(32,4.5);
\draw (38,0)--(38,4.5);
\draw (44,0)--(44,4.5);
\draw (2,4.5)--(2,4.5);
\draw (8,4.5)--(32,4.5);
\draw (38,4.5)--(44,4.5);

\node at (2,-5) {\tiny $1$};
\node at (8,-5) {\tiny $3$};
\node at (14,-5) {\tiny $3$};
\node at (20,-5) {\tiny $3$};
\node at (26,-5) {\tiny $3$};
\node at (32,-5) {\tiny $3$};
\node at (38,-5) {\tiny $2$};
\node at (44,-5) {\tiny $2$};
\end{scope} 

\node (14344322) at (1.2,2.1) {};
\begin{scope}[shift={(0,2)},scale=0.05]
\draw (2,0)--(2,4.5);
\draw (8,0)--(8,4.5);
\draw (14,0)--(14,7.5);
\draw (20,0)--(20,4.5);
\draw (26,0)--(26,4.5);
\draw (32,0)--(32,7.5);
\draw (38,0)--(38,4.5);
\draw (44,0)--(44,4.5);
\draw (2,4.5)--(2,4.5);
\draw (8,4.5)--(26,4.5);
\draw (14,7.5)--(32,7.5);
\draw (38,4.5)--(44,4.5);

\node at (2,-5) {\tiny $1$};
\node at (8,-5) {\tiny $4$};
\node at (14,-5) {\tiny $3$};
\node at (20,-5) {\tiny $4$};
\node at (26,-5) {\tiny $4$};
\node at (32,-5) {\tiny $3$};
\node at (38,-5) {\tiny $2$};
\node at (44,-5) {\tiny $2$};
\end{scope} 

\node (14343322) at (5.2,2.1) {};
\begin{scope}[shift={(4,2)},scale=0.05]
\draw (2,0)--(2,4.5);
\draw (8,0)--(8,4.5);
\draw (14,0)--(14,7.5);
\draw (20,0)--(20,4.5);
\draw (26,0)--(26,7.5);
\draw (32,0)--(32,7.5);
\draw (38,0)--(38,4.5);
\draw (44,0)--(44,4.5);
\draw (2,4.5)--(2,4.5);
\draw (8,4.5)--(20,4.5);
\draw (14,7.5)--(32,7.5);
\draw (38,4.5)--(44,4.5);

\node at (2,-5) {\tiny $1$};
\node at (8,-5) {\tiny $4$};
\node at (14,-5) {\tiny $3$};
\node at (20,-5) {\tiny $4$};
\node at (26,-5) {\tiny $3$};
\node at (32,-5) {\tiny $3$};
\node at (38,-5) {\tiny $2$};
\node at (44,-5) {\tiny $2$};
\end{scope}

\node (15354322) at (1.2,0.1) {};
\begin{scope}[shift={(0,0)},scale=0.05]
\draw (2,0)--(2,4.5);
\draw (8,0)--(8,4.5);
\draw (14,0)--(14,7.5);
\draw (20,0)--(20,4.5);
\draw (26,0)--(26,4.5);
\draw (32,0)--(32,7.5);
\draw (38,0)--(38,4.5);
\draw (44,0)--(44,4.5);
\draw (2,4.5)--(2,4.5);
\draw (8,4.5)--(20,4.5);
\draw (26,4.5)--(26,4.5);
\draw (14,7.5)--(32,7.5);
\draw (38,4.5)--(44,4.5);

\node at (2,-5) {\tiny $1$};
\node at (8,-5) {\tiny $5$};
\node at (14,-5) {\tiny $3$};
\node at (20,-5) {\tiny $5$};
\node at (26,-5) {\tiny $4$};
\node at (32,-5) {\tiny $3$};
\node at (38,-5) {\tiny $2$};
\node at (44,-5) {\tiny $2$};
\end{scope}

\draw (13233211)--(12222211);
\draw (13232211)--(12222211);
\draw (14243211)--(13233211);
\draw (14243211)--(13232211);
\draw (13333322)--(12222211);
\draw (14344322)--(13233211);
\draw (14344322)--(13333322);
\draw (14343322)--(13232211);
\draw (14343322)--(13333322);
\draw (15354322)--(14243211);
\draw (15354322)--(14344322);
\draw (15354322)--(14343322);
\end{tikzpicture}
   \caption{An interval isomorphic to $\IP_2\times\IP_3$.}
 \label{app:fig:ospintervalexample}
\end{figure}
\begin{proof} Let $\pi=(P_1, P_2,\ldots,P_p)$. Each block of $\pi$ is the union of
  blocks of $\sigma$, and so we can find $A \subseteq [s]$ such that
  $P_1=\cup_{i\in A} S_i$. We show that there exists $k$ such that $A =
  [k]$. Suppose that there are $1\leq u<v \leq p$ such that $u\notin A$ and
  $v\in A$. Then there is $j\geq2$ such that $S_u \subseteq P_j$. This
  contradicts the fact that $u<v$, $S_v \subseteq P_1$ and $\sigma\leq \pi$.
  Hence $A = [k]$ for some $k$. 
  Removing the first block of $\pi$ and the first $k$ blocks of $\sigma$
  we can repeat the argument with the ordered set partitions
  $(S_{k+1},S_{k+2},\ldots,S_s) \leq (P_2,P_3,\ldots,P_p)$
  and find
  $P_2=\cup_{k+1 \leq i \leq k+l}S_i$ for some $l$. 
  After a finite number of iterations we thus
  construct a unique interval partition
  $\tau =(T_1,T_2,\ldots,T_p) \in \IP_s$ such that  
  $$
  \pi=\biggl(\bigcup_{i\in T_1} S_i, 
       \bigcup_{i\in T_2} S_i, 
       \ldots,
       \bigcup_{i\in T_p} S_i\biggr). 
  $$ 
  Clearly the image of $\Phi$ is contained in $[\sigma,\pi]$ and $\Phi$ is injective and it remains to show surjectivity.
  To this end pick an arbitrary $\rho \in [\sigma,\pi]$. 
  From the first part of the proposition we infer that
  $\rho \geq \sigma$ is of the form 
  $$
  \rho=\biggl(\bigcup_{i \in G_1} S_i,
      \bigcup_{i \in G_2} S_i,
      \ldots,
      \bigcup_{i\in G_t} S_i\biggr)
  $$
for some $\gamma =(G_1,G_2,\ldots, G_t) \in \IP_s$. Since $\rho \leq \pi$, the partition $\gamma$ must be finer than $\tau$. Hence, each restriction $\gamma\restr_{T_i} \in \IP_{T_i}, i\in[p]$, consists of sequence of consecutive blocks of $\gamma$ without splitting any original block $G_1,G_2,\dots,G_t \in\gamma$.
Hence we obtain $\Phi((\gamma_1,\gamma_2, \ldots, \gamma_p)) = \rho$ and
the map $\Phi$ is indeed bijective.
\end{proof}

\subsection{Incidence algebras and multiplicative functions}
Let $(P,\leq)$ be a (finite) partially ordered set.
The incidence algebra $\IncAlg(P)=\IncAlg(P,\C)$ is the algebra of 
functions supported on the set of pairs $\{(x,y)\in P\times P : x,y\in P; x\leq y\}$
with convolution
$$
f*g(x,y) = \sum_{x\leq z\leq y} f(x,z)\, g(z,y)
$$
For example, if $P$ is the $n$-set~$\{1,2,\dots,n\}$ with the natural order,
then $\IncAlg(P)$
is the algebra of $n\times n$ upper triangular matrices.
The algebra~$\IncAlg(P)$ is unital with the Kronecker function $\delta(x,y)$ 
serving as the unit element and a function $f\in\IncAlg(P)$
is invertible if and only if $f(x,x)$ is nonzero for every $x\in P$.
An example of an invertible function is the \emph{Zeta function},
which is defined as $\zeta(x,y)\equiv 1$.
Its inverse is called the \emph{M{\"o}bius function} of~$P$, 
denoted $\mu(x,y)$.
For functions $F,G:P\to\C$ we have the fundamental equivalence
 (``M{\"o}bius inversion formula'')
$$
\left(
  \forall x\in P:  F(x) = \sum_{y\leq x} G(y)
\right)
\qquad\iff\qquad
\left(
  \forall x\in P:  G(x) = \sum_{y\leq x} F(y)\,\mu(y,x)
\right)
$$
A function $f\in\IncAlg(\SP_n)$ (actually a family of functions)
is called \emph{multiplicative} if there is a \emph{characteristic sequence}
 $(f_n)_{n\geq1}$ such
that for any pair $\sigma,\pi\in\SP$ we have
$$
f(\sigma,\pi) = \prod f_i^{k_i}
$$
where $k_i$ are the structural constants of the interval $[\sigma,\pi]$
from Proposition~\ref{app:prop:intervals}. 
It can be shown \cite{DoubiletRotaStanley:1972:foundations6} that the 
multiplicative functions form a subalgebra of the incidence algebra
$\IncAlg(\SP_n)$.
For example, the Zeta function is multiplicative with characteristic sequence
$(1,1,\dots)$ and the M\"obius function is multiplicative as well
with characteristic sequence $\mu_n=(-1)^{n-1}(n-1)!$,
cf.~\cite{Schutzenberger:1954:contribution,Rota:1964:theory};
more precisely, if $\pi=\{P_1,P_2,\dots,P_p\}$ then
\begin{equation}
\label{app:eq:SPmoebius}
  \mu_{\SP}(\sigma,\pi) = \prod_{i=1}^p (-1)^{k_i-1}(k_i-1)!
\end{equation}
where $k_i=\#(\sigma\restr_{P_i})$.
Multiplicative functions on the lattice of set partitions provide  
a combinatorial model for Fa\`a di Bruno's formula
which expresses the Taylor coefficients of a composition of exponential 
formal power series in terms of the coefficients of the original functions, 
see
\cite{DoubiletRotaStanley:1972:foundations6,Stanley:1999:enumerative2};
in the case of noncrossing partitions the convolution is commutative
and can be modeled as multiplication of certain power series (``$S$-transforms''), see \cite{NicaSpeicher:2006:lectures}.
The lattice of interval partitions combinatorially models the composition
of ordinary formal power series, see \cite{Joyal:1981:theorie}.
From Proposition~\ref{app:prop:OSPintervals} one might guess that convolution
on the poset of ordered set partitions is also related to some kind
of function composition, and Proposition~\ref{app:prop:multiplicative} below shows that this is indeed the case for a certain class of functions to be defined next.

\begin{defi}

Denote by $\Nfin^\infty$  the set of finite sequences of positive integers:  
\[
\Nfin^\infty=
\bigcup_{p=1}^\infty \N^p. 
\]
For $m \in \N$ let $F_{\OSP^m}$ be the set of $\C$-valued functions $f$ on the set of $m$-chains
 \[
 \{(\sigma_1, \dots, \sigma_m) \in  \cup_{n=1}^\infty \underbrace{(\OSP_n\times \cdots \times \OSP_n)}_{\text{$m$ fold}}  \mid \sigma_1 \leq \sigma_2 \leq \cdots \leq \sigma_m\}. 
\] 
We are concerned with different levels of reduced incidence algebras.
 \begin{enumerate}[label=\rm(\roman*),leftmargin=1cm]

\item  A function $f \in F_{\OSP^2}$ is said to be \emph{adapted}
if 
 there is a family $(f_{\uk})_{\uk\in \Nfin^\infty} \subseteq \C$ such that 
$$
f(\sigma,\pi) = f_{k_1,\dots,k_p}, 
$$ 
where $(k_i)_{i=1}^p$ is the type of the interval $[\sigma,\pi]$ as 
defined in Proposition~\ref{app:prop:OSPintervals}. The family $(f_{\uk})_{\uk\in \Nfin^\infty}$ is called the {\it defining family}. 

\item To any adapted function $f\in F_{\OSP^2}$ we associate its multivariate generating function 
$$
\uZ_f (\uz)= \sum_{\uk\in\Nfin^\infty} f_{\uk} \uz^{\uk},\qquad \uz=(z_1,z_2,z_3,\dots)
$$
with the usual multiindex convention $\uz^{\uk}= z_1^{k_1}z_2^{k_2} \cdots$, 
where $z_1,z_2,\dots$ are commuting indeterminates.

\item  A function $f \in F_{\OSP^2}$ is said to be \emph{multiplicative} if it is adapted and moreover the defining family satisfies
$$
f_{k_1,\dots, k_p} = \prod_{i=1}^p f_{k_i}.  
$$ 
If $f$ is multiplicative, then the sequence of values $f_n=f(\hat{0}_n,\hat{1}_n)$ is called the \emph{defining
  sequence} of $f$.

\item To a multiplicative function $f\in F_{\OSP^2}$ we associate the (univariate)
 generating function
$$
Z_f (z)= \sum_{n=1}^\infty f_n z^n. 
$$

\item   A function $f \in F_{\OSP^3}$ is said to be \emph{quasi-multiplicative} if
 there is an array of coefficients $(f_{j k})_{j,k =1}^\infty \subseteq \C$ such that 
$$
f(\sigma,\rho,\pi) = \prod_{i=1}^p \prod_{G\in\gamma_i} f_{i, \abs{G}}, 
$$ 
where $(\gamma_1,\dots, \gamma_p)$ is the image of $\rho$ under the map $\Psi$ in Proposition
\ref{app:prop:OSPintervals}. The array $(f_{j k})_{j,k =1}^\infty$ is called the {\it defining array} of $f$. 

\item To a  quasi-multiplicative function $f\in F_{\OSP^3}$  we associate a sequence of (univariate) generating functions
$$
Z_{f}^{(j)}( z)= \sum_{k=1}^\infty f_{j k} z^k,\qquad j \in \N. 
$$

\item 
 For $f \in F_{\OSP^3}$ and $g \in F_{\OSP^2}$ we define the convolution 
 \begin{equation*}
 (f\otriangle g)(\sigma, \pi):= \sum_{\rho\in[\sigma,\pi]} f(\sigma,\rho,\pi)\,g(\rho,\pi),\qquad \sigma \leq \pi. 
 \end{equation*}
 
\end{enumerate}
\end{defi}

This latter provides a combinatorial model for the composition 
of multivariate functions. 
\begin{prop}
  \label{app:prop:multiplicative}
 If  a function $f\in F_{\OSP^3}$ is quasi-multiplicative and $g \in F_{\OSP^2}$  is adapted then $f\otriangle g \in F_{\OSP^2}$ is adapted and 
$$
\uZ_{f\otriangle g}(\uz) = \uZ_g(Z_f^{(1)}(z_1), Z_f^{(2)}(z_2), \dots),\qquad \uz =(z_1,z_2,\dots).  
$$
\end{prop}
\begin{proof}
We use the notations in the statement of Proposition
\ref{app:prop:OSPintervals}. Pick any $\rho \in[\sigma,\pi]$ and let
$(\gamma_1,\gamma_2, \ldots, \gamma_p):=\Psi(\rho)$ be its image under $\Psi$.
Thus  $\gamma_i=(G_{i,1}, G_{i,2}, \ldots, G_{i,r_i})\in \IP_{k_i}$ is
an interval partition and we have the bijective images
\begin{align*}
\Psi([\sigma,\rho])&= \IP_{\abs{G_{1,1}}}\times  \IP_{\abs{G_{1,2}}}\times 
 \cdots \times \IP_{\abs{G_{1,r_1}}} \times \IP_{\abs{G_{2,1}}} \times \cdots \times \IP_{\abs{G_{p,r_p}}}, \\
\Psi([\rho,\pi])&= \IP_{\abs{\gamma_1}}\times
                   \IP_{\abs{\gamma_2}}\times
                   \cdots \times
                   \IP_{\abs{\gamma_p}}.
\end{align*}
Hence
\begin{align*}
(f\otriangle g)\,(\sigma,\pi) 
&= \sum_{\rho \in [\sigma,\pi]} f(\sigma,\rho,\pi)\,g(\rho, \pi) \\
&= \sum_{(\gamma_1,\gamma_2,\ldots,\gamma_p)\in \IP_{k_1}\times \IP_{k_2}\times \cdots \times \IP_{k_p}}\prod_{i=1}^p\left(\prod_{G\in\gamma_i} f_{i,\abs{G}}\right)  g_{\abs{\gamma_1}, \abs{\gamma_2}, \dots, \abs{\gamma_p}} \\
&= \sum_{r_i\in[k_i], 1 \leq i \leq p}\sum_{\substack{(n_{i k})_{i \in [p], k\in[r_i]} \\ n_{i1}+n_{i2}+\cdots + n_{i r_i}=k_i, 1\leq i \leq p}}\prod_{i=1}^p\left(\prod_{k=1}^{r_i} f_{i, n_{ik}}\right)  g_{r_1, r_2, \dots, r_p} \\
&=: (f\otriangle g)_{k_1,k_2,\dots, k_p}. 
\end{align*}
This shows that for every pair $(\sigma,\pi)$ the
value $f\otriangle g(\sigma,\pi)$ is determined by the structural sequence
$(k_1,\dots, k_p)$ and thus $f\otriangle g$ is adapted.
Now multiplying the terms with $z^{\uk}$ and summing over $\uk$ we obtain 
 \begin{align*}
\uZ_{f\otriangle g}(\uz) 
&= \sum_{\uk=(k_1,k_2,\dots) \in \Nfin^\infty} (f\otriangle g)_{\uk} \uz^{\uk} \\
&= \sum_{p\geq1} \sum_{\underline{r}=(r_1,r_2,\dots,r_p) \in\N^p} \sum_{\substack{(n_{i1}, \dots, n_{i r_i}) \in \N^{r_i} \\ 1 \leq i \leq p}}\prod_{i=1}^p\left(\prod_{k=1}^{r_i} f_{i, n_{ik}}z_i^{n_{ik}}\right)  g_{r_1, r_2, \dots, r_p} \\
&= \sum_{p\geq1} \sum_{\underline{r}=(r_1,r_2,\dots,r_p) \in\N^p} g_{r_1, r_2, \dots, r_p}\prod_{i=1}^p Z_f^{(i)}(z_i)^{r_i}  \\
&= \uZ_g(Z_f^{(1)}(z_1), Z_f^{(2)}(z_2),\dots). 
\end{align*}

\end{proof}

\begin{corollary}
  \label{app:cor:multiplicative2}
 If $f,g \in F_{\OSP^2}$ are multiplicative, then so is $f\ast g$ and 
$$
Z_{f\ast g}(z) = Z_g(Z_f(z)). 
$$
\end{corollary}
\begin{proof}
Given a multiplicative function $f\in F_{\OSP^2}$,
we lift it to a quasi-multiplicative function $\tilde{f}\in  F_{\OSP^3}$ 
via its defining family $\tilde{f}_{j k}= f_k$, i.e.,
$\tilde{f}(\sigma,\rho,\pi):=f(\sigma,\rho)$ and the generating functions
are $Z_{\tilde{f}}^{(j)}(z)= Z_{\tilde{f}}(z)$ for all $j \geq 1$. 
On the other hand, $g$ is multiplicative, therefore adapted with
$g_{k_1,\dots, k_p}= g_{k_1} \cdots g_{k_p}$ and has generating function
\begin{equation*}
\uZ_g(z_1,z_2,\dots)= \sum_{p=1}^\infty \sum_{(k_1,k_2,\dots, k_p) \in \N^p} g_{k_1}g_{k_2} \dotsm g_{k_p}z_1^{k_1}z_2^{k_2}\cdots z_p^{k_p} = \sum_{p=1}^\infty Z_g(z_1) Z_g(z_2) \dotsm Z_g(z_p). 
\end{equation*}
By Proposition~\ref{app:prop:multiplicative}
$$
\uZ_{\tilde{f}\otriangle g}(z_1,z_2,\dots)=  \sum_{p=1}^\infty Z_g(Z_f(z_1))Z_g(Z_f(z_2)) \dotsm Z_g(Z_f(z_p)).  
$$
This shows that $(\tilde{f}\otriangle g)_{k_1,k_2,\dots, k_p}= \prod_{i=1}^p h_{k_i}$, where $h_k:= \frac{1}{k!}\frac{d^k}{dz^k}\big|_{z=0}Z_g(Z_f(z)).$ Therefore $\tilde{f} \otriangle g = f \ast g$ is multiplicative and $Z_{f\ast g}= Z_g(Z_f(z))$.  
\end{proof}

\subsection{Special functions on the poset of ordered set partitions} We define several special functions in the case of ordered set partitions, and compute their generating functions.  

\begin{defi}\label{app:def:factorials}
 Given ordered set partitions $\sigma\leq\rho\leq \pi =(P_1,P_2,\dots,P_p)\in \OSP_n$, a sequence $\ut=(t_1,t_2,\dots)\in\R^\N$ and a number $t\in \R$ we define 
\begin{align*}
\beta_{\ut}(\sigma,\pi)&=\prod_{i=1}^p\binom{t_i}{\sharp(\sigma\restr_{P_i})},   \\
\beta_t(\sigma,\pi) &= \beta_{(t,t,\dots)}(\sigma,\pi),  \\
\gamma_{\ut}(\sigma,\rho,\pi) &= \prod_{i=1}^p \prod_{G\in\gamma_i} \binom{t_i}{\abs{G}},
\end{align*}
where $\binom{t}{n}$ is the generalized binomial coefficient and $(\gamma_1,\gamma_2,\dots, \gamma_p)$ is the image of $\rho$ by the map $\Psi$ in Proposition
\ref{app:prop:OSPintervals}.
Moreover, for $\sigma\leq\pi$ we define 
\begin{align}
  [\sigma:\pi] &= \prod_{P \in \bar{\pi}} \sharp(\sigma\restr_P),
                 \nonumber
  \\
  [\sigma:\pi]!&= \prod_{P \in \bar\pi} \sharp(\sigma\restr_P)!,
                 \label{eq:[sigma:pi]!}
  \\
  \widetilde{\zeta}(\sigma,\pi) &= \frac{1}{[\sigma:\pi]!},
                 \nonumber
  \\
  \label{eq:mutilde}
  \widetilde{\mu}(\sigma,\pi)&= \frac{(-1)^{\abs{\sigma}-\abs{\pi}}}{[\sigma:\pi]}. 
\end{align}
\end{defi}
\begin{remark} \label{rem:beta_gamma} \ 
 \begin{enumerate}[label=\rm(\roman*),leftmargin=1cm] 
\item The combinatorial significance of the numbers \eqref{eq:[sigma:pi]!} is
   \begin{equation}
     \label{eq:[sigma:pi]!=fixed}
     [\sigma:\pi]! = \#\{\rho~ |~ \rho\leq \pi \text{ and } \bar{\rho}=\bar{\sigma}\}
   \end{equation}
   
  \item The values $\beta_{\ut}(\sigma,\pi)$ and $\gamma_{\ut}(\sigma,\rho,\pi)$ 
  depend only on the first $\abs{\pi}$ elements of $\ut$.  
 \item\label{item:beta_gamma}
  $\beta_{\ut}, \beta_t, \gamma_{\ut}$ are related via 
  \[
 \beta_{\ut}(\sigma,\pi) = \gamma_{\ut}(\sigma, \pi,\pi) \quad \text{and}\quad  \gamma_{\ut}(\sigma, \rho,\pi) = \prod_{i=1}^p \beta_{t_i}(\sigma\restr_{P_i}, \rho\restr_{P_i}). 
   \]
   \end{enumerate}
\end{remark}

Since every interval in $\OSP_n$ is isomorphic to a product of lattices of
interval partitions (Proposition~\ref{app:prop:OSPintervals})
and thus is a Boolean lattice, it follows that the semilattice of ordered set partitions is Eulerian,
i.e., its M\"obius function only depends on the rank and $\mu_{\OSP}(\sigma,\pi)
= (-1)^{\abs{\sigma}-\abs{\pi}}$,
see \cite[Proposition 3 and its Corollary]{Rota:1964:theory}. 
Hence if $\sigma\leq\pi$ we may write,  
\begin{align*}
&\widetilde{\zeta}(\sigma,\pi) = \frac{\zeta_\SP(\bar{\sigma}, \bar{\pi})}{[\sigma:\pi]!}, \\
&\widetilde{\mu}(\sigma,\pi)= \frac{\mu_{\OSP}(\sigma,\pi)}{[\sigma:\pi]} =
  \frac{\mu_{\SP}(\bar{\sigma},\bar{\pi})}{[\sigma:\pi]!}
  =\mu_{\SP}(\bar{\sigma},\bar{\pi}) \, \tilde{\zeta}(\sigma,\pi)
  ,  
\end{align*}
where the M\"obius function $\mu_{\SP}$ was defined in formula~\eqref{app:eq:SPmoebius}.

As a consequence of observation \eqref{eq:[sigma:pi]!=fixed} we have the
following connection between the M\"{o}bius inversion on $\OSP$ and $\SP$.
\begin{prop}
  \label{app:prop:f*tzeta=fbar*zeta}
  Let $f:\OSP\to \C$ be a function which is invariant under permutations of
  the blocks, i.e., $f(\pi)=f(\pi')$ whenever $\overline{\pi}=\overline{\pi'}$
  and denote by $\bar{f}:\SP\to\C$ the corresponding function on $\SP$,
  then the convolutions
  $f*_\OSP\tilde{\zeta}$
  and
  $f*_\OSP\tilde{\mu}$ are invariant as well and are given by
  \begin{align*}
    f*_\OSP\tilde{\zeta}(\pi)
    &= \bar{f}*_\SP\zeta_\SP(\overline{\pi})
      \\
    f*_\OSP\tilde{\mu}(\pi)
    &= \bar{f}*_\SP\mu_\SP(\overline{\pi})
      ,
\intertext{
  i.e.,
  }
    \sum_{\substack{\rho\in\OSP\\ \rho\leq\pi}}f(\rho)\,\tilde{\zeta}(\rho,\pi)
    &=    \sum_{\substack{\sigma\in\SP\\
    \sigma\leq\overline{\pi}}}\bar{f}(\sigma)
    \\
    \sum_{\substack{\rho\in\OSP\\ \rho\leq\pi}}f(\rho)\,\tilde{\mu}_{\OSP}(\rho,\pi)
    &=    \sum_{\substack{\sigma\in\SP\\
    \sigma\leq\overline{\pi}}}\bar{f}(\sigma)\,\mu(\sigma,\overline{\pi})
    .
  \end{align*}
\end{prop}

\begin{prop} \ 
\begin{enumerate}[label=\rm(\roman*),leftmargin=1cm]
\item  The function $\beta_{\ut} \in F_{\OSP^2}$ is adapted with defining family 
\begin{equation*}
(\beta_{\ut})_{\uk} = \prod_{i=1}^p \binom{t_i}{k_i}. 
\end{equation*}

\item The function $\gamma_{\ut}\in F_{\OSP^3}$ is quasi-multiplicative with defining array 
\begin{equation*}
(\gamma_{\ut})_{jk}= \binom{t_j}{k}. 
\end{equation*}

\item The functions $\beta_t,\widetilde{\mu},\widetilde{\zeta}\in F_{\OSP^2}$ are
  multiplicative with defining sequences
\begin{align*}
&\beta_t(\hat{0}_n,\hat{1}_n)= \binom{t}{n}, \\ 
&\widetilde{\zeta}(\hat{0}_n,\hat{1}_n)= \frac{1}{n!}, \\
&\widetilde{\mu}(\hat{0}_n,\hat{1}_n) = \frac{(-1)^{n-1}}{n}. 
\end{align*}
\end{enumerate}
\end{prop}
\begin{proof}
 The claims follow by definition and by Proposition~\ref{app:prop:OSPintervals}. 
\end{proof}
\begin{corollary}
  \label{app:cor:inversion} \ 
\begin{enumerate}[label=\rm(\roman*),leftmargin=1cm]
\item
  \label{app:it:inversion0} The inverse function (with respect to the convolution $\ast$) of $\widetilde{\mu}$ is $\widetilde{\zeta}$. 
 \item  
  \label{app:it:semigroup2} For $\us,\ut \in \R^\infty$ we have 
\[
\gamma_{\us}\otriangle\beta_{\ut} = \beta_{\us \circ \ut}, 
\]
where $\us \circ \ut=(s_1 t_1, s_2 t_2, \dots)$.

\item 
 \label{app:it:semigroup} $\beta_t$ satisfies the semigroup property $\beta_s \ast \beta_t = \beta_{s t}$ for $s,t \in\R$. 
\end{enumerate}
\end{corollary}
\begin{proof}
\ref{app:it:inversion0}\,\, Since $Z_{\widetilde{\mu}}(z)=\log (1+z)$ and $Z_{\widetilde{\zeta}}(z)= e^z-1$, we have $Z_{\widetilde{\mu}}(Z_{\widetilde{\zeta}}(z))=z$. 

\itemspacing
\ref{app:it:semigroup2}\,\,  We have 
\begin{align*}
&Z_{\gamma_{\us}}^{(j)}(z) = \sum_{k=1}^\infty \binom{s_j}{k} z^k = (1+z)^{s_j}-1, \\
  &\uZ_{\beta_{\ut}}(\uz) = \sum_{\uk} \prod_{i=1}^p \binom{t_i}{k_i} z_i^{k_i} 
      = \sum_{p\geq1}\prod_{i=1}^p\left((1+z_i)^{t_i}-1\right). 
\end{align*}
By Proposition~\ref{app:prop:multiplicative}, $\uZ_{\gamma_{\us}\otriangle\beta_{\ut}}(\uz) = \uZ_{\beta_{\ut}}(Z_{\gamma_{\us}}^{(1)}(z_1), Z_{\gamma_{\us}}^{(2)}(z_2),\dots)$, which equals $\uZ_{\beta_{\us\circ \ut}}(\uz)$. 

\itemspacing
\ref{app:it:semigroup}\,\, We can use $Z_{\beta_t}(z)=\sum_{n=1}^\infty \binom{t}{n}z^n = (1+z)^t-1$ and Corollary~\ref{app:cor:multiplicative2}. 
\end{proof}

\bibliographystyle{amsalpha}
\newcommand{\etalchar}[1]{$^{#1}$}
\providecommand{\bysame}{\leavevmode\hbox to3em{\hrulefill}\thinspace}
\providecommand{\MR}{\relax\ifhmode\unskip\space\fi MR }
\providecommand{\MRhref}[2]{\href{http://www.ams.org/mathscinet-getitem?mr=#1}{#2}
}
\providecommand{\href}[2]{#2}

\end{document}